\newtheorem{thm}[subsubsection]{Theorem}
\newtheorem{lem}[subsubsection]{Lemma}
\newtheorem*{Lem}{Lemma}
\newtheorem{prp}[subsubsection]{Proposition}
\newtheorem*{Prp}{Proposition}
\newtheorem{crl}[subsubsection]{Corollary}
{  \theoremstyle{definition}
           \newtheorem{dfn}[subsubsection]{Definition}
           \newtheorem{exm}[subsubsection]{Example}
           \newtheorem{rem}[subsubsection]{Remark}
           \newtheorem{rems}[subsubsection]{Remarks}
           \newtheorem*{Rem}{Remark}
           \newtheorem*{Rems}{Remarks}
}
\newcommand{\defn}{\ \\[2pt] \noindent {\bf Definition.\ }}
\newcommand{\eproof}{\ \hfill $\Box$}
 \newcommand{\chs}[1]{{{#1}\brack 2}}
\newcommand{\bs}{\backslash}
\newcommand{\ol}{\overline}
\newcommand{\cA}{\mathcal{A}}
\newcommand{\cC}{\mathcal{C}}
\newcommand{\cF}{\mathcal{F}}
\newcommand{\cL}{\mathcal{L}}
\newcommand{\cM}{\CMcal{M}}
\newcommand{\sM}{\mathfrak{M}} 
\newcommand{\cO}{\mathcal{O}}
\newcommand{\cP}{\mathcal{P}}
\newcommand{\cQ}{\mathcal{Q}}
\newcommand{\cS}{\mathcal{S}}
\newcommand{\cT}{\mathcal{T}}
\newcommand{\cU}{\mathcal{U}}
\newcommand{\cV}{\mathcal{V}}
\newcommand{\cW}{\mathcal{W}}
\newcommand{\cX}{\mathcal{X}}
\newcommand{\cY}{\mathcal{Y}}
\newcommand{\cZ}{\mathcal{Z}}
\newcommand{\id}{\mathrm{id}}
\newcommand{\pr}{\mathrm{pr}}
\newcommand{\Z}{\mathbb{Z}}
\renewcommand{\P}{\mathbb{P}}
\newcommand{\Q}{\mathbb{Q}}
\newcommand{\C}{\mathbb{C}}
\newcommand{\R}{\mathbb{R}}
\newcommand{\bM}{\overline{\cM}}
\newcommand{\bsM}{\overline{\sM}}
\newcommand{\bT}{\overline{\cT}}
\newcommand{\reg}{\mathrm{reg}}
\renewcommand{\kappa}{\varkappa}
\renewcommand{\epsilon}{\varepsilon}
\newcommand{\ad}{\operatorname{ad}}
\newcommand{\Out}{\operatorname{Out}}
\newcommand{\EQ}{\operatorname{Eq}}
\newcommand{\SEQ}{\operatorname{Seq}}
\newcommand{\Grp}{\mathtt{Grp}}
\newcommand{\Cat}{\mathtt{Cat}}
\newcommand{\Orbi}{\mathtt{Orbi}}
\newcommand{\Top}{\mathtt{Top}}
\newcommand{\Diff}{\operatorname{\mathcal{D}\!\mathit{iff}}}
\newcommand{\Atlas}{\mathtt{Atlas}}
\newcommand{\Hom}{\operatorname{Hom}}
\newcommand{\Mor}{\operatorname{Mor}}
\newcommand{\Ob}{\operatorname{Ob}}%
\newcommand{\End}{\operatorname{End}}
\newcommand{\Image}{\mathrm{Im}}
\newcommand{\Ker}{\mathrm{Ker}}
\newcommand{\an}{\mathit{an}}
\newcommand{\alg}{\mathit{alg}}
\newcommand{\nor}{\mathrm{nor}}
\newcommand{\naive}{\mathrm{nv}}
\newcommand{\Sp}{\mathsf{Sp}}  
\newcommand{\Do}{{D_0}}
\newcommand{\Ens}{\mathtt{Set}}
\newcommand{\op}{\mathrm{op}}
\newcommand{\Pre}{\mathtt{Pre}}
\newcommand{\Sch}{\mathtt{Sch}}
\newcommand{\Stacks}{\mathtt{Stacks}}
\newcommand{\Charts}{\mathtt{Charts}}
\newcommand{\Spec}{\mathrm{Spec}}
\newcommand{\Stab}{\mathrm{Stab}}
\newcommand{\Aut}{\operatorname{Aut}}
\newcommand{\Iso}{\operatorname{Iso}}
\newcommand{\MSt}{\Stacks/\Sp}
\newcommand{\isom}{\simeq}
\newcommand{\dirLim}{\underset{\longrightarrow}{\operatorname{Lim}}}
\newcommand{\dirtLim}{\underset{\longrightarrow}{{\cL}\!{\mathit{im}}}}
\newcommand{\teich }{Tei\-ch\-m\"ul\-ler}
\newcommand{\TS}{\teich\ space}
\newcommand{\LR}{\mathtt{LR}} 
\newcommand{\COM}{\mathtt{COM}} 
\newcommand{\Adm}{\mathfrak{Adm}}
\newcommand{\cAdm}{\mathcal{A}dm}
\newcommand{\wh}{\widehat}
\newcommand{\wt}{\widetilde}
\newcommand{\Gal}{\mathrm{Gal}}
\begin{document}

\title
{Augmented \TS s and Orbifolds }

\author{Vladimir Hinich}
\address{Department of Mathematics\\
 University of Haifa\\
 Mount Carmel,  Haifa 31905,  Israel}
 \email{hinich@math.haifa.ac.il}
 \author{Arkady Vaintrob}
 \address{Department of Mathematics\\
 University of Oregon\\
 Eugene, OR  97403, USA}
 \email{vaintrob@math.uoregon.edu}

\thanks{The work of the second author was partially supported by the NSF
  grant DMS-0407000}

\subjclass{Primary 32G15; 
Secondary 57R18, 55N32}

\date{}

\begin{abstract}

We study complex-analytic properties of the augmented \TS s $\bT_{g,n}$ 
obtained by adding to the classical \TS s  $\cT_{g,n}$ points 
corresponding to Riemann surfaces with nodal singularities.
Unlike $\cT_{g,n}$, the space $\bT_{g,n}$ is not a complex 
manifold (it is not even locally compact). 
We prove however that the quotient of the augmented \TS\
by any finite index subgroup
of the \teich\ modular group has a canonical
structure of a complex orbifold.     
Using this structure we construct natural maps
from $\bT$ to stacks of admis\-sible coverings of stable Riemann surfaces.
This result is important for understanding the cup-product in stringy
orbifold cohomology. We also establish
some new technical results from the general theory of orbifolds which  
may be of independent interest.
\end{abstract}

\maketitle

\setcounter{tocdepth}{2}
\tableofcontents

\section{Introduction}

\label{sec:intro}

\subsection{Augmented \TS s}
\label{ss:ats}
The \TS\ $\cT_{g,n}$ is the space of   pairs $( (X,x_1,\ldots,x_n),\phi)$,
where $(X,x_1,\ldots,x_n)$ is a 
compact  complex curve (Riemann surface) 
of genus $g$ with  $n$   distinct marked points (which we also call
\emph{punctures})  and  
$$
\phi:   (S,p_1,\ldots,p_n) \to (X,x_1,\ldots,x_n)
$$ is
a \emph{marking} --- an isotopy class of orientation 
preserving diffeomorphisms with a fixed compact oriented 
surface $S$ with $n$ marked points $p_1,\ldots,p_n\in S$.
Lipman Bers in~\cite{bers} introduced
the augmented \TS\ $\bT_{g,n}$ by adding to $\cT_{g,n}$
points corresponding to Riemann surfaces with nodes.
(A marking of a nodal Riemann surface $X$ is an isotopy class of maps
 $\phi: S \to  X$, such that the preimages of nodes are simple closed curves
  on  $S$, see Definition~\ref{dfn:ATS}.)

Let $\Gamma_{g,n}=\pi_0(\Diff^+(S,p_1,\ldots,p_n))$ be the 
\teich\  modular group, i.e.\ the group 
of isotopy classes of 
orientation preserving diffeomorphisms $(S,p_1,\ldots,p_n)$.
(This group is also known as the mapping class group of the $n$-punctured
surface of genus $g$, cf.~\cite{Iv}).
We will frequently denote this group simply by $\Gamma$.
The modular group $\Gamma$ naturally acts on $\bT_{g,n}$ 
and the quotient $\Gamma\bs \bT_{g,n}$ is homeomorphic to 
the Deligne-Mumford-Knudsen compactification $\bM_{g,n}$ of the 
moduli space of Riemann surfaces of genus $g$ with $n$ punctures. 
One of Bers' goals was an attempt to introduce a natural complex
structure on $\bM_{g,n}$ and to prove its projectivity.
The existence of a normal complex structure on the quotient
$\Gamma\bs \bT_{g,n}$ was announced by Bers~\cite{bers,bers2,bers3},
but, to the best of our knowledge, no detailed proof of this result 
had been published.

Unlike the usual \TS, the space $\bT_{g,n}$ is not a 
manifold (it is not even locally compact). Still, the augmented \TS s 
 play an important role  in \teich\ theory (see~\cite{ab3}).
In particular they appear in the study of the Weil-Petersson metric on
$\cT_{g,n}$ (see~\cite{masur,wolpert,brock,brock-marg}).
One of the goals of this paper is to understand and study the
augmented \TS\ from the complex analytic point of view. 
Our main results suggest that the space $\bT_{g,n}$ 
can be viewed as a certain universal space
of coverings of stable Riemann surfaces of genus $g$  
ramified in at most $n$ points and from this point of 
view it should be thought of as a projective system of complex orbifolds.

\subsection{Results}
\label{sec:intr_results}
Our main result is the following theorem 
(a combined statement of~\ref{thm:orbifoldstructure},~\ref{sss:2groups}
and~\ref{covered-by}). 

\medskip

\noindent
{\bf Theorem.}
{\em Let $G$ be a finite index subgroup of the \teich\ modular 
group $\Gamma_{g,n}$, where $(g,n)$ is in the stable range (i.e.\  $2g-2+n > 0$).
  \begin{itemize}
  \item[(i)]
The quotient $G\bs \bT_{g,n}$ has a structure of a complex orbifold
such that $G\bs \cT_{g,n}$ is its open suborbifold.
\ 
In particular, $G\bs \bT_{g,n}$ is a compact normal complex space.

  \item[(ii)]
For every finite index subgroup $G' \subset G$ 
there exists a canonical morphism 
$G'\bs\bT\to G\bs\bT$ of 
the corresponding           orbifolds.
  \item[(iii)]
There exists a finite index subgroup $G' \subset G$ such that the orbifold
$G'\bs \bT_{g,n}$ is a manifold (i.e.\ each point has a trivial stabilizer).
  \end{itemize}
}

We will denote the quotient $G\bs \bT_{g,n}$ with this orbifold structure by
$[G\bs \bT_{g,n}]$.  For $G=\Gamma_{g,n}$ 
the resulting  orbifold  $[\Gamma_{g,n}\bs \bT_{g,n}]$ 
coincides with the Deligne-Mumford moduli stack%
\footnote{We use the fraktur font to distinguish stacks and  orbifolds 
(such as $\bsM_{g,n}$ and $\Adm_{g,n,d}$)  from  underlying coarse spaces 
denoted by the mathcal font (resp.\ $\cM_{g,n}$ and $\cAdm_{g,n,d}$).} 
$\bsM_{g,n}$ of stable curves of genus $g$ with $n$ punctures.
In Section~\ref{ss:gluing-ATS} we prove that the natural gluing operations
on the collection of stacks $\bsM_{g,n}$ of stable marked curves can be extended 
to give canonical operations on the collection of orbifolds  $[G\bs \bT]$.

As we prove in Section~\ref{ss:choice-g}, the first statement of this theorem 
also holds for certain finite  extensions 
$\wt{G}\to G\subset \Gamma_{g,n}$ of finite index subgroups of $\Gamma_{g,n}$ 
acting on $\bT_{g,n}$ via the homomorphism  $\gamma: \wt{G}\to \Gamma_{g,n}$.
This leads to our second main result---a discovery of a connection between 
the augmented  \TS\  $\bT_{g,n}$ and the moduli space\footnote{In fact, this 
is a Deligne-Mumford stack over $\mathbb{Z}[\frac{1}{d!}]$.
} 
 $\Adm_{g,n,d}$ of  admissible coverings $\pi:\wt{X}\to X$ of degree $d$, 
where $X$ is a stable complex curve of genus $g$ with $n$ punctures 
(see e.g.~\cite[Sect.~4]{ACV}).

Let $S$ be a compact oriented  surface of genus $g$ with $n$
punctures and let $\rho:\wt{S}\to S$ be a finite covering unramified
outside the punctures. 
For any stable complex curve $X$  of genus $g$ with $n$ punctures 
and a marking $\phi:S\to X$ the map $\rho$ induces an admissible 
covering $\wt{X}\to X$.  Thus, on the level of points, $\rho$ gives a map 
\begin{equation}
  \label{eq:rho}
v_\rho:\bT_{g,n} \to \cAdm_{g,n,d}~.
\end{equation}
In Section~\ref{sec:teich-vs-adm} we  show that this map can be elevated to 
a continuous map from  $\bT_{g,n}$ to 
the complex orbifold $\Adm_{g,n,d}$.
To do this we first construct a morphism of complex orbifolds
$[\wt{G}\bs\bT_{g,n}]\to\Adm_{g,n,d}$, where $\wt{G}$ is a finite extension
of a finite index subgroup of $\Gamma_{g,n}$. Then we compose
this morphism with the canonical map $\bT_{g,n}\to [\wt{G}\bs\bT_{g,n}]$.
(Note that, since  $ [\wt{G}\bs\bT_{g,n}]$ is not a quotient orbifold, the existence of this
map is  non-obvious. It is constructed in Section~\ref{sec:projection}
using parts (ii) and (iii) of the above theorem.) 

An important application of this result is a proof given in
Section~\ref{sec:cohom}  of associativity of stringy orbifold cohomology (see
below in~\ref{sec:motivation}). 

\

Since the projection $\left[G\bs\bT_{g,n}\right] \to \bsM_{g,n}$ 
is a finite morphism, the complex orbifold $\left[G\bs\bT_{g,n}\right]$
is projective. It is equipped with a tautological 
family of stable curves $\pi: \cX \to [G\bs \bT_{g,n}]$.
    Points of $\cX$ can be viewed as stable $G$-marked 
curves, where by a $G$-marking we understand 
a $G$-orbit in the set of all markings on a curve.
The orbifold $\left[G\bs\bT_{g,n}\right]$ allows to introduce, \emph{a
  posteriori},   a notion of a family of complex $G$-marked stable
curves.  By definition, this is a family 
of curves induced from the tautological family 
$\pi: \cX \to \left[G\bs \bT_{g,n}\right]$ via a map 
$S\to [G\bs \bT_{g,n}]$. For families of smooth curves, this
notion coincides with the one 
given  by Grothendieck in~\cite{groth}.
It would be nice to have an \emph{a priori} notion of such a family 
(defined over $\Z$ or at least over $\Q$) which would identify
$\left[G\bs\bT_{g,n}\right]$ with the (analytification of the)
corresponding moduli stack.  

Stable $G$-marked curves can be thought of as curves with generalized
level structures.  Indeed, for certain choices of the group $G$ this notion 
gives Prym level structures considered by Looijenga in~\cite{Loo}. 
Let $\wt{S}\to S$ be the universal Prym cover of  a compact oriented
 surface $S$ of genus $g$, i.e.\ a Galois covering
whose Galois group $H$ is the quotient of $\pi_1(S)$ 
by the normal subgroup generated by the squares of all elements of
$\pi_1(S)$.\footnote{Of course $H$ is isomorphic to $H_1(S,\Z/2)$} 
Let $G=\Gamma_{g,\chs{k}}$ be the subgroup of elements of
$\Gamma_{g,0}$  whose 
   lifts to $\wt{S}$ act on $H_1(\wt{S},\Z/k)$ as elements of $H$. 
The quotient $\cM_{g,{\chs{k}}}=G\bs\cT_g$ is the moduli space of smooth 
curves of genus $g$ with a level-$k$  Prym structure.
In~\cite{Loo} Looijenga studied the normalization  
$\bM_{g,\chs{k}}$ of the moduli space $\bM_g$ in the field of 
meromorphic functions on 
$\cM_{g,\chs{k}}$.
The main result of~\cite{Loo} 
is that, when $k$ is even and $k\geq 6$, the space 
$\bM_{g,\chs{k}}$ is smooth.

We use Looijenga's
result in our proof of part (iii) of the main theorem.
To do this we need to refine it in two ways. 
First, we generalize Looijenga's theorem to Riemann surfaces with punctures 
and the corresponding subgroup 
$G=\Gamma_{g,n,\chs{k}}$ of the modular group $\Gamma_{g,n}$.
Second, we show that when $k$ is even and $k \geq 6$
the orbifold $[G \bs \bT_{g,n}] $ is a complex manifold  
and, therefore, coincides with the Looijenga's space 
$\bM_{g,n,\chs{k}}$.
This provides a modular description of the  space $\bM_{g,n,\chs{k}}$.
   We will use this fact in Section~\ref{sec:projection}  to construct
canonical maps  $\pi_G:\bT_{g,n}\to\relax[G\bs\bT_{g,n}].$

\

The most natural way to 
introduce an orbifold structure on a topological space   
is to describe it as the moduli space of some geometric objects.
In the lack of a modular description,\footnote{V.~Braungardt in his thesis~\cite{brau} 
(see also~\cite{hs}) introduced a concept of a locally complex ringed space and  
proved that $\bT$ can be equipped with such structure which
is universal in a certain sense. However, the quotient stacks
$G\bs \bT$ produced this way
are \emph{non-separated} and therefore are 
very different from our complex orbifolds $[G\bs \bT]$.
We thank the referee for bringing these references to our attention.
}
we had to look for alternative ways.
We construct an orbifold structure on $G\bs \bT_{g,n}$ using 
our formalism of orbifold charts developed in Section~\ref{sec:satake}.

The traditional  approach of Satake~\cite{satake} works only for
effective orbifolds and is insufficient for our purposes.
We generalize Satake's description of orbifolds in terms of charts and
atlases to include non-effective orbifolds. 
We show that the resulting notion is equivalent by its
expressive power to the ``modern'' approaches to orbifolds 
based on the language of stacks and \'etale groupoids. 

In particular, our construction in Section~\ref{orbifold-from-atlas}
which associates a stack to an orbifold atlas 
is analogous to the well-known realization (due to Satake)
of an effective $C^\infty$-orbifold as a quotient of a manifold by
a compact Lie group.  
Our result, however, holds also for non-effective and complex
orbifolds. Even in the effective $C^\infty$-case our construction is
functorial and does not use partitions of unity. It is defined by a
universal property which is very convenient when dealing with
morphisms from an orbifold. 

\

Our proof of the main theorem uses yet another technical result
which may be of independent interest. This is Theorem~\ref{thm:gaga}
on analytification of some algebraic moduli stacks. Namely, we prove
that the analytifications of the stacks $\bsM_{g,n}$ and $\Adm_{g,n,d}$
represent the corresponding moduli functors in the complex analytic
category.

\subsection{The case $g=1,\ n=1$ }
We will illustrate the main theorem 
on the simplest interesting case 
of the one-punctured torus, i.e.\ when $g=1$ and $n=1$.

The moduli space $\cM_{1,1}$ of one-dimensional complex tori with one
mar\-ked point can be viewed as the space of lattices in $\R^2$ up to
similarity. 
Marking of a torus corresponds to a choice of a basis of the lattice.
Therefore the \TS\ $\cT_{1,1}$  is the set of similarity classes of
pairs of non-collinear vectors in $\R^2$  which can be identified
with the upper half-plane. 
The boundary of the compactified moduli space
$\bM_{1,1}$ consists of the single point corresponding to the 
degenerate elliptic curve $C$ (a pinched torus). 
Therefore, markings of $C$ correspond to isotopy classes 
of simple closed paths on the standard torus $S$.
So the boundary of $\bT_{1,1}$ can be identified with  
$\P^1(\Q)=Q\cup\{\infty\}$ (viewed as the set of pairs of relatively 
prime integers $(p,q)$ up to a common factor $\pm 1$). 
The base of the topology of $\bT_{1,1}$ near a boundary point 
is given by the collection of open disks tangent to the real line at
this point (plus the point itself). 

The \teich\ modular group $\Gamma_{1,1}$ is isomorphic to $\mathrm{SL}_2(\Z)$ 
and the quotient $\Gamma_{1,1} \bs \bT_{1,1}$  is the orbifold
$\bM_{1,1}$ whose underlying space is the Riemann sphere $\P^1(\C)$.
The quotient of $\bT_{1,1}$ by a finite index subgroup of $\mathrm{SL}_2(\Z)$ 
is a finite ramified covering of $\P^1(\C)$ and, therefore, has a 
canonical structure of a compact Riemann surface.
We will  show that analogous results hold  for arbitrary $g$ and $n$.

\subsection{Detailed description of the paper}
\label{sec:guide}

A significant part of the paper deals with general questions 
of orbifold theory. 

In algebraic geometry, the language of stacks is
the most adequate for moduli problems. 
We find it useful for studying orbifolds in other settings as well.

In Section~\ref{sec:orbi}  we present two different approaches to 
defining a 2-category of orbifolds: one based on stacks, the other on groupoids.  
Whereas the 2-category structure on stacks is standard,
the 2-category structure on groupoids is not the one that first comes
to mind.

We define an orbifold as a stack of geometric origin,
which means that it is equivalent to the stack associated to a
separated \'etale groupoid.%
\footnote{We cannot simply mimic the standard definition of an algebraic
stack (see~\cite{LMB}), since the categories of smooth or complex
manifolds do not have arbitrary fiber products which are needed to 
define representable morphisms.}

We prove  that the functor associating a stack to a groupoid
gives an equivalence between 2-categories
of separated \'etale groupoids and orbifolds.

Since $\Sp$ may not have arbitrary fiber products, 
we cannot define representable morphisms for general $\Sp$-stacks
and so it is impossible to define $\Sp$-orbifolds 
simply by modifying the standard definition of 
algebraic stacks (see~\cite{LMB}).  Instead we define an $\Sp$-orbifold 
as an $\Sp$-stack equivalent to the stack associated to an  $\Sp$-groupoid.
In the case when $\Sp$ is the category of schemes this approach gives
separated Deligne-Mumford stacks.

Our treatment of these questions is similar but not identical to 
the works of Metzler~\cite{metzler}, Noohi~\cite{noohi} and
Behrend-Xu~\cite{berxu}. 
The most significant difference between these papers and our approach 
is that we work in the category of smooth manifolds 
where general fiber products do not exist.
For this reason, we develop the theory so that only
fiber products along \'etale  morphisms are used.

In this section we also introduce
gerbes over orbifolds which 
will be used later in Section~\ref{sec:teich-vs-adm}. 

In Section~\ref{sec:satake} we present a more 
traditional approach to orbifolds. It is based on the notion of (generalized)
orbifold charts.  For effective orbifolds this approach 
goes back to the original definition of Satake~\cite{satake}.  
We generalize Satake atlases to include non-effective orbifolds. 
The main result here is a construction of an orbifold
from an atlas of generalized orbifold charts.

This gives us a flexibility to  use any of the three 
languages (of orbifold charts, groupoids or stacks)
depending on the circumstances.
In particular, the orbifold structure on the quotient $G\bs\bT_{g,n}$ 
will be given using a generalized orbifold atlas.

\

Throughout the paper we work with the  moduli stacks of stable complex
curves and admissible coverings in the complex-analytic category. 
Therefore we need to know that the analytification of the algebraic
Deligne-Mumford stacks
$\bsM$ and  $\Adm$ represent the corresponding functors (of families
of nodal Riemann surfaces and of families of admissible coverings) in
the analytic category. This is proved in Section~\ref{gaga}.

\

To construct an orbifold atlas for the quotient $G\bs\bT_{g,n}$, we  
start with an orbifold atlas for 
     $$\bM=\bM_{g,n}=\Gamma_{g,n}\bs\bT_{g,n}$$
and then construct corresponding charts upstairs on 
$G\bs\bT_{g,n}$. 
The existence of an orbifold atlas for $\bM$ follows from the
smoothness of the moduli stack $\bsM$,
but to be able to lift the charts to  $G\bs\bT_{g,n}$ 
we need an atlas on $\bsM$  whose charts satisfy some very special
properties. We call such charts \emph{quasiconformal}  
and prove that there exists a quasiconformal atlas $\bsM$ in 
Section~\ref{sec:quasiconf}. 

To construct such an atlas we use a version of the
Earle-Marden~\cite{marden} local holomorphic coordinates on the \TS\  
$\cT_{g,n}$.  
Let us recall  the definition  of these coordinates. 
Start with a maximally degenerate
stable Riemann surface $X_0$ in $\bM_{g,n}$.
This surface  is a union of $2g+n-2$ triply punctured spheres glued
together along  $3g+n-3$ pairs of punctures. 
For each of $3g+n-3$ nodes $q_i\in X_0$
choose a pair of ``coordinate'' functions $z_i,w_i$ 
that identify a neighborhood of 
$q_i$ with a neighborhood of the node of the curve
$V_i=\{(z_i, w_i)| z_i w_i =0\}\in \C^2$.
By replacing 
$V_i$ with $$V_{i,t_i}=\{(z_i, w_i) \in \C^2 | z_i w_i =t_i \}$$
we obtain a $3g+n-3$-parameter holomorphic family $X_t$ of nodal Riemann
surfaces. This gives a holomorphic map $\phi$
from a unit polydisk in $\C^{3g+3-n}$ to $\bsM$.
At every point where $\phi$ is \'etale, it defines an
orbifold chart of  $\bsM$. 
However $\phi$ may not be \'etale  everywhere
(see~\cite{cex} for a counterexample). 
To circumvent this problem and guarantee  existence of \'etale
charts at every point of $\bM$ we make a very special choice of 
local coordinates around punctures of $X_0$
(see Section~\ref{choice-of-lc}).

The existence of a quasiconformal atlas on 
$\bsM$ reflects two approaches to constructing
this space: one based on \TS s and 
another based on the theory of Deligne-Mumford stacks. 
This indicates that the appearance of quasiconformal charts here may
be not coincidental. 

\
In Section~\ref{sec:teich} we prove our main result---that the quotient of
the augmented \TS\ $\bT_{g,n}$ by a finite index subgroup $G$
of the modular group has a natural structure of a complex orbifold.
We do this by constructing an orbifold atlas on $G\bs \bT$
using the existence of a quasiconformal atlas on $\bsM$ proved in
Section~\ref{sec:quasiconf}.

In Section~\ref{sec:properties} we establish some properties
of the orbifold structure on $G\bs\bT$.
A special example of $G$-marked curves give
curves with level-$\ell$ structures. These curves
correspond to the subgroup 
$$
G=\Gamma^{(\ell)}=\Ker(\Gamma\rTo\Aut(H_1(S,\Z/\ell))).
$$

Since the orbifold structure on $[G\bs\bT]$ is given by
an ad hoc  construction and not by a universal property,
the existence of the quotient map $$\pi_G:\bT\to\relax[G\bs\bT]$$ 
is not guaranteed and requires special attention.
This is done in Section~\ref{sec:projection}.

First, for each finite index subgroup $G'\subset G$
we define a natural map of orbifolds
$\relax[G'\bs\bT]\to\relax[G\bs\bT]$.
Then, using a generalization of Looijenga's
analysis~\cite{Loo} we prove
that for any finite index subgroup $G$ of $\Gamma$ 
there exists a finite index subgroup $G'\subset G$ such that $[G'\bs\bT]$
is a complex manifold. 
Then, the quotient map $\pi_G$ can be defined as the composition
$$ \bT\rTo^{\pi_{G'}}\relax[G'\bs\bT]\rTo\relax[G\bs\bT].$$

In Section~\ref{ss:gluing-ATS} we show that the natural gluing
operations 
$$\bT_{g,n}\times \bT_{g',n'} \rTo \bT_{g+g',n+n'}
\quad \mathrm{and} \quad \bT_{g,n+2}\rTo \bT_{g+1,n}$$ 
descend to the maps of complex orbifolds
when we pass to quotients by finite index subgroups.

\

As we explained above,  given a covering $\rho:\wt{S}\to S$ of degree
$d$ (where $S$ is, as before, a compact oriented
surface of genus $g$ with $n$ punctures), 
one can naturally assign to each marked stable curve $(X,\phi)\in\bT_{g,n}$
an admissible covering $\wt{X}\to X$ of  degree $d$.

In Section~\ref{sec:teich-vs-adm} we  elevate the map~(\ref{eq:rho}) to
a morphism 
      $$v_\rho: \bT_{g,n}\to \Adm_{g,n,d}$$
of topological stacks (where $\bT_{g,n}$ has trivial stabilizers)
by composing a complex orbifold morphism
$[G\bs\bT]\to\Adm$ with the quotient map $\bT\to [G\bs\bT]$. 
Here $G$ is a symmetry group of the finite covering $\rho:\wt{S}\to S$. 
It is not a subgroup of the modular group $\Gamma$, it acts on 
$\bT$ via a natural homomorphism $\gamma:G\to\Gamma$ whose kernel and
the index of the image in $\Gamma$ are finite. 
Thus we need to deal with quotients of $\bT$ which are slightly more
general that the quotients modulo a finite index subgroup of
$\Gamma$. The quotient $[G\bs\bT]$ is a gerbe over 
$[\mathrm{Im}(\gamma)\bs\bT]$.

We also prove compatibility of the maps $v_\rho$ with the gluing operations
constructed in Section~\ref{ss:gluing-ATS}.

Even though the maps $v_\rho$ 
are morphisms of topological stacks,
we can view them as holomorphic maps by 
replacing $\bT$ with the projective system of complex orbifolds
$[G\bs\bT]$.

Finally, in Section~\ref{sec:cohom} we show how our results
about the spaces $G\bs\bT$ can be used in the study of 
stringy orbifold cohomology. This was the original motivation
for the work presented in this paper and we explain it in
a greater detail below.

\subsection{Motivation and application: orbifold cup-product}
\label{sec:motivation}
This paper is an offshoot of our project to study generalized multiplicative
orbifold cohomology theories~\cite{HV}.
The Chen-Ruan definition of the cup-product in (stringy) orbifold
cohomology (see~\cite{CR} and~\cite{FG}) uses cohomo\-logi\-cal
correction classes whose construction involves certain equivariant vector
bundles on the spaces of admissible coverings $\Adm_{g,n,d}$.
The space $\Adm_{g,n,d}$ has an open stratum corresponding to non-singular
curves and its boundary consists of products of spaces
 $\Adm_{g',n',d}$ for $g'\le g$ and $n'\le n$.
The associativity and commutativity of the orbifold cup-product are derived
in~\cite{CR} and~\cite{FG} from the fact that the fibers of these bundles on
$\Adm_{g,n,d}$ at certain boundary points are isomorphic. 
This would be immediate if the spaces $\Adm$ were connected.
However, this is far from being true.
For example, components of the open stratum of $\Adm_{g,n,d}$ 
correspond to conjugacy classes of actions of
the fundamental group of the curve on a $d$-element set.
An attempt to resolve this difficulty brought us to considering augmented 
\TS s.\footnote{The issue of non-connectivity of the spaces of admissible
coverings is also addressed in~\cite{JKK}, see Lemma 2.30. Unfortunately,
we were unable to understand the proof of this lemma.}

Let $S$ be a compact oriented
surface of genus $g$ with $n$ punctures  and  let  $\rho:\wt{S}\to S$
be a finite covering unramified outside of the punctures. 
For a stable Riemann surface $X$ of genus $g$ with $n$ punctures and a
marking  $\phi:S\to X$ we obtain an admissible covering of $X$ induced
from $\rho$. 
This leads to the map~(\ref{eq:rho}) of topological orbifolds.
Since the augmented \TS\ $\bT_{g,n}$ is contractible,
its image in $\Adm_{g,n,d}$ is connected. 
The boundary of $\bT_{g,n}$ consists of strata which are products of
$\bT_{g',n'}$ for smaller values of $g'$ and $n'$ and the maps $v_\rho$ respect
the decompositions of boundary strata of $\bT$ and $\Adm$. 
Therefore this construction allows to move verification of associativity and
commutativity of the orbifold cup-product away from a highly disconnected
space $\Adm_{g,n,d}$ to the contractible space $\bT_{g,n}$.

This is the idea of our approach to the stringy cup-product problem. 
To implement it, we have to be able to speak about continuous maps from the
space $\bT_{g,n}$ to $\Adm_{g,n,d}$. 
However, this task is non-trivial, since the former is a nasty 
topological space, whereas the latter is the space of $\C$-points of
a nice Deligne-Mumford stack. The common ground is found in 
the 2-category of complex orbifolds and is developed in this paper.

Applications of this construction to generalized stringy orbifold cohomology  
theories will be described in our forthcoming paper~\cite{HV}.

\subsection{Acknowledgments}
Parts of this paper were written during our stay at
IHES and MPIM. We are grateful to these institutions for hospitality
and for the excellent working environment.
We are grateful to O.~Gabber for pointing out the 
book~\cite{hakim}. We are grateful to Matteo Tommasini for having  found a 
gap in the original proof of Theorem~\ref{thm:realization-is-orbi}.

\section{Generalities on orbifolds}
\label{sec:orbi}

In this section we present our preferred way of working with orbifolds.
The language of algebraic stacks has long been the tool of choice for dealing
with  orbifolds in the context of algebraic geometry.  
We find it the most appropriate  in other categories as well.

In order to obtain different species of the  notion of an orbifold 
($C^\infty$, complex, algebraic), we have to choose an appropriate basic
category  $\Sp$ of manifolds or spaces 
and work with stacks over $\Sp$ (see Section~\ref{ss:list}).
The resulting notion of a $\Sp$-stack  is too general to be geometrically
meaningful in the same way as the corresponding notion  
of stack in algebraic geometry is too general. 
In order to  distinguish  geometrically meaningful stacks, 
we restrict our attention to  \'etale groupoids which have 
already been used for the description of orbifolds. 

Since $\Sp$ may not have arbitrary fiber products, 
we cannot define representable morphisms for general $\Sp$-stacks
and so it is impossible to define $\Sp$-orbifolds 
simply by modifying the standard definition of 
algebraic stacks (see~\cite{LMB}).  Instead we define an $\Sp$-orbifold 
as an $\Sp$-stack equivalent to the stack associated to an  $\Sp$-groupoid.
In the case when $\Sp$ is the category of schemes this approach gives
separated Deligne-Mumford stacks. 

Both stacks and \'etale groupoids form a 2-category.
Whereas the 2-category structure on stacks is standard, the 2-category 
structure on groupoids is not the one that first comes to mind. 
We define an orbifold as a stack of geometric origin,
which means that it is equivalent to the stack associated to a
separated \'etale groupoid.
We prove further that the functor associating a stack to a groupoid
gives an equivalence between 2-categories
of separated \'etale groupoids and orbifolds.

Our treatment is similar but not equivalent
to the recent expositions of Metzler~\cite{metzler}, Noohi~\cite{noohi} and
Behrend-Xu~\cite{berxu}. 
The most significant difference between these works and our approach 
is that our basic category of spaces is the category of 
smooth manifolds in which  general fiber products do not exist.
For this reason, we develop the theory which uses only
fiber products along \'etale  morphisms.

Having in mind the above-mentioned 2-equivalence, our approach to orbifolds 
via stacks is not so different from the 
widely  accepted approach based on groupoids. 
We prefer, however, the approach via stacks for various reasons.

In Section~\ref{sec:teich-vs-adm} we use a related
notion of a gerbe which is slightly non-standard. 
It is presented, along with other miscellanea, in~\ref{ss:misc}.  
Sheaves and vector bundles on orbifolds are defined in~\ref{sec:sheaves}.

\subsection{Basic categories of  ``spaces''}
\label{ss:list}

In what follows $\Sp$ will denote one of the following categories  of ``spaces.''
\begin{itemize}
\item[(i)]{The category of  
Hausdorff topological spaces.}
\item[(ii)]{The category of 
separated locally ringed topological spaces.}
\item[(iii)]{The category of $C^\infty$-manifolds.}
\item[(iv)]{The category of complex manifolds.}
\item[(v)]{The category of separated complex spaces.}
\item[(vi)]{The category of smooth separated schemes over a field.}
\item[(vii)]{The category of separated schemes over a base scheme.}
\end{itemize}

In each of these categories there exists  a notion of an \'etale morphism.
It is a local isomorphism for categories (i)-(v) 
and an \'etale morphism of schemes for cases (vi)-(vii).

We consider the category $\Sp$ endowed with the topology
defined by open covers  for $\Sp$  of type (i)-(v) 
and by \'etale morphisms for cases (vi)-(vii). 
We could equally consider the \'etale topology in all cases.

Note that  in all our categories of spaces there exist 
fiber products $X\times_ZY$ when one of the structure maps 
$X\to Z,\ Y\to Z$ is \'etale. 
Also,  the notion of a proper map makes sense for all these categories.

\subsection{Groupoids \emph{in} categories of spaces}

\subsubsection{Groupoids} 
\label{nota}
    A \emph{groupoid} (in category $\Ens$) is a small category whose
morphisms are invertible. Thus, a groupoid $G_\bullet=(G_0,G_1)$ 
can be specified by giving   a set $G_0$ of its objects, a set $G_1$
of its arrows, and operations:  
$$\iota:G_0\to G_1 \text{\ (identity)}, \ s,t:G_1\to G_0 \text{\ (source and 
target)}$$
 and the composition 
$$c:G_1\times_{G_0}G_1\to G_1$$
satisfying well-known axioms. 

Groupoids form a 2-category which we denote $\Grp$.  One-morphisms 
in $\Grp$ are functors between groupoids; 
2-morphisms are natural transformations between the corresponding functors. 
The 2-category $\Grp$ is strict: the composition
of functors is strictly associative.

It is sometimes convenient to  view 
groupoids as  (very special) simplicial sets. 
To  a groupoid $G_\bullet=(G_0,G_1)$ 
we assign  a simplicial set 
$(G_0,G_1,G_2,\ldots)$ whose $n$-simplices are
$n$-tuples of composable arrows in $G_1$. 
In this description
the source and the target maps $s,t:G_1\to G_0$ become the face maps $d_1$ 
and $d_0$; the composition of arrows $c$ becomes the face map 
$$d_1:G_2:=G_1\times_{G_0}G_1\to G_1.$$

\begin{dfn}
Let $\Sp$ be one of the categories of spaces from Section~\ref{ss:list}.

A \emph{groupoid  in}\footnote{In~\cite{LMB} the term {\sl
    \'espace  en groupo\"ides}  is used instead.}   
  $\Sp$  is 
a pair $G_\bullet=(G_1,G_2)$ of objects of $\Sp$, together
 with the structure maps $\iota,s,t,c$ as in~\ref{nota}, such that for
any $M\in\Sp$ the functor $\Hom(M,\underline{\ \ }\,)$ 
sends  this collection to a groupoid.
\end{dfn}
     (We assume above that the fiber product $G_2=G_1\times_{G_0}G_1$
exists in $\Sp$.)

\begin{dfn} 
Let $G_\bullet$ be a groupoid in $\Sp$.
\begin{itemize}
\item[(i)] 
$G_\bullet$ is called {\em separated} if the diagonal $(s,t):G_1\to
G_0\times G_0$  is proper. 
\item[(ii)] 
$G_\bullet$ is called  {\em \'etale} if the maps $s,t:G_1\to G_0$ are \'etale. 
\item[(iii)] 
$G_\bullet$ is called a $\Sp$-groupoid if it is \'etale and separated.
\end{itemize}
\end{dfn}

\begin{rem}
Separated groupoids are sometimes called {\sl proper groupoids}.
We follow Grothendieck's  terminology
(separatedness $=$ properness of the diagonal). 
\end{rem}

\begin{exm}
\label{transformation-g}
Let $G$ be a (discrete) group acting on $X\in\Sp$. 
The {\em transformation groupoid} $(G\bs X)_\bullet$ is defined by 
$$
(G\bs X)_0=X, \quad  (G\bs X)_1=G\times X,
$$
where the source map $s$ is the projection $G\times X \to G$
and the target $t$ is the action map $t: G\times X\to X$. 
If $G$ is finite then $(G\bs X)_\bullet$ is an $\Sp$-groupoid. 
If $\Sp$ is one of the non-algebraic categories~\ref{ss:list}.(i)-(v), then
$(G\bs X)_\bullet$ is an $\Sp$-groupoid also when
the action of $G$ on $X$ is discontinuous.
\end{exm}

\subsection{Groupoids \emph{over} categories of spaces. Stacks }
\label{ss:over}

By definition, a groupoid $G_\bullet$ in $\Sp$ represents a functor
from $\Sp$ to $\Grp$
$$
M\mapsto (\Hom(M,G_0),\Hom(M,G_1)).
$$
Since groupoids form a 2-category and not 
just a category, the notion of a functor to $\Grp$ is too rigid: 
the most natural constructions produce only 
pseudofunctors (see~\ref{ss:pseudofun})
to $\Grp$. This is why we need a relaxed  version of the notion 
of an $\Sp$-groupoid. 

The definitions~\ref{grpover} and~\ref{stack} below are special cases of 
Grothendieck's notions of a fibered category and of a stack,
see~\cite[Ch.~4]{sga1}. 
 The case when $\Sp$ is the category of schemes is described in~\cite{LMB}.

\begin{dfn}(see~\cite[Sec.~2]{LMB})
\label{grpover}
{\em A groupoid over}\footnote{%
{\sl Cat\'egorie  fibr\'ee en   groupo\"ides sur $\Sp$}
in the original terminology of \cite{LMB}.}   
 $\Sp$ is a category $\cX$ endowed with a functor $\pi:\cX\to\Sp$ such that 
\begin{itemize}
\item[(i)]For any $\alpha:U\to V$ in $\Sp$ and $x\in\cX$ with $\pi(x)=V$
there exists $a:y\to x$ such that $\pi(a)=\alpha$.

\item[(ii)] For any pair of morphisms $a:y\to x,\ b:z\to x$ in $\cX$
any map $\gamma:\pi(y)\to \pi(z)$ satisfying $\pi(a)=\pi(b)\gamma$
there exists a unique $c:y\to z$ such that $a=bc$ and $\pi(c)=\gamma$.
\end{itemize}
\end{dfn}
Groupoids over $\Sp$ form a 2-category denoted $\Grp/\Sp$.
A 1-morphism from $\pi:\cX\to\Sp$ to $\pi':\cX'\to\Sp$ is 
a functor $f:\cX\to\cX'$ strictly commuting with $\pi,\pi'$. 
A 2-morphism $\theta:f\to g$  between 
$f,g:\cX\to\cX'$ is  a natural transformation  
that sends $x\in\cX$ to a morphism $\theta(x):f(x)\to g(x)$ 
over $\id_{\pi(x)}$ in $\cX'$.

\subsubsection{Pseudofunctors. Cleavage}
\label{ss:pseudofun}
Let $\pi:\cX\to\Sp$ be a groupoid over $\Sp$. The fibers
$$\cX_U:=\pi^{-1}(U),\quad  \mathrm{for} \quad U\in\Sp,$$ 
are groupoids. For each
$\alpha:U\to V$ and for each $x\in \cX_V$ choose a lifting $a:\alpha^*(x)\to x$
of $\alpha$. This choice can be uniquely extended to a functor
$\alpha^*:\cX_V\to\cX_U$. 
Also, for each pair of composable arrows in $\Sp$,
one has a uniquely defined isomorphism
$\theta_{\alpha,\beta}:\alpha^*\beta^*\to(\beta\alpha)^*$ 
These isomorphisms $\theta$ satisfy a standard
compatibility  condition shown on diagram~(\ref{compatibility-of-thetas}).

The   above collection  $(\cX_U,\alpha^*,\theta_{\alpha,\beta})$ 
defines a {\em pseudofunctor}
$\Sp^\op\to\Grp$; it would be a genuine functor if 
$\theta_{\alpha,\beta}$ were the identity for all $\alpha$ and $\beta$.

Vice versa, given a collection of groupoids $\cX_U$ 
for   each $U\in\Sp$,  together with functors 
$$ \alpha^*:\cX_V\rTo\cX_U$$ 
for each   morphism $\alpha:U\to V$ and equivalences  
\begin{equation}
\label{thetas}
\theta_{\alpha,\beta}:\alpha^*\beta^*\to(\beta\alpha)^*
\end{equation}
for each pair of composable arrows $\alpha,\beta$ of $\Sp$,
such that the diagram
\begin{equation}
\label{compatibility-of-thetas}
\begin{diagram}
\alpha^*\beta^*\gamma^* & \rTo^{\theta_{\alpha,\beta}\cdot\gamma^*} & 
(\beta\alpha)^*\gamma^* \\
\dTo^{\alpha^*\cdot\theta_{\beta,\gamma}} & & 
\dTo^{\theta_{\beta\alpha,\gamma}} \\
\alpha^*(\gamma\beta)^* & \rTo^{\theta_{\alpha,\gamma\beta}} & 
(\gamma\beta\alpha)^*
\end{diagram}
\end{equation}
is commutative for each triple of composable arrows,
one can ``glue'' a groupoid $\pi:\cX\to\Sp$ by the formulas
\begin{align*}
\Ob\ \cX=\coprod\Ob\ \cX_U;\quad \pi(x)=U\Leftrightarrow x\in\cX_U;
\\
\Hom_{\cX}(x,y)=\coprod_{\alpha:U\to V}\Hom_{\cX_U}(x,\alpha^*(y)).
\end{align*}

\begin{dfn}
A choice of  functors $a^*:\cX_V\to\cX_U$ for each $a:U\to V$ in $\Sp$
and of compatible equivalences~(\ref{thetas}) is called {\em a cleavage}
of a groupoid $\pi:\cX\to\Sp$ (in SGA1: un clivage). 
Thus every groupoid over $\Sp$ admits a cleavage, and cleaved
groupoids over  $\Sp$ are the same as pseudofunctors
$\Sp^\op\to\Grp$. 
\end{dfn}
Any groupoid $G_\bullet$ in $\Sp$ represents a functor
$\Sp\to\Grp$. This, together with the trivial cleavage
$\theta_{\alpha,\beta}=\id$,  defines a groupoid
over $\Sp$. Thus, the notion of groupoid {\em over} $\Sp$ generalizes that 
of groupoid {\em in} $\Sp$.

\

Groupoids over $\Sp$ play the role of ``presheaves of groupoids'' 
on $\Sp$. 
Stacks can be viewed as ``sheaf of groupoids''.

\begin{dfn}
\label{stack}(see~\cite{LMB}, Sect.~2--3)
A \emph{stack} (of groupoids) $\cX$ over $\Sp$ is a groupoid over $\Sp$
satisfying the following two conditions.
\begin{itemize}
\item[(i)]
For any two objects $x,y\in\cX_U$ the assignment
$$ \alpha:V\rTo U\mapsto \Hom(\alpha^*(x),\alpha^*(y))$$
is a sheaf on $\Sp/U:=\{V\to U|V\in\Sp\}$.
\item[(ii)]
For any covering $\alpha_i:V_i\to U$ in $\Sp$ 
the groupoid $\cX_U$ is equivalent to the groupoid $\cX(\{V_i\})$
of ``local data'' whose objects are collections 
$$\left(x_i\in\cX_{V_i},
\theta_{ij}:x_i|_{V_{ij}}\rTo x_j|_{V_{ij}}\right),
\mathrm{\ where\ } V_{ij}:=V_i\times_UV_j,
$$
with compatible $\theta_{ij}$ and whose morphisms are isomorphisms of these
collections. 
\end{itemize}
\end{dfn}

Stacks over $\Sp$ form a 2-category $\MSt$ which is a strictly
full 2-subcategory of $\Grp/\Sp$.

Let $M\in\Sp$. The functor on $\Sp$ represented by $M$ is a stack. It is
called the stack represented by $M$. 
An 1-morphism $M\to\cX$ is given by an object of $\cX_M$.

\subsubsection{Associated stack} 
For every $\cX\in\Grp/\Sp$ we can associate an $\Sp$-stack $[\cX]$ 
which is constructed   in two steps.   First
one sheafifies all $\Hom$-sets and then 
``glues'' new objects from the local data as in
Definition~\ref{stack} (see the details in~\cite[Lemme 3.2]{LMB}).

The stack associated to an $\Sp$-groupoid $M_\bullet$
will be denoted $[M_\bullet]$. 
If $M_\bullet=(G\bs X)_\bullet$, we will write $[M_\bullet]=[G\bs X]$ 
rather than  $[(G\bs X)_\bullet]$.

The following lemma gives an explicit description of 
the groupoid $[M_{\bullet}]_U$.
\begin{lem}
\label{lem:explicit-asso}
Let $U\in\Sp$.
\begin{itemize} \item[(i)]
Objects of $[M_{\bullet}]_U$ are morphisms
$V_{\bullet}\to M_{\bullet}$ of groupoids where $\alpha:V\to U$ is
\'etale surjective in $\Sp$ and the groupoid $V_{\bullet}$ in $\Sp$ is 
defined by the formulas
$$
V_n=V\times_U\ldots\times_UV \text{\em \ ($n+1$  factors)}.
$$

\item [(ii)]
 Given two objects, $\alpha:V_{\bullet}\to M_{\bullet}$ and
$\alpha':V'_{\bullet}\to M_{\bullet}$, a morphism from $\alpha$ to $\alpha'$
is a morphism between two functors from $V_{\bullet}\times_UV'_{\bullet}$
to $M_{\bullet}$.
\end{itemize}
\end{lem}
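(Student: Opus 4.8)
The plan is to prove Lemma~\ref{lem:explicit-asso} by unwinding the two-step construction of the associated stack $[M_\bullet]$ recalled just before the statement. Recall that $[M_\bullet]$ is obtained from the represented groupoid-over-$\Sp$ $M\mapsto(\Hom(M,M_0),\Hom(M,M_1))$ first by sheafifying all $\Hom$-presheaves, and then by formally adjoining descent data relative to covers of $\Sp$ (here: \'etale surjective maps $V\to U$, since that is the topology on $\Sp$ in cases (vi)--(vii) and an equivalent, cofinal family of covers in cases (i)--(v)). So an object of $[M_\bullet]_U$ is, by construction, a descent datum: a cover $\alpha\colon V\to U$ together with an object $x\in(M_\bullet)_V^{\mathrm{sh}}$ — an object of the sheafified fiber over $V$ — plus a gluing isomorphism $\theta$ on $V\times_UV$ satisfying the cocycle condition on $V\times_UV\times_UV$. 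The content of part~(i) is to identify this package with a single honest groupoid morphism $V_\bullet\to M_\bullet$, where $V_\bullet$ is the \v{C}ech (nerve) groupoid $V_n=V\times_U\cdots\times_UV$.

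First I would treat the sheafified fibers. Over a single object $V\in\Sp$ the presheaf $\Hom(-,M_0)$ is already a sheaf because $M_0\in\Sp$ and representable functors are sheaves for the chosen topology; likewise $\Hom(-,M_1)$. Hence no sheafification is needed for objects, and an object of $(M_\bullet)_V^{\mathrm{sh}}$ is just a map $V\to M_0$ in $\Sp$; the sheafification only affects the $\Hom$-sets, which is part~(ii). So an object of $[M_\bullet]_U$ is: an \'etale surjection $\alpha\colon V\to U$, a morphism $f_0\colon V\to M_0$, and a gluing isomorphism $\theta\colon \pr_0^*f_0\to\pr_1^*f_0$ over $V\times_UV$ — i.e.\ a map $f_1\colon V\times_UV\to M_1$ with $s f_1=\pr_0^*f_0$, $t f_1=\pr_1^*f_0$ — subject to the cocycle identity $\pr_{02}^*f_1=c(\pr_{01}^*f_1,\pr_{12}^*f_1)$ on $V\times_UV\times_UV$. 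Next I would observe that $V_\bullet=(V_1,V_2)=(V\times_UV,\,V\times_UV\times_UV)$ with $V_0=V$ is genuinely a groupoid in $\Sp$ (the fiber products exist because $\alpha$ is \'etale, as noted in Section~\ref{ss:list}), with source/target the two projections, identity the diagonal, composition the outer projection $V_2\to V_1$, and inversion the flip. Then the data $(f_0,f_1)$ together with the compatibility with $s,t$ and the cocycle condition is \emph{exactly} the statement that $(f_0,f_1)$ is a morphism of groupoids $V_\bullet\to M_\bullet$; the identity-preservation $f_1\circ\iota_{V_\bullet}=\iota_{M_\bullet}\circ f_0$ follows automatically from the cocycle condition by restricting along the degeneracy, and invertibility of $\theta$ is automatic since $M_\bullet$ is a groupoid. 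This establishes (i).

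For part~(ii): by the construction of the associated stack, a morphism in $[M_\bullet]_U$ between two descent data $(\alpha\colon V\to U, x)$ and $(\alpha'\colon V'\to U, x')$ is a section over $U$ of the sheafified $\Hom$-presheaf, which by the definition of sheafification is represented by an \'etale cover of $U$ refining both $V$ and $V'$ — and $W:=V\times_UV'$ is such a common refinement, with its own \v{C}ech groupoid $W_\bullet=V_\bullet\times_UV'_\bullet$ mapping to both $V_\bullet$ and $V'_\bullet$. Pulling $x$ and $x'$ back to $W_\bullet$ gives two functors $W_\bullet\to M_\bullet$, and a morphism of descent data is precisely a natural transformation (necessarily an isomorphism) between them that is compatible with the gluing isomorphisms; compatibility with the gluing isomorphisms is exactly naturality with respect to the arrows $W_1\rightrightarrows W_0$, so the whole package is nothing but a 2-morphism of groupoid morphisms $W_\bullet\to M_\bullet$. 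One should check independence of the choice of common refinement — any finer cover gives a canonically isomorphic set of morphisms, which holds because the \v{C}ech groupoids of two cofinal covers are connected by essentially unique morphisms and $M_\bullet$ is a stack-theoretically rigid target; this is the routine bookkeeping.

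I expect the main obstacle to be purely organizational rather than mathematical: carefully matching the ``glue local data'' step of \cite[Lemme 3.2]{LMB} — which is phrased for an arbitrary site — with the concrete \v{C}ech groupoid $V_\bullet$, and in particular checking that it suffices to use covers of the form ``single \'etale surjective $V\to U$'' (rather than arbitrary families $\{V_i\to U\}$) and their associated \v{C}ech nerves, so that the indexing by a groupoid $V_\bullet$ in $\Sp$ really captures \emph{all} objects up to isomorphism. Once one knows that single-map covers are cofinal — which holds in all of the categories \ref{ss:list}(i)--(vii), taking $V=\coprod V_i$ in the topological/analytic cases and using that the topology is generated by \'etale surjections in the algebraic cases — the verification of the groupoid axioms for $V_\bullet$, the translation of descent data into functors, and of morphisms of descent data into natural transformations, are all immediate diagram chases that I would not spell out in full.
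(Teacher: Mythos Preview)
Your proposal is correct and follows exactly the approach the paper indicates: the paper's ``proof'' consists of the single sentence ``This is a direct application of the construction of the associated stack, see the proof of Lemma 3.2 in~\cite{LMB},'' and what you have written is precisely an unwinding of that construction. You have supplied considerably more detail than the paper does (including the cofinality of single-map covers and the identification of descent data with groupoid morphisms), but there is no difference in method.
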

This is a direct application of the construction of the associated stack,
see the proof of Lemma 3.2 in~\cite{LMB}.
\qed

Proposition~\ref{prp:hom-asso} below gives a similar explicit description
of the
groupoid $\Hom([X_\bullet],\relax[Y_\bullet])$, where 
$X_\bullet$ and $Y_\bullet$ are $\Sp$-groupoids.

\begin{dfn}
A map $f:Z_\bullet\to X_\bullet$ is called an {\em acyclic fibration} if
the map $f_0:Z_0\to X_0$ is \'etale surjective and the commutative square
$$
\begin{diagram}
Z_1 & \rTo^{f_1} & X_1 \\
\dTo^{(s,t)} & & \dTo^{(s,t)} \\
Z_0\times Z_0 & \rTo^{f_0\times f_0}& X_0\times X_0
\end{diagram}
$$
is Cartesian.\footnote{Acyclic fibrations are special cases of weak
  equivalences of groupoids,  as defined in~\cite{moepronk}.}
\end{dfn}

Let $f:Z_\bullet\to X_\bullet$ be an acyclic fibration and let 
$g:X'_\bullet\to X_\bullet$ be a morphism. Then the ``naive''
fiber product 
$$Z'_\bullet=Z_\bullet\times^\naive_{X_\bullet}X'_\bullet$$
defined by 
$$ Z'_i=Z_i\times_{X_i}X'_i,\ i=0,1,$$
gives rise to an acyclic fibration $f':Z'_\bullet\to X'_\bullet$.

\begin{prp}
\label{prp:hom-asso}
The groupoid $\Hom([X_\bullet],\left[Y_\bullet\right])$ 
has the following explicit description.
\begin{itemize}
\item[(i)]
The objects of $\Hom([X_\bullet],\left[Y_\bullet\right])$ 
are diagrams of $\Sp$-groupoids 
$$X_\bullet\lTo^s Z_\bullet\rTo^f Y_\bullet$$
where $s$ is an acyclic fibration.
\item[(ii)] A morphism from 
$X_\bullet\lTo^s Z_\bullet\rTo^f Y_\bullet$ to
$X_\bullet\lTo^{s'} Z'_\bullet\rTo^{f'} Y_\bullet$
in $\Hom([X_\bullet],\left[Y_\bullet\right])$ 
is given by a $2$-morphism between the two compositions
$f\circ\pr_1$ and $f'\circ\pr_2$ from
$Z_\bullet\times^\naive_{X_\bullet}Z'_\bullet$ to $Y_\bullet$.
\end{itemize}
\end{prp}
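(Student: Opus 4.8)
The plan is to deduce this description of $\Hom([X_\bullet],[Y_\bullet])$ from the explicit description of the associated stack already provided in Lemma~\ref{lem:explicit-asso}, together with the observation that acyclic fibrations are precisely the groupoid-theoretic incarnation of \'etale surjective covers. First I would recall that an object of the stack $[X_\bullet]$ evaluated on an object $U\in\Sp$ is, by Lemma~\ref{lem:explicit-asso}, a morphism of groupoids $V_\bullet\to X_\bullet$ where $V\to U$ is \'etale surjective and $V_\bullet$ is the \v{C}ech groupoid of the cover; the key point is that the canonical map $V_\bullet\to U$ (viewing $U$ as a groupoid with only identity arrows) is itself an acyclic fibration, and conversely every acyclic fibration $Z_\bullet\to U$ with $U$ a ``discrete'' groupoid is of this form with $Z_0=V$. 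Thus an $S$-point of $[X_\bullet]$, i.e.\ a $1$-morphism $U\to[X_\bullet]$, is the same datum as a span $U\leftarrow Z_\bullet\to X_\bullet$ with the left leg an acyclic fibration. Taking $U=[X_\bullet]$ itself (or rather approximating it), a $1$-morphism $[X_\bullet]\to[Y_\bullet]$ should unwind to a span $X_\bullet\leftarrow Z_\bullet\to Y_\bullet$ with acyclic left leg.

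The cleanest way to organize this is via a calculus of fractions. I would first establish the auxiliary facts stated right before the proposition: that acyclic fibrations are stable under the naive base change $Z_\bullet\times^\naive_{X_\bullet}X'_\bullet$ (this is a direct diagram chase using that $s$ is \'etale, so the relevant fiber products exist, plus the Cartesian square in the definition of acyclic fibration), and that if $f\colon Z_\bullet\to X_\bullet$ is an acyclic fibration then the induced map $[Z_\bullet]\to[X_\bullet]$ of associated stacks is an equivalence. The latter is the crucial lemma: one checks essential surjectivity and full faithfulness on each fiber groupoid $[Z_\bullet]_U\to[X_\bullet]_U$ using the description of Lemma~\ref{lem:explicit-asso}, or more slickly one notes that $[Z_\bullet]$ and $[X_\bullet]$ have the same associated stack because $Z_\bullet\to X_\bullet$ is a ``hypercover'' in the appropriate sense. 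Granting this, a morphism $[X_\bullet]\to[Y_\bullet]$ is represented by choosing a groupoid $Z_\bullet$ with an acyclic fibration $Z_\bullet\to X_\bullet$ (inverting it up to equivalence of associated stacks) together with an honest groupoid morphism $Z_\bullet\to Y_\bullet$; this gives objects as in (i). For (ii), two such spans $(Z_\bullet,s,f)$ and $(Z'_\bullet,s',f')$ are compared over their common refinement $Z_\bullet\times^\naive_{X_\bullet}Z'_\bullet$ — which is again acyclic over $X_\bullet$ by the base-change fact — and a $2$-morphism between the images in $[Y_\bullet]$ is exactly a $2$-morphism (natural transformation) between the two composed functors $f\circ\pr_1,\ f'\circ\pr_2\colon Z_\bullet\times^\naive_{X_\bullet}Z'_\bullet\to Y_\bullet$, using condition~(i) of Definition~\ref{stack} (the sheaf condition on $\Hom$'s) to see that such a $2$-morphism over the refinement descends/glues correctly and that the answer is independent of the choice of refinement.

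Concretely, the steps in order are: (1) prove the naive fiber product of an acyclic fibration along any morphism is an acyclic fibration; (2) prove that an acyclic fibration induces an equivalence on associated stacks; (3) using Lemma~\ref{lem:explicit-asso}, identify a $1$-morphism $[X_\bullet]\to[Y_\bullet]$ with the data of an acyclic fibration $Z_\bullet\to X_\bullet$ and a functor $Z_\bullet\to Y_\bullet$, and check two such data give the same $1$-morphism iff they agree on a common refinement — this yields (i); (4) unwind, in the same language, what a $2$-morphism between two $1$-morphisms is, reducing it to a natural transformation over $Z_\bullet\times^\naive_{X_\bullet}Z'_\bullet$, and check via the sheaf axiom that this is well-defined and compatible with composition — this yields (ii). Finally I would remark that this is really the standard description of the mapping groupoid in a localization of a $2$-category at a class of arrows admitting a calculus of right fractions, specialized to the class of acyclic fibrations, so one could alternatively cite the general machinery (e.g.\ as in Pronk's work on bicategories of fractions) rather than redo it by hand.

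The main obstacle I anticipate is step~(2) — showing an acyclic fibration induces an equivalence of associated stacks — and, relatedly, the well-definedness checks in steps~(3)--(4): one must verify that passing to a common refinement $Z_\bullet\times^\naive_{X_\bullet}Z'_\bullet$ genuinely does not change the $1$-morphism or the set of $2$-morphisms, which requires knowing that the refinement maps $Z_\bullet\times^\naive_{X_\bullet}Z'_\bullet\to Z_\bullet$ are themselves acyclic fibrations and that any two refinements are dominated by a third. All of this hinges on the fact that in $\Sp$ fiber products along \'etale maps exist (so all the naive fiber products in sight are legitimate objects of $\Sp$) and that \'etale surjective maps are stable under base change and composition — exactly the structural properties of $\Sp$ singled out in Section~\ref{ss:list}. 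Once those stability properties are in hand the arguments are formal diagram chases; the care needed is purely in not accidentally using a fiber product that may fail to exist.
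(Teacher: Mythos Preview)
Your proposal is correct in outline, but it takes a genuinely different and longer route than the paper. The paper's proof begins with the one-line reduction you never quite make: by the universal property of the associated stack, $\Hom([X_\bullet],[Y_\bullet])\simeq\Hom(X_\bullet,[Y_\bullet])$, so one only needs to describe maps from the \emph{groupoid} $X_\bullet$ into the \emph{stack} $[Y_\bullet]$. Such a map restricts to $\hat F\colon X_0\to[Y_\bullet]$, which by Lemma~\ref{lem:explicit-asso} is presented by an \'etale surjective $s_0\colon Z_0\to X_0$ together with $f_0\colon Z_0\to Y_0$; setting $Z_1=(Z_0\times Z_0)\times_{X_0\times X_0}X_1$ yields an acyclic fibration $Z_\bullet\to X_\bullet$, and the compatibility with arrows in $X_\bullet$ then forces the extension $f\colon Z_\bullet\to Y_\bullet$. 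Part~(ii) is declared straightforward.

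Your approach instead passes through a calculus of right fractions: you would prove that acyclic fibrations induce equivalences of associated stacks (your step~(2)) and then invoke the Pronk-style machinery. This is valid, and has the virtue of situating the result in a general $2$-categorical localization framework, but the very lemma you flag as the main obstacle --- step~(2) --- is entirely bypassed by the paper's direct argument. The paper never needs to know that $[Z_\bullet]\to[X_\bullet]$ is an equivalence; it only needs the universal property of stackification on the \emph{source} side, which is free. So while your plan would succeed, you are proving strictly more than is required, and the paper's shortcut via $\Hom(X_\bullet,[Y_\bullet])$ is the simplification worth noting.
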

\begin{proof}
By the universality of the associated stack, we need just
to describe $\Hom(X_\bullet,\left[Y_\bullet\right])$.
A map $F:X_\bullet\to\relax[Y_\bullet]$ defines a composition
$\hat{F}:X_0\to\relax[Y_\bullet]$. By Lemma~\ref{lem:explicit-asso}
there exists an \'etale surjective map $s_0:Z_0\to X_0$ and a map $f_0:Z_0\to
Y_0$ so that the pair $(s_0,f_0)$ presents $\hat{F}$.

Consider the space
\begin{equation}
\label{eq:Z1}
Z_1=(Z_0\times Z_0)\times_{X_0\times X_0}X_1.
\end{equation}
This determines an acyclic fibration $s:Z_\bullet\to X_\bullet$.
We claim that $F$ canonically determines
(and is canonically determined
by) a map $f:Z_\bullet\to Y_\bullet$ extending $f_0$.

The pair $(s_0,f_0)$ gives for each $U\in\Sp$ a functor $F(U)$ which
acts on objects by
\begin{equation}
\label{image-F}
     (x:U\to X_0) \ \rMapsto \ (U\lTo U\times_{X_0}Z_0\rTo Z_0\rTo^{f_0} Y_0).
\end{equation}

Let us describe the action of $F$ on the arrows. To each arrow in 
$(X_\bullet)_U$ (that is to each map $x:U\to X_1$) $F$ assigns a
morphism between two images of $sx,\ tx:U\to X_0$ given as
in~(\ref{image-F}). The second part of Lemma~\ref{lem:explicit-asso}
says that this amounts to a map $U\to Z_1$ where $Z_1$ is defined 
by~(\ref{eq:Z1}).

This proves the first part of the proposition.
The second part is straightforward.
\end{proof}

\subsubsection{Fiber products}
\label{ss:fibre-products}

Since groupoids over $\Sp$ form a 2-category, 
we will use the following natural 2-categorical fiber   
product operation.

\defn
The \emph{fiber product} of a diagram of  1-morphisms in $\Grp/\Sp$
   $$ \cX\rTo^f\cZ\lTo^g\cY$$
is the groupoid in $\Grp/\Sp$
whose objects over $U\in\Sp$ are triples $(x,y,\theta)$, where
 $x\in\cX(U),\ y\in\cY(U)$ and $\theta:f(x) \tilde{\to} g(y) $ 
is an isomorphism; 
morphisms are compatible pairs of morphisms in $\cX$ and $\cY$.

This fiber product has the expected properties.
\begin{lem}
\label{lem:fibre-products}
Let $\cF$ be a fiber product of a diagram 
$\cX\to \cZ \leftarrow \cY$. Then
\begin{itemize}
\item[(i)] If $\cX,\ \cY,\ \cZ$ are $\Sp$-stacks then
$\cF$ is as well an $\Sp$-stack.
\item[(ii)] The associated stack $[\cF]$ is a fiber product of the
  diagram  
$$[\cX]\to [\cZ]\leftarrow [\cY].$$
\end{itemize}
\end{lem}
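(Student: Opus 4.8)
The plan is to prove the two statements in turn, each reducing to a routine but slightly tedious verification from the definitions. For part (i), I would observe that a fiber product in the $2$-category $\Grp/\Sp$ is characterized by a $2$-universal property, so it suffices to check that the explicitly constructed groupoid $\cF$ (with objects $(x,y,\theta)$ over $U$) actually enjoys that property; this is the standard check and I would cite it as folklore or reduce it to the universal property verbatim. The content of (i) is then the claim that $\cF$ satisfies the two descent conditions of Definition~\ref{stack}. For condition (i) of that definition, a $\Hom$-sheaf in $\cF$ between $(x,y,\theta)$ and $(x',y',\theta')$ over $U$ is, by the definition of morphisms in $\cF$, the equalizer of the two obvious maps from the product sheaf $\Hom_{\cX}(x,x')\times\Hom_{\cY}(y,y')$ built from $\theta,\theta'$; an equalizer of sheaves is a sheaf, so this is a sheaf on $\Sp/U$. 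For condition (ii), given a cover $\{V_i\to U\}$, I would take a descent datum for $\cF$, project it to descent data for $\cX$, $\cY$, and (via $f$, $g$) for $\cZ$, use that $\cX,\cY,\cZ$ are stacks to glue $x$, $y$ over $U$ together with the glued isomorphism $f(x)\to g(y)$ coming from uniqueness of gluing in $\cZ$, and check the resulting triple $(x,y,\theta)$ is an object of $\cF_U$ restricting to the given datum; essentiality and fullness/faithfulness of the comparison functor $\cF_U\to\cF(\{V_i\})$ follow from the corresponding properties for $\cX$, $\cY$, $\cZ$ together with condition (i) just proved.

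For part (ii), the point is that the associated-stack functor $\cX\mapsto[\cX]$ is a left-$2$-adjoint to the inclusion $\MSt\hookrightarrow\Grp/\Sp$ (this is exactly the universal property recorded after the definition of the associated stack, following \cite[Lemme 3.2]{LMB}). Left adjoints — even in the $2$-categorical, pseudo sense — preserve the $2$-categorical limits that exist; but rather than invoke a general principle, I would give the direct argument. One has a canonical $1$-morphism $[\cF]\to[\cX]\times_{[\cZ]}[\cY]$ induced by functoriality of $[-]$ applied to the two projections and the canonical $2$-cell, using that $[-]$ preserves $2$-commutative squares. To see it is an equivalence, I would test against an arbitrary $\Sp$-stack (equivalently, by Lemma~\ref{lem:explicit-asso}, against objects $U\in\Sp$): for a stack $\cT$,
$$
\Hom(\cT,[\cF])\isom\Hom(\cT,\cF)\isom\Hom(\cT,[\cX])\times_{\Hom(\cT,[\cZ])}\Hom(\cT,[\cY]),
$$
where the first equivalence is the universal property of the associated stack (both $\cT$ and, by part (i) once we know the comparison is an equivalence, the target are stacks — so in fact I should argue in the other order: use the universal property of $[\cF]$ and the $2$-fiber-product description of $\Hom$ to identify $\Hom(\cT,[\cF])$ with $\Hom(\cT,\cF)$, then apply the universal property once more to each factor $\cX,\cY,\cZ$), and the second is the defining property of the $2$-fiber product of groupoids. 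Naturality of these equivalences in $\cT$ then forces the canonical comparison $1$-morphism to be an equivalence by the $2$-Yoneda lemma.

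The main obstacle, such as it is, will be bookkeeping rather than mathematics: one has to be careful that all the ``gluing'' in part (i) is done with the specified coherence isomorphisms $\theta_{\alpha,\beta}$ rather than strict equalities (the stacks here are only pseudofunctorial), and that the canonical comparison morphism in part (ii) is genuinely canonical — i.e.\ independent up to unique $2$-isomorphism of the choices of cleavages and of the representing \'etale surjections in Lemma~\ref{lem:explicit-asso}. I would handle this by phrasing everything through universal properties (the $2$-universal property of the fiber product and the $2$-adjunction defining $[-]$), so that each comparison $1$-morphism is produced once and for all by a mapping property and the coherence is automatic; the explicit models are then used only to verify the universal property holds, not to construct the morphisms. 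With that organization the proof is essentially formal, which is why the paper states it as a lemma and the verification can be left to the reader.
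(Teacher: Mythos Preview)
Your argument for part (i) is correct and matches the paper's ``immediate''; the verification via equalizers of $\Hom$-sheaves and gluing of descent data is exactly the routine check one would write out.

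Your argument for part (ii), however, has a genuine gap. You write that ``left adjoints\dots preserve the $2$-categorical limits that exist'' --- this is false; left adjoints preserve \emph{colimits}. You then try a direct argument using
\[
\Hom(\cT,[\cF])\;\isom\;\Hom(\cT,\cF),
\]
but this is the wrong direction of the adjunction: the universal property of the associated stack says $\Hom([\cF],\cT)\isom\Hom(\cF,\cT)$ for $\cT$ a stack, not $\Hom(\cT,[\cF])\isom\Hom(\cT,\cF)$. The latter equivalence is simply false in general (already for set-valued presheaves: maps from a sheaf into a presheaf are not the same as maps into its sheafification). Your parenthetical shows you sensed the problem, but the ``other order'' you propose still invokes the same wrong-direction equivalence, and becomes circular.

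The paper's one-line ``(ii) follows from (i)'' should be read as follows. Form the explicit fiber product $\cF':=[\cX]\times_{[\cZ]}[\cY]$ in $\Grp/\Sp$; by (i) this is already a stack. The unit maps $\cX\to[\cX]$, $\cY\to[\cY]$, $\cZ\to[\cZ]$ induce a canonical $\cF\to\cF'$, and since $\cF'$ is a stack this factors through $[\cF]\to\cF'$. To see this last map is an equivalence one checks it on $U$-points using the explicit description of the associated stack (Lemma~\ref{lem:explicit-asso}): a descent datum for $\cF$ on a cover of $U$ is a compatible system of triples $(x_i,y_i,\theta_i)$, which is exactly the data of an object of $[\cX]_U$, an object of $[\cY]_U$, and an isomorphism between their images in $[\cZ]_U$ --- that is, an object of $\cF'_U$. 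The role of (i) is precisely to guarantee that $\cF'$ is a stack, so that no further stackification is needed on the right-hand side. Equivalently, one may phrase this as the (true, but nontrivial) fact that stackification, like sheafification, is left exact and hence preserves finite $2$-limits; but this is a theorem about the explicit construction, not a formal consequence of being a left adjoint.
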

\begin{proof} 
The statement (i) is immediate and (ii) follows from (i). 
\end{proof}

\subsection{Orbifolds}
\label{ss:orbi}

In this section we define $\Sp$-orbifolds, where $\Sp$ is one of the
categories of spaces from Section~\ref{ss:list}.

\begin{dfn}
\label{def:orbi}
A stack $\cX$ over $\Sp$ is called an $\Sp$-\emph{orbifold} if it is
equivalent to the stack $[X_{\bullet}]$ associated  to an (\'etale separated)
$\Sp$-groupoid $X_{\bullet}$.

The full 2-subcategory of $\Sp$-orbifolds in the 2-category of $\Sp$-stacks
will be denoted by $\Sp$-$\Orbi$ (or simply $\Orbi$).
\end{dfn}

Let $\Sp$ be the category of schemes over a fixed base scheme.
The standard definition of Deligne-Mumford stack requires the diagonal 
to be quasi-compact (i.e., the preimage 
under the diagonal map
of any quasi-compact open subset is quasi-compact). 
The stacks having proper diagonal are 
called \emph{separated stacks.}

Definition of  orbifolds as equivalence classes of separated \'etale
groupoids belongs to Moerdijk. We prefer looking at a groupoid as a 
specific {\em presentation} of an $\Sp$-orbifold
in the sense of the following definition.

\begin{dfn} Let $\cX$ be an $\Sp$-orbifold. An $\Sp$-groupoid $X_\bullet$
together with a map 
$$\alpha: X_\bullet\rTo\cX$$
of groupoids over $\Sp$ is called a presentation of $\cX$ if it induces
an equivalence $[X_\bullet]\to\cX$.
\end{dfn}
A presentation $\alpha:X_\bullet\to\cX$ of an $\Sp$-orbifold $\cX$ is
uniquely determined
by a morphism $\alpha_0:X_0\to\cX$ and by an 
equivalence $\alpha_1:X_1\to X_0\times_{\cX}X_0$.

\subsection{Representable morphisms}

\begin{dfn}\label{representable}
An $\Sp$-orbifold $\cX$ is called \emph{representable} if for 
every $U\in\Sp$ the groupoid $\cX(U)$ is discrete (i.e. 
the group of automorphisms of  every object in $\cX(U)$
is trivial).
\end{dfn}

If $\Sp$ is one of the non-algebraic categories~\ref{ss:list}.(i)-(v), then
representable orbifolds are functors represented by objects of $\Sp$; 
representable orbifolds for $\Sp$ of type (vi) or (vii) 
correspond to algebraic spaces.

\begin{dfn}
A morphism $f:\cX\to \cY$ of $\Sp$-orbifolds 
   is called \emph{representable} if 
for any morphism $a:Y\to\cY$ such that  $Y\in\Sp$ and 
the fiber product $\cX\times_{\cY}Y\in\Sp\text{-}\Orbi$ exists,
this fiber product is representable. 
\end{dfn}
It is clear that  in order to check that a morphism $f:\cX\to \cY$ is
representable,  it is sufficient to prove that 
 for some presentation $Y_\bullet$ of $\cY$ the fiber product
 $\cX\times_{\cY}Y_0$ is a representable orbifold. 

\begin{prp}
Let $\cX$ be an $\Sp$-orbifold.
\begin{itemize}
\item[(i)] 
The diagonal $\cX\to\cX\times\cX$ is representable.
\item[(ii)]
 Let $f:X\to\cX$ be a morphism in $\Sp\text{-}\Orbi$ such that $X$
   belongs to $\Sp$.
Then $f$ is representable.
\end{itemize}
\end{prp}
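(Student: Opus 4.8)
The statement has two parts, and part (ii) is a special case of part (i) applied to the morphism $X \to \cX \times \cX$ obtained from $f$ and $\id_X$ — no, more directly, part (ii) follows because a morphism $f \colon X \to \cX$ from an object of $\Sp$ is representable iff for every $Y \to \cX$ the fiber product $X \times_{\cX} Y$ is representable, and this fiber product is (when it exists) a fiber product over which both legs land in $\Sp$, hence by part (i) is representable. So the real content is part (i): the diagonal $\Delta \colon \cX \to \cX \times \cX$ is representable. To check representability of $\Delta$ it suffices, by the remark following the definition of representable morphism, to exhibit one presentation $Z_\bullet$ of $\cX \times \cX$ and show that $\cX \times_{\cX \times \cX} Z_0$ is a representable orbifold.

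\textbf{Carrying it out.} Fix a presentation $\alpha \colon X_\bullet \to \cX$; then $X_\bullet \times X_\bullet$ (the product $\Sp$-groupoid, which is again étale and separated since products of étale maps are étale and products of proper maps are proper) is a presentation of $\cX \times \cX$, with $(X \times X)_0 = X_0 \times X_0$. So I must analyze $\cX \times_{\cX \times \cX} (X_0 \times X_0)$. Unwinding the 2-fiber-product definition from Section~\ref{ss:fibre-products}: for $U \in \Sp$, an object of this fiber product over $U$ is a triple consisting of an object $x \in \cX(U)$, a map $U \to X_0 \times X_0$ (i.e.\ two objects $y_1, y_2 \in X_0(U)$, which we may push to $\alpha_0(y_1), \alpha_0(y_2) \in \cX(U)$), and an isomorphism in $\cX(U) \times \cX(U)$ between $(x,x)$ and $(\alpha_0(y_1), \alpha_0(y_2))$ — that is, a pair of isomorphisms $x \xrightarrow{\sim} \alpha_0(y_1)$ and $x \xrightarrow{\sim} \alpha_0(y_2)$ in $\cX(U)$. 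Composing, such a triple is the same as a map $U \to X_0 \times X_0$ together with an isomorphism $\alpha_0(y_1) \xrightarrow{\sim} \alpha_0(y_2)$ in $\cX(U)$, i.e.\ (since $\alpha$ is a presentation, $X_1 \simeq X_0 \times_{\cX} X_0$) a map $U \to X_1$. Thus the fiber product is represented by the object $X_1 \in \Sp$, which is in particular a representable orbifold.

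\textbf{The main obstacle.} The one point requiring care is the identification $X_0 \times_{\cX} X_0 \simeq X_1$ and the verification that the resulting object $X_1 \in \Sp$ genuinely represents the 2-fiber product as a \emph{stack} (not merely that its groupoid of sections is discrete), including checking that automorphisms of every section are trivial — this is where the hypothesis that $X_\bullet$ is a \emph{presentation} (so $\alpha_1 \colon X_1 \to X_0 \times_{\cX} X_0$ is an equivalence) is used, together with the fact that a morphism from $U \in \Sp$ to a stack represented by an object of $\Sp$ has no nontrivial automorphisms. Once part (i) is established, part (ii) is a one-line consequence: given $a \colon Y \to \cX$ with $Y \in \Sp$, the fiber product $X \times_{\cX} Y$ (when it exists in $\Sp\text{-}\Orbi$) sits in a Cartesian square over $\cX \times \cX \xleftarrow{\Delta} \cX$ via $(f,a)$, hence is representable by (i). I expect the bookkeeping of the 2-fiber-product definition to be the only mildly delicate step; everything else is formal.
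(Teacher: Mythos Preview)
Your proposal is correct and follows essentially the same route as the paper. For (i) the paper also fixes a presentation $X_\bullet$, takes $X_0\times X_0$ as the test object, and identifies $\cX\times_{\cX\times\cX}(X_0\times X_0)$ with $X_1$ (phrased there via Lemma~\ref{lem:fibre-products} as the associated stack of $X_\bullet\times_{X_\bullet\times X_\bullet}(X_0\times X_0)=X_0\times_{X_\bullet}X_0=X_1$, whereas you unwind the $2$-fiber-product definition directly); for (ii) the paper invokes the identity $X\times_{\cX}X_0=\cX\times_{\cX\times\cX}(X\times X_0)$, which is exactly your Cartesian-square argument specialized to $Y=X_0$.
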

\begin{proof}
Choose a presentation $X_\bullet$ of $\cX$. 
  Then the fiber product 
$$\cX\times_{\cX\times\cX}(X_0\times X_0)$$
is equivalent to the stack associated to 
$$
X_\bullet\times_{X_\bullet\times X_\bullet}(X_0\times X_0)=
X_0\times_{X_\bullet}X_0=X_1.
$$
This proves the first statement.

The second statement
follows from the equality
$$X\times_{\cX}X_0=\cX\times_{\cX\times\cX}(X\times X_0).$$
\end{proof}

\subsubsection{Properties of representable morphisms}
\label{sss:properties-rep}

\defn
A property (class) $P$ of morphisms in $\Sp$ is called \emph{local} if
for each Cartesian diagram
$$
\begin{diagram}
T & \rTo & X \\
\dTo^{f'} & & \dTo^f \\
Z & \rTo^g & Y
\end{diagram}
$$
the following hold:
\begin{itemize}
\item $f\in P$ and $g$ is \'etale implies that $f'\in P$ and
\item $f'\in P$ and $g$ is \'etale surjective implies that $f\in P$.
\end{itemize}

Let $P$ be a local property of morphisms in $\Sp$.
We say that a representable morphism $f:\cX\to\cY$ of $\Sp$-orbifolds 
satisfies  $P$ if its base change $f': X_0\to Y_0$ satisfies $P$,
where $Y_0\to\cY$ is obtained from a presentation
$Y_\bullet$ of $\cY$.

\

The following classes of morphisms are 
local:  smooth (= submersive), \'etale, 
\'etale surjective, proper, open embedding,
and finite (=proper with finite fibers).

\

Locality of \'etale surjective morphisms is important for the following 
description of the 2-category of orbifolds.
\begin{prp}
\label{2-equivalence}  
The 2-category $\Sp$-$\Orbi$ is equivalent to the 2-category
whose objects are $\Sp$-groupoids and
morphisms are defined as in Proposition~\ref{prp:hom-asso}.
\end{prp}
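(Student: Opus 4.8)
The plan is to exhibit the equivalence by analyzing the two functors relating these 2-categories. One functor, call it $\Phi$, takes an $\Sp$-groupoid $X_\bullet$ to the associated stack $[X_\bullet]$, which is an $\Sp$-orbifold by Definition~\ref{def:orbi}. On morphisms, $\Phi$ is defined via the description in Proposition~\ref{prp:hom-asso}: a span $X_\bullet\lTo^s Z_\bullet\rTo^f Y_\bullet$ with $s$ an acyclic fibration is sent to the induced morphism $[X_\bullet]\simeq[Z_\bullet]\to[Y_\bullet]$ (here we use that an acyclic fibration induces an equivalence of associated stacks — this is essentially the content of Lemma~\ref{lem:explicit-asso} combined with the naive fiber product construction preceding Proposition~\ref{prp:hom-asso}). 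The other direction, $\Psi$, takes an $\Sp$-orbifold $\cX$ to any $\Sp$-groupoid $X_\bullet$ that presents it; this requires a choice, so $\Psi$ is only defined after choosing presentations, but that is harmless for establishing a 2-equivalence.

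First I would check that $\Phi$ is essentially surjective on objects: this is immediate from the definition of $\Sp$-orbifold, since every orbifold is by definition equivalent to some $[X_\bullet]$. Next, the key step is to verify that $\Phi$ is fully faithful on the level of morphism-groupoids, i.e.\ that for $\Sp$-groupoids $X_\bullet$, $Y_\bullet$ the functor
$$
\Phi: \Hom_{\Sp\text{-}\mathtt{Gpd}}(X_\bullet,Y_\bullet) \rTo \Hom_{\Sp\text{-}\Orbi}([X_\bullet],[Y_\bullet])
$$
is an equivalence of groupoids, where the left-hand side is the groupoid of spans as in Proposition~\ref{prp:hom-asso}. But this is precisely what Proposition~\ref{prp:hom-asso} asserts: it gives an explicit description of $\Hom([X_\bullet],[Y_\bullet])$ whose objects are exactly the spans and whose morphisms are the $2$-morphisms between the two compositions out of the naive fiber product. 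So the content here is really just to match the $2$-categorical composition of spans (with its coherence data) against the composition of stack morphisms — a routine but somewhat tedious bookkeeping of associated-stack universal properties, using Lemma~\ref{lem:fibre-products}(ii) to handle how naive fiber products of groupoids become genuine $2$-fiber products of stacks.

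The main obstacle, and the place where locality of \'etale surjective morphisms (noted just before the statement) enters, is showing that the span-composition is well-defined and associative up to coherent $2$-isomorphism entirely within the $\Sp$-groupoid world — that is, without leaving the category $\Sp$, where arbitrary fiber products may fail to exist. Concretely, to compose $X_\bullet\lTo Z_\bullet\rTo Y_\bullet$ with $Y_\bullet\lTo Z'_\bullet\rTo W_\bullet$ one must form $Z_\bullet\times^{\naive}_{Y_\bullet}Z'_\bullet$ and check that the leftward leg to $X_\bullet$ is again an acyclic fibration; this uses that acyclic fibrations are stable under the naive pullback (the remark preceding Proposition~\ref{prp:hom-asso}) and that \'etale surjectivity is a local property, so all the relevant fiber products are fiber products along \'etale maps and hence exist in $\Sp$. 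Once this is in place, I would assemble everything: $\Phi$ is essentially surjective, and it induces equivalences on all morphism-groupoids, hence is a $2$-equivalence onto its image, which by essential surjectivity is all of $\Sp\text{-}\Orbi$. I expect the genuinely delicate point to be the coherence/strictness bookkeeping for the $2$-categorical composition of spans rather than any conceptual difficulty, since Propositions~\ref{prp:hom-asso} and the fiber-product lemmas have already done the substantive work.
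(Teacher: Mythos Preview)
Your approach is correct but is organised quite differently from the paper's. You argue top-down: essential surjectivity is immediate from Definition~\ref{def:orbi}, full faithfulness on Hom-groupoids is exactly Proposition~\ref{prp:hom-asso}, and the remaining work is to check that span-composition is well-defined and matches composition of stack morphisms. The paper instead works bottom-up and geometrically: it shows (i) that for any presentation $X_\bullet\to\cX$ the map $X_0\to\cX$ is \'etale surjective, (ii) that conversely any \'etale surjective $Y\to\cX$ with $Y\in\Sp$ yields a presentation by setting $X_1=Y\times_\cX Y$, and (iii) that any two presentations $X_\bullet,Y_\bullet$ of $\cX$ are linked by acyclic fibrations via the common refinement built from $Z_0=X_0\times_\cX Y_0$. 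Point (iii) is precisely what guarantees that the span description of morphisms is insensitive to the choice of presentation, which is the geometric content your argument absorbs into the citation of Proposition~\ref{prp:hom-asso}. Your route has the advantage of making the formal $2$-categorical structure explicit; the paper's route makes the underlying geometry (presentations $\leftrightarrow$ \'etale surjective atlases) more visible and is terser.

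One small correction: you locate the use of locality of \'etale surjective morphisms in the existence of fiber products needed for span-composition, but that existence only uses the standing assumption on $\Sp$ that fiber products along an \'etale leg exist. Locality in the sense of~\ref{sss:properties-rep} is used elsewhere, namely in the paper's point (i): to conclude that $X_0\to\cX$ is \'etale surjective one checks it after the base change $X_0\to\cX$, where it becomes the source map $s:X_1\to X_0$, which is \'etale and has the identity section. That is the step where the property being \emph{local} (detectable after \'etale surjective base change) is genuinely invoked.
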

\begin{proof}
If $X_\bullet$ is a presentation of $\cX$, the corresponding map
$X_0\to\cX$ is \'etale surjective since its base change with respect to
the morphism $X_0\to\cX$ is $s:X_1\to X_0$ which is \'etale and admits 
a section.

Vice versa, assume $a:Y\to\cX$ is \'etale surjective. 
Consider
 $X_0=Y$,  and $X_1=Y\times_{\cX}Y$. The orbifold $X_1$ is representable and,
since it is \'etale over $X_0\in\Sp$, 
it belongs to $\Sp$.\footnote{This is true even in
  the case when $\Sp$ is the category of schemes,  see~\cite[6.17]{knutson}.}
Therefore we found a $\Sp$-groupoid $X_\bullet$ presenting $\cX$.

Let $X_0\to \cX$ and $Y_0\to\cX$ be \'etale surjective and let 
$Z_0=X_0\times_{\cX}Y_0$.
Let $X_\bullet,\ Y_\bullet$ and $Z_\bullet$ be the presentations of $\cX$
constructed as above from the maps $X_0\to\cX$, $Y_0\to\cX$, $Z\to\cX$.
Then the maps $Z_\bullet\to X_\bullet$ and $Z_\bullet\to Y_\bullet$
are acyclic fibrations in the sense of~\ref{ss:over}. 
\end{proof}

Thus, the 2-category $\Orbi$  can be defined in terms of  of $\Sp$-groupoids.
By using language of stacks we do not gain new ``expressive power''.
However, this language has the 
same advantages in dealing with $\Sp$-orbifolds as it has in the context
of algebraic geometry.

Proposition~\ref{2-equivalence} 
implies that the $1$-category obtained from $\Orbi$ by identifying 
isomorphic morphisms, is equivalent to the localization of the category
of $\Sp$-orbifolds by the collection of acyclic fibrations. The latter
category is what Moerdijk~\cite{moepronk} calls the category of
orbifolds.

\subsection{Some examples and constructions}
\label{ss:misc}

\subsubsection{Points and the coarse space}
\label{ss:coarse}
Let $\Sp$ be of one of the non-algebraic categories of spaces \ref{ss:list}.(i)-(v).
For an $\Sp$-orbifold $\cX$ define 
$|\cX|$ as the set of
connected components of the groupoid $\cX(\text{point})$. 
If $\cX$ is represented
by an $\Sp$-groupoid $X_{\bullet}$, one has a natural surjection
$X_0\to |\cX|$. The set $|\cX|$ endowed with the quotient topology
is called {\em the coarse space of } $\cX$.
Open subsets of $|\cX|$ are in a one-to-one correspondence 
with (equivalence classes of) open substacks of $\cX$.

If $\Sp$ is  of algebraic type (vi) or (vii),
the points of $\cX$ are defined as classes of 
equivalent objects of $\cX(\Spec\ K)$,  where $K$ is a field and the
equivalence allows to extend the 
field $K$.

The set of points $|\cX|$ is endowed with the Zariski topology, 
whose open sets are
 defined by points $|\cU|$ of open suborbifolds $\cU$ of $\cX$
(see details in~\cite{LMB}).

If $\Sp$ is the category of complex manifolds
the coarse space of an $\Sp$-orbifold
has a natural structure of  a complex space.
If  $\Sp$ is the category of schemes of finite type over a locally
Noetherian base, the coarse moduli space is an algebraic space
by a result of Keel-Mori, see~\cite{keelmori}.

\subsubsection{Global quotient}
Let $X\in\Sp$ and let  $G$ be a finite group acting on $X$. 
Then the  $\Sp$-orbifold $[G\bs X]$ associated to the transformation  
groupoid $(G\bs X)_\bullet$ (see~\ref{transformation-g})
is called {\em the global quotient orbifold}.

\subsubsection{Change of the base category}
\label{change-basecat}

Let $F:\Sp_1\to \Sp_2$ be a functor 
between two categories of spaces
from the list~\ref{ss:list} that preserves
\'etale morphisms, coverings and proper morphisms, as well as the fiber 
products.  Then the functor $F$ extends to the corresponding
categories of $\Sp$-groupoids.  Using Proposition~\ref{prp:hom-asso}
and Proposition~\ref{2-equivalence}, 
we obtain, up to $2$-equivalence, a functor
$$ F:\Sp_1\text{-}\Orbi\rTo\Sp_2\text{-}\Orbi.$$

Examples of this construction
provide various forgetful functors. 
A less obvious example is the functor assigning to a scheme of finite
type over $\C$ its analytification, see~\cite[expos\'e XII]{sga1}.

\

One of the goals of this paper is to construct maps
from the augmented \TS\ $\bT$ (which is a topological space) to 
stacks of admissible coverings $\Adm$ 
considered either as a Deligne-Mumford stack or as an orbifold 
in  the category of complex spaces (see~\ref{sss:moduli}). 
The above construction allows one to define 
the desired map as a $1$-morphism in the category of topological orbifolds. 

This may seem a weak notion; it is sufficient, however, to be able to
pull back a vector bundle on $\Adm$ to $\bT$ (see~\ref{sec:sheaves}).

\subsubsection{Moduli stacks}
\label{sss:moduli}

In this paper a few moduli stacks play an important role.

According to~\cite{dm} and~\cite{knudsen} 
the functor assigning to each scheme $S$ the groupoid
of families of stable curves of genus $g$ 
with $n$ punctures over $S$, is represented by a smooth projective
Deligne-Mumford stack.  
We denote it $\bsM_{g,n}$. Its open substack $\sM_{g,n}$ represents the 
groupoid of smooth families. 

The stack of admissible coverings $\Adm_{g,n,d}$ assigns to a scheme $S$
the groupoid of admissible coverings of degree $d$ of $S$-families of
stable curves of genus $g$  
unramified of $n$ points (see~\cite[Sect.~4]{ACV}).
This is a proper Deligne-Mumford stack having a projective
coarse moduli space, see~\cite{Mochizuki}.
Similarly, given a finite group $H$, we denote by $\Adm_{g,n}(H)$
 the stack of admissible $H$-coverings.

These are 
algebraic orbifolds (i.e.\ $\Sp$ is the category of schemes)
in the sense of our definition. We will prove in Section~\ref{gaga}
that the analytifications 
  of  the stacks $\sM$, $\bsM$, $\Adm$ represent the corresponding
groupoids of analytic families (of    stable curves or of admissible coverings). 

\subsubsection{Gerbes}
\ \\
\defn
A morphism $f:\cX\to\cY$ of $\Sp$-orbifolds is called {\em a gerbe} if
\begin{itemize}
\item $f:\cX\to\cY$ is surjective.
\item $\Delta:\cX\to\cX\times_{\cY}\cX$ is surjective.
\end{itemize}

\noindent
The first condition means that for any object $y\in\cY_U$ there exists
a covering $V\to U$ and an object $x\in\cX_V$ such that $f(x)$ is isomorphic
to $y_V$. The second condition means that given a pair of objects
$x_1,\ x_2$ in $\cX_U$ and an isomorphism $\theta:f(x_1)\to f(x_2)$ in 
$\cY_U$, there exists a covering $V\to U$ and an isomorphism 
$\eta:x_{1V}\to x_{2V}$ such that $f(\eta)=\theta$.

A gerbe $f:\cX\to\cY$ is called \emph{split} if there exists a morphism $s:\cY\to\cX$
such that the composition $f\circ s$ is equivalent to $\id_{\cY}$.

A typical example of a split gerbe is given by a finite group 
trivially acting on a manifold. Here is a non-split example. Let
$\wt{G}\to G$ be a surjective homomorphism of finite groups. Let $G$
act on a manifold $X$. Then the morphism 
$$[\wt{G}\bs X]\rTo\relax[G\bs X]$$
is a gerbe which is not necessarily split.

Note that a base change of a gerbe is a gerbe and that for any gerbe
$\cX\to\cY$ there exists a covering $\cY'\to \cY$ such that the
base change $\cX'\to\cY'$ splits. All this immediately follows from the
definition.

\subsection{Sheaves and vector bundles on orbifolds}
\label{sec:sheaves}

A sheaf (or a vector bundle) on an orbifold $\cX$ is given by a compatible
collection of sheaves (vector bundles) on each \'etale 
neighborhood $f:X\to\cX$.
Here is an appropriate definition.

\begin{dfn}\label{dfn:sheaf}
A \emph{sheaf} $F$ on an orbifold $\cX$ is a collection of the following data:
\begin{itemize}
\item 
Assignment, for each \'etale morphism $f:X\to\cX$, of a sheaf $F_f$
on $X\in\Sp$.
\item 
An isomorphism of sheaves 
      $$\theta_{f,g,\phi,\alpha}:\phi^*(F_g)\to F_f$$
for each quadruple $(f,g,\phi,\alpha)$, where $f:X\to\cX$ and $g:Y\to\cX$
are \'etale morphisms, $\phi:X\to Y$ is a morphism in $\Sp$ and
$\alpha:f\to g\circ\phi$ a morphism in $\cX(X)$.
\end{itemize}
The isomorphisms $\theta$ should be compatible with respect to
compositions, i.e.\  for any  morphisms 
$h:Z\to\cX$, $\psi:Y\to Z$, $\beta:g\to h\circ\psi$ we have
$$ \theta_1\circ\phi^*(\theta_2)=\theta_{12},$$
where $\theta_1 = \theta_{f,g,\phi,\alpha}$, $\theta_2
= \theta_{g,h,\psi,\beta}$,  
$\theta_{12} = \theta_{f,h,\psi\circ\phi,(\beta\phi)\circ\alpha}$ 
and 
$$\beta\phi:g\circ\phi\to h\circ\psi\circ\phi$$ 
is induced by $\beta$.
\end{dfn}

A vector bundle on orbifolds is defined similarly.

Let $X_\bullet$ be a presentation of $\cX$. A sheaf (resp., a vector bundle) 
$F$ on $\cX$ gives a sheaf (a vector bundle) $F_0$ on $X_0$ together with
an isomorphism $\theta:s^*(F_0)\to t^*(F_0)$ of sheaves on $X_1$ 
satisfying the cocycle condition on $X_2$. It is a standard fact that
the above assignment is an equivalence of categories. In particular,
if $\cX=[G\bs X]$, where $G$ is a finite group, and $X\in\Sp$, then
sheaves (resp., vector bundles) on $\cX$ are the same as $G$-equivariant 
sheaves (vector bundles) on $X$.

\subsubsection{Inverse image}

Given a morphism of orbifolds $f:\cX\to\cY$ one can choose presentations
$X_\bullet$ and $Y_\bullet$ of $\cX$ and $\cY$ so that $f$ lifts to
a map 
$$f_\bullet:X_\bullet\to Y_\bullet$$ 
of $\Sp$-groupoids. 
Then a sheaf (resp., a vector bundle) $F$ on $\cY$ is given by a sheaf 
(a vector bundle) 
$F_0$ on $Y_0$ together with the descent data 
(an isomorphism $\theta:s^*(F_0)\to t^*(F_0)$ satisfying
the cocycle condition). The inverse image $f_0^*(F_0)$ together with the
inverse image descent data define a sheaf (a vector bundle) on $X$.
One can easily check that the result does not depend on the choice
of presentations for $\cX$ and for $\cY$. This defines the inverse
image functor $f^*$.

\section{Satake orbifolds}
\label{sec:satake}

In this section the category of spaces $\Sp$ 
is either the category of $C^\infty$-manifolds or 
of complex manifolds.

Originally orbifolds were  defined by Satake~\cite{satake} using 
the  language of orbifold charts. This approach  works only for
effective orbifolds which is not sufficient for our purposes. 

In this section we present a generalization of
Satake's description of orbifolds in terms of charts and
atlases which also works for non-effective orbifolds
and has some other advantages.
We will show that this generalized Satake definition of $\Sp$-orbifolds
is equivalent to the one based on the language of stacks from
Section~\ref{ss:orbi}.
Even though the definition in terms of stacks is more natural, 
we need to use charts and atlases in Section~\ref{sec:teich}
in order to construct  a complex orbifold structure on quotients of
the augmented \TS. 

In Section~\ref{geography} we define Satake orbifold atlases.
Our definition is more general than the original one given by Satake
in~\cite{satake}. Our atlases, in addition to orbifolds charts,
contain information about admissible maps between the charts.
This allows us to incorporate non-effective orbifolds. We prove that
every $\Sp$-orbifold has such an atlas.

  In Section~\ref{orbifold-from-atlas}, conversely, we show that
any Satake orbifold specified by
a collection of (generalized) orbifold charts 
and admissible morphisms between them
corresponds to an $\Sp$-orbifold.
This orbifold is constructed as  $2$-colimit (in an appropriate sense) 
of the global quotients defined by the charts. 

Our method has several advantages 
over the standard construction of an equivalence class of groupoids
from a Satake orbifold (see e.g.~\cite{moepronk}).
First, we define the associated  $\Sp$-orbifold by a universal
property which is very convenient  in applications.
Second,  our procedure works with non-effective orbifolds as well as
with effective ones.
And third, the same construction works 
both for $C^\infty$ and complex orbifolds.

\subsection{Geographical approach: charts and atlases}
\label{geography}

Recall the following fact.
\begin{lem}
\label{lem:orbi-neighborhood}
Let $X_\bullet$ be an $\Sp$-groupoid. Let $x\in X_0$ and 
     $$G=\Aut(x)=\{\gamma\in X_1|s(\gamma)=t(\gamma)=x\}.$$
For any open neighborhood $V$ of $x\in X_0$ there exists 
an open neighborhood $U\subset V$ of $x$ so that
the restriction of $X_\bullet$ to $U$  is isomorphic to a quotient
groupoid $(G\bs U)_\bullet$.
\end{lem}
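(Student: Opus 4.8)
The plan is to produce the neighborhood $U$ by shrinking twice: first to kill ``extra'' arrows coming from points other than $x$, then to ensure the local groupoid is exactly the transformation groupoid of the finite group $G=\Aut(x)$ acting on $U$. Throughout I will use that $X_\bullet$ is \'etale and separated, so $s,t:X_1\to X_0$ are local isomorphisms and $(s,t):X_1\to X_0\times X_0$ is proper.

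First I would analyze the fiber $(s,t)^{-1}(x,x)$. By separatedness this fiber is compact, and since $s$ is \'etale it is discrete, hence finite; it is precisely the group $G$. Because $s$ is a local isomorphism, for each $\gamma\in G$ there is an open neighborhood $W_\gamma\subset X_1$ of $\gamma$ on which $s$ restricts to an isomorphism onto an open set of $X_0$; shrinking, I may assume the $W_\gamma$ are pairwise disjoint and that $t$ is also an isomorphism on each $W_\gamma$. Now I would use properness of $(s,t)$ to find an open neighborhood $V'\subset V$ of $x$ such that $(s,t)^{-1}(V'\times V')\subset\bigcup_{\gamma\in G}W_\gamma$: indeed the set $X_1\setminus\bigcup_\gamma W_\gamma$ is closed, its image under the proper (hence closed) map $(s,t)$ is closed in $X_0\times X_0$ and misses $(x,x)$, so its complement contains a product neighborhood $V'\times V'$. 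Thus over $V'$ every arrow lies in a unique $W_\gamma$, which gives a well-defined map from $(s,t)^{-1}(V'\times V')$ to $G$.

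Next I would arrange that the group structure is ``constant.'' Using the composition map $c$ and the fact (just established) that arrows near $x$ are labeled by elements of $G$, I can shrink $V'$ further to a neighborhood $U$ so that: (a) for every $\gamma\in G$, $s$ maps $W_\gamma\cap(s,t)^{-1}(U\times U)$ isomorphically onto $U$ (so $t|_{W_\gamma}\circ(s|_{W_\gamma})^{-1}$ defines a local automorphism $\rho_\gamma:U\to U$ fixing $x$); (b) these $\rho_\gamma$ compose correctly, $\rho_{\gamma}\rho_{\delta}=\rho_{\gamma\delta}$ on $U$, which follows by shrinking so that the relevant compositions of arrows stay inside the chosen $W$'s and using associativity of $c$ together with the discreteness of the labeling. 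This exhibits an action of the finite group $G$ on $U$ by the $\rho_\gamma$, and the restriction of $X_\bullet$ to $U$ is then the transformation groupoid $(G\bs U)_\bullet$: its arrows are exactly $\bigsqcup_{\gamma\in G}\{(\gamma,u)\}$ with source $u$ and target $\rho_\gamma(u)$, matching $\bigcup_\gamma W_\gamma\cap(s,t)^{-1}(U\times U)$.

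The main obstacle I anticipate is step two, making the labeling genuinely group-equivariant after a single shrinking: a priori the decomposition into the $W_\gamma$ only gives, for each $u\in U$, a \emph{bijection} between arrows near $x$ over $(u,\rho_\gamma(u))$-type pairs and elements of $G$, and one must check that composition of arrows is transported to multiplication in $G$ uniformly in $u$, not just at $u=x$. The resolution is a standard but slightly fiddly shrinking argument: the two continuous maps $U\to G$ (resp.\ $U\to X_1$) given by $u\mapsto \rho_\gamma(\rho_\delta(u))$-arrow and $u\mapsto\rho_{\gamma\delta}(u)$-arrow agree at $x$ and land in a discrete set after composing with the labeling, hence agree on a neighborhood; intersecting over the finitely many pairs $(\gamma,\delta)$ gives the required $U$. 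One should also verify that $\rho_\gamma$ is an honest diffeomorphism (resp.\ biholomorphism) of $U$ onto itself rather than merely onto an open subset, which again is arranged by replacing $U$ with $\bigcap_{\gamma\in G}\rho_\gamma(U)$, a $G$-invariant open neighborhood of $x$.
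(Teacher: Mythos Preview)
Your argument is correct and is the standard proof of this fact. The paper itself gives no proof: it simply cites Theorem~4.1 of Moerdijk--Pronk, whose argument is essentially the one you have written out (finiteness of $G$ from properness of the diagonal plus \'etaleness of $s$, separating the arrows into disjoint $W_\gamma$, shrinking via properness so all nearby arrows lie in $\bigcup_\gamma W_\gamma$, then defining the $G$-action and making $U$ invariant).
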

\begin{proof}
See proof of Theorem~4.1 in~\cite{moepronk}.
\end{proof}

This lemma implies  that any $\Sp$-orbifold can be covered by open
suborbifolds of the form $[G\bs U]$, 
where $G$ is a finite group. 

A pair $(U,G)$ as above is called an orbifold chart of $\cX$ 
(see  a more formal definition below). An orbifold 
chart $(U,G)$ is called {\em effective} if the
action of $G$ on $U$ is effective. 

In~\cite{satake} Satake defined an orbifold ($V$-manifold in his terminology)
as a topological space endowed with an atlas of effective
orbifold charts.   We will call such objects {\em effective
Satake orbifolds}.
Satake proved that every effective Satake orbifold 
can be presented as a quotient of a manifold by a compact group acting
with finite stabilizers. In~\cite{moepronk}  Moerdijk and Pronk
deduce from this that an effective Satake 
orbifold can be presented by a $C^\infty$-groupoid.

In Section~\ref{orbifold-from-atlas} 
we define general (not necessarily effective)
Satake orbifolds and construct an orbifold atlas for arbitrary orbifold
in the sense of Section~\ref{sec:orbi}. We also present a new construction
that associates to a general Satake orbifold
a $C^\infty$ (or complex) orbifold.

We begin with formal definitions of orbifold charts and atlases.

\subsubsection{Abstract orbifold charts}
     \defn
An \emph{abstract orbifold chart} is a pair $(V,H)$ where $V\in \Sp$ and $H$ is
a finite group acting on $V$. 
A \emph{morphism of abstract orbifold charts}
      $$f:(V,H)\rTo(V',H')$$
is a pair $(f_V,f_H),$ where $f_V:V\to V'$ is a morphism in 
$\Sp$  and $f_H:H\to H'$ is a group homomorphism, such that
\begin{itemize}
\item the map $f_V$ is $f_H$-equivariant and

\item the induced map of orbifolds
$$ [H\bs V]\rTo \relax[H'\bs V']$$
is an open embedding 
(see Section~\ref{sss:properties-rep}).
\end{itemize}

 An abstract orbifold chart $(V,H)$ is called \emph{effective} if $H$
 acts effectively on $V$. 

 The category of abstract orbifold charts will be denoted by $\Charts$. 
\begin{rems} \
\label{rems:open-emb}
\begin{enumerate}
\item[(i)] 
The second condition
in the definition of morphism of charts can be 
reformulated as follows. After a base change, the
map $[H\bs V]\to\relax[H'\bs V']$  turns into
$[H\bs (H'\times V)]\to V'$. 
The latter map is an open embedding if and only if 
\begin{itemize}
\item 
the kernel of the map $f_H:H\to H'$ acts freely on $V$ and 
\item
the induced map from the quotient space $H'\times^HV$ to $V'$ is an  
open embedding.
\end{itemize}
\item[(ii)]
If $f$ is a map of abstract orbifold charts, 
then the map $f_V$ is \'etale  because it is
the composition of an open embedding $V\to H'\times V$, 
the \'etale morphism 
$$H'\times V\to \relax[H\bs H'\times V]$$
and  the  open embedding described in the previous remark.
\end{enumerate}
\end{rems}

\begin{dfn}\label{dfn:satake-orbi}
Let  $X$ be a Hausdorff topological space. 

 An \emph{orbifold chart} of $X$ is a  collection $(V,H,\pi:V\to X)$
 where $(V,H)\in\Charts$  and $\pi$ is a
continuous map identifying the quotient $V/H$ with an open subset of $X$.
An orbifold chart $(V,H,\pi)$ of $X$ is 
called effective if the abstract orbifold chart $(V,H)$ is effective.

A \emph{morphism} 
$$f:(V,H,\pi)\to (V',H',\pi')$$ 
of orbifold charts of $X$
is a morphism $(f_V,f_H)$ of the abstract orbifold charts satisfying the
compatibility $\pi=\pi'\circ f_V$.

The category of orbifold charts of $X$  will be denoted  $\Charts/X$.
\end{dfn}

Note that a morphism $f:(V,H,\pi)\to(V',H',\pi')$ of effective 
orbifold charts is uniquely determined by its first component $f_V$.

Let $(V,H,\pi)$ be an orbifold chart. Any element $h\in H$ defines 
{\em the inner automorphism} $h$ of  $(V,H,\pi)$ by the formulas
$$ h_V(x)=h(x),\ h_H(g)=hgh^{-1}.$$

The effective version of  these notions
is considerably simpler due to the
following property of effective orbifold charts.

\begin{lem}
\label{MoePronkA1}
Let $f,g:(V,H,\pi)\to\relax(V',H',\pi')$ be two injective 
maps between connected effective orbifold charts. 
Then there exists $h\in H'$ so that $g=h\circ f$.
\end{lem}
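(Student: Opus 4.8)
The plan is to exploit effectiveness to reduce the claim to an elementary statement about two local embeddings of $V$ into $V'$ whose images coincide. First I would invoke Remark~\ref{rems:open-emb}(ii): since $f$ and $g$ are morphisms of abstract orbifold charts, the underlying maps $f_V,g_V\colon V\to V'$ are \'etale, and by the compatibility $\pi=\pi'\circ f_V=\pi'\circ g_V$ they have the same image, namely the open subset $\pi'^{-1}$-preimage of $\pi(V/H)\subset X$. Being \'etale with the same open image, and $V$ being connected, one checks that $g_V\circ f_V^{-1}$ makes sense at least locally; the point is to promote this to a globally defined element of $H'$.

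The key step is the following. Pick a point $v\in V$ and consider $f_V(v)$ and $g_V(v)$ in $V'$. Since $\pi'(f_V(v))=\pi(v)=\pi'(g_V(v))$, the two points lie in the same $H'$-orbit, so there is $h\in H'$ with $h(f_V(v))=g_V(v)$. Replacing $f$ by $h\circ f$ (which is again a morphism of charts, by composing with the inner automorphism $h$ of $(V',H',\pi')$), we may assume $f_V(v)=g_V(v)=:v'$. Now I would argue that $f_V=g_V$ on a neighborhood of $v$: both are \'etale at $v$, both are compatible with $\pi,\pi'$, and near $v'$ the fibers of $\pi'$ are the $H'$-orbits, which are discrete near a point with finite stabilizer — more precisely, using Lemma~\ref{lem:orbi-neighborhood} (or a direct slice argument) one can arrange that near $v'$ the map $\pi'$ identifies a neighborhood of $v'$ with its image modulo $\Stab_{H'}(v')$, and near $v$ the map $\pi$ does the same with $\Stab_H(v)$; since both charts are effective and $f,g$ induce open embeddings of the quotient orbifolds, the stabilizers match and the two local sections $f_V,g_V$ of $\pi'$ over $\pi(V)$ agree locally. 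Then $\{v\in V : f_V(v)=g_V(v)\}$ is open; by a standard argument (the diagonal of $V'$ is closed since $V'$ is a manifold, hence Hausdorff, and $f_V,g_V$ are continuous) it is also closed; by connectedness of $V$ it is all of $V$, so $f_V=g_V$.

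Finally, having $f_V=g_V$ (after the modification by $h$), I would conclude that the group components agree too: since the chart is effective, $f_H$ is determined by $f_V$ — indeed $f_H(\sigma)$ is the unique element of $H'$ whose action on $V'$ restricts, via the \'etale embedding $f_V$, to the action of $\sigma$ on $V$, and uniqueness here is exactly effectiveness of $H'$ on $V'$ (using connectedness of $V'$, or of the image). Hence $g=h\circ f$ as morphisms of orbifold charts, which is the assertion.

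\textbf{Main obstacle.} The delicate point is the local comparison $f_V=g_V$ near a point where the isotropy is nontrivial: there one must use precisely that the induced maps on quotient orbifolds are \emph{open embeddings} (not merely that $V/H\to V'/H'$ is injective on coarse spaces) to force the stabilizer subgroups of $v$ and $v'$ to correspond and the two \'etale local sections of $\pi'$ to coincide rather than differ by a nontrivial stabilizer element. Effectiveness is what prevents such a discrepancy from being invisible on the level of the underlying maps. Once this local rigidity is in hand, the globalization via connectedness and the recovery of $f_H$ from $f_V$ are routine.
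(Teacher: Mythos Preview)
The paper does not prove this; it cites Proposition~A.1 of Moerdijk--Pronk~\cite{moepronk}. Your outline is the standard strategy used there, and your last step (recovering the group component from $f_V=g_V$ via effectiveness of $H'$ on $V'$) is correct.

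There is, however, a genuine gap at your central step. You choose $h\in H'$ so that $g_V(v)=h\,f_V(v)$ at a \emph{single} point $v$ and then assert $g_V=h\,f_V$ on a neighborhood. But at a point with nontrivial stabilizer this $h$ is not unique, and for the wrong choice the local assertion is simply false. Take $V=V'=\C$, $H=H'=\Z/2$ acting by $z\mapsto-z$, $f_V=\id$, $g_V(z)=-z$: at $v=0$ the element $h=e$ matches, yet $f_V\ne g_V$ anywhere else; the correct $h$ is $-1$. Your sentence ``the stabilizers match and the two local sections of $\pi'$ agree locally'' is precisely the statement to be proved, and invoking effectiveness alone does not establish it --- your ``main obstacle'' paragraph correctly locates the difficulty but does not resolve it.

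The repair (as in Moerdijk--Pronk) is one of: (a) write $V=\bigcup_{h'\in H'}W_{h'}$ with $W_{h'}=\{v:g_V(v)=h'f_V(v)\}$ closed; since $H'$ is finite some $W_{h'_0}$ has nonempty interior, and this selects the correct $h'_0$; or (b) start at a point with trivial stabilizer (open dense by effectiveness), where $h$ is uniquely determined. One then shows $\mathrm{int}(W_{h'_0})$ is closed: near a limit point $v_1$ the local diffeomorphism $\psi=(h'_0 f_V)^{-1}\circ g_V$ satisfies $\pi\psi=\pi$ and $\psi=\id$ on a nearby open set; one checks $\psi$ locally coincides with some $k\in\Stab_H(v_1)$, and since $k$ then fixes an open subset of the connected $V$, an $H$-invariant Riemannian metric (making $k$ an isometry) together with effectiveness force $k=e$. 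In the complex-analytic case the identity principle shortcuts this last part.
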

\begin{proof}
See Proposition A.1 in Moerdijk-Pronk~\cite{moepronk}.
\end{proof}

The following example shows this  does not hold
in general. Let $H$ act trivially
on $V$ and let $\phi$ be a non-inner automorphism of $H$. Then the pair
$(\id_V,\phi)$ is an automorphism of $(V,H,\id_V)$ which cannot be
obtained from $(\id_V,\id_H)$ by conjugation.

As a special case of Lemma~\ref{MoePronkA1} we deduce that if a chart 
$(V,H,\pi)$ is effective, the semigroup of endomorphisms
$\End(V,H,\pi)$ identifies with $H$.

We start with the (more or less standard) definition 
of {\em effective} orbifold atlases.

\begin{dfn}
An \emph{effective orbifold atlas} of a Hausdorff topological space $X$ is a 
collection of  effective orbifold charts on $X$
covering $X$, such that for any two charts $(V',H',\pi')$ and
$(V'',H'',\pi'')$ with $x\in\pi'(V')\cap\pi''(V'')$ there exists a
chart $(V,H,\pi)$ in the collection and a pair of injective 
morphisms from  $(V,H,\pi)$ to  $(V',H',\pi')$ and  $(V'',H'',\pi'')$
respectively  so that $x\in\pi(V)$.
\end{dfn}

The notion of equivalent atlases and of the maximal atlas in the effective
case are defined in a standard way.

\begin{dfn}\label{def:effective}
A topological space $X$ with a family of equivalent effective orbifold atlases 
is called {\em an effective Satake orbifold}. 
\end{dfn}

\
Below we present a  general definition of (not necessarily
effective) a Satake orbifold.
To be able to work with non-effective atlases, we will have to 
specify the admissible morphisms between the orbifold charts
explicitly.

Our general definition reduces to~\ref{def:effective} in the effective case
(see Remark~\ref{eff:equivalence-of-def} below).

The category of the orbifold charts $A$ will satisfy the following properties.

\begin{dfn}
\label{chart-cat}
 A category $A$ is called {\em a chart} category if 
\begin{itemize}
\item For each $a\in A$ all endomorphisms of $a$ in $A$ are invertible.
\item For each $a,b\in A$ the set $\Hom_A(a,b)$ is a (may be, empty)
$\Aut(b)$-torsor.
\end{itemize}
\end{dfn}

Note that any arrow $f:a\to b$ in a chart category $A$ defines
a homomorphism 
     $$ \Aut(f):\Aut(a)\rTo \Aut(b) $$ 
uniquely characterized by the property
$$ f\circ u=\Aut(f)(u)\circ f\textrm{ for }u\in\Aut(a).$$

\begin{dfn}
\label{satake-orbifold-atlas}
An \emph{orbifold atlas}
 of a Hausdorff topological space $X$ consists of
\begin{itemize} 
\item
 A chart category $A$.
\item
A functor $c:A\to\Charts/X$  which sends
$a\in A$ to the chart
\begin{equation}
  \label{eq:atlas}
c(a)=(V_c(a),H_c(a),\pi_c(a))\in\Charts/X.  
\end{equation}

\item
A collection of isomorphisms $\iota:\Aut(a)\to H_c(a)$ 
compatible with the action of both groups on $V_c(a)$ such that
$\phi\in\Aut(a)$ induces the inner automorphism of $c(a)$ given by the
element $\iota(\phi)\in H_c(a)$.
\end{itemize}
The above data are assumed to satisfy the following properties.
\begin{itemize}
\item[(i)] The images of the charts $c(a)$ cover the whole $X$.
\item[(ii)]
For any $x\in X$ belonging to the images of two charts $c(a')$ and $c(a'')$,
there exists $a\in A$ with a pair of arrows $a\to a',\ a\to a''$, such that
$x$ belongs to the image of $c(a)$.
\end{itemize}
A \emph{morphism of orbifold atlases}  $(A,c)\to (A',c')$ is 
a fully faithful functor 
$$f:A\to A'$$ 
of the corresponding chart categories 
together with an isomorphism of 
functors 
$$c\rTo^\isom c'\circ f.$$

Two orbifold atlases are called  \emph{equivalent} if they can be
connected by a sequence of morphisms  in the above sense.
\end{dfn}
\

We will usually suppress the subscript $c$ in 
equation~(\ref{eq:atlas}) and will write simply
$$
c(a)=(V(a),H(a),\pi(a))\ \mathrm{or\ even\ }
c(a)=(V(a),H(a),\pi).
$$

\begin{rem}
\label{eff:equivalence-of-def}
Let $X$ be an effective Satake orbifold defined by a set $A$ of connected  
effective orbifold charts. Then by~\ref{MoePronkA1} the subcategory 
of $\Charts/X$ defined by the set $A$ 
of orbifold charts and all injective morphisms between them,
is a chart category. Thus, our definition~\ref{satake-orbifold-atlas} reduces
to the standard definition 
\ref{def:effective} in the effective case.
\end{rem}

\subsubsection{An orbifold atlas of an orbifold} 
\label{orb-to-satake}
\noindent {\bf Proposition.\ }
Any  $\Sp$-orbifold admits an orbifold atlas in the sense of 
Definition~\ref{satake-orbifold-atlas}. 
\begin{proof}
For an orbifold $\cX$, define an atlas category $A$ as follows
(see~\cite[6.1]{double}).  The objects of $A$  are 
triples $(V,H,\hat{\pi})$, where $(V,H)$ is an abstract 
orbifold chart and $\hat{\pi}:[H\bs V]\to\cX$ is an open embedding.
A morphism 
$$f:(V,H,\hat{\pi})\to(V',H',\hat{\pi'})$$
 is a triple  $(f_V,f_H,\theta),$
where $(f_V,f_H)$ is a morphism of abstract orbifold charts and
$\theta$ is an isomorphism between $\hat\pi$ and $\hat{\pi'}\circ\hat{f}$,
      where
$$
  \hat{f}:[H\bs V]\to\relax[H'\bs\ V']
$$
is the map of orbifolds induced by $(f_V,f_H)$. 
Let $X$ be the coarse space for $\cX$.
Define the functor
$$  c:A\to\Charts/X $$ 
by assigning to the triple $(V,H,\hat\pi)\in A$ 
the orbifold chart $(V,H,\pi)$, where $\pi$ is the composition of the
projection $V\to H\bs V$  with the map $H\bs V\to X$ induced by
$\hat\pi$. 
Let us check that  $c:A\to\Charts/X$  is an orbifold atlas for $\cX$.
According to Lemma~\ref{lem:orbi-neighborhood}, the 
conditions (i) and (ii) of Definition~\ref{satake-orbifold-atlas} are satisfied.

Now we will show that $A$ is a chart category. 
Let $a=(V,H,\hat\pi)$ be an object of $A$.
Since $\hat\pi:[H\bs V]\to\cX$ is an open embedding, 
the category $\Hom([H\bs V],\cX)$ 
is equivalent to the category $\Hom([H\bs V],\left[H\bs V\right])$. 
Thus, $\End(a)$ is isomorphic to the group of automorphisms of the
identity functor  $[H\bs V]\to\relax[H\bs V]$ and by
Lemma~\ref{lem:explicit-asso}  we have 
$$\End(a)=\Aut(a)=H.$$ 
A similar  argument
proves that $\Hom(a,b)$ is an $\Aut(b)$-torsor.
\end{proof}

\subsection{An orbifold from a Satake orbifold atlas}
\label{orbifold-from-atlas}

In this  section we will show that to each Satake orbifold there  
corresponds an orbifold in the sense of Definition~\ref{def:orbi}. 
This correspondence is natural in a sense which 
we will not try to make precise 
(because we have not introduced a 2-category structure on Satake orbifolds). 

\begin{thm}
\label{satake-groupoid}
There exists a natural construction which assigns  to a Satake 
orbifold $X$ an $\Sp$-orbifold $[X]$, such that 
an orbifold chart $(V,H,\pi)$ of $X$ gives an
open embeddings of orbifolds 
$$\hat{\pi}:[H\bs V]\rTo \relax[X].$$  
In particular, the coarse space of the orbifold $[X]$ is homeomorphic to the 
underlying space of $X$.
\end{thm}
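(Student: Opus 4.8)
The plan is to build the orbifold $[X]$ as a $2$-colimit of the global quotient orbifolds $[H(a)\bs V(a)]$ attached to the charts $a\in A$, and to characterize it by a universal property: for every $\Sp$-orbifold $\cY$, the groupoid $\Hom([X],\cY)$ should be equivalent to the groupoid of compatible families of morphisms $[H(a)\bs V(a)]\to\cY$, $a\in A$, together with the $2$-isomorphisms coming from the arrows of $A$. Concretely, I would first turn the chart category $A$ into an $\Sp$-groupoid: set $X_0=\coprod_{a\in A} V(a)$, and for the arrows take
$$
X_1=\coprod_{a,b\in A}\ \coprod_{f\in\Hom_A(a,b)} V(a)\times^{H(b)}\bigl(H(b)\bigr),
$$
or more precisely the disjoint union over pairs $(a,b)$ of the spaces $V(a)\times_{V(b)}\bigl(H(b)\times V(b)\bigr)$ formed via the étale map $f_V$ (which is étale by Remark~\ref{rems:open-emb}(ii)) and the action map; one also throws in, for $a=b$, the ``inner'' arrows recorded by the isomorphism $\iota:\Aut(a)\to H(a)$. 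The source and target maps are étale because each $f_V$ is étale and each action map is étale, and separatedness follows from finiteness of the groups $H(a)$ together with the fact that the maps $f_V$ realize open embeddings $H(b)\times^{H(a)}V(a)\hookrightarrow V(b)$. Then one defines $[X]:=[X_\bullet]$, the associated $\Sp$-stack (Section~\ref{ss:over}), which is an $\Sp$-orbifold by Definition~\ref{def:orbi}.

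The second step is to verify the promised properties. For each chart $a$, the inclusion $V(a)\hookrightarrow X_0$ together with the arrows of $A$ out of $a$ produces a morphism of $\Sp$-groupoids $(H(a)\bs V(a))_\bullet\to X_\bullet$, hence a morphism $\hat\pi_a:[H(a)\bs V(a)]\to[X]$; I would check that this is an open embedding by the criterion of~\ref{ss:fibre-products} and Proposition~\ref{2-equivalence}, i.e.\ by showing the base change of $\hat\pi_a$ along $X_0\to[X]$ is an open embedding of spaces, which reduces to the open-embedding clauses built into the definition of a morphism of abstract orbifold charts. Conditions (i) and (ii) of Definition~\ref{satake-orbifold-atlas} say exactly that the images of the $\hat\pi_a$ cover $[X]$ and that any two overlap through a common refinement, so the $[H(a)\bs V(a)]$ form an open cover of $[X]$. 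Identifying the coarse space: by~\ref{ss:coarse} the coarse space of $[X]$ is $X_0/(\text{equivalence generated by }X_1)$ with the quotient topology; the relation identifies $x\in V(a)$ with $f_V(x)\in V(b)$ for every arrow $f:a\to b$ and with its $H(a)$-orbit, and one checks—using condition (ii) to get local comparisons and Lemma~\ref{MoePronkA1}-style rigidity only in the effective part—that this quotient is canonically homeomorphic to $X$ itself, since each $\pi_c(a):V(a)\to X$ already factors through $V(a)/H(a)$ and these glue to the given topology on $X$.

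The third step, which I expect to be the real content rather than a formality, is \textbf{well-definedness up to equivalence and naturality}: different but equivalent orbifold atlases (in the sense of Definition~\ref{satake-orbifold-atlas}, connected by fully faithful functors $f:A\to A'$ with $c\simeq c'\circ f$) must yield equivalent $\Sp$-orbifolds. A morphism of atlases induces a morphism $X_\bullet\to X'_\bullet$ of the associated groupoids; I would show it is an acyclic fibration (Definition in~\ref{ss:over})—fully faithfulness of $f$ giving the Cartesian square on arrows, and the fact that $f$ need not be essentially surjective being harmless because the charts in $A$ already cover $X$, so $X_0\to X'_0$ is étale surjective—and then invoke Proposition~\ref{2-equivalence} (acyclic fibrations become equivalences of stacks) to conclude $[X_\bullet]\simeq[X'_\bullet]$. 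This also delivers the naturality asserted in the theorem. The main obstacle is precisely the bookkeeping around \emph{non-effective} charts: one must be careful that the ``inner automorphism'' clause ($\phi\in\Aut(a)$ acts as $\iota(\phi)\in H(a)$) is exactly what is needed so that the groupoid $X_\bullet$ does not double-count automorphisms, and that the torsor condition in Definition~\ref{chart-cat} makes the space $X_1$ well-defined independently of choices of the arrows $f\in\Hom_A(a,b)$; getting this compatibility right is what makes the construction work beyond the effective case treated by Satake and Moerdijk--Pronk.
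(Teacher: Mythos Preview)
Your plan has the right shape, but there are two genuine gaps.

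\textbf{The object $X_\bullet$ you write down is a category, not a groupoid.} Your $X_1$ is indexed over arrows $f\in\Hom_A(a,b)$, and each summand is (after unwinding) essentially a copy of $V(a)$ mapping into $V(b)$ via $f_V$ together with the $H(b)$-action. These arrows only go in the direction of the chart-category arrows; when $\Hom_A(a,b)=\emptyset$ but the images of $c(a)$ and $c(b)$ overlap (which atlas axiom~(ii) only guarantees via a zig $a\leftarrow d\to b$), your $X_1$ contains no arrow between the relevant points. So $(X_0,X_1)$ is an $\Sp$-\emph{category}, and to get a stack you must localize, exactly as in the paper's~\ref{dirlimofstacks}. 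The real content of the theorem is that this localization is still represented by an \'etale separated $\Sp$-groupoid; you have not addressed this. The paper handles it by introducing the intersection-of-charts operation $c(a_1\cap a_2)$ (see~\ref{intersection-of-charts}), built as a $2$-fiber product over the realization $[U]$, and taking $\Mor(\cY)=\coprod_{a_1,a_2}V(a_1\cap a_2)$. That construction is what guarantees the morphism space is a manifold with \'etale source and target; your $X_1$ does not furnish this. (Incidentally, summing your summand over \emph{all} $f\in\Hom_A(a,b)$, which is an $H(b)$-torsor, and then also including the $H(b)$-action, double-counts.)

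\textbf{The atlas-independence argument is incorrect.} You claim that for a morphism of atlases $f:A\to A'$ the induced map $X_0\to X'_0$ is \'etale surjective because ``the charts in $A$ already cover $X$''. But $X_0=\coprod_{a\in A}V(a)$ and $X'_0=\coprod_{a'\in A'}V(a')$, and the map lands only in the components $V(f(a))$; any $V(a')$ with $a'$ not in the essential image of $f$ is missed. Coverage of the underlying space $X$ has nothing to do with surjectivity of the map of object-spaces, so this is not an acyclic fibration in the sense of~\ref{ss:over}. The paper's proof of independence (Proposition~\ref{prp:independence}) avoids this by working directly with the $2$-colimit $\dirtLim$ and interpolating through the subatlas $\bar B=\{a\in A:\exists\, b\in B,\ \Hom(a,b)\ne\emptyset\}$: one shows separately that $B\to\bar B$ and $\bar B\to A$ induce equivalences of realizations, using Lemma~\ref{covering-B} to produce the necessary coverings. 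You would need an argument of that kind.
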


In a certain sense, this result is a converse to
Proposition~\ref{orb-to-satake}. 
When $X$ is an effective $C^\infty$ Satake orbifold,
it follows from Theorem 4.1 of~\cite{moepronk}.

We will use Satake orbifolds in the study of quotients of \TS s 
in Section~\ref{sec:teich}. 
The construction of a complex orbifold from an orbifold atlas of
will be used in Section~\ref{sec:teich-vs-adm} 
to obtain  a map from the  augmented \TS\ $\bT_{g,n}$ to the stack of
admissible coverings $\Adm_{g,n,d}$. 

\

\emph{The proof} of Theorem~\ref{satake-groupoid} occupies the rest 
of this section.
The idea is very simple.  If a manifold $X$ is covered by 
open subsets $U_\alpha,\ \alpha\in A$,
then under some natural assumptions on $A$,  $X$ can be 
described  as the direct limit  of the collection $U_\alpha$. 
In our situation the realization $[X]$ of a Satake orbifold $X$ will
be defined as a ($2$-) colimit of its orbifold charts. The most
difficult part of this project consists of proving that the resulting
stack is an orbifold. 

\subsubsection{Direct limit of stacks}
\label{dirlimofstacks}

Recall the notion of direct limit of $\Sp$-stacks.
Since the stacks form a 2-category, it makes more sense to talk about
weak functors into the (2-) category of stacks. Let $I$ be a category.
A functor 
     $$F:I\rTo\MSt$$
 is defined as a fibered category 
$$\pi:\cF\rTo I^\op\times\Sp$$
 such that for each $i\in I$ the fiber
$$\cF_i\rTo \Sp$$
is an $\Sp$-stack.

Following~\cite[VI.6.3]{sga4}  we define
$\dirLim(F)$ as the localization of 
the total category $\cF$ with respect to the morphisms of the type $\alpha^*$
where $\alpha\in\Mor(I)$. The resulting localization is still a category over
$\Sp$. 
As we show below, $\dirLim(F)$ is fibered over $\Sp$;
we denote the associated $\Sp$-stack by $\dirtLim(F)$.
The category $\dirtLim(F)$ can be described in terms of pseudofunctors
as follows.
The composition 
$$\Pi=\pr_2\circ\pi:\cF\to\Sp$$
is a fibered category. Choosing a cleavage, we get 
a pseudofunctor $\Sp^\op\to\Cat$. 
Its composition with the 
total localization functor $\Cat \to \Grp$
gives a pseudofunctor $\Sp^\op\to\Grp$, i.e., 
a cleaved  groupoid over $\Sp$
which is denoted by $\dirLim(F)$. 

In general, we have no reason to expect that 
the direct limit $\dirtLim(F)$ is an orbifold, 
even if the fibers  $\cF_i$ are all $\Sp$-orbifolds.
This happens, however, in some cases.  
Let, for example, $X\in\Sp$ and let a finite group $G$ act on $X$. These
data define an obvious functor 
$$\wt{X}:BG\to\MSt$$
from the classifying groupoid $BG$ of $G$ to $\Sp$ and, therefore, to 
$\Sp$-stacks. The direct limit $\dirLim(\wt{X})$ is the functor from $\Sp$
to groupoids represented by the quotient groupoid $(G\bs X)_\bullet$; 
the associated stack $\dirtLim(\wt{X})$ is $[G\bs X]$.

\subsubsection{Constructing orbifold from an atlas}
\label{def:realization}

Let $X$ be a Hausdorff topological space
and let $c:A\to\Charts/X$ be an orbifold atlas for $X$,
where $A$ is a chart category. The composition
\begin{equation}
\label{c-composition}
 \cV: A\rTo^c\Charts/X\rTo^{\pr_1}\Sp\rTo\MSt
\end{equation}
assigns to $a\in A$ the $\Sp$-stack represented by $V(a)\in\Sp$.

We define the \emph{realization} $[X]$ as $\dirtLim(\cV)$.
This is an $\Sp$-stack  which depends 
on $X$ and on the choice of the atlas  $c:A\to\Charts/X$ of $X$.

We will prove later that $[X]$ is essentially independent of the choice
of the atlas.

\

Let $c:A\to\Charts/X$ be an atlas and let $I$ be a finite subset of $\Ob(A)$.
Define $A_I$ as the full subcategory of $A$ which consists
of objects $a\in A$ satisfying the condition
$$ \Hom(a,i)\ne\emptyset \textrm{ for each } i\in I.$$
Define $X_I$ as the intersection of the images
of the charts corresponding to the elements of $I$. For each
$a\in A_I$ the chart $c(a)$ has its image in $X_I$. 
This  gives a functor 
$$c_I:A_I\to\Charts(X_I).$$
 
\begin{lem}
\label{atlas-I}
The pair $(A_I,c_I)$ is an orbifold atlas of $X_I$.
\end{lem}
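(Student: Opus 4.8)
The plan is to verify directly the two conditions of Definition~\ref{satake-orbifold-atlas} for the pair $(A_I,c_I)$, together with the structural requirements on the chart category and the isomorphisms $\iota$. First I would observe that $A_I$ inherits the chart-category structure from $A$: it is a full subcategory, so for $a\in A_I$ all endomorphisms of $a$ in $A_I$ are the same as in $A$ hence invertible, and for $a,b\in A_I$ the set $\Hom_{A_I}(a,b)=\Hom_A(a,b)$ is an $\Aut(b)$-torsor (possibly empty). The functor $c_I$ is just the restriction of $c$, with target re-interpreted in $\Charts/X_I$: this makes sense precisely because, for $a\in A_I$ and any $i\in I$, an arrow $a\to i$ in $A$ induces by Remark~\ref{rems:open-emb}(ii) an \'etale map $V(a)\to V(i)$ whose image on coarse spaces lands inside $\pi(i)(V(i))$, so the image of $c(a)$ is contained in $\bigcap_{i\in I}\pi(i)(V(i))=X_I$. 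The isomorphisms $\iota:\Aut(a)\to H(a)$ are simply carried over unchanged.

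Next I would check condition (i), that the images of the charts $c_I(a)$, $a\in A_I$, cover $X_I$. Fix $x\in X_I$. For each $i\in I$ we have $x\in\pi(i)(V(i))$, and so by repeatedly applying condition (ii) of Definition~\ref{satake-orbifold-atlas} for the ambient atlas $(A,c)$, one can find a single $a\in A$ with arrows $a\to i$ for all $i\in I$ and with $x$ in the image of $c(a)$. (This is a finite induction on $|I|$: take two of the charts containing $x$, replace them by a common refinement $a'$ containing $x$ with arrows to both, then combine $a'$ with the next chart, and so on; here finiteness of $I$ is used.) Such an $a$ lies in $A_I$ by definition and $x$ lies in the image of $c_I(a)$, proving (i).

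Finally I would check condition (ii) for $(A_I,c_I)$: given $x\in X_I$ in the images of two charts $c_I(a')$ and $c_I(a'')$ with $a',a''\in A_I$, I need $a\in A_I$ with arrows $a\to a'$, $a\to a''$ and $x$ in the image of $c_I(a)$. Apply condition (ii) of the ambient atlas $(A,c)$ to get some $b\in A$ with arrows $b\to a'$, $b\to a''$ and $x$ in the image of $c(b)$. The only thing to verify is that $b$ (or a further refinement of it) can be taken in $A_I$, i.e. that $\Hom_A(b,i)\ne\emptyset$ for all $i\in I$. Since $a'\in A_I$, for each $i\in I$ we have an arrow $a'\to i$; composing with $b\to a'$ gives $b\to i$, so indeed $b\in A_I$, and we are done. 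I expect the main obstacle to be exactly this bookkeeping point --- ensuring the refining chart produced by the ambient atlas's axioms still maps to every element of $I$ --- which is resolved cleanly by composing with an existing arrow $a'\to i$ rather than by invoking the cofinality axiom again; the finite-induction argument in step two is the only place where one must be slightly careful about the order of combining charts.
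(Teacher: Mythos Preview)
Your proposal is correct and follows essentially the same approach as the paper: the paper's proof is a one-liner noting that the only nontrivial point is the covering condition~(i), which follows from property~(ii) of the ambient atlas by induction on $|I|$. You have simply filled in the details the paper leaves implicit, including the verification of condition~(ii) for $(A_I,c_I)$ via composition through $a'\to i$, which the paper treats as obvious.
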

\begin{proof}
The only thing we have to check
is that the images of the charts of $A_I$ cover the whole $X_I$. 
This  follows from
property~\ref{satake-orbifold-atlas} (ii) of orbifold 
atlases by induction on the cardinality of $I$.
\end{proof}

\begin{dfn}\label{covering}
A collection of arrows $f_i:a_i\to b$ 
in $A$ with the same target  $b\in A$ 
is called {\em a covering} if the 
maps $V(f_i):V(a_i)\to V(b)$  cover $V(b)$.
\end{dfn}

\begin{lem}
\label{covering-B}
Let $B$ be a subset of $\Ob(A)$ and let $a\in A$.  Assume that 
     $$\Image\ c(a)\subset \bigcup_{b\in B} \Image\ c(b).$$ 
Then the collection of maps $f:x\to a$ 
from elements $x$ which can be mapped
 into an element of $B$ is a covering.
\end{lem}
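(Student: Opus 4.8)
\textbf{Proof proposal for Lemma~\ref{covering-B}.}

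The plan is to reduce the statement to the defining property~\ref{satake-orbifold-atlas}(ii) of an orbifold atlas, applied locally and then patched. First I would fix $a\in A$ and a point $v\in V(a)$, and let $x_0=\pi_c(a)(v)\in X$ be its image in the coarse space. By hypothesis $\Image\, c(a)\subset\bigcup_{b\in B}\Image\, c(b)$, so there is some $b\in B$ with $x_0\in\Image\, c(b)$. Now I would invoke property~\ref{satake-orbifold-atlas}(ii) to the pair of charts $c(a)$ and $c(b)$: since $x_0$ lies in the image of both, there exists $x\in A$ together with arrows $x\to a$ and $x\to b$ such that $x_0\in\Image\, c(x)$. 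The existence of the arrow $x\to b$ with $b\in B$ shows $x$ is one of the objects appearing in the claimed covering family.

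The next step is to control the image of the map $V(x)\to V(a)$ near $v$. By Remark~\ref{rems:open-emb}(ii) every arrow of $A$ induces an \'etale map on the underlying charts, so $V(x)\to V(a)$ is \'etale; moreover, the compatibility $\pi_c(x)=\pi_c(a)\circ V(x\to a)$ together with the fact that $V(x)/H(x)=\Image\, c(x)$ as an open subset of $X$ implies that the image of $V(x)\to V(a)$ in $V(a)$ is exactly the preimage $\pi_c(a)^{-1}(\Image\, c(x))$, which is an $H(a)$-invariant open subset of $V(a)$ containing the $H(a)$-orbit of $v$ (since $x_0\in\Image\, c(x)$ forces $v$ to lie in this preimage). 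In particular $v$ lies in the image of $V(x)\to V(a)$ for this particular $x$.

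Finally I would let $v$ range over all of $V(a)$. For each $v$ the construction above produces an object $x_v\in A$, an arrow $x_v\to a$, an arrow $x_v\to b_v$ with $b_v\in B$, and an open neighborhood of $v$ (namely the image of $V(x_v)\to V(a)$) contained in $\bigcup$ of images of the maps $V(x)\to V(a)$ over all $x$ admitting an arrow into $a$ and into some element of $B$. Since these open sets cover $V(a)$, the corresponding collection of arrows $\{x\to a\}$ is a covering in the sense of Definition~\ref{covering}, which is precisely the assertion. The one point requiring care is the identification of the image of $V(x)\to V(a)$ with the saturated preimage $\pi_c(a)^{-1}(\Image\, c(x))$; I expect this compatibility between the chart projections and the induced \'etale maps --- essentially unwinding Remark~\ref{rems:open-emb} together with the condition $\pi=\pi'\circ f_V$ from Definition~\ref{dfn:satake-orbi} --- to be the only mildly delicate step, the rest being a direct appeal to the atlas axioms.
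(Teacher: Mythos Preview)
Your approach is essentially the same as the paper's, but the step you flag as ``mildly delicate'' is actually false as stated, and this is the one real gap.

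You claim that for the chosen arrow $x\to a$ the image of the single map $V(x)\to V(a)$ equals the full $H(a)$-saturated preimage $\pi_c(a)^{-1}(\Image\, c(x))$. This is not true in general. What Remark~\ref{rems:open-emb} gives is that $H(a)\times^{H(x)}V(x)\to V(a)$ is an open embedding; its image is indeed the saturated preimage, but the image of $V(x)$ alone is only one $H(x)$-piece of it and need not be $H(a)$-stable. (Take $V(a)=\C$ with $H(a)=\Z/2$ acting by $z\mapsto -z$, and $V(x)$ a small disk around $1$ with $H(x)=1$: the image is the disk, not its union with the reflected disk.) Consequently you cannot conclude that your specific $v$ lies in the image of that particular $V(x)\to V(a)$; you only know it lies in the $H(a)$-orbit of the image.

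The fix is exactly what the paper does: use that $A$ is a chart category, so $\Aut(a)\cong H(a)$ acts simply transitively on $\Hom_A(x,a)$. If $v$ and $V(\alpha)(w)$ lie in the same $H(a)$-orbit, replace $\alpha$ by $h\circ\alpha$ for the appropriate $h\in\Aut(a)$; the new arrow still has source $x$ (hence still maps to $b\in B$) and now hits $v$. Equivalently, since the covering family in the statement consists of \emph{all} arrows $x\to a$ from objects admitting a map into $B$, you may simply observe that the union of the images of $V(\alpha)$ over $\alpha\in\Hom_A(x,a)$ is the $H(a)$-saturation and hence contains $v$. Either formulation closes the gap; the rest of your argument is fine.
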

\begin{proof} Let $v\in V(a)$ and let $x=\pi(v)$. There exists $b\in B$ such 
that $x\in\Image\ c(b)$. Then by 
property~\ref{satake-orbifold-atlas} (ii) of orbifold atlases there exists 
$d\in A$, a pair of maps $\alpha:d\to a$ and $\beta:d\to b$, and 
$w\in V(d)$ such that $x=\pi(w)$. This implies that the elements 
$v$ and $V(\alpha)(w)$ belong to the same $H(a)$-orbit. This means that
by replacing  $\alpha$ with its $H(a)$-conjugate, we can assure that
$v=V(\alpha)(w)$. 
\end{proof}

Now, we are ready to prove that the realization 
does not depend on the choice of an atlas.

\begin{prp}
\label{prp:independence}
Let 
$$   B\rTo A\rTo^c\Atlas/X $$
be a morphism of orbifold atlases of $X$.
Suppose, as above, that 
$$ \cV:A\to\Sp\to\MSt $$ 
sends $a\in A$ to $V(a)$ and let $\cV_B$
be the restriction  of  $\cV$ to $B$. 

Then the map of the realizations
$$ \dirtLim(\cV_B)\rTo\dirtLim(\cV)$$
is an equivalence.
\end{prp}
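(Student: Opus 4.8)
The plan is to reduce the statement to a local computation, using the fact that a $2$-colimit of stacks can be checked on each fiber $\cV_B(b)=V(b)$ and glued. First I would invoke Lemma~\ref{atlas-I} and Lemma~\ref{covering-B} to see that the inclusion $B\hookrightarrow A$ is ``cofinal enough'': for every $a\in A$ the slice category $B/a$ (arrows $x\to a$ with $x$ in the image of $B$) contains a covering in the sense of Definition~\ref{covering}. Since the realization is defined as $\dirtLim(\cV)$, an $\Sp$-stack, and since being an equivalence of stacks can be tested locally (Definition~\ref{stack}(i)--(ii)), it suffices to produce, for each object $a\in A$, a natural equivalence between the restriction of $\dirtLim(\cV)$ to the chart $[H(a)\bs V(a)]$ and the restriction of $\dirtLim(\cV_B)$ to the corresponding open substack.

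The key step is the following. For fixed $a$, pulling back $\dirtLim(\cV)$ along the open embedding $\widehat\pi_a:[H(a)\bs V(a)]\to[X]$ gives, by construction of the $2$-colimit (and the description in~\ref{dirlimofstacks} of $\dirLim(\wt X)$ for a single group action), precisely the stack $[H(a)\bs V(a)]$ itself; the same pullback computed from $\cV_B$ produces the $2$-colimit of the diagram $\cV_B$ restricted to $B/a$. So the content reduces to: the $2$-colimit over $B/a$ of the charts $V(x)$, with their $H(x)$-actions and transition maps, recovers $[H(a)\bs V(a)]$. This is where Lemma~\ref{covering-B} does the work: the maps $V(x)\to V(a)$ form an \'etale (indeed open, by Remarks~\ref{rems:open-emb}(ii)) cover of $V(a)$, and the groupoid presenting $[H(a)\bs V(a)]$ — namely $(H(a)\bs V(a))_\bullet$ — is refined by the \v Cech-type groupoid built from this cover together with the torsor structure on the Hom-sets in the chart category (Definition~\ref{chart-cat}). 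By Proposition~\ref{2-equivalence} (equivalence of $\Sp$-orbifolds with $\Sp$-groupoids) and the descent property of stacks, the associated stack of this refined groupoid is again $[H(a)\bs V(a)]$.

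The main obstacle I anticipate is bookkeeping the $2$-categorical coherence: one must check that the equivalences produced chart-by-chart are compatible on overlaps $X_I$, which is exactly the content of Lemma~\ref{atlas-I} applied to finite subsets $I\subset\Ob(A)$, but verifying the cocycle condition for the gluing isomorphisms requires tracking the isomorphisms $\theta$ in the morphisms of $\Charts/X$ and the compatibility data $\iota:\Aut(a)\to H_c(a)$ through the localization defining $\dirLim$. A clean way to organize this is to note that both $\dirtLim(\cV_B)$ and $\dirtLim(\cV)$ come equipped with canonical morphisms from each representable stack $V(b)$, $b\in B$, and that the induced functor $\dirtLim(\cV_B)\to\dirtLim(\cV)$ is, on each fiber over $U\in\Sp$, fully faithful and essentially surjective — fully faithful because $\Hom$-sheaves are computed on a common refinement of covers (here the cofinality of $B/a$ enters again), and essentially surjective because every object of $[X]$ is, \'etale-locally, pulled back from some $V(a)$, hence, by Lemma~\ref{covering-B}, from some $V(b)$ with $b\in B$. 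Assembling these local statements via Definition~\ref{stack} yields the desired equivalence.
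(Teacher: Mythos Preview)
Your primary line of argument is circular. You want to pull back along ``the open embedding $\widehat\pi_a:[H(a)\bs V(a)]\to[X]$'' and invoke Proposition~\ref{2-equivalence}, but at this point in the paper neither of these is available: we do not yet know that $[X]$ is an orbifold, nor that each chart defines an open substack. Those facts are Theorem~\ref{satake-groupoid} and Theorem~\ref{thm:realization-is-orbi}, whose proofs rely on Corollary~\ref{crl:globalchart}, which in turn is an immediate consequence of the very proposition you are trying to prove. So the pullback-to-a-chart strategy cannot be used here.

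Your fallback at the end --- checking that the map is fully faithful and essentially surjective fiberwise via the universal property of the $2$-colimit --- is in the right spirit, but you skip the actual difficulty. Lemma~\ref{covering-B} does not say that an arbitrary $a\in A$ is covered by objects of $B$; it says $a$ is covered by objects $x$ admitting \emph{some} arrow into an element of $B$. There is no reason such $x$ themselves lie in the image of $B$. The paper handles this by introducing the intermediate full subcategory
\[
\bar B=\{a\in A\mid \exists\,b\in B:\ \Hom(a,b)\ne\emptyset\}
\]
and factoring $B\to\bar B\to A$. The step $\bar B\to A$ is then exactly the covering argument you sketch (every $a\in A$ is covered by objects of $\bar B$, so compatible maps from $\bar B$ extend uniquely to $A$). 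The step $B\to\bar B$ is different: for $a\in\bar B$ one \emph{chooses} an arrow $a\to b$ with $b\in B$, and the work is to show that the resulting map $V(a)\to\cX$ is independent of this choice up to canonical isomorphism --- this is where Lemma~\ref{atlas-I} for $I=\{b,b'\}$ together with Lemma~\ref{covering-B} produce the commutative diagram needed. Both steps are argued purely through the universal property of $\dirtLim$ (maps out to an arbitrary stack $\cX$), without ever assuming anything about the internal structure of $[X]$.
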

\begin{proof}
We define a subatlas $\bar{B}$ in $A$ by the formula
$$ \bar{B}=\{a\in A|\exists b\in B: \Hom(a,b)\ne\emptyset\}.  $$
Since $\bar{B}$ contains the image of $B$ in $A$, the morphism of atlases
is the composition
$$   B\to \bar{B}\to A\rTo^c\Atlas/X.  $$

We will prove that the functors 
$$  B\to \bar{B} \mathrm{\ \ and\ \ } \bar{B}\to A  $$
induce an equivalence of the corresponding direct limits.

For each $a\in\bar{B}$ choose an arrow $f:a\to b$ with $b\in B$.
Given a compatible collection of maps $V(b)\to\cX$ for $b\in B$, we will get
a collection of maps $V(a)\to \cX$ for $a\in\bar{B}$. To prove 
that it is  automatically compatible, we will check that if $g:a\to
b'$ is another arrow with $b'\in B$, the compositions 
$$
V(a)\to V(b)\to\cX \mathrm{\ and\ } 
V(a)\to V(b')\to\cX
$$ 
are canonically isomorphic.

We claim that $a$ is covered by the arrows $u:x\to a$ 
which can be placed in a commutative diagram~(\ref{covering-bb}),
where  $b''\in B$.

\begin{equation}
\label{covering-bb}
\begin{diagram}
   x & & & \\
\dTo^u & \rdTo(3,2) & & \\
a & &  & b'' \\
\dTo^f & \rdTo(3,2)^{g\quad\quad} &\ldTo(3,2)  & \dTo \\
b & &  & b'
\end{diagram}
\end{equation}

Lemma~\ref{atlas-I}, for $I=\{b,b'\}$, together with 
Lemma~\ref{covering-B} applied to the atlas $\bar{B}_I$ and to the subset 
$\Ob\ B_I$  guarantee that $a$ is covered by arrows $u:x\to a$,
such that $x$ can be mapped to an element $b''\in B$ which,
in turn, can be sent  to $b$ and to $b'$. Since $A$ is a chart
category, the arrows $b''\to b$ and  $b''\to b'$ can be chosen so that
the diagram~(\ref{covering-bb}) becomes commutative.

The  equivalence between the compositions
$$ V(x)\rTo V(b'')\rTo V(b)\rTo\cX$$
and
$$V(x)\rTo V(b'')\rTo V(b')\rTo\cX$$
is now immediate. 
  Since the maps $u:x\to a$ cover $a$, this gives the required
equivalence between the compositions 
$$V(a)\to V(b)\to\cX  \text{\ \ and \ \ }
\ V(a)\to V(b')\to\cX\ .$$

Now let us prove a similar statement for the functor $\bar{B}\to A$.
Any object $a\in A$  can be covered by objects of $\bar{B}$. Thus,
for any stack $\cX$ a map $V(a)\to\cX$ is uniquely defined by a compatible
collection of maps $\alpha_f:V(b_f)\to\cX$ for each $f:b_f\to a$ with 
$b_f\in\bar{B}$. Given a compatible collection of maps $V(b)\to\cX$
for $b\in\bar{B}$, the collection of maps $V(a)\to\cX$ so defined will
be automatically compatible by Lemma~\ref{covering-B}. 
This proves that the functor
$\dirtLim\cV_{\bar{B}}\to \dirtLim\cV$ is an equivalence.
\end{proof}

\begin{crl}
\label{crl:globalchart}
Assume that a Satake  orbifold $X$ admits a global chart, i.e. a chart
$(V,H,\pi)$ 
with surjective $\pi:V\to X$. Then the realization $[X]$ is naturally
equivalent to $[H\bs V]$.
\end{crl}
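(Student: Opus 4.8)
The plan is to exhibit a particularly simple atlas for $X$ consisting of a single object, compute the realization for that atlas directly, and then invoke Proposition~\ref{prp:independence} to conclude that this agrees with the realization computed from any atlas. First I would observe that the global chart $(V,H,\pi)$ together with the (necessarily effective, up to adjusting automorphisms — but in general we simply record $\Aut=H$) automorphism data determines a chart category $A_0$ with a single object $a_0$, with $\Hom_{A_0}(a_0,a_0)=\Aut(a_0)=H$ acting on $V(a_0)=V$ in the prescribed way, and with $c(a_0)=(V,H,\pi)$. Since $\pi$ is surjective, condition (i) of Definition~\ref{satake-orbifold-atlas} holds trivially, and condition (ii) holds because for any $x\in X$ lying in the image of $c(a_0)$ (which is all of $X$) we may take $a=a_0$ with the pair of arrows being $\id_{a_0}$. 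Hence $(A_0,c)$ is a legitimate orbifold atlas of $X$.

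Next I would compute the realization attached to this one-object atlas. By definition (Section~\ref{def:realization}), $[X]$ for the atlas $(A_0,c)$ is $\dirtLim(\cV)$, where $\cV\colon A_0\to\MSt$ sends $a_0$ to the stack represented by $V$ and sends the morphisms $h\in H$ to the corresponding automorphisms of $V$. But $A_0$ is precisely the classifying groupoid $BH$, and $\cV$ is precisely the functor $\wt V\colon BH\to\MSt$ of Section~\ref{dirlimofstacks}. As recorded there, $\dirtLim(\wt V)=[H\bs V]$. So the realization built from the global chart atlas is canonically equivalent to $[H\bs V]$.

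Finally, I would connect the given atlas on $X$ to the one-object atlas $(A_0,c)$ and apply Proposition~\ref{prp:independence}. The cleanest route is to note that $a_0$, having surjective $\pi$, can be adjoined to (a maximal representative of) any given atlas $(A,c)$: form the atlas $A'$ generated by $A$ together with $a_0$ and all admissible arrows from objects of $A$ into $a_0$ (each object $(V',H',\pi')$ of $A$ admits such an arrow since its image lies in $X=\Image\pi$, using that $A'$ is to be a chart category one checks the $\Hom$-sets are $\Aut$-torsors), and likewise arrows from $a_0$ back down when they exist. Then both inclusions $A\hookrightarrow A'$ and $A_0\hookrightarrow A'$ are morphisms of orbifold atlases of $X$, so Proposition~\ref{prp:independence} gives equivalences $\dirtLim(\cV_A)\isom\dirtLim(\cV_{A'})\isom\dirtLim(\cV_{A_0})$, and the middle term is what the definition of $[X]$ produces while the right term is $[H\bs V]$ by the previous paragraph. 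The asserted open embedding $\hat\pi\colon[H\bs V]\to[X]$ of Theorem~\ref{satake-groupoid} is then the identity under this identification.

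The step I expect to require the most care is verifying that adjoining the global chart $a_0$ to an arbitrary atlas yields a genuine chart category — i.e. that each $\Hom(a,a_0)$ is a nonempty $\Aut(a_0)=H$-torsor. Nonemptiness amounts to producing, for each chart $(V',H',\pi')$ in $A$, an admissible map of abstract orbifold charts $(V',H')\to(V,H)$ lifting the inclusion of images; this should follow from the fact that $\pi\colon[H\bs V]\to[X]$ is an open embedding of orbifolds restricting over $\Image\pi'$ to an isomorphism with $[H'\bs V']$, so $(V',H')$ is literally an orbifold chart of $[H\bs V]$ and Proposition~\ref{orb-to-satake} (applied to $[H\bs V]$) supplies the required arrow into $a_0$; the torsor property is then the corresponding statement for that atlas. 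One must also check the adjoined arrows compose correctly with the existing ones, but that is formal once the $\Hom$-sets have the right structure. Everything else in the argument is a direct appeal to Proposition~\ref{prp:independence} and the computation $\dirtLim(\wt V)=[H\bs V]$ already recorded in Section~\ref{dirlimofstacks}.
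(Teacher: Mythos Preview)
Your first two paragraphs capture exactly the paper's idea: identify $BH$ with a one-object atlas whose realization is $[H\bs V]$, then invoke Proposition~\ref{prp:independence}. The paper's proof is shorter because it reads the hypothesis as saying that the global chart is already an object of the given atlas: it writes ``let $a\in A$ define a global chart $c(a)=(V,H,\pi)$'' and then the inclusion $BH\to A$ sending the unique object to $a$ is immediately a fully faithful functor (since $\Aut_A(a)=H$ by the atlas axioms), hence a morphism of atlases, and Proposition~\ref{prp:independence} finishes. No larger atlas $A'$ is needed.

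Your third paragraph, building $A'$ by adjoining $a_0$ to an arbitrary $A$, is where you run into trouble, and you correctly flag it in your final paragraph. The difficulty is real: to make $A'$ a chart category you must manufacture, for every $a\in A$, a morphism of orbifold charts $c(a)\to(V,H,\pi)$, and your proposed justification is circular --- it appeals to $[H\bs V]\to[X]$ being an open embedding of orbifolds, but that presupposes the very identification you are proving (and Proposition~\ref{orb-to-satake} produces charts \emph{of} an orbifold, not morphisms \emph{into} a given chart of a Satake space). The paper sidesteps this entirely by taking the global chart to already live in $A$; this is the intended reading, and it is how the corollary is applied later (e.g.\ in \ref{induced-atlas} and \ref{intersection-of-charts}, where the global chart is always an object of the atlas at hand). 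So drop the adjoining step and simply assume $a_0\in A$.
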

\begin{proof}
Let $c:A\to\Charts/X$ be an orbifold atlas of $X$ and let $a\in A$
define a global chart $c(a)=(V,H,\pi)$. 
The embedding $BH\to A$  which sends the unique
object of $BH$ to $a$ gives an embedding of orbifold
atlases. The realization $\dirtLim\cV_{BH}$ is precisely  $[H\bs V]$.
\end{proof}

\subsubsection{The induced atlas}
\label{induced-atlas}

Let $c:A\to\Charts/X$ be an orbifold atlas of $X$ and let $U$ be an open 
subset of $X$. The \emph{induced orbifold atlas}  of $U$ is the functor
$$ c_U:A\to\Charts/U $$
defined as the composition of $c:A\to\Charts/X$ 
with the restriction functor 
$$\Charts/X\to\Charts/U$$ 
which sends a chart $(V,H,\pi)$ to  $(\pi^{-1}(U),H,\pi|_{\pi^{-1}(U)})$. 

By definition, a canonical morphism of the realizations 
$[U]\to \relax[X]$ is defined.

For example, if $U$ is the image in $X$ of a chart $c(a)=(V,H,\pi)$,
then by~\ref{crl:globalchart} the realization $[U]$ is equivalent to
$[H\bs V]$. 

\

Later we will need the following
explicit description of  fiber products in $\Charts/X$.
\begin{lem}
\label{fiberproduct_charts}
Let $f_1$ and $f_2$ be two morphisms
of orbifold charts
$$  f_i:(V_i,H_i,\pi_i)\rTo (V,H,\pi),\ i=1,2.  $$ 
Define the triple $(V_{12},H_{12},\pi_{12})$
by the formulas
$$
 V_{12}=V_1\times_VV_2,\ H_{12}=H_1\times_HH_2, \ \pi_{12}=\pi_1\circ\pr_1.
$$
Then the projections 
$$\pr_i:(V_{12},H_{12},\pi_{12})\to(V_i,H_i,\pi_i),       \ i=1,2,  $$ 
are morphisms of orbifold charts.
\end{lem}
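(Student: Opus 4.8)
The plan is to check directly that each projection $\pr_i$ meets the requirements of a morphism of orbifold charts of $X$ (Definition~\ref{dfn:satake-orbi}): that $(\pr_i^V,\pr_i^H)$ is a morphism of abstract orbifold charts and that $\pi_{12}=\pi_i\circ\pr_i^V$; along the way one must also see that $(V_{12},H_{12},\pi_{12})$ really is an orbifold chart of $X$. I will do everything for $\pr_1$, the case of $\pr_2$ being symmetric.

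The formal part first. By Remark~\ref{rems:open-emb}(ii) the maps $f_i^V$ are \'etale, so $V_{12}=V_1\times_VV_2$ exists in $\Sp$; the finite group $H_{12}=H_1\times_HH_2\subset H_1\times H_2$ acts on $V_{12}$ componentwise, this being well defined because each $f_i^V$ is $f_i^H$-equivariant. Since $f_1^V\circ\pr_1^V=f_2^V\circ\pr_2^V$ (the universal property of $V_{12}$) and $\pi_i=\pi\circ f_i^V$ (the compatibility built into $f_i$), the two a priori definitions $\pi_1\circ\pr_1^V$ and $\pi_2\circ\pr_2^V$ of $\pi_{12}$ coincide; in particular $\pi_{12}=\pi_i\circ\pr_i^V$ for both $i$, and $\pi_{12}$ is $H_{12}$-invariant. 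Equivariance of $\pr_1^V$ under $\pr_1^H$ is immediate.

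It remains to show the induced map $\hat{\pr_1}\colon[H_{12}\bs V_{12}]\to[H_1\bs V_1]$ is an open embedding; granting this, $(V_{12},H_{12},\pi_{12})$ is a chart of $X$ because $\hat{\pr_1}$ induces an open embedding of coarse spaces $V_{12}/H_{12}\hookrightarrow V_1/H_1$, which composed with the open embedding $V_1/H_1\hookrightarrow X$ coming from $\pi_1$ exhibits $\pi_{12}$ as identifying $V_{12}/H_{12}$ with an open subset of $X$. By Remark~\ref{rems:open-emb}(i), $\hat{\pr_1}$ is an open embedding exactly when (a) $\Ker(\pr_1^H)$ acts freely on $V_{12}$ and (b) the induced map $\mu\colon H_1\times^{H_{12}}V_{12}\to V_1$ is an open embedding. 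For (a): $\Ker(\pr_1^H)=\{e\}\times\Ker(f_2^H)$, and since $f_2$ is a chart morphism $\Ker(f_2^H)$ acts freely on $V_2$ (Remark~\ref{rems:open-emb}(i) applied to $f_2$), so an element $(e,k)$ fixing $(v_1,v_2)$ forces $k v_2=v_2$, hence $k=e$. For (b) I will show $\mu$ is \'etale and injective; such a map of manifolds is an open embedding. \'Etaleness: the same computation as in (a) shows $H_{12}$ acts freely on $H_1\times V_{12}$, so the quotient map $H_1\times V_{12}\to H_1\times^{H_{12}}V_{12}$ is an \'etale surjection, and the composite $H_1\times V_{12}\to V_1$, $(g,(v_1,v_2))\mapsto g\cdot v_1$, is \'etale since on each sheet $\{g\}\times V_{12}$ it is a translation followed by the projection $\pr_1^V\colon V_1\times_VV_2\to V_1$, itself \'etale as a base change of the \'etale map $f_2^V$; hence $\mu$ is \'etale.

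The only genuinely non-formal point is the injectivity of $\mu$, and here the hypothesis that $\hat f_2$ is an open embedding is essential. I will use the following unique-lifting property of $f_2$: if $v_2,v_2'\in V_2$ and $h\in H$ satisfy $h\cdot f_2^V(v_2)=f_2^V(v_2')$, then there is a \emph{unique} $h_2\in H_2$ with $f_2^H(h_2)=h$ and $h_2 v_2=v_2'$. Uniqueness is the free action of $\Ker(f_2^H)$ on $V_2$; existence follows by applying the injectivity of the open embedding $H\times^{H_2}V_2\to V$ (Remark~\ref{rems:open-emb}(i) for $f_2$) to the two elements $[h,v_2]$ and $[e,v_2']$, which have the same image in $V$. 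Now suppose $\mu([g,(v_1,v_2)])=\mu([g',(v_1',v_2')])=:w$, i.e.\ $g\cdot v_1=g'\cdot v_1'=w$, so $v_1=g^{-1}w$ and $v_1'=(g')^{-1}w$. Applying $f_1^V$ to the fiber-product relations $f_1^V(v_1)=f_2^V(v_2)$ and $f_1^V(v_1')=f_2^V(v_2')$ and using equivariance yields $h\cdot f_2^V(v_2)=f_2^V(v_2')$ with $h:=f_1^H(g')^{-1}f_1^H(g)$. Take $h_2\in H_2$ as in the unique-lifting property and put $h_1:=(g')^{-1}g\in H_1$. Then $h_1 v_1=v_1'$, $h_2 v_2=v_2'$, and $f_1^H(h_1)=f_1^H(g')^{-1}f_1^H(g)=h=f_2^H(h_2)$, so $(h_1,h_2)\in H_{12}$ sends $(g,(v_1,v_2))$ to $(g',(v_1',v_2'))$; hence $[g,(v_1,v_2)]=[g',(v_1',v_2')]$ and $\mu$ is injective. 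This finishes (b), hence the proof. I expect step (b) --- and within it the injectivity of $\mu$ via the unique-lifting property --- to be the main obstacle; the rest is routine.
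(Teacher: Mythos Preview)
Your proof is correct and follows essentially the same route as the paper's: both reduce the open-embedding condition via Remark~\ref{rems:open-emb}(i) to freeness of $\Ker(\pr_1^H)$ and injectivity of $H_1\times^{H_{12}}V_{12}\to V_1$, and both establish the latter by the same lifting argument using that $H\times^{H_2}V_2\to V$ is injective. Your isolation of the ``unique-lifting property'' for $f_2$ makes the key step a bit more explicit than in the paper, but the argument is the same.
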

\begin{proof}
 The morphisms $f_i:V_i\to V$ are \'etale, therefore, the fiber product 
$V_{12}$ exists in $\Sp$ and the projections
$\pr_i:V_{12}\to V_i$ are \'etale. Thus we have to verify
that the maps $[H_{12}\bs V_{12}]\to\relax[H_i\bs V_i]$ 
are open embeddings.  According to 
Remark~\ref{rems:open-emb}.(i), we need to check
two conditions. The first one,
that the kernel of the map $\pr_1:H_{12}\to H_1$ acts freely on $V_{12}$,
immediately follows  from the similar property of the map $f_2$. 
The second  is that the map 
$$
\alpha:H_1\times^{H_{12}}V_{12}\to V_1
$$
 is an open embedding.  This map is
  \'etale since $H_{12}$ acts freely on $H_1\times V_{12}$ and $\pr_1$
is \'etale. Thus, it is enough to check that $\alpha$ is injective. 

Assume that  we have
$$ h_i\in H_1,\quad (x_i,y_i)\in V_{12}\text{ for }i=1,2  $$
such that $h_1(x_1)=h_2(x_2)$.
We  need to find $(u,v)\in H_{12}$ such that
$$ h_1=h_2u \mathrm{\ and \ } (x_2,y_2)=(u,v)(x_1,y_1).  $$
Since we must set $u=h_2^{-1}h_1$, we need to show that 
there exists $v\in H_2$ satisfying the conditions
\begin{equation}
\label{eq:check1}
 f_2(v)=f_1(h_2^{-1}h_1)\mathrm{\ and\ } y_2=vy_1.
\end{equation}
Applying $f_1$ to the equality $h_1(x_1)=h_2(x_2)$ one obtains
$$ 
  f_1(h_1)f_2(y_1)=f_1(h_1)f_1(x_1)=f_1(h_2)f_1(x_2)=f_1(h_2)f_2(y_2).
$$
Since $f_2$ is a morphism of orbifold charts,  the above equation
implies the existence of $v\in H_2$  which satisfies 
$$ f_1(h_1)=f_1(h_2)f_2(v),\quad y_2=vy_1.$$
This is equivalent to equation~(\ref{eq:check1}).
\end{proof}

\subsubsection{Intersection of charts}
\label{intersection-of-charts}

Now we will describe an operation that assigns 
to every pair of objects in the chart category $A$
an orbifold chart. For manifolds it corresponds to the usual operation
of intersection of charts. 

\

\noindent {\bf Proposition-Definition.\ }
Let $c:A\to\Charts/X$ be an orbifold atlas.
There exists a natural operation that assigns to a pair of objects
$a_1, a_2 \in A$ a chart $c(a_1\cap a_2)$ together with morphisms
   $$  \pr_i:c(a_1\cap a_2)\to c(a_i),\ i=1,2,  $$
which  satisfy the following universal property.

For  each pair of morphisms 
      $$\alpha_1:b\to a_1,\ \alpha_2:b\to a_2$$
there exists a canonical morphism of charts 
     $$  c(\alpha_1,\alpha_2):c(b)\to c(a_1\cap a_2)  $$
such that 
      $$  \pr_i \circ c(\alpha_1,\alpha_2) = V(\alpha_i), \ i=1,2. $$
\ \\[2pt]
We call the chart
   $$ c(a_1\cap a_2)=(V(a_1\cap a_2),H(a_1\cap a_2),\pi(a_1\cap a_2))$$
the \emph{intersection} of $a_1$ and $a_2$.

\

This operation resembles a direct product operation, but it is not 
a direct product. We call it intersection because of the lack of a
more appropriate  term.

\

\begin{proof}
Let $c_i:=c(a_i)=(V_i,H_i,\pi_i),\ i=1,2$ be two orbifold charts. 
We will construct a chart $(V_{12},H_{12},\pi_{12})$ together with a
pair of maps  
$$\pr_i:(V_{12},H_{12},\pi_{12})\rTo(V_i,H_i,\pi_i)$$ 
satisfying the universal property.

Consider an open subset  $U=\pi_1(V_1)\cap\pi_2(V_2)$ of $X$. 
We can view  it as a Satake orbifold with the induced 
atlas of  orbifold charts, see~\ref{induced-atlas}.
The charts $(U_i,H_i,\pi_i)$, where $U_i=\pi_i^{-1}(U)$,
have the same image $U$ in $X$. Therefore, by 
Corollary~\ref{crl:globalchart}, the maps 
$[H_1\bs U_1]\to \relax[U]$ and $[H_2\bs U_2]\to\relax[U]$ 
are equivalences. 
Consider the 2-fiber product 
$$V_{12}=U_1\times_{\left[U\right]}U_2,$$
where the maps $U_i\to\relax[U]$ are defined as the compositions
$$ U_i\to \relax[H_i\bs U_i]\rTo \relax[U].$$

Since $V_{12}$ is a representable $\Sp$-orbifold,  we may assume that $V_{12}\in\Sp$. 

The group $H_2$ acts freely on $V_{12}$ with the 
quotient $U_1$. Similarly, the group $H_1$ acts freely on $V_{12}$
with the  quotient $U_2$. These actions commute and define an action
of $H_{12}=H_1\times H_2$ on $V_{12}$.  
Thus we have constructed an orbifold chart $(V_{12},H_{12},\pi_{12})$
with $\pi_{12}$ being the composition of the projection to $U_1$ and
$\pi_1$. We claim this is the chart we need.

Let $c=(W,H,\rho)$ be a chart. A map from $c$ to $c(a_1\cap a_2)$ is given
by a pair of maps $W\to V_{12}$ and $H\to H_1\times H_2$. 
It is uniquely defined by a triple $(f_1,f_2,\theta)$ where 
         $$f_i:c\to (U_i,H_i,\pi_i)$$  
are morphisms of charts and  $\theta: \psi_1\to \psi_2$ 
is an isomorphism between the two compositions
\begin{equation}
\label{eq:iso}
     \psi_i: W\rTo U_i\rTo \relax[H_i\bs U_i]\rTo \relax[U],\quad i=1,2.
\end{equation}

Let $\alpha_i:b\to a_i,\quad i=1,2,$ be two arrows in $A$.
We have  a pair of morphisms  
      $$ c(\alpha_i):c(b)=(W,H,\rho)\to(V_i,H_i,\pi_i). $$
Since $\rho(W)\subseteq U=\pi_1(V_1)\cap\pi_2(V_2)$, the morphisms
$c(\alpha_i)$ factor through $(U_i,H_i,\pi_i)$. 
By definition of realization, each of the compositions
\begin{equation}
\label{eq:iso1}
     \psi_1: W\rTo U_1\rTo \relax[H_1\bs U_1]\rTo \relax[U]
\end{equation}
and
\begin{equation}
\label{eq:iso2}
     \psi_2: W\rTo U_2\rTo \relax[H_2\bs U_2]\rTo \relax[U]
\end{equation}
is canonically isomorphic to the composition
$$ W\rTo \relax[\rho(W)]\rTo \relax[U].$$
Therefore, one has a canonical
choice of isomorphism between (\ref{eq:iso1}) and (\ref{eq:iso2}), so that
a map $c(b)\to c(a_1\cap a_2)$ is defined.
\end{proof}

\

Now we are ready to prove that the realization $[X]$ of a Satake
orbifold is an orbifold in the sense of Definition~\ref{def:orbi}.
This will be the last step in the proof of Theorem~\ref{satake-groupoid}.

\begin{thm}
\label{thm:realization-is-orbi}
Let $X$ be a Satake orbifold. Then its realization $[X]$ is an orbifold.
\end{thm}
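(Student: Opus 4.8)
The plan is to show that the $\Sp$-stack $[X]=\dirtLim(\cV)$ is an orbifold by exhibiting an explicit presentation, i.e.\ by constructing an $\Sp$-groupoid $Z_\bullet$ together with an equivalence $[Z_\bullet]\to[X]$. The natural candidates are obtained from the charts: set $Z_0=\coprod_{a\in A}V(a)$ and, for $Z_1$, use the intersection construction of Proposition-Definition~\ref{intersection-of-charts}, namely $Z_1=\coprod_{a_1,a_2\in A}V(a_1\cap a_2)$, with source and target given by the two projections $\pr_i:V(a_1\cap a_2)\to V(a_i)$. The composition $Z_1\times_{Z_0}Z_1\to Z_1$ will come from the universal property of intersections (a pair of maps $b\to a_1$, $b\to a_2$ induces $c(b)\to c(a_1\cap a_2)$), applied fibrewise. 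One first checks that this $Z_\bullet$ really is a groupoid object in $\Sp$: the only nonformal point is that the relevant fibre products exist, which holds because all the structure maps in sight are \'etale --- by Remark~\ref{rems:open-emb}.(ii) every morphism of charts has \'etale first component, and the projections $\pr_i$ out of an intersection are \'etale by construction (they are the base changes $U_i\times_{[U]}U_j\to U_i$).

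Next I would verify that $Z_\bullet$ is a $\Sp$-groupoid in the sense of Definition of~\ref{ss:over}, i.e.\ \'etale and separated. \'Etaleness of $s,t:Z_1\to Z_0$ is exactly the previous paragraph. Separatedness (properness of $(s,t):Z_1\to Z_0\times Z_0$) should be checked chart-by-chart: restricted over $V(a_1)\times V(a_2)$, the map is $V(a_1\cap a_2)\to V(a_1)\times V(a_2)$, which is the diagonal-type map of the global quotient situation and is proper because the groups $H(a_i)$ are finite and, locally on $X$, the charts are of the form $[H\bs U]$ with $H$ finite (Lemma~\ref{lem:orbi-neighborhood}); one reduces to the case of a single ambient chart where $[X]|_U\isom[H\bs V]$ by Corollary~\ref{crl:globalchart} and the assertion is the classical statement that a global quotient by a finite group has proper diagonal.

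The crux is then the equivalence $[Z_\bullet]\isom[X]=\dirtLim(\cV)$. Here I would use the description of $\dirtLim$ from~\ref{dirlimofstacks} together with the locality result Proposition~\ref{prp:independence} (independence of the atlas): the canonical maps $V(a)\to Z_0\to[Z_\bullet]$ are compatible up to the isomorphisms furnished by the intersection charts, so they assemble into a morphism $\dirtLim(\cV)\to[Z_\bullet]$; conversely, the presentation data $Z_0\to[X]$, $Z_1=Z_0\times_{[X]}Z_0$ (this last identification is where the universal property of $c(a_1\cap a_2)$ and Corollary~\ref{crl:globalchart} are used to identify $V(a_1\cap a_2)$ with the honest $2$-fibre product $V(a_1)\times_{[X]}V(a_2)$) gives the inverse. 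This is the main obstacle: one must show $Z_1\to Z_0\times_{[X]}Z_0$ is an equivalence, equivalently that for each pair $a_1,a_2$ the square
\begin{equation*}
\begin{diagram}
V(a_1\cap a_2) & \rTo & V(a_2) \\
\dTo & & \dTo \\
V(a_1) & \rTo & [X]
\end{diagram}
\end{equation*}
is $2$-Cartesian. By Proposition~\ref{prp:independence} this is local on $X$, so we may restrict to an open $U$ that is the image of a single chart; there $[X]|_U\isom[H\bs V]$ by Corollary~\ref{crl:globalchart}, the charts $c(a_i)$ restricted to $U$ become \'etale maps of the form $U_i\to[H\bs V]$, and the intersection was \emph{defined} precisely as $U_1\times_{[H\bs V]}U_2$ --- so the square is Cartesian by construction. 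Patching these local statements via Proposition~\ref{prp:independence} (applied to the subatlas supported on a covering of $X$ by chart-images) completes the identification, and hence shows $[X]\isom[Z_\bullet]$ is an orbifold. Finally the last sentence of Theorem~\ref{satake-groupoid} --- that each chart gives an open embedding $[H\bs V]\to[X]$ and that the coarse space of $[X]$ is $X$ --- follows from Corollary~\ref{crl:globalchart} applied to the chart-image open substacks together with~\ref{ss:coarse}.
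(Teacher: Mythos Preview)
Your groupoid $Z_\bullet$ is exactly the paper's $\cY$: objects $\coprod_a V(a)$, morphisms $\coprod_{a_1,a_2}V(a_1\cap a_2)$, structure maps the two projections. So the construction is the same; the divergence is in how you argue $[Z_\bullet]\simeq[X]$.

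The paper does not attempt to compute the $2$-fibre product $Z_0\times_{[X]}Z_0$. Instead it works directly with the \emph{definition} of $[X]$ as the stackification of $\dirLim(\cV)$: it introduces the auxiliary $\Sp$-category $\cX$ (objects $\coprod_a V(a)$, morphisms $\coprod_{\alpha\in\Mor A}V(s\alpha)$) representing the un-localized diagram, builds a functor $\iota:\cX\to\cY$, and then checks by hand that for each connected $M\in\Sp$ the groupoid $\cY(M)$ is the total localization of the category $\cX(M)$. The verification is an explicit calculation with the formula $\bar F(a,b,f)=F(\pr_2,f)\circ F(\pr_1,f)^{-1}$ for the extension of any functor $F:\cX(M)\to G$ to $\cY(M)$. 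Since $\dirLim(\cV)(M)$ is \emph{by definition} this localization, one concludes $\cY$ represents $\dirLim(\cV)$ and hence $[\cY]\simeq[X]$.

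Your route---show the square with vertices $V(a_1\cap a_2),V(a_1),V(a_2),[X]$ is $2$-Cartesian, and do this ``locally on $X$''---has a circularity. You invoke Proposition~\ref{prp:independence}, but that proposition compares realizations of two atlases of the \emph{same} space $X$; it does not say anything about restricting the stack $[X]$ to an open $U\subset X$, nor that the canonical map $[U]\to[X]$ of~\ref{induced-atlas} is fully faithful. To make sense of ``$[X]|_U\simeq[H\bs V]$'' you would need to know that $[U]\to[X]$ is an open embedding of stacks, or at least that the coarse space of $[X]$ is $X$ so that you can speak of the open substack over $U$. Both of these are consequences of Theorem~\ref{satake-groupoid}, established \emph{after} the present theorem using the explicit groupoid presentation. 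Without them, the reduction ``work over a single chart where $[X]$ becomes $[H\bs V]$'' is not available, and you are left having to compute $V(a_1)\times_{[X]}V(a_2)$ directly over a stack about which nothing is yet known beyond its colimit description---which is precisely what the paper's localization calculation accomplishes.
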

\begin{proof}
Let $c: A\to\Charts/X$  be an orbifold atlas of $X$,
$\cV:A\to\Sp$ be the obvious functor assigning $V(a)$ to $a\in A$.
We wish to present the stack
$\dirtLim(\cV)$ by an $\Sp$-groupoid. The problem here is in the fact
that the definition of $\dirLim(\cV)$ includes localization of the total
category which may destroy representability.

Fortunately, the intersection operation~\ref{intersection-of-charts}
allows one to present the localization in a very explicit way.

As it was done  in~\ref{dirlimofstacks}, we 
will interpret the functor 
$$ \cV:A\rTo\Sp,\ a\mapsto V(a) $$
as a category $\cX$ fibered over $A^\op\times\Sp$.
The fibers $\cX_{a,M}$ at $(a,M)\in A^\op\times\Sp$ are discrete;
one has $\cX_{a,M}=\Hom(M,V(a))$ for connected $M\in\Sp$.

The category $\cX$ considered as a fibered category over $\Sp$ comes
from a category \emph{in} $\Sp$ (which we denote by the same letter) defined
as follows
\begin{itemize}  
\item The objects of $\cX$ is $\displaystyle\coprod_{a\in A} V(a)$.

\item The morphisms of $\cX$ is
$\displaystyle\coprod_{\alpha\in\Mor(A)} V(s(\alpha))$

\item The map $s\!:\Mor(\cX)\to\Ob(\cX)$ restricted to the $\alpha$-component
is $\id_{V(s(\alpha))}$.
\item The map $t:\Mor(\cX)\to\Ob(\cX)$ restricted to the $\alpha$-component
is $V(\alpha)$.
\end{itemize}
Here, as before, $s\alpha$ 
and $t\alpha$ denote the source and the target of an arrow $\alpha$.

Now we will  present an \'etale groupoid $\cY$ in $\Sp$ such that the
corresponding  fibered category over $\Sp$ is obtained from $\cX$
by the full localization of the fibers. 
Thus $\cY$ will represent $\dirtLim(\cV)$. 

Define the groupoid $\cY$ as follows.
\begin{itemize} 
\item $\Ob(\cY)=\Ob(\cX)$.
\item $\displaystyle\Mor(\cY)=\coprod_{a_1,a_2\in A}V(a_1\cap a_2)$ 
(we are using here the notation of~\ref{intersection-of-charts}).
\item The maps $s,t:\Mor(\cY)\to\Ob(\cY)$ are just the projections
$\pr_1,\ \pr_2$ from 
$V(a_1\cap a_2)$ to $V(a_1)$ and to $V(a_2)$.
\end{itemize}

The structure maps $s,t$ are \'etale. 
The composition in $\cY$ is given by the canonical projections
$$  c(a_1\cap a_2)\times_{c(a_2)}c(a_2\cap a_3)
\rTo c(a_1\cap a_3),\ a_1,a_2,a_3\in A
$$
defined as follows.

Consider the chart
$$c=(V,H,\pi)= c(a_1\cap a_2)\times_{c(a_2)}c(a_2\cap a_3).$$
 A map from $c$ to $ c(a_1\cap a_3)$ is uniquely determined
by maps $c\to c(a_i),\ i=1,2,$
and an isomorphism between the two maps from $V$ 
to $[\pi_1(V_1)\cap\pi_3(V_3)]$.
The maps $c\to c(a_i),\ i=1,2,$ are defined by 
$$\pr_1:c(a_1\cap a_2)\to c(a_1) \mathrm{\ and\ }
\pr_2:c(a_2\cap a_3)\to c(a_3).$$

To get an isomorphism of the two maps from $V$ we notice
that all the realizations involved, $[\pi_i(V_i)]$ and their double and
triple intersections, are global quotient orbifolds by~\ref{crl:globalchart} 
and their inclusions are open embeddings of orbifolds. 
Since $\pi(V)$ belongs to the triple intersection 
$\pi_1(V_1)\cap\pi_2(V_2)\cap\pi_3(V_3)$, the isomorphisms between
the two maps from $V$ to $[\pi_1(V_1)\cap\pi_2(V_2)]$ 
and between the
two maps from $V$ to $[\pi_2(V_2)\cap\pi_3(V_3)]$ 
induced by the maps
$c\to c(a_1\cap a_2)$ and $c\to c(a_2\cap a_3)$
can be realized as isomorphisms between two pairs of maps from $V$ 
to $[\pi_1(V_1)\cap\pi_2(V_2)\cap\pi_3(V_3)]$.
Their composition, composed with the open embedding of
$[\pi_1(V_1)\cap\pi_2(V_2)\cap\pi_3(V_3)]$ into
$[\pi_1(V_1)\cap\pi_3(V_3)]$, yields the required datum.

\

The canonical map $\iota:\cX\to\cY$ of $\Sp$-categories is defined as follows.
It is identical on the objects. For any morphism $\alpha:a\to b$ 
in $A$ a canonical map $\iota_\alpha:c(a)\to c(a\cap b)$ corresponds
to the pair $(\id_a,\alpha)$. This induces a map
$V_{\iota_\alpha}:V(a)\to V(a\cap b)$ which assembles into
the map $\iota:\Mor(\cX)\to\Mor(\cY)$.

We claim that, after passage to associated stacks, $\cY$
becomes the full localization of $\cX$.

One has
$$\Ob\cX(M)=\Ob\cY(M)=\coprod_{a\in A}\Hom(M,V(a))=\{(a,f)|a\in A,
f:M\to V(a)\}.$$

Furthermore,
$$\Mor\cX(M)=\!\!\!\coprod_{\alpha\in\Mor(A)}\!\!\!\Hom(M,V({s\alpha}))=
\{(\alpha,f)|\alpha\in\Mor(A),f\!:\!M\to V({s\alpha})\},$$
where
$$
s(\alpha,f)=(s\alpha,f),       \mathrm{\ and \ }     t(\alpha,f)=
(t\alpha,V(\alpha)\circ f:M\to V(s\alpha)\to V(t\alpha)).
$$

Similarly,
$$
\Mor\cY(M)=\!\coprod_{a,b\in A}\!\Hom(M,V(a\cap b))=
\{(a,b,f)|a,b\in A,f:M\to V(a\cap b)\},
$$
where
\begin{equation}
  \label{eq:star1}
s(a,b,f)=(a,V(\pr_1)\circ f), \mathrm{\ and \ } t(a,b,f)=(b,V(\pr_2)\circ f).
\end{equation}
In the formula~(\ref{eq:star1})
the maps $\pr_1:c(a\cap b)\to c(a)$ and $\pr_2:c(a\cap b)\to c(b)$ 
are the standard projections.

The functor $\iota:\cX(M)\to\cY(M)$ assigns to an arrow $(\alpha,f)$ in 
$\cX(M)$ the arrow $\quad$ $(s\alpha,t\alpha,V(\id,\alpha)\circ f)$
where  $c(\id,\alpha):c(a)\to c(a\cap b)$ is defined by the maps
$$\id:a\to a, \mathrm{\ and\ } \alpha:a\to b$$ 
in $A$.

For a pair of arrows $\alpha:d\to a$ and $\beta:d\to b$ in $A$ a map
$V(\alpha,\beta):V(d)\to V(a\cap b)$ is \'etale. Moreover, the maps 
$V(\alpha,\beta)$ form an \'etale covering of $V(a\cap b)$. Let
$f_{\alpha,\beta}:M_{\alpha,\beta}\to V(d)$ be the map obtained from 
$f:M\to V(a\cap b)$ via the base change along $V(\alpha,\beta)$
and let $U(\alpha,\beta):M_{\alpha,\beta}\to M$ be the base change
of $F(\alpha,\beta)$. 

A direct calculation shows
that
\begin{equation}
\label{morY}
(a,b,G(\alpha,\beta)\circ f)\circ\iota(\alpha,f_{\alpha,\beta})=
\iota(\beta,f_{\alpha,\beta}).
\end{equation}

Let $G$ be a stack of groupoids over $\Sp$
and let $F:\cX\to G$ be a functor of fibered categories.
We claim there exists a unique 
$\bar{F}:\cY\to G$ such that $F=\bar{F}\circ\iota$.
On objects $\bar{F}$ must coincide with $F$, since
$\Ob(\cX(M))=\Ob(\cY(M))$ for all $M\in\Sp$.

From~(\ref{morY}) it follows how $\bar{F}$ should act on morphisms:
given $(a,b,f)$ with $a,b\in A$ and $f:M\to V(a\cap b)$ an element of
$\Mor\cY(M)$ we have for each $\alpha:d\to a,\ \beta:d\to b$
\begin{equation}
\label{morYtoG}
\bar{F}(a,b,G(\alpha,\beta)\circ f)=
F(\beta,f_{\alpha,\beta})\circ F(\alpha,f_{\alpha,\beta})^{-1}.
\end{equation}
Since $G$ is a stack and $G(\alpha,\beta):M_{\alpha,\beta}\to M$ form a covering,
$\bar{F}(a,b,f)$ is uniquely defined.  To prove its
existence, we have to check that $\bar{F}$ defined by~(\ref{morYtoG})
commutes with compositions. 
This is a straightforward calculation.

This completes the proof of Theorem~\ref{thm:realization-is-orbi}.
\end{proof}

Now we can finish the proof of Theorem~\ref{satake-groupoid}.
Since $[X]$ is represented by the groupoid $\cY$, with
        $$\Ob(\cY)=\coprod_{a\in A}V(a),$$ 
for every $a\in A$ we have an open suborbifold $U_a$
represented by the $\Sp$-groupoid $G_\bullet(a)=(G_0(a),G_1(a))$ with
$$G_0(a)=V(a) \mathrm{\  and \ } G_1(a)=V(a\cap a).$$
Since the set of arrows $G_1$ can be identified with
  $$V(a)\times_{\left[H_a\bs V_a\right]}V(a)=H(a)\times V(a),$$
we see that the open suborbifold can be identified 
with the global quotient $[H(a)\bs V(a)]$.
\eproof

\ 

Note the following important  corollaries of this theorem.

\begin{crl}
\label{crl:universal-property}
Let $X$ be a Satake orbifold and  let $[X]$ be its realization.
Let $\cY$ be an arbitrary orbifold.
Then a  map $f:[X]\to\cY$ of orbifolds  is determined by the
following data:
\begin{itemize}
\item 
For each $a\in A$ a map $$f_a:[H(a)\bs V(a)]\to\cY$$ 
(where, as usual, $(V(a),H(a),\pi(a))=c(a)$ is the chart corresponding to
$a$).
\item For each morphism $\alpha:a\to b$ in $A$
a $2$-morphism 
      $$\theta_\alpha:f_a\to f_b\circ\left[c(\alpha)\right]$$ 
where  for a morphism $\phi$ of orbifold charts we denote
by $[\phi]$ the corresponding map of quotient orbifolds.
\end{itemize}
These data are required to satisfy obvious
 compatibility condition for $\theta_\alpha$. 
\end{crl}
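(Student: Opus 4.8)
The statement is essentially the $2$-universal property of the colimit $[X]=\dirtLim(\cV)$ of~\ref{dirlimofstacks}, read off through the identifications $\dirtLim(\cV|_{BH(a)})=[H(a)\bs V(a)]$; the plan is to make this transport precise. First I would record the universal property itself. By construction $[X]=\dirtLim(\cV)$, where $\cV\colon A\to\MSt$ sends $a$ to the stack represented by $V(a)$ and an arrow $\alpha\colon a\to b$ to $V(\alpha)\colon V(a)\to V(b)$. Since $\dirLim(\cV)$ is, by definition, the localization of the total category of $\cV$ at the arrows coming from $\Mor(A)$, and one then passes to the associated stack, for any $\Sp$-stack $\cY$ the groupoid $\Hom([X],\cY)$ is canonically equivalent to the groupoid of \emph{pseudo-cocones} from $\cV$ to $\cY$: an object is a family of $1$-morphisms $g_a\colon V(a)\to\cY$ equipped with $2$-morphisms $\tau_\alpha\colon g_a\Rightarrow g_b\circ V(\alpha)$, one for each $\alpha\colon a\to b$ in $A$, satisfying $\tau_{\id_a}=\id_{g_a}$ and the cocycle identity $\tau_{\beta\alpha}=(\tau_\beta\cdot V(\alpha))\circ\tau_\alpha$; a morphism is the evident modification. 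I would simply invoke this, as it is the defining property of the $2$-colimit.

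Next I would match pseudo-cocones $(g_a,\tau_\alpha)$ with the data $(f_a,\theta_\alpha)$ of the statement. Given $a\in A$, restrict a pseudo-cocone to the full subcategory $BH(a)\subset A$ on the single object $a$, whose arrows are $\Aut(a)\xrightarrow{\ \iota\ }H(a)$ acting on $V(a)$: this is a pseudo-cocone from $\cV|_{BH(a)}$ to $\cY$, hence, since $\dirtLim(\cV|_{BH(a)})=[H(a)\bs V(a)]$ by~\ref{dirlimofstacks}, it amounts to a $1$-morphism $f_a\colon[H(a)\bs V(a)]\to\cY$ --- equivalently, $f_a$ is the restriction of $f$ to the open suborbifold $[U_a]\subset[X]$, $U_a=\pi(V(a))$, which by Corollary~\ref{crl:globalchart} and~\ref{induced-atlas} is $[H(a)\bs V(a)]$, so $f_a=f\circ\hat{\pi}_a$. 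For an arrow $\alpha\colon a\to b$ the chart morphism $c(\alpha)$ induces $[c(\alpha)]\colon[H(a)\bs V(a)]\to[H(b)\bs V(b)]$, and $\theta_\alpha\colon f_a\Rightarrow f_b\circ[c(\alpha)]$ is the descent of $\tau_\alpha$ along the \'etale surjection $q_a\colon V(a)\to[H(a)\bs V(a)]$: since $q_a$ is an epimorphism of stacks, a $2$-morphism between two $1$-morphisms out of $[H(a)\bs V(a)]$ is the same as a $2$-morphism between their pullbacks along $q_a$ compatible with the $H(a)$-equivariance, and $\tau_\alpha$ together with the cells $\tau_h$ $(h\in H(a))$ that exhibit $g_a=f_a\circ q_a$ is exactly such. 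Conversely $f_a$ yields $g_a$ by pulling back along $q_a$ with its tautological equivariance cells, and $\theta_\alpha$ yields $\tau_\alpha$ by whiskering with $q_a$. Because $A$ is a chart category, $\Hom_A(a,b)$ is an $\Aut(b)$-torsor, so all the $\tau_\alpha$ with target $b$ are determined by one of them together with the equivariance data at $b$ --- which is precisely what gets packaged by working with $f_b$ on the quotient $[H(b)\bs V(b)]$ rather than with $g_b$ on $V(b)$. Under this dictionary the cocycle identity for $\tau$ becomes the stated compatibility for $\theta$ and $\tau_{\id}=\id$ becomes $\theta_{\id}=\id$; naturality in $\cY$ is clear, which yields the asserted description of maps $[X]\to\cY$.

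I expect the one genuinely delicate point --- and the main obstacle --- to be verifying that the correspondence $\tau\leftrightarrow\theta$ above is an \emph{equivalence} of groupoids, and not merely a map: that it respects vertical composition of $2$-cells and is fully faithful and essentially surjective. This is descent of $1$- and $2$-morphisms along the atlas maps $q_a$, bookkept coherently over all of $\Mor(A)$. I would organize it around the notion of a \emph{normalized} pseudo-cocone --- one whose $\tau_h$ is the tautological equivariance $2$-cell of $g_a$ for every $h\in\Aut(a)$ --- proving that every pseudo-cocone is connected by a unique modification to a normalized one, and then observing that normalized pseudo-cocones are literally the data $(f_a,\theta_\alpha)$. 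The remaining verifications are short diagram chases; for these I would use Lemma~\ref{lem:explicit-asso} to compute $2$-morphisms of maps into global quotients and, where convenient, the intersection charts $c(a_1\cap a_2)$ of~\ref{intersection-of-charts} to make the gluing explicit. Everything else is routine.
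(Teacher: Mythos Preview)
Your proof is correct and is exactly the argument the paper has in mind: the corollary is stated without proof, as an immediate consequence of the very definition of $[X]=\dirtLim(\cV)$ as a $2$-colimit (together with the identification $\dirtLim(\cV|_{BH(a)})\simeq[H(a)\bs V(a)]$ from~\ref{dirlimofstacks}). Your discussion of ``normalized'' pseudo-cocones is slightly more than is needed---the restriction of a pseudo-cocone to $BH(a)$ already \emph{is} the datum of $f_a$, so no separate normalization step is required---but the bookkeeping you outline is harmless and the overall argument is sound.
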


\begin{prp}
The categories of sheaves  (or categories of vector bundles) on a
Satake orbifold $X$ and its realization $[X]$ are canonically equivalent. 
\end{prp}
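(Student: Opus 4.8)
The goal is to establish an equivalence of categories between sheaves (resp.\ vector bundles) on a Satake orbifold $X$ and sheaves (resp.\ vector bundles) on its realization $[X]$, as defined in~\ref{dfn:sheaf}. The key is that both sides admit descriptions as compatible collections of data indexed by the chart category $A$, so I would prove the equivalence by constructing functors in both directions and checking they are mutually quasi-inverse. First I would unwind what a sheaf on $[X]$ amounts to. Since $[X]$ is covered by the open suborbifolds $U_a=[H(a)\bs V(a)]$ indexed by $a\in A$, a sheaf $F$ on $[X]$ restricts to a sheaf $F_a$ on each $U_a$, and by the description of sheaves on a global quotient (see~\ref{sec:sheaves}), $F_a$ is the same as an $H(a)$-equivariant sheaf on $V(a)$. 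For each morphism $\alpha:a\to b$ in $A$, the induced open embedding $[c(\alpha)]:U_a\to U_b$ gives a comparison isomorphism $[c(\alpha)]^*F_b\isom F_a$, and these are compatible under composition (and under the torsor action of $\Aut(b)$ on $\Hom_A(a,b)$, using that inner automorphisms act by the equivariant structure). Conversely, a sheaf on $X$ in the sense of~\ref{dfn:sheaf} is precisely a compatible collection of sheaves over all \'etale neighborhoods of $[X]$; restricting to the cofinal family of \'etale charts coming from the $V(a)$ recovers exactly the same indexed data.

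Concretely, I would introduce the intermediate category $\mathtt{Sh}(A,c)$ whose objects are families $(F_a)_{a\in A}$ where $F_a$ is an $H(a)$-equivariant sheaf on $V(a)$, together with isomorphisms $\theta_\alpha\colon V(\alpha)^*F_b\to F_a$ for each $\alpha\in\Mor(A)$, compatible with composition and with the torsor structure on Hom-sets. Then I would construct:
\begin{itemize}
\item a functor $\mathrm{Sh}([X])\to\mathtt{Sh}(A,c)$ by restriction along the open embeddings $\hat\pi_a\colon [H(a)\bs V(a)]\to[X]$ of Theorem~\ref{satake-groupoid};
\item a functor $\mathrm{Sh}(X)\to\mathtt{Sh}(A,c)$ by evaluating a sheaf-on-$X$-in-the-sense-of-\ref{dfn:sheaf} at the \'etale morphisms $V(a)\to V(a)/H(a)\subset X$ lifted appropriately — but here I must be careful, since~\ref{dfn:sheaf} was stated for orbifolds, so really the cleanest route is to work only with $\mathrm{Sh}([X])$ and $\mathtt{Sh}(A,c)$.
\end{itemize}
The core claim is then that restriction $\mathrm{Sh}([X])\to\mathtt{Sh}(A,c)$ is an equivalence. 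Essential surjectivity: given $(F_a,\theta_\alpha)$, I glue using the fact that $\{U_a\}$ is an open cover of $[X]$ and that on overlaps the data are determined by the intersection charts $c(a_1\cap a_2)$ from~\ref{intersection-of-charts}; the compatibility of the $\theta_\alpha$ gives exactly the descent datum needed, because $[X]$ is realized by the groupoid $\cY$ with $\Ob(\cY)=\coprod_a V(a)$ and $\Mor(\cY)=\coprod_{a_1,a_2}V(a_1\cap a_2)$ (Theorem~\ref{thm:realization-is-orbi}), so a sheaf on $[X]$ is literally a sheaf on $\Ob(\cY)$ with descent along $\Mor(\cY)$. Full faithfulness follows from the sheaf axiom~\ref{stack}(i) applied to the cover $\{U_a\}$ of $[X]$, which is exactly condition~\ref{satake-orbifold-atlas}(ii) guaranteeing enough overlaps.

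\textbf{The main obstacle.} The subtle point is matching the two notions of ``compatible collection.'' On the realization side, the index $2$-category has morphisms parametrized by $\Hom_A(a,b)$, which is an $\Aut(b)$-torsor, not a single map; so the comparison isomorphisms $\theta_\alpha$ must be shown to transform correctly under the torsor action, and in particular an endomorphism $\phi\in\Aut(a)=H(a)$ must act on $F_a$ via its $H(a)$-equivariant structure — this is where the compatibility datum $\iota\colon\Aut(a)\to H_c(a)$ in the definition~\ref{satake-orbifold-atlas} of an orbifold atlas is essential. Once one checks that the equivariant structure on each $F_a$ is forced to coincide with the action of $\Aut(a)$ through the $\theta$'s (using that inner automorphisms of a chart act trivially up to the specified isomorphism), the gluing goes through verbatim as in the proof that $[X]$ is independent of the atlas (Proposition~\ref{prp:independence}), and the rest is the standard descent bookkeeping. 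The vector bundle case is identical, replacing ``sheaf'' by ``locally free sheaf of finite rank'' throughout, since all the functors involved ($V(\alpha)^*$, restriction to $U_a$, gluing) preserve local freeness.
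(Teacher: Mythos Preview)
Your approach is correct and is essentially a fully spelled-out version of what the paper leaves implicit: the paper's entire proof is the single sentence ``This result follows from our construction of the realization of a Satake orbifold,'' meaning exactly that the explicit groupoid presentation $\cY$ of $[X]$ from Theorem~\ref{thm:realization-is-orbi} (with $\Ob(\cY)=\coprod_a V(a)$ and $\Mor(\cY)=\coprod_{a_1,a_2}V(a_1\cap a_2)$) identifies sheaves on $[X]$ with descent data along $\cY$, which is precisely your intermediate category $\mathtt{Sh}(A,c)$. Your observation that the torsor structure on $\Hom_A(a,b)$ and the isomorphism $\iota:\Aut(a)\to H(a)$ are what guarantee the equivariant structures match is the one point the paper does not make explicit, and it is the right thing to check.
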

\begin{proof}
This result follows from our construction of the realization
of a Satake orbifold.
\end{proof}

\section{Algebraic moduli versus analytic moduli}
\label{gaga}

\subsection{Two ways of passing from algebraic to analytic families}
The two ways of looking at orbifolds discussed in Sections~\ref{sec:orbi}
and~\ref{sec:satake}---as groupoid-valued functors on 
a certain category of spaces (manifolds) 
and as geometric objects represented by groupoids in the category of 
spaces---suggest two possible ways of passing from one category of 
manifolds to another.
We are particularly interested in the passage from the category of 
 schemes  (of finite type over $\C$) to the category of analytic spaces.

The first way of passing from schemes to complex spaces is to replace
a functor on the category of 
 schemes with a functor on complex spaces. For example, 
the functor of families of stable curves over schemes becomes
the functor of families of stable Riemann surfaces.\footnote{This procedure
is not defined for an arbitrary Deligne-Mumford stack.} 

The second way is the change of the base category 
mentioned in~\ref{change-basecat}.  That is for a Deligne-Mumford 
stack $\cX$ represented by a groupoid $X_\bullet$
we can  apply the analytification functor
which produces  a groupoid $X_\bullet^{\an}$ representing an orbifold in the 
analytic category.

Of course, since the first procedure
is not even formally defined, we cannot
expect that  these two processes always give
the same result.

However, as we show in this section, for the moduli spaces of stable
punctured curves and of admissible coverings the two procedures are
equivalent. 

Let $\bsM_{g,n}$ be the stack of stable complex  curves
 of genus $g$  with $n$ punctures 
and let $\Adm_{g,n,d}$ (resp.\ $\Adm_{g,n}(H)$) be
the stack of admissible  coverings of degree $d$ 
(resp.\ of admissible $H$-coverings).

These stacks are proper Deligne-Mumford 
stacks and, therefore, $\Sp$-orbifolds where $\Sp$ is the category of schemes.

In this section we prove the following result.
\begin{thm}
\label{thm:gaga}
The analytification of the stack $\bsM_{g,n}$
(resp., of $\Adm_{g,n,d}$, resp., of $\Adm_{g,n}(H)$)
represents  the functor of  analytic families of stable 
curves of genus $g$ with $n$ punctures (resp., of 
admissible coverings of degree $d$ of curves of genus $g$ 
unramified outside of $n$ points, resp., of $H$-admissible coverings).  
\end{thm}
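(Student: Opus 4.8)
The plan is to prove the statement separately for each of the three moduli problems, but via one common mechanism: a (stable curve, admissible covering) over a complex space $S$ is, Zariski-locally on $S$, pulled back from the algebraic stack via a holomorphic map, and this local datum glues. The key input is that the algebraic stacks $\bsM_{g,n}$, $\Adm_{g,n,d}$, $\Adm_{g,n}(H)$ are \emph{smooth} (for $\bsM_{g,n}$) or at least proper Deligne-Mumford stacks that admit smooth presentations by schemes of finite type over $\C$, so their analytifications are orbifolds in the sense of Section~\ref{sec:orbi} by the change-of-base-category construction of~\ref{change-basecat}. Fix a smooth presentation $U_\bullet$ of, say, $\bsM_{g,n}$ by schemes over $\C$; applying analytification gives an $\Sp$-groupoid $U_\bullet^{\an}$ presenting the analytic orbifold $\bsM_{g,n}^{\an}$. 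On $U_0$ there lives the universal algebraic family of stable curves; analytifying it (using Serre's GAGA-type statement for the projective morphism $\cX\to U_0$, which is legitimate since $U_0$ is a scheme of finite type over $\C$) produces an analytic family of stable Riemann surfaces over $U_0^{\an}$, with descent data down to the groupoid. This shows there is a natural transformation
\begin{equation}
\Hom_{\mathtt{An}}(S,\bsM_{g,n}^{\an})\rTo \{\text{analytic families of stable curves over }S\};
\end{equation}
the content of the theorem is that this transformation is an equivalence of groupoids, functorially in $S$.

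For full faithfulness, given two analytic families $X_1, X_2$ over $S$, an isomorphism of families is a section of the analytic $\Isom$-sheaf, whereas a $2$-morphism between the corresponding maps to $\bsM_{g,n}^{\an}$ is a section of the analytification of the algebraic $\Isom$-scheme; one checks these agree because $\Isom(X_1,X_2)\to S$ is finite (stable curves with punctures have finite automorphism groups and the $\Isom$-functor is representable by a scheme finite over the base, by Deligne--Mumford), and a finite morphism of finite-type $\C$-schemes has the same sections after analytification over any complex space $S$ mapping in — this is the GAGA comparison for finite morphisms, extended from the compact case to arbitrary $S$ by working locally and using that $\Isom$ is \emph{finite}, hence proper, over $S$ after base change. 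For essential surjectivity one must show that an arbitrary analytic family $\pi\colon X\to S$ of stable Riemann surfaces is, locally on $S$, the analytification of an algebraic family; equivalently, it is classified by a holomorphic map $S\to\bsM_{g,n}^{\an}$. This is where the deformation theory enters: the completed local ring of $\bsM_{g,n}$ at a point is a power series ring (smoothness of the stack, via Deligne--Mumford / Knudsen), the versal deformation of a stable curve is effective and algebraizable (Artin's algebraization, or here just the explicit plumbing description of $\bsM_{g,n}$), and the formal map classifying $\widehat{X}_{s}$ over the formal neighborhood of $s\in S$ converges because both sides are analytic and the versal family is induced from an actual algebraic (indeed projective) family over an étale chart $U_0$. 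Thus one gets a holomorphic lift $S\supset (\text{nbhd of }s)\to U_0^{\an}\to\bsM_{g,n}^{\an}$ inducing $X$; these local lifts differ by the groupoid relations, so they define a global morphism $S\to\bsM_{g,n}^{\an}$.

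For $\Adm_{g,n,d}$ and $\Adm_{g,n}(H)$ the argument is the same, with two extra ingredients: the structural description of admissible coverings from~\cite[Sect.~4]{ACV} identifying $\Adm$ with a moduli stack that is finite (the forgetful map $\Adm_{g,n,d}\to\bsM_{g,n}$ is finite, since an admissible covering of a fixed stable curve is a point of a finite scheme), so $\Adm^{\an}$ is obtained by analytifying a presentation and its universal covering family; and the analytic statement that an analytic family of admissible coverings $\wt{X}\to X\to S$ is classified by a map to $\Adm^{\an}$, which follows by combining the $\bsM$-case (classifying $X$) with the finiteness of $\Adm\to\bsM$ (an admissible covering of a given analytic family is a section of the analytification of a finite $\bsM$-scheme, hence algebraic and unique).

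The main obstacle, and the step that needs real care rather than formal nonsense, is essential surjectivity in the non-compact / relative setting: GAGA in its classical form compares coherent sheaves and morphisms on a \emph{fixed} compact complex space with the algebraic category, but here the base $S$ is an arbitrary (non-compact, possibly singular) complex space, and one is comparing \emph{families} over it. The resolution is to reduce everything to the representability of $\bsM_{g,n}$ and $\Adm$ by finite-type $\C$-schemes after an étale (here: analytic-local) base change, so that the only GAGA statement actually invoked is the comparison for a \emph{projective} morphism $\cX\to U_0$ (resp.\ for \emph{finite} morphisms such as $\Isom$ and $\Adm\to\bsM$) with $U_0$ of finite type over $\C$; this reduction, together with the effectivity and algebraizability of the local (plumbing) deformations of stable curves, is what makes the two procedures agree.
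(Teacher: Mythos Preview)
Your approach differs substantially from the paper's, and the essential surjectivity argument has a real gap.

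The paper does not use deformation theory. It proceeds in two steps. First (Proposition~\ref{anfamily-algobjects}), a formal argument via Hakim's framework of schemes over locally ringed spaces shows that $\bsM_{g,n}^{\an}$ represents the functor of \emph{algebraic} families of stable curves parametrized by complex-analytic bases. Second (Theorem~\ref{anal-is-alg-curves}), the paper shows that any analytic family $\pi\colon X\to S$ of stable punctured curves is automatically projective: the line bundle $L=\bigl(\omega_\pi\otimes\cO_X(-\sum_i D_i)\bigr)^{\otimes 3}$, with $\omega_\pi$ the relative dualizing sheaf in the analytic category~\cite{RRV}, is relatively very ample on every fiber, so it gives a finite morphism $X\to\P((\pi_*L)^*)$. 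Hakim's relative GAGA~\cite{hakim} then identifies $X\to S$ with the analytification of an algebraic family over the locally ringed space $(S,\cO_S)$, and identifies sections and automorphisms as well. For $\Adm_{g,n,d}$ and $\Adm_{g,n}(H)$ one observes that the covering $C\to X$ is specified by a coherent sheaf of $\cO_X$-algebras, so relative GAGA applies once more (Theorem~\ref{anal-is-alg-adm}).

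Your essential surjectivity argument asserts that the formal classifying map at $s\in S$ ``converges because both sides are analytic and the versal family is induced from an actual algebraic (indeed projective) family over an \'etale chart $U_0$.'' This step is not justified. Formal versality produces a map between completed local rings, but there is no general principle by which such a map is induced by a holomorphic map on an open neighborhood; ``both sides are analytic'' is not an argument, and the projectivity of the \emph{universal} family $\cX\to U_0$ is beside the point---what must be classified is the \emph{given} family $X\to S$ over an arbitrary non-compact base, and you have not produced the holomorphic map from $S$. To make your route work you would need an independent input: either analytic Artin approximation, or a proof (via Kuranishi--Grauert type results) that the analytification of the algebraic versal family is versal for the \emph{analytic} deformation functor of a stable nodal curve. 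The latter is essentially equivalent to the theorem itself. The paper's device of exhibiting $\omega_\pi(\sum D_i)^{\otimes 3}$ as relatively ample sidesteps this convergence issue entirely by algebraizing $X\to S$ directly.
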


Here is a plan of the proof. First, following M.\,Hakim~\cite{hakim},
we define algebraic families  whose base is an arbitrary
 locally ringed topological space. 
The analytifications of the algebraic moduli stacks
automatically represent the corresponding algebraic families
with bases in the category of complex-analytic spaces. 
The rest follows from the fact proved in Section~\ref{sec:an-an}
that any complex-analytic family of
stable complex curves is necessarily projective (and therefore algebraic).
This is  a generalization of the well-known fact that every compact
complex manifold of dimension one  is projective.
Thus our proof does not work for families of algebraic varieties of dimension
higher than one.

\subsection{Analytic families of algebraic curves}
\label{sec:analytic_fams}
When we  speak about families of varieties parametrized by an 
analytic space, we usually mean an analytic family of the corresponding
complex analytic spaces. Sometimes it is important to  have both
``analytic'' and ``algebraic'' directions. This can be done using the notion
of a family of schemes parametrized by a ringed topological space introduced
by M.\,Hakim~\cite{hakim} (in a much greater generality).

We present below a definition of a family of objects of a stack $\cX$ 
parametrized by a locally ringed topological space. 
For the stack  $\cX=\bsM_{g,n}$  this gives the notion
of an analytic family of algebraic curves.
Similarly,  for  $\cX=\Adm_{g,n,d}$ we get the notion of an 
analytic family of algebraic admissible coverings.
 
\

\begin{dfn}
Let $(X,\cO)$ be a locally ringed site and let $\cX$ be a stack of groupoids
on the category of affine schemes.
Let $\Pre$-$\cX(X,\cO)$ be the fibered category over $X$ whose
fiber  over $U\in X$ is the groupoid 
     $$\Pre\mathrm{-}\cX(X,\cO)_U=\cX(\cO(U)).$$
Denote by $\cX(X,\cO)$ the groupoid of  global sections of the stack
associated to the fibered category $\Pre$-$\cX(X,\cO)$.
Objects of $\cX(X,\cO)$ are called {\em families of objects of $\cX$
parametrized by $(X,\cO_X)$.} 
\end{dfn}
For $\cX=\Sch$ this gives to the notion of
a scheme over $(X,\cO)$ (see~\cite{hakim}); 
for $\cX=\bsM_{g,n}$ we get the notion of an analytic family of stable
algebraic curves, and  $\cX=\Adm_{g,n,d}$ 
we obtain the notion of an analytic family of algebraic 
admissible coverings.

Thus, by definition, \emph{a scheme over $(X,\cO)$} is  given
by a collection of the following data.
\begin{itemize}
\item An open covering $\{U_i\}$ of $X$;
\item A collection of schemes $Y_i$ over $\Spec\ \cO(U_i)$;
\item A compatible collection of isomorphisms of the pullbacks of $Y_i$
and of $Y_j$ to $\Spec\ \cO(U_{ij})$.
\end{itemize}

A similar description can be given 
for $\cX=\bsM_{g,n}$ or $\Adm_{g,n,d}$.

\begin{prp}
\label{anfamily-algobjects}
Let $\cX$ be an algebraic  Deligne-Mumford
stack of finite type over $\C$. Then
the functor $(X,\cO)\mapsto\cX(X,\cO)$  from
the category of analytic spaces to the category of
groupoids is representable by the analytification of $\cX$.
\end{prp}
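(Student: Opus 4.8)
The plan is to reduce the assertion to the case where $\cX$ is a scheme, where it is essentially the universal property of the analytification functor, and then to bootstrap to the general case by étale descent along an atlas. So I would first treat the case $\cX=Y$, a separated scheme of finite type over $\C$. Here $\Pre$-$\cX(X,\cO)_U=Y(\cO(U))=\Hom_{\Sch/\C}(\Spec\cO(U),Y)$, and unwinding the definition of the stack associated to this fibered category over the topological space $X$ identifies $\cX(X,\cO)$ with the set $\Hom_{\mathtt{LRS}/\C}((X,\cO),Y)$ of morphisms of locally ringed spaces over $\C$ (a morphism being assembled from $\C$-algebra homomorphisms $\cO(V)\to\cO(U)$ over an affine open cover $\{V\}$ of $Y$ and a compatible open cover $\{U\}$ of $X$). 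By Grothendieck's construction of the analytification, \cite[expos\'e~XII]{sga1}, the analytic space $Y^{\an}$ represents the functor $T\mapsto\Hom_{\mathtt{LRS}/\C}(T,Y)$ on analytic spaces, so $\cX(X,\cO)=Y^{\an}(X)$, naturally in $(X,\cO)$; this is also the content of Hakim's theory of relative schemes over a ringed space (\cite{hakim}).

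For a general Deligne--Mumford stack $\cX$ of finite type over $\C$ I would choose an étale atlas $p\colon P\to\cX$ with $P$ a separated scheme of finite type over $\C$ and set $R=P\times_{\cX}P$; since $p$ is étale, $R$ is again such a scheme and $\cX=[R\rightrightarrows P]$. By the change-of-base-category construction of~\ref{change-basecat} the analytification is $\cX^{\an}=[R^{\an}\rightrightarrows P^{\an}]$, so by Lemma~\ref{lem:explicit-asso} the groupoid $\cX^{\an}(X)$ consists of pairs $\bigl(Z\to X,\ Z_{\bullet}\to(R^{\an}\rightrightarrows P^{\an})\bigr)$ with $Z\to X$ an étale surjection of analytic spaces. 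On the other side, unwinding Hakim's definition, an object of $\cX(X,\cO)$ is a collection, over the members $U_i$ of an open cover of $X$, of objects of $\cX(\cO(U_i))=[R\rightrightarrows P](\cO(U_i))$ with gluing isomorphisms over the overlaps; and by Lemma~\ref{lem:explicit-asso} again each such object is an étale surjection $W_i\to\Spec\cO(U_i)$ together with a morphism of groupoids $W_{i\bullet}\to(R\rightrightarrows P)$. Using the scheme case for $P$ and for $R$ to identify $\Hom_{\an}(Z,P^{\an})$ and $\Hom_{\an}(Z\times_XZ,R^{\an})$ with the corresponding families over the analytic bases $Z$ and $Z\times_XZ$, one sees that a morphism of analytic groupoids $Z_{\bullet}\to(R^{\an}\rightrightarrows P^{\an})$ carries precisely the same information as a descent datum defining an object of $\cX(X,\cO)$. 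Tracing morphisms through in the same way, with the help of Proposition~\ref{prp:hom-asso}, yields the desired natural equivalence $\cX(X,\cO)\simeq\cX^{\an}(X)$.

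The main obstacle --- and essentially the only real work --- is this last reduction. Hakim's construction of $\cX(X,\cO)$ involves two successive sheafifications: one in the étale topology on affine schemes, by which $\cX=[R\rightrightarrows P]$ is built, and one in the topology of open covers of the space $X$; these cannot be interchanged naively, because an open cover of $X$ need not give an étale cover of $\Spec\cO(X)$. What makes the argument go through is the fact, recorded in~\ref{ss:list}, that an étale map of analytic spaces is a local isomorphism: this lets one refine, locally on $X$, any étale surjection $Z\to X$ arising on the analytic side to an honest open cover of $X$, after which the étale sheafification over each $\Spec\cO(U_i)$ is harmless and the scheme case applies directly. The cleanest way to organize this is probably to check that the evident comparison $1$-morphism from the stack $(X,\cO)\mapsto\cX(X,\cO)$ to $\cX^{\an}$ is a morphism of stacks in the variable $X$ whose pullback along the étale atlas $P^{\an}\to\cX^{\an}$ is the identification $(X,\cO)\mapsto P(X,\cO)$ with $P^{\an}$ furnished by the scheme case; since $P^{\an}\to\cX^{\an}$ is an étale surjection, the comparison morphism is then an equivalence.
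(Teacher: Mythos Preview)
Your proposal is correct and follows essentially the same approach as the paper: both reduce the statement to the universal property of analytification for schemes of finite type over $\C$ (your ``scheme case'') and then pass to the groupoid presentation $\cX=[R\rightrightarrows P]$. The paper's proof is considerably terser---it simply records the two classical facts that $\Hom_{\LR}((X,\cO),\Spec A)\cong\Hom_{\COM}(A,\Gamma(X,\cO))$ and that any locally-ringed-space map from an analytic space to a finite-type $\C$-scheme $M$ factors canonically through $M^{\an}$, and declares the result immediate---whereas you spell out the descent along the atlas and explicitly address the interaction between the two sheafifications (\'etale on schemes versus open covers on $X$), which the paper leaves implicit.
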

\begin{proof}
The analytification $\cX^\an$ is defined as follows. Let $\cX$ be presented
by a groupoid $X_\bullet$ where $X_i$, $i=0,1$ are schemes of finite
type over $\C$.  Then  $\cX^\an$ 
is defined as the stack associated to the groupoid $X^\an_\bullet$.

The  statement of the proposition  follows immediately from 
the following facts.
\begin{itemize}
\item A map $\Hom_{\LR}((X,\cO),\Spec A)\to\Hom_{\COM}(A,\Gamma(X,\cO))$ is a
bijection. Here the left-hand side $\Hom$ is taken in the category of
locally ringed spaces and the right-hand side $\Hom$ in the category of
commutative rings.
\item A map $X\to M$ from an analytic space $X$ to a scheme of locally
finite type over $\C$ in the category of locally ringed spaces lifts
canonically to a map $X\to M^\an$ of analytic spaces.
\end{itemize}
\end{proof}

Thus, according to Proposition~\ref{anfamily-algobjects}, the
complex-analytic stack $\bsM_{g,n}^\an$ represents (algebraic) families
of stable  curves of genus $g$ 
with $n$ punctures parametrized by complex-analytic spaces. A similar
claim is true for $\Adm_{g,n,d}^\an$ and $\Adm_{g,n}(H)^\an$. 

\subsection{Analytic families of analytic curves}
\label{sec:an-an}

Here we prove that any analytic family of stable curves (or of stable
admissible coverings or of stable admissible $H$-coverings) 
is algebraic, i.e.\ it can be obtained as the analytification of
an algebraic family.  Together with Proposition~\ref{anfamily-algobjects}
this will give Theorem~\ref{thm:gaga}.

\

\begin{thm}
\label{anal-is-alg-curves}
Any analytic family $(\pi: X\to S,\sigma_1,\ldots, \sigma_n)$ 
of stable punctured curves is projective. In particular, it is an
analytification of an algebraic family over $S$.  
\end{thm}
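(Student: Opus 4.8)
\emph{Proof proposal.} The plan is to first produce a relatively ample line bundle on $X$, which gives projectivity over $S$, and then to algebraize the resulting family of subvarieties of a projective space using representability of the Hilbert functor over locally ringed spaces together with the two facts about analytification recorded in the proof of Proposition~\ref{anfamily-algobjects}. The admissible covering versions will follow by the same argument, replacing the relevant parameter scheme.

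\emph{Step 1: projectivity over $S$.} Since $\pi\colon X\to S$ is a flat proper family whose fibres have at worst nodal singularities, $\pi$ is a local complete intersection morphism and the relative dualizing sheaf $\omega_{X/S}$ is invertible, restricting on each fibre to the dualizing sheaf of the corresponding stable curve. Set $\mathcal{L}=\omega_{X/S}^{\otimes 3}(3\sum_i\sigma_i)$. For a single stable $n$-pointed curve $(C,p_1,\dots,p_n)$, stability forces $\omega_C(\sum p_i)$ to have strictly positive degree on every irreducible component, so $\mathcal{L}|_C$ is very ample with $H^1(C,\mathcal{L}|_C)=0$; this is the classical input underlying the construction of $\bsM_{g,n}$. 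By Grauert's direct image theorem and the analytic cohomology-and-base-change theorem, $E:=\pi_*\mathcal{L}$ is therefore a locally free $\mathcal{O}_S$-module of rank $5(g-1)+3n$ whose formation commutes with arbitrary base change, the natural map $\pi^*E\to\mathcal{L}$ is surjective, and --- checking fibrewise by Nakayama and properness --- the induced $S$-morphism $X\to\mathbb{P}_S(E)$ is a closed embedding. Thus $(\pi,\sigma_\bullet)$ is projective over $S$.

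\emph{Step 2: a local model and algebraization.} Being algebraic is local on $S$, so we may shrink $S$ so that $E$ is free; then $X$ is a closed analytic subspace of $\mathbb{P}^N_S$, flat over $S$, with fibrewise Hilbert polynomial $P(t)=3(2g-2+n)t+1-g$ relative to $\mathcal{O}(1)$, and the images of the sections $\sigma_i$ are flat closed subspaces over $S$ as well. Such flat families of $n$-pointed closed subschemes of $\mathbb{P}^N$ with fixed numerical invariants, parametrized by an arbitrary locally ringed $\mathbb{C}$-space, are classified by a $\mathbb{C}$-scheme $\mathcal{H}$ of finite type --- a locally closed subscheme of a product of Hilbert schemes of $\mathbb{P}^N$, namely the scheme on which the construction of $\bsM_{g,n}$ is based. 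Hence our analytic data amount to a morphism of locally ringed spaces $(S,\mathcal{O}_S)\to\mathcal{H}$. Covering $\mathcal{H}$ by affine opens $\Spec A_\alpha$ and letting $S_\alpha$ be the preimage of $\Spec A_\alpha$, the restriction $(S_\alpha,\mathcal{O}_S|_{S_\alpha})\to\Spec A_\alpha$ corresponds to a ring map $A_\alpha\to\mathcal{O}_S(S_\alpha)$ (first bullet in the proof of Proposition~\ref{anfamily-algobjects}); pulling the universal pointed curve over $\Spec A_\alpha$ back along $\Spec\mathcal{O}_S(S_\alpha)\to\Spec A_\alpha$ and gluing over $\alpha$ yields an algebraic family of stable $n$-pointed curves over $(S,\mathcal{O}_S)$ in the sense of Section~\ref{sec:analytic_fams}. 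Its analytification is canonically identified with $(X\to S,\sigma_\bullet)$ by the second bullet in the proof of Proposition~\ref{anfamily-algobjects}, the compatibility of the comparison maps being checked on each chart $\Spec A_\alpha$.

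\emph{Main obstacle.} The delicate point is the algebraization in Step~2: one must know that flat families of subschemes of a fixed projective space over an arbitrary --- in particular non-reduced --- analytic base are classified by the ordinary algebraic Hilbert scheme, i.e.\ a relative analytic-to-algebraic (GAGA-type) representability statement of Hakim's type, and one must carry the marked points (and, in the covering case, the covering datum) along throughout. Step~1, by contrast, is essentially routine once the fibrewise very-ampleness of the log-tricanonical bundle on a nodal curve is granted and Grauert's coherence machinery is available.
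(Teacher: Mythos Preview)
Your Step~1 matches the paper's argument: both use the log-tricanonical bundle to produce a map from $X$ into a projective bundle over $S$. The paper is terser---it only argues that the resulting map $j:X\to\P((\pi_*L)^*)$ is finite (fibrewise closed embedding plus properness), hence $\pi$ is projective, rather than verifying that $j$ is a closed embedding as you do---but the substance is the same. (The paper writes $\omega_\pi\otimes\cO_X(-\sum D_i)$ where you write $\omega_{X/S}(\sum\sigma_i)$; your sign is the standard one.)

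Your Step~2 takes a different route. Once projectivity is in hand, the paper simply invokes Hakim's relative GAGA \cite[Theorem~VIII.3.5]{hakim}: for a projective morphism over an analytic base, the categories of coherent sheaves on $X$ and on the associated relative scheme $X^{\alg}$ coincide. Algebraicity of the sections $\sigma_i$ and of all analytic automorphisms then follows in one line. You instead route through the Hilbert scheme, asserting that flat closed analytic subspaces of $\P^N_S$ over an analytic base $S$ are classified by the algebraic Hilbert scheme. This is correct, but it is not more elementary: the comparison of the analytic and algebraic Hilbert functors (equivalently, of the Douady space with the analytified Hilbert scheme) is itself a relative-GAGA statement of the same depth, and in practice one deduces it from Hakim's theorem rather than the other way around. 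So your detour does not bypass the hard analytic input; the paper's approach just names that input and cites it directly. Your flagging of exactly this point as the ``main obstacle'' is accurate.
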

\begin{proof}
Let $\omega_\pi$ be the relative dualizing sheaf of $\pi$. 
In  the analytic category it was defined in~\cite{RRV} as $\pi^!(\cO_S)$ 
where the functor
      $$\pi^!=D_X\circ\pi^*\circ D_S $$ 
is obtained from the inverse image functor by dualization.
The morphism $\pi$ is a locally complete intersection morphism, therefore
it follows (see e.g.~\cite{ha})  that $\omega_\pi$ is an invertible sheaf.
It satisfies the base change formula%
\footnote{%
A detailed proof of the  base change formula 
for  Cohen-Macaulay morphisms of locally Noetherian schemes
is given in in~\cite[Theorem~3.6.1]{C} 
It is based on a local description of $\omega_\pi$ in terms
of Ext functors and on the base change for Ext functors. 
For local complete intersections  it is given by the
compatibility lemma~\cite[Lemma 2.6.2]{C} whose proof 
remains valid in the analytic setting as well.
}  
$\omega_{\pi_T}=g^*(\omega_\pi)$ for a Cartesian diagram
\begin{equation}
\begin{diagram}
X_T & \rTo^g & X \\
\dTo^{\pi_T} & & \dTo_\pi \\
T & \rTo^f & S
\end{diagram}\quad.
\end{equation}
Let $D_i=\sigma_i(S)$ be the divisor in $X$ 
that corresponds to the $i$th marked point.
We claim that  the invertible sheaf 
 $$L=\left(\omega_\pi \otimes \cO_X(-\sum_{i=1}^n D_i)\right)^{\otimes 3} $$
 gives rise to a finite morphism 
      $$j:X\to \P((\pi_*(L))^*).$$
Indeed, since the restriction of $L$ to  every fiber $X_s, \ s\in S$,
is very ample,  the map $j$ is well-defined and its restriction to
$X_s$ is a closed embedding. Since $\pi$ is proper, $j$ is also proper
and thus is finite.   This implies that $\pi$ is projective.  

Let $X^\alg$ be the scheme over $S$ whose analytification is
isomorphic to $X$. According to the ``relative GAGA'' 
(see Theorem VIII.3.5 in~\cite{hakim})
the categories of coherent sheaves on $X$ and on $X^\alg$ are equivalent.

This implies that the sections $\sigma_i:S\to X$
are algebraic and also that any analytic automorphism of $X$ comes
from an automorphism of $X^\alg$. This completes the proof.
\end{proof}

Now we can easily obtain a similar result
for families of stable admissible coverings. 
\begin{thm}
\label{anal-is-alg-adm}
Any 
analytic family of stable admissible coverings 
      $$(C\to X\to S,\sigma_1,\ldots, \sigma_n)$$ 
is projective. The same is true for  families of admissible $H$-coverings.
\end{thm}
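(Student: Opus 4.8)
The plan is to bootstrap from Theorem~\ref{anal-is-alg-curves}: the total space of an admissible covering is itself a projective family of (nodal) curves over $S$, after which relative GAGA algebraizes the covering map together with the marked sections and, in the equivariant case, the $H$-action.

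First I would recall that the base $\pi:X\to S$ of the admissible covering is a family of stable punctured curves, hence projective by Theorem~\ref{anal-is-alg-curves}; fix a relatively very ample invertible sheaf $L_X$ on $X/S$, for instance the sheaf $L=\left(\omega_\pi\otimes\cO_X(-\sum_{i=1}^n D_i)\right)^{\otimes 3}$ produced in the proof of that theorem. Let $f:C\to X$ denote the covering map. By the definition of an analytic family of admissible coverings, $f$ is a finite morphism of analytic spaces and $q:=\pi\circ f:C\to S$ is a flat proper family of nodal analytic curves, with the $\sigma_i$ landing in the relative smooth locus. Since $f$ is finite, $f^*L_X$ is relatively very ample on $C/S$, so $q:C\to S$ is projective; by relative GAGA (Theorem~VIII.3.5 in~\cite{hakim}) it is therefore the analytification of an algebraic family $C^\alg\to S$. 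Note that this step deliberately avoids building an ample bundle on $C$ out of its own dualizing sheaf---which would be awkward in the presence of rational tails and bridges---and instead simply pulls one back along the finite map $f$.

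Now $X$ and $C$ are the analytifications of $S$-schemes $X^\alg$ and $C^\alg$, and relative GAGA identifies the categories of coherent sheaves, and in particular the sets of $S$-morphisms, over $X$ and over $C$ with their algebraic counterparts. Consequently the covering $f:C\to X$, the sections $\sigma_i:S\to X$, and---for $\cX=\Adm_{g,n}(H)$---the action of $H$ on $C$ over $X$, all descend canonically and uniquely to morphisms of $S$-schemes. It remains to check that the resulting algebraic datum $C^\alg\to X^\alg\to S$ is again a family of admissible coverings (respectively admissible $H$-coverings): admissibility is a fiberwise condition---étaleness away from nodes and marked points, balancing of the two ramification indices at each node, stability of the base, and transitivity of the $H$-action on a general geometric fiber---and the fibers of the algebraic family coincide with those of the original analytic one. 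Together with Proposition~\ref{anfamily-algobjects} this also completes the proof of Theorem~\ref{thm:gaga}. The only mildly delicate point is the projectivity of $q:C\to S$ discussed above; everything else is a formal consequence of relative GAGA.
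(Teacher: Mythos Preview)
Your argument is correct, but the paper takes a slightly shorter route. Instead of first exhibiting $C\to S$ as projective in its own right and then algebraizing $C$ and the map $f$ separately, the paper observes that since $f:C\to X$ is finite, the covering is completely encoded by the coherent sheaf of $\cO_X$-algebras $f_*\cO_C$; once $X$ is algebraized via Theorem~\ref{anal-is-alg-curves}, relative GAGA over $X$ algebraizes this sheaf of algebras, producing $C^{\alg}$ and $f^{\alg}$ simultaneously. Your approach instead proves projectivity of $q:C\to S$ directly and then appeals to GAGA for morphisms between projective $S$-schemes. Both work; the paper's version is more economical, while yours is more explicit about the fiberwise verification of the admissibility conditions and the algebraization of the $H$-action. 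One small imprecision: the pullback $f^*L_X$ along a finite map is ample but not in general very ample; of course ampleness (or simply the fact that finite-over-projective is projective) already gives what you need.
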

\begin{proof} The family $(X\to S, \sigma_1,\ldots, \sigma_n)$ is 
analytification of an algebraic family 
$$(X^{\alg}\rTo S,\sigma_1,\ldots, \sigma_n)$$
by Theorem~\ref{anal-is-alg-curves}. 
Since the covering $C$ of $X$ is given by a coherent sheaf of algebras, 
the result follows by  the ``relative GAGA''~\cite[Theorem
VIII.3.5]{hakim}. 

Finally, to deal with the case of  $H$-coverings, we notice 
that the balancedness condition in the definition of admissible $H$-covering
(see Section 4.3.1 of~\cite{ACV}) only involves geometric points.
Therefore  this condition is the same for analytic and algebraic version.
\end{proof}

\section{ \TS s and quasiconformal charts of $\bsM$}
\label{sec:quasiconf}
In the beginning of this section we introduce the \TS s
$\cT_{g,n}$ and $\bT_{g,n}$ and present some facts about them
which will be needed later.
Then we construct on $\bsM=[\Gamma_{g,n}\bs\bT_{g,n}]$
an orbifold atlas whose charts satisfy 
some very special properties. We call such charts 
\emph{quasiconformal}.
Our construction uses a version of the Earle-Marden~\cite{marden} local
holomorphic coordinates on the \TS\ $\cT_{g,n}$.  
In Section~\ref{sec:teich} we will use this quasiconformal atlas to 
construct an orbifold atlas on quotient of the augmented \TS\ $\bT$ 
by finite-index subgroups of the modular group $\Gamma$.

\subsection{\TS s $\cT_{g,n}$ and $\bT_{g,n}$}

Here we recall definitions and standard facts about the \TS s
$\cT_{g,n}$ and $\bT_{g,n}$.

Let us fix a compact oriented surface $S$  of genus $g$
with $n$ boundary components $L_1, \ldots, L_n$ and smooth parametrizations
        $$\lambda_i: S^1 \to L_i$$
 compatible with the orientation of $S$.
Here and below, we assume that the surface $S$ is of hyperbolic type, i.e.\
$2g+n-2>0$. 

\

\begin{dfn}
\label{dfn:ATS}
Let $(X,p_1,\ldots,p_n)$ be a stable complex  curve $X$ of
arithmetic genus $g$ with $n$ punctures $p_i\in X$. A \emph{marking} of
the punctured  curve $(X,p_1,\ldots,p_n)$ is  a continuous map
      $$\phi: S\to X$$
satisfying the following properties
\begin{enumerate}
\item[(i)] 
The preimage $\phi^{-1}(p_i)$  of the $i$th puncture $p_i\in X$  is
the $i$th boundary  component $L_i \subset S$.
\item[(ii)] 
The preimage $\phi^{-1}(q)$ of every node  $q\in X$
 is a simple closed curve in $S$.
\item[(iii)] The map $\phi$ induces a homeomorphism 
      $$\phi^{-1}(X_{\reg})\to X_{\reg},$$ 
where 
$$
   X_\reg = X - X_\mathrm{sing} - \{p_1,\ldots,p_n\},
$$
         is the complement of the sets of nodes and punctures of $X$.
\end{enumerate}

Two markings $\phi, \phi' : S\to X $ are called \emph{isotopic}
if $\phi'=\phi \circ f$, where $f$ is a diffeomorphism
of $S$, such that 
\begin{equation}
  \label{eq:diff_bnd}
f_{L_i}=\mathrm{Id}_{L_i}, \ i=1,\ldots, n,  
\end{equation}
and $f$ is isotopic to the identity in the class of diffeomorphisms
satisfying~(\ref{eq:diff_bnd}).
\end{dfn}

\begin{dfn}
A punctured stable curve $X$ with an isotopy class of markings $[\phi]$ is
called a \emph{marked curve}.

The set $\bT_{g,n}$  of isomorphism classes of marked curves of
genus $g$ with $n$ punctures is called the \emph{augmented \TS}.
\end{dfn}

\begin{Rem}Sometimes, when we wish to stress the functorial dependence
of the augmented \TS\ on $S$, we will use the notation $\bT(S)$ instead of
 $\bT_{g,n}$. Of course, $\bT(S)$ depends, up to non-canonical isomorphism,
only on the genus of $S$ and on the number of its boundary components.
\end{Rem}

The points $(X,[\phi])$ of $\bT_{g,n}$, where $X$ is a non-singular 
complex curve, form
the usual \TS\ $\cT_{g,n}$.

In order to introduce a topology on $\bT_{g,n}$ we need the following notion.

\begin{dfn}
\label{dfn:contraction} 
Let $(X,x_1,\ldots,x_n,\phi)$ and $(Y,y_1,\ldots,y_n,\psi)$ be two
marked stable curves.
A continuous map $f: X \to Y$ is called a \emph{contraction} if  it
satisfies the following conditions.
\begin{itemize}
\item[(i)] 
$f(x_i)=y_i$ for $i=1,\ldots,n$.
\item[(ii)] 
$f$ induces a homeomorphism $f^{-1}(Y_\reg) \to Y_\reg$.
\item[(iii)] 
 For every node $y\in Y$ its preimage $f^{-1}(y)$ is
  either a node  of $X$ or a simple closed loop.
\item[(iv)] 
The marking $\psi$ of $Y$ is isotopic to $f \circ \phi$.
\end{itemize}

For unmarked punctured curves  $(X,x_1,\ldots,x_n)$ 
and $(Y,y_1,\ldots,y_n)$ a contraction is defined as any 
continuous map $f: X \to Y$ satisfying conditions (i)---(iii).
\end{dfn}

The following sets form a basis of the topology of the augmented \TS.
Choose a marked curve 
$$(Y,y_1,\ldots,y_n,[\psi]) \in \bT_{g,n},$$
a number $\varepsilon > 0$ and an open subset $N$ of $Y$ containing 
all the nodes of $Y$.  
The neighborhood $\cU_{N,\varepsilon} \subset \bT_{g,n}$ 
is  defined as the 
set of all  
$$(X,x_1,\ldots,x_n,[\phi])\in \bT_{g,n}$$
for which there exists a contraction $f: X \to Y$
such that the restriction of $f$ to $f^{-1}(Y-\bar{N})$ is
$(1+\varepsilon)$-quasiconformal.

\subsubsection{Modular group action}
\label{sec:modular}

 Let 
$$\Gamma_{g,n}=\pi_0(\Diff^+(S/\partial S))$$ 
be the \teich\  \emph{modular group}, i.e.\ the group  of isotopy classes 
of  orientation preserving diffeomorphisms of $S$ identical on the 
boundary  $\partial S$.
(This group is also known as the mapping class group of the $n$-punctured
surface of genus $g$, cf.~\cite{Iv}).
We will usually denote this group by $\Gamma(S)$ or simply by $\Gamma$.
The modular group $\Gamma$ naturally acts on $\bT_{g,n}$ and on $\cT_{g,n}$
as follows:
\begin{equation}
  \label{eq:action}
[\gamma](X,[\phi]):=(X,[\phi\circ \gamma^{-1}]),
\end{equation}
where $[\gamma]\in \Gamma$ is a mapping class represented by a diffeomorphism
$\gamma$ and $\phi:S\to X$ is a marking of $X$.

This action allows the following description 
of markings of a nodal curve
$X_0$ in terms of markings of nearby smooth curves. Let $X$ be a smooth curve
and let $X_0$ be a nodal curve.

Assume there is a contraction of $X$ to $X_0$  that contracts
several disjoint simple closed curves $C_1,\ldots, C_r$ on $X$.

\begin{prp} \label{prp:dehn}
There is a natural bijection between the set of isotopy classes of
markings of the nodal curve $X_0$ and the set of 
$G$-orbits in the set of isotopy classes of markings of $X$, where
$G$ is a subgroup of $\Gamma$ generated by the Dehn twists around the curves 
$C_1,\ldots,C_r$.
\end{prp}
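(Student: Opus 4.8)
The statement is a classical fact about \teich\ theory going back to Bers, but in the present framework one wants a proof phrased in terms of markings as continuous maps $\phi:S\to X$ (Definition~\ref{dfn:ATS}). The natural strategy is to exhibit the bijection explicitly via a \emph{contraction} $f:X\to X_0$ as in the hypothesis, and then to show that two markings of $X$ produce isotopic markings of $X_0$ precisely when they differ by an element of the Dehn-twist subgroup $G$. I would set up the map in the ``easy'' direction first: given a marking $\phi:S\to X$, the composition $f\circ\phi:S\to X_0$ satisfies conditions (i)--(iii) of Definition~\ref{dfn:ATS} essentially by construction, since $f$ carries each $C_j$ to a node of $X_0$ and restricts to a homeomorphism over $(X_0)_{\reg}$. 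This gives a well-defined map from isotopy classes of markings of $X$ to isotopy classes of markings of $X_0$, and it is $\Gamma$-equivariant in the sense that it is constant on each $G$-orbit: a Dehn twist $\tau_{C_j}$ around $C_j$ is supported in an annular neighborhood of $C_j$, which $f$ collapses, so $f\circ(\phi\circ\tau_{C_j}^{-1})$ is isotopic to $f\circ\phi$ (the isotopy being the obvious ``un-twisting'' that takes place inside the collapsed annulus). Hence the map descends to a map from $G$-orbits of markings of $X$ to isotopy classes of markings of $X_0$.

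\textbf{Surjectivity.} Given an arbitrary marking $\psi:S\to X_0$, the curves $D_j:=\psi^{-1}(\text{node}_j)$ are disjoint simple closed curves on $S$, one for each node of $X_0$; on the complement of these curves $\psi$ is a homeomorphism onto $(X_0)_{\reg}$. One wants to lift $\psi$ through $f$ to a marking of $X$. Over $(X_0)_{\reg}$ this is automatic since $f$ is a homeomorphism there; the only issue is to extend across the $D_j$, i.e.\ to fill in an annular neighborhood of each $D_j$ in $S$ by a map into a neighborhood of the corresponding $C_j$ in $X$. Such an extension exists because both sides are (up to isotopy) standard annuli or collars around a node, and the obstruction to extending a map of a circle into an annulus is a winding number which one is free to choose; any choice gives a marking $\phi$ with $f\circ\phi$ isotopic to $\psi$. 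This proves surjectivity of the descended map.

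\textbf{Injectivity.} This is the heart of the matter and the step I expect to be the main obstacle. Suppose $\phi,\phi':S\to X$ are two markings with $f\circ\phi$ isotopic to $f\circ\phi'$. One must produce a diffeomorphism $h$ of $S$, a product of powers of the Dehn twists $\tau_{C_j}$ (more precisely, of the pullbacks $\psi^{-1}$ of loops around the nodes — but these coincide with the $\tau_{C_j}$ after the identification coming from $f$), such that $\phi'$ is isotopic to $\phi\circ h^{-1}$. The idea is: the given isotopy on $X_0$ pulls back, away from the nodes, to an isotopy between $\phi$ and $\phi'$ on $S$ minus small annuli around the $C_j$; the only ambiguity is in how this partial isotopy behaves on each annulus, and the ``defect'' is measured by an integer for each annulus — exactly the exponent of the corresponding Dehn twist. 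Making this precise requires the standard fact (essentially the Alexander method / the classification of mapping classes of an annulus rel boundary) that $\pi_0$ of the diffeomorphisms of an annulus fixing the boundary pointwise is generated by the Dehn twist, together with a gluing argument: one isotopes $\phi'$ to agree with $\phi$ on the complement of the annuli, and then reads off the twisting on each annulus. I would cite the annulus computation rather than reprove it, and the remaining work is bookkeeping: checking that the resulting element of $\Gamma$ lies in $G$ and that different choices of intermediate isotopy change it only within $G$. Combining surjectivity and injectivity yields the asserted natural bijection.
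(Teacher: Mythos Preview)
The paper itself gives no proof of this proposition at all: it is stated and immediately followed by a bare \qed, i.e.\ the authors regard it as a standard fact of surface topology and leave it to the reader. So there is nothing in the paper to compare your argument against line by line.

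That said, your proposal is essentially the correct standard argument and would serve as a perfectly acceptable proof. A couple of small points are worth tightening. First, when you say ``$f$ collapses'' the annular neighborhood of $C_j$, strictly speaking $f$ collapses only the core curve $C_j$, not the whole annulus; the right way to phrase the $G$-invariance step is that one can isotope the Dehn twist so that its support shrinks into an arbitrarily small collar of the core, and then the composition with $f\circ\phi$ converges (uniformly) to $f\circ\phi$ itself, giving a homotopy through markings. Second, in the injectivity step you should be explicit that after arranging $f\circ\phi=f\circ\phi'$ one has $\phi^{-1}(C_j)=(\phi')^{-1}(C_j)$ as subsets of $S$, so that the ``defect'' $\phi^{-1}\circ\phi'$ is a self-homeomorphism of $S$ equal to the identity off a neighborhood of these common curves; the annulus mapping-class computation you cite then finishes the job. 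With those clarifications your outline is complete.
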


\qed

 \

We will use the following classical results about the \TS s
$\cT_{g,n}$ and $\bT_{g,n}$ and the action of the modular group on
them (for details see \cite{ab3} and references there).

\begin{thm}
  \begin{itemize}
  \item[(i)]
The space  $\cT_{g,n}$ has a structure of a complex manifold of complex
dimension $3g+n-3$ diffeomorphic to an open ball in $\R^{6g+2n-6}$.

  \item[(ii)]
The quotient $ \Gamma\bs\cT_{g,n}\ $ is isomorphic, as a complex space, 
to $\cM_{g,n}$.

\item[(iii)]
The quotient space $ \Gamma\bs\bT_{g,n}\ $ is homeomorphic to $\bM_{g,n}$.
  \end{itemize}
\end{thm}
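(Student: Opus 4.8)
All three assertions are classical facts about Teichm\"uller spaces, so the plan is to assemble the proof from the standard theory (see \cite{ab3}), and I would organize it as follows.

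For (i) I would obtain the complex structure from the Bers embedding. Fixing a basepoint $(X_0,[\phi_0])\in\cT_{g,n}$ and passing to the mirror Riemann surface $X_0^*$, Ahlfors--Bers quasiconformal deformation theory produces a holomorphic injection of $\cT_{g,n}$ onto a bounded domain in the $(3g+n-3)$-dimensional complex vector space $Q(X_0^*)$ of holomorphic quadratic differentials on $X_0^*$; the charts attached to different basepoints are related by biholomorphisms, so $\cT_{g,n}$ becomes a complex manifold of dimension $3g+n-3$. For the assertion that the underlying smooth manifold is diffeomorphic to an open ball I would instead invoke Fenchel--Nielsen coordinates attached to a pants decomposition of $S$: these give a real-analytic diffeomorphism $\cT_{g,n}\cong(\R_{>0}\times\R)^{3g+n-3}\cong\R^{6g+2n-6}$, and $\R^{6g+2n-6}$ is diffeomorphic to an open ball.

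For (ii) the first step is the set-theoretic identification. A point of $\cT_{g,n}$ is an isomorphism class of smooth $S$-marked curves, and two markings of the same curve yield isomorphic marked curves exactly when they differ by an element of $\Gamma$; hence $\Gamma\bs\cT_{g,n}$ is in canonical bijection with $|\cM_{g,n}|$. Next, since a hyperbolic Riemann surface with punctures has finite automorphism group, $\Gamma$ acts on $\cT_{g,n}$ properly discontinuously and by biholomorphisms, so $\Gamma\bs\cT_{g,n}$ inherits the structure of a normal complex space. Finally I would verify that the natural bijection $\Gamma\bs\cT_{g,n}\to\cM_{g,n}$ is a local isomorphism of complex spaces, using that near a curve $X$ with automorphism group $H$ both spaces are modeled on the global quotient $[H\bs V]$ with $V$ the (smooth) versal deformation space of $X$; a bijective local isomorphism of complex spaces is an isomorphism.

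For (iii) the set-level statement $\Gamma\bs\bT_{g,n}=|\bM_{g,n}|$ is proved exactly as in (ii), now using Proposition~\ref{prp:dehn} to match the $\Gamma$-orbits of markings of a nodal curve with its isomorphism class. The remaining, and I expect principal, difficulty is to show that this bijection is a homeomorphism. Since the quotient map $\bT_{g,n}\to\Gamma\bs\bT_{g,n}$ is open, it suffices to compare the topology on $\bT_{g,n}$ generated by the contraction neighborhoods $\cU_{N,\epsilon}$ with the analytic topology of $\bM_{g,n}$; the obstacle is that $\bT_{g,n}$ is not locally compact and $\Gamma$ fails to act properly discontinuously near the boundary (a Dehn twist about a vanishing cycle fixes the corresponding boundary point), so the comparison cannot be reduced formally to the smooth case of (ii). The plan is therefore to carry it out by hand, identifying the sets $\cU_{N,\epsilon}$ with the images of the plumbing families that furnish the local coordinates on $\bsM_{g,n}$ --- precisely the Earle--Marden coordinates recalled in Section~\ref{sec:quasiconf} --- which is in essence Bers' theorem; I would cite \cite{bers,ab3} for the analytic details rather than reproduce them here.
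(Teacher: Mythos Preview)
Your outline is correct and in fact considerably more detailed than what the paper does: the paper does not prove this theorem at all, but simply records it as a collection of classical results and refers the reader to \cite{ab3} and the references therein. Your sketch of (i) via the Bers embedding and Fenchel--Nielsen coordinates, of (ii) via the proper discontinuity of the $\Gamma$-action and comparison with the versal deformation, and of (iii) via the identification of the $\cU_{N,\varepsilon}$ neighborhoods with plumbing coordinates, is the standard route and is exactly what one finds in the sources the paper cites.
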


Here and below $\cM_{g,n}$ (resp.,  $\bM_{g,n}$) denotes the complex
 space associated to the moduli stack of compact Riemann surfaces of
 genus $g$   with $n$ marked points (resp., its Deligne-Mumford
 compactification).

\subsection{Complex structure of  $\cT_{g,n}$}

We present below a modular description of the complex space $\cT_{g,n}$
(see~\cite{groth,earle,engber}).

\begin{dfn}
Let $B$ be a complex space. A \emph{family of smooth
curves} of genus $g$ with $n$ punctures over
the base $B$ is a flat proper morphism of complex spaces 
$\pi:C\to B$  with $n$ sections $\sigma_i: B\to C$,
such that fibers of $\pi$ are complex  curves of genus $g$ and
the images of the sections  $\sigma_i, \ i=1,\ldots,n$, 
are pairwise disjoint.
\end{dfn}

To each $b\in B$ we assign a set 
$$P_b=\pi_0(\Diff^+(S,C_b)),$$
 where
$C_b=\pi^{-1}(b)$. Since $\pi$ is topologically a locally trivial fibration,
these sets assemble into a covering 
$$p:P\rTo B$$ with fibers $P_b$.

\begin{dfn}
A \emph{marking}  of a family of smooth curves $\pi:C\to B$ is 
a section of the associated covering 
$$p:P(\pi)\rTo B.$$

If  $G\subset \Gamma$ is  a subgroup of the modular group,
a section of the covering 
$$p_G: G\bs P(\pi) \rTo B$$ 
is called
a $G$-\emph{marking} of the family $\pi$.
\end{dfn}

\

The following result proved in~\cite{earle,engber} generalizes 
the theorem of  Grothendieck on modular description of the 
\TS\ $\cT_g=\cT_{g,0}$.

\begin{thm} \label{th:teich_mod}
For $2g+n>2$, the functor
$$
B\mapsto F(B),$$
where
$F(B)$ is the set of isomorphism classes of  
marked curves of genus $g$ with $n$ punctures over $B$,
is representable by a complex manifold.
The representing object is isomorphic to the \TS\ $\cT_{g,n}$.
\end{thm}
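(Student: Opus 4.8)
The plan is to establish the representability of the functor $F$ of marked families of smooth punctured curves by exhibiting the Teichm\"uller space $\cT_{g,n}$ together with its tautological family as a fine moduli space. The strategy splits into two parts: first produce a universal family over $\cT_{g,n}$, then verify the universal property. For the first part, I would invoke the classical fact (Bers, Earle, Grothendieck; see the references cited before the statement) that $\cT_{g,n}$ carries a complex manifold structure of dimension $3g+n-3$ and supports a holomorphic family $\pi^{\mathrm{univ}}:\cC^{\mathrm{univ}}\to\cT_{g,n}$ of smooth $n$-punctured genus-$g$ curves, together with a \emph{tautological marking} $\phi^{\mathrm{univ}}$, i.e.\ a section of $p:P(\pi^{\mathrm{univ}})\to\cT_{g,n}$; at the point $(X,[\phi])$ the fiber is $X$ with its marking $[\phi]$. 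This is essentially how $\cT_{g,n}$ is constructed in the analytic theory, so I would cite it rather than rebuild it.

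Second, I would check the universal property. Given any complex space $B$ and a marked family $(\pi:C\to B,\sigma_1,\dots,\sigma_n,m)$, where $m$ is a section of $p:P(\pi)\to B$, one gets a set-theoretic map $\psi:B\to\cT_{g,n}$ sending $b$ to the isomorphism class of $(C_b,[\sigma_i(b)],m(b))$; one must show $\psi$ is holomorphic and that $(\pi,\sigma_i,m)$ is the pullback along $\psi$ of the tautological family. Holomorphicity is local on $B$, so I would reduce to the case where $P(\pi)\to B$ is trivial, i.e.\ the marking is an honest topological trivialization of the family up to isotopy. The key input here is the theory of quasiconformal deformations: near a point $b_0\in B$ the family $C\to B$ is classified, via Ahlfors--Bers and the measurable Riemann mapping theorem, by a holomorphically varying Beltrami differential on the fixed marked surface $S$, and the resulting map into the Bers model of $\cT_{g,n}$ is holomorphic. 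Concretely, I would use Theorem~\ref{th:teich_mod}'s known proof ingredients: the Bers embedding realizes $\cT_{g,n}$ as a bounded domain, and the classifying map of a holomorphic family of Beltrami coefficients into that domain is holomorphic by standard dependence-on-parameters results for the Beltrami equation.

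Once $\psi$ is known to be holomorphic, identifying $C\to B$ with $\psi^*\cC^{\mathrm{univ}}$ is a matter of descent: the two families have canonically isomorphic fibers compatibly with the markings, and a marking rigidifies (kills all automorphisms of) a punctured curve in the stable range $2g+n-2>0$, so the fiberwise isomorphisms glue uniquely to an isomorphism of families over $B$ carrying $\sigma_i$ and $m$ to their tautological counterparts. The rigidity point is exactly what forces $F$ to be a \emph{functor to sets} rather than to groupoids, and it is what makes the moduli problem fine.

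The main obstacle, and the step I would spend the most care on, is the holomorphicity of the classifying map $\psi$ in full generality — in particular handling an arbitrary (possibly singular, non-reduced) complex-space base $B$ rather than a smooth one. Over reduced bases one can argue pointwise using the Bers coordinates; over non-reduced bases one needs the deformation-theoretic statement that the Kodaira--Spencer map of the tautological family is an isomorphism onto the tangent space of $\cT_{g,n}$ (so that $\cT_{g,n}$ pro-represents the local deformation functor of each marked curve) together with an effectivity/convergence statement to pass from formal to analytic. This is where I would lean hardest on the cited literature (Earle, Engber, Grothendieck) and on the standard identification of the tangent space to $\cT_{g,n}$ with $H^1(X,\Theta_X(-\sum p_i))$; the remaining verifications (flatness, properness, disjointness of sections being preserved under pullback) are routine.
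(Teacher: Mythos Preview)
The paper does not prove this theorem: it is stated as a result of Earle~\cite{earle} and Engber~\cite{engber}, generalizing Grothendieck's modular description of $\cT_{g,0}$, and is cited without proof. Your outline is a reasonable sketch of the classical argument found in those references (universal family via Bers' analytic theory, holomorphicity of the classifying map via parameter dependence of the Beltrami equation, rigidity from $2g+n-2>0$), so there is nothing to compare against here; just be aware that in the context of this paper you would be expected to cite the result rather than reprove it.
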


\subsection{Quasiconformal atlas for $\bsM$}
\label{ss:qc}

In this section we prove the existence of an atlas on $\bM$ with
especially nice orbifold charts. These charts, which we call
\emph{quasiconformal}, satisfy a collection of properties described
in~\ref{sss:QCdef}. Our approach is based on the plumbing construction
of Earle and Marden~\cite{marden}. This construction produces a family
of stable curves over a polydisk starting with a maximally degenerate
curve $X_0$ and a collection of local coordinates near the nodes of $X_0$. 
This family of curves is not everywhere locally universal, i.e.\ it does
not necessarily give an orbifold chart for the moduli space $\bM$
(see a counterexample in~\cite{cex}).

However, as we show in this section,  open subsets of those coordinate
polydisks which do form an orbifold chart cover the whole moduli space
and therefore give an orbifold atlas with required properties.

To prove that the charts obtained from the plumbing construction
cover the whole moduli space, we proceed as follows. First,
for each stable curve $X$ we  describe very special plumbing data
$(X_0,\ z_i)$,  where $X_0$ is a maximally degenerate stable curve of
genus $g$ with $n$ punctures (all such curves have $m=3g+n-3$ nodes)
and $z_1,\ldots, z_{2m}$ are local parameters near the nodes of $X_0$.

This data gives rise to a family of curves 
\begin{equation}
  \label{eq:family0}
  \pi: \cX \rTo U
\end{equation}
whose base $U$ is an neighborhood of the origin in $\C^{3g+n-3}$.
This family, which we construct in~\ref{choice-of-lc},
contains $X$ and has the property that  the geodesics
(in the hyperbolic metric) which cut $X$ into a union of
``pairs of pants''  in local coordinates $z_i$  have equations $|z_i|=s_i$.

The family~(\ref{eq:family0}) is induced from the universal family
over the moduli stack $\bsM$ via a map $U\to \bsM$ which gives rise to an 
orbifold chart 
$$\hat{\beta}:[A\bs U]\to\bsM.$$

To prove \'etalness of $\hat{\beta}$ we first show in~\ref{ks-calculation} 
that, when $X$ is non-singular, the restriction of the 
family~(\ref{eq:family}) to a certain subspace of $U$ of \emph{real}
dimension $m$ is the Fenchel-Nielsen family (see Section~\ref{sss:fn-family}).
This gives \'etalness in the non-singular case
and   in~\ref{sss:etality-nodal} we deduce from it the general case.

\subsubsection{Quasiconformal charts on $\bsM$}
\label{sss:QCdef}

We start with a definition of quasiconformal charts.

Let $U$ be an open subset of $\C^m$ with an action of a finite group $A$.
Let  
$$\hat\beta:[A\bs U]\to\bsM $$ 
be an open embedding and let 
$$\beta:U\to\bM $$ 
be the corresponding map to the coarse moduli space.
Denote by 
$$\pi:\cX\to U$$ 
the family of nodal curves on $U$ induced by
$\beta$ and let $U_0$ be the smooth locus of $\pi$:
      $$ U_0=U\times_{\bsM}\sM.$$
The complement $U-U_0$ will be called the singular locus (of $\pi$).
For $t\in U$ we  denote by $X_t$ the fiber $\pi^{-1}(t)$.

\

The construction of~\ref{orb-to-satake} provides 
$\bM$, the coarse moduli space 
of the smooth complex orbifold $\bsM$, with an orbifold atlas. 
Below, in our construction of an orbifold atlas for $G\bs\bT_{g,n}$, we will need
charts satisfying some nice properties.  We call such charts {\em quasiconformal}. 

\

\noindent
{\bf Definition.}
An orbifold chart $(U,A,\beta)$ of the complex orbifold $\bsM$ is
called {\em quasiconformal} if it satisfies the  following conditions (QC1)--(QC6).
\begin{itemize}
\item[(QC1)] The manifold $U$ is analytically equivalent to a contractible
neighborhood of $0$ in $\C^m$, so that 
the singular locus $U-U_0$ 
corresponds to the union of (some) coordinate hyperplanes. In
particular, if $U\ne U_0$, the intersection of the components
of the  singular locus is stable 
under the $A$-action. 
We assume that there exists a point $z\in U$ fixed by $A$. If $U\ne U_0$, we
assume that $z$ lies in the intersection of the components of the  singular locus.

\item[(QC2)] For  every $t\in U$ there exists an open neighborhood 
$U^t$ of $t$ in $U$ and a 
\emph{quasiconformal contraction} --- a
continuous map  
$$c^t:\cX^t\to X_t,$$
where $\cX^t$ is the 
restriction of $\cX$ to $U^t$,  such that 
for every fiber $X_s,\ s\in U^t$, the restriction
      $$c^t_s=c^t|_{X_s}:X_s\rTo X_t$$
is  a contraction (see Definition~\ref{dfn:contraction}). 
In addition, the map $c^t$  is \emph{quasiconformal} in the following
sense. 

Let $\phi_t:S\to X_t$ be a marking; choose a neighborhood $N$ of the nodes 
of $X_t$ and $\varepsilon>0$. Then there exists a small neighborhood
$U^\delta$ of $t$ in $U^t$ such that for any $s\in U^\delta$ and for any 
marking $\phi_s:S\to X_s$  for which $\phi_t$ is isotopic
$c^t_s\circ\phi_s$,  the contraction $c^t_s:X_s\to X_t$ is
$(1+\varepsilon)$-quasiconformal outside the preimage of $\ol{N}$.
\item[(QC3)] For  every $t\in U$ there  exist neighborhoods $\cO_i\ni x_i$ 
of the nodes $x_i,\ i=1,\ldots,r$, of the curve $X_t$  such that 
\begin{itemize}
\item[(a)] The maps 
$$ c^t:\cX^t\rTo X_t\textrm{ and } \pi^t:\cX^t\rTo U^t$$
define an analytic isomorphism
\begin{equation}
\label{dir-product}
(c^t)^{-1}(X_t-\bigcup\cO_i)\rTo U^t\times(X_t-\bigcup\cO_i).
\end{equation}
\item[(b)]
For  every $i=1,\ldots,r,$ the map
\begin{equation}
\label{dir-product-2}
(c^t)^{-1}(\cO_i)\rTo U^t
\end{equation}
is analytically isomorphic to the standard projection  
$$P_i \to D^m $$ 
from  
$$ P_i=\{(u,v,t_1,\ldots,t_m)\in D^2\times D^m|uv=t_i\} $$
to the standard polydisk $D^m\subset \C^m$. 
\end{itemize}
\item[(QC4)] 
For any $s\in U^t, \ u\in U^s\cap U^t\cap U_0$
there exists a homeomorphism 
     $$\theta:X_u\to X_u$$
  isotopic to the identity, such that
$$ c_u^t\circ\theta=c_s^t\circ c_u^s$$

\item[(QC5)] One has $U=U^z$.
\item[(QC6)] 
For a node $x$ of $X_z$ let $D_x$ be the space
$$ D_x=\{t\in U|(c_t^z)^{-1}(x)\textrm{ is a point }\}.$$
Then $D_x$  is a component of the singular locus
and  every component of the singular locus is obtained
in this way.
\end{itemize}

\
\begin{Rems}
\

1. 
Note that if the condition (QC2) is valid for some marking
$\phi_t$ of $X_t$, then it is valid for all markings of $X_t$.
Also, since $\sM=[\Gamma\bs\cT]$, the condition (QC2) is empty for 
$t\in U_0$.

2. 
Existence of continuous 
contractions $c^t$ in (QC2) is not a very restrictive
condition.  What makes it non-trivial is the 
requirement that $c^t$ is quasiconformal.

3.
The property (QC3) means that the family of curves over
$U$ is constant outside neighborhoods of the nodes
and is equivalent to the family given by the 
plumbing construction (see~\ref{ssub:basicfamily}) 
in the neighborhoods of the nodes. 

4. 
The property (QC6) identifies   the set of
components of the singular locus with the set of nodes of $X_z$.
A marking $\phi: S \to X_z$ of $X_z$
allows to identify the fundamental group of $U_0$ with the subgroup 
  of the modular group $\Gamma$ generated by the Dehn twists around
$\phi^{-1}(x)$, where $x$ runs through the nodes of $X_z$.

5. 
Below we will construct a collection of quasiconformal charts for
$\bsM$ using a plumbing construction and will prove  that they give an
orbifold atlas of $\bsM$.  This means that, in a certain sense, all 
sufficiently small orbifold charts of $\bsM$ are quasiconformal. 
\end{Rems}

The notion of a quasiconformal chart serves a bridge between the
\teich\   and the stack-theoretic approach to the description
of the moduli space of stable curves.

\begin{thm}\label{thm:atlas}
The moduli stack  $\bsM$ of stable curves admits an orbifold atlas 
of quasiconformal charts.  
\end{thm}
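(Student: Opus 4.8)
The plan is to cover $\bM$ by charts coming from the Earle--Marden plumbing construction \cite{marden}, using the special choice of local parameters of Section~\ref{choice-of-lc} to make these charts \'etale over $\bsM$, and then to read off conditions (QC1)--(QC6) directly from the explicit plumbing picture. Fix a stable punctured curve $X$ of genus $g$ with $n$ punctures, representing $[X]\in\bM$. Choose a maximally degenerate stable curve $X_0$ (a union of $2g+n-2$ triply punctured spheres glued along the $m=3g+n-3$ pairs of punctures), a contraction $X\to X_0$ pinching a pants decomposition of $X_{\reg}$ that refines the nodes of $X$, and, following Section~\ref{choice-of-lc}, coordinate pairs $(z_i,w_i)$ at the nodes of $X_0$ chosen equivariantly for a maximal finite automorphism group and adapted to the hyperbolic metric of $X$ (so that the pants geodesics of $X$ become the circles $|z_i|=s_i$). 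Plumbing, i.e.\ replacing each model $\{z_iw_i=0\}$ by $\{z_iw_i=t_i\}$ (see~\ref{ssub:basicfamily}), yields a holomorphic family $\pi:\cX\to U$ of stable curves over a polydisk $U\subset\C^m$ containing a point $t_X$ with $X_{t_X}\cong X$, and, by the representability of $\bsM$ \cite{dm,knudsen}, a holomorphic map $\hat\beta:U\to\bsM$. As the chart around $[X]$ I take a small polydisk $U'\subseteq U$ centred at the point $z$ over $X$, together with the action of $A=\Aut(X)$, and let $\beta:U'\to\bM$ be the induced map to the coarse space; when $X$ is smooth one takes $U'$ inside $\cT_{g,n}$ and $\hat\beta$ the universal family via Theorem~\ref{th:teich_mod}.

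The crux is that, with this choice of parameters, $\hat\beta$ is \'etale, hence descends after shrinking to an open embedding $[A\bs U']\to\bsM$. For a smooth fibre this is the computation of Section~\ref{ks-calculation}: the restriction of the plumbing family to each torus $\{\,|z_i|=s_i\,\}\subset U$ is, up to the induced marking, a Fenchel--Nielsen twist family, and letting the $s_i$ vary recovers the full Fenchel--Nielsen parametrization (Section~\ref{sss:fn-family}), which by classical \teich\ theory is a real-analytic diffeomorphism onto $\cT_{g,n}$; since $\hat\beta$ is holomorphic and $\dim_{\C}\cT_{g,n}=m=\dim_{\C}\bsM$, a local diffeomorphism onto a smooth fibre is automatically \'etale there. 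The nodal case is deduced in Section~\ref{sss:etality-nodal}: the $t_i$ indexed by the nodes of the central fibre are exactly the smoothing parameters of those nodes in the versal deformation, the remaining $t_i$ deform the normalization and are governed by the smooth case applied to its components, and matching this with the known structure of the completed local ring of $\bsM$ at the boundary point gives \'etalness there as well. I expect this \'etalness statement --- and in particular arranging in Section~\ref{choice-of-lc} a choice of coordinates robust enough to survive the comparison with Fenchel--Nielsen coordinates at both smooth and nodal points, given that a careless choice fails \cite{cex} --- to be the main obstacle of the whole argument.

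Granting \'etalness, I verify (QC1)--(QC6) from the construction. Property (QC1) holds because $U'$ is a polydisk whose singular locus $U'-U'_0$, with $U'_0=U'\times_{\bsM}\sM$, is the union of the coordinate hyperplanes attached to the smoothed nodes, whose common intersection contains the $A$-fixed centre $z$; (QC6) is the tautological identification of these hyperplanes with the nodes of $X_z$, and (QC5) holds because, after shrinking, the plumbing collapse onto the central fibre is defined over all of $U'$. Properties (QC2)--(QC4) are the analytic content of the plumbing construction: in the explicit coordinates the family is a trivial product over $U'$ outside prescribed neighbourhoods $\cO_i$ of the nodes and is isomorphic to the standard model $\{uv=t_i\}$ inside them, which is (QC3); the collapsing maps built in these coordinates restrict on each fibre to contractions in the sense of Definition~\ref{dfn:contraction} which are $(1+\varepsilon)$-quasiconformal outside any fixed neighbourhood of the nodes once one is close enough to the relevant stratum, which is (QC2); and (QC4) follows from the uniqueness, up to a self-homeomorphism isotopic to the identity, of contractions between given fibres compatible with the markings. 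Finally, running this construction for every $[X]\in\bM$ produces a covering family of quasiconformal charts, and the overlap axiom of Definition~\ref{satake-orbifold-atlas} holds because a sufficiently small polydisk contained in the intersection of two quasiconformal charts and centred at the fibre-point of its deepest stratum again satisfies (QC1)--(QC6) and maps into both; this yields the required orbifold atlas of $\bsM$ consisting of quasiconformal charts.
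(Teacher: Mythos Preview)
Your overall strategy matches the paper's: plumbing from a maximally degenerate $X_0$ with the special coordinates of Section~\ref{choice-of-lc}, \'etalness via the Fenchel--Nielsen comparison (Sections~\ref{ks-calculation}--\ref{sss:etality-nodal}), and then assembling the charts into an atlas (Section~\ref{atlas-for-bM}). Two points deserve correction. First, your \'etalness argument in the smooth case overstates the Fenchel--Nielsen comparison: it is \emph{not} true (nor needed) that ``letting the $s_i$ vary recovers the full Fenchel--Nielsen parametrization''. The hyperbolic length $l_i$ of the $i$-th waist geodesic in the plumbed surface depends on \emph{all} the $t_j$, not just on $|t_i|$, so the plumbing map is not literally FN coordinates. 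What the paper actually proves is narrower and sufficient: the pullback along $u:\R^m\to B_0$, $u(x)=(t_1e^{2\pi i x_1},\dots,t_me^{2\pi i x_m})$, \emph{is} the FN twist deformation, so $T\beta\circ Tu$ sends the standard basis of $\R^m$ to the twist vectors $\partial/\partial\tau_i$; these form a $\C$-basis of $H^1(X,T)$ by Wolpert's formula $\omega_{\mathrm{WP}}=\sum d\tau_i\wedge dl_i$, and since $Tu$ sends a real basis to a complex basis, $T\beta$ is an isomorphism. Your sketch should invoke Wolpert's formula explicitly rather than an unproved claim about the radial direction.

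Second, you do not need (and the paper does not attempt) to choose the local coordinates $(z_i,w_i)$ equivariantly for $A=\Aut(X)$. The paper first proves $\hat\beta$ is \'etale at $t$ and only \emph{then} invokes Lemma~\ref{lem:orbi-neighborhood} to produce a small $A$-invariant neighbourhood $U'$ with $[A\bs U']\hookrightarrow\bsM$; the $A$-action on $U'$ comes from the groupoid presentation of $\bsM$, not from symmetries of the plumbing data. Likewise, for the overlap axiom the paper does not manufacture a fresh small polydisk but takes the fibre product $W=[A\bs U_1]\times_{\bsM}U_2$, which is automatically an open $A$-invariant subset of $U_2$ and hence again a chart of the prescribed type. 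With these two adjustments your outline coincides with the paper's proof.
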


Proof of this theorem occupies the rest of this subsection
(\ref{ssub:basicfamily}---\ref{atlas-for-bM}).

\subsubsection{Plumbing construction}
\label{ssub:basicfamily}

Fix a  maximally degenerate stable  curve $X_0$ of genus $g$ with $n$
punctures,  i.e.\ $X_0$ has a maximal possible number of nodes 
   $$m=3g+n-3.$$  
Let $x_1,\ldots,x_m \in X_0$ be the  nodes of $X_0$.
For each node $x_i\in X_0$ fix an open neighborhood $x_i\in V_i \subset X_0$,
such that  these sets $V_i$ are pairwise disjoint, do not contain punctures
and  each $V_i$ is a union of two subsets 
   $$V_i=U_i\cup U_{i+m}, \ i=1,\ldots,m,$$
meeting at the point $x_i$ and 
homeomorphic to the open unit disk $D\subset \C$. 
Finally, 
let $$z_k: D \to X_0, \ k=1,\ldots, 2m,$$ be holomorphic maps
such that $z_k$ 
gives a homeomorphism between $D$ and $z_k(D)=U_k$ and
     $$z_i(0)=z_{i+m}(0)=x_i, \ i=1,\dots,m.$$
Using the choices of the curve $X_0$ and of $2m$ 
local coordinate functions $z_i$,
we will construct a  family $\cX$ of stable punctured 
curves over the polydisk $D^m$ as follows. 

Take an open subset $\cY\subset X_0\times D^m$ given by
$$
\cY=X_0\times D^m - \bigcup_{i=1}^m W_i,
$$
where
$$
W_i=\{(x,t_1,\ldots,t_m)\in X_0\times D^m \ |\  x= z_i(z) \text{\ or\ }
x= z_{i+m}(z) \text{\ for\ } |z|\le |t_i|\}
$$
and
$$
P_i = \{(u,v,t_1,\ldots,t_m)\in D^2\times D^m \ | \ uv=t_i\}~.
$$
We glue the manifolds $\cY$ and $P_i$ using the equivalence relation
generated by the following conditions.
\begin{itemize}
\item The point of $\cY$
with coordinates
$(z_i(z),t_1,\ldots,t_m)$ is equivalent to the point of $P_i$ with coordinates
$(z,t_i/z,t_1,\ldots,t_m)$
\item The point of $\cY$ with coordinates
$(z_{i+m}(z),t_1,\ldots,t_m)$ is equivalent to the point of $P_i$ 
with coordinates $(t_i/z,z,t_1,\ldots,t_m)$.
\end{itemize}

One easily sees that the quotient of $\cY\sqcup P_1\sqcup\ldots\sqcup P_m$
by the equivalence relation described above is Hausdorff; it is, therefore,
a complex manifold
which we denote by $\cX$. It is fibered over $D^m$; its fiber $X_t$
over $t=(t_1,\ldots,t_m)$ is obtained from the original nodal curve $X_0$
by ``holomorphic plumbing'' which replaces a neighborhood of the node $x_i$,
for which $t_i\ne 0$, locally 
parametrized by a neighborhood of the node of the curve
$uv=0$, with a piece of the smooth curve $uv=t_i$.

\

The fiber $X_t$ of the  above family 
is smooth if and only if all the coordinates of $t$ are nonzero.

Introduce the following notation
$$\Do=D-\{0\}, \ B_0=(\Do)^m \text{\ and \ } B=D^m$$
and let
\begin{equation} \label{eq:family}
\pi:\cX\rTo B
\end{equation}
be the family of curves constructed above.
The restriction of $\pi$ to $B_0$ 
gives the family 
$$\pi_0:\cX_0\to B_0$$ 
of smooth curves.

According to the results of Section~\ref{gaga}, 
      the stack  $\bsM$ represents complex
families of nodal curves with punctures. Thus, the family~(\ref{eq:family})
defines a map 
     $$\hat\beta:B\to \bsM.$$

As was shown in~\cite{cex}, the map $\hat\beta$ is not necessarily
\'etale. We will show however, that for any stable punctured curve $X$ 
there exists a choice of a maximally degenerated curve $X_0$, 
together with a choice of local coordinates near the nodes
 so that, for some point $t\in B$,  the map $\hat\beta$ is \'etale at $t$ and 
$\beta(t)$ is presented by $X$.

For the point and the plumbing data 
chosen as above, consider the group $A=\Aut(\hat\beta(t))$. 
According to Lemma~\ref{lem:orbi-neighborhood}, there exists a contractible 
neighborhood $U$  such that $(U,A,\hat\beta)$ 
gives an open embedding $[A\bs U]\to \bsM$.
We also  assume  that $U$ does not 
intersect  coordinate hyperplanes which do not contain $t$.
The singular locus of $(U,A,\beta)$ is the union of coordinate
hyperplanes containing $t$.  
The collection of quasiconformal contractions
is given by the standard contraction of the family 
   $$ \{(z,w,t)\in\C^3|\ |z|\leq 1,\  |w|\leq 1,\  |t|\leq 1,\ zw=t\}$$
over the closed disk $|t|\leq 1$ to the fiber at $t=0$.

\subsubsection{Construction of the family}
\label{choice-of-lc}

Let $(X,x_1,\ldots,x_n)$ be a  
punctured curve with $r$ nodes. 

We endow the complement
$$X_\reg=X - \{\text{nodes and punctures}\}$$
with the canonical complete hyperbolic metric. Choose a
maximal collection of simple disjoint geodesics 
$$C_i, i=1,\ldots,m-r$$ 
on $X$ such that their complement
$$X_\reg - \bigcup_iC_i$$
is a disjoint union of pairs of pants $P_j$, $j=1,\ldots,2g-2+n$.

Note that each geodesic $C_i$ has a natural (angular) parametrization.
To each boundary component of each pair of pants $P_j$ we glue a
punctured disk, so that the angular parametrizations on the common circle
coincide. As a result, we get an embedding of each pair of pants $P_j$
into a triply punctured sphere $S_j$; each punctured disk glued to a pair
of pants $P_j$ defines an open embedding $z:D_0\to S_j$ which is
{\em almost} the local coordinate near the puncture we need.

Here is the reason we will have to make a small adjustment to the embeddings 
$z:D_0\to S_j$. If $w:D_0\to S_k$ is the other local coordinate corresponding
to the same geodesic $C_i$, the gluing formula is $zw=1$, whereas
we were supposed to get $zw=t$ with $|t|<1$.

The lemma below claims that each open embedding $z_i:D_0\to S_j$ can be 
extended to an open embedding $Z_i:D'_0\to S_j$ of a greater
punctured disk. Then we can substitute
$z_i$ with $Z_i(1+\varepsilon)$ so that the geodesic $C_i$ will be given by the
equation $|z|=\frac{1}{1+\varepsilon}$ and the images of the unit disks will 
still have no intersection.

\begin{Lem}
Let $X$ be a bordered Riemann surface and $C$ be its boundary
component endowed with the intrinsic metric. Glue a unit disk $D$ to $X$
so that the common boundary component acquires the same angular coordinate
from $X$ and from $D$.
Let $\wh{X}$ be the resulting Riemann surface.
Then the map $D\to \wh{X}$ extends to an open embedding $D'\to\wh{X}$
of a strictly greater disk $D'\supseteq D$ having the same center.
\end{Lem}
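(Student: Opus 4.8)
The plan is to realize the glued surface $\wh X$ explicitly near the boundary circle and to use the fact that a hyperbolic surface with geodesic boundary has a collar of definite width. First I would note that the boundary component $C$ of the bordered Riemann surface $X$ is a simple closed geodesic (the intrinsic metric on $C$ referred to in the statement is the one induced by the hyperbolic metric on $X$, and the angular parametrization is arc-length rescaled to total length $2\pi$). By the collar lemma for hyperbolic surfaces with geodesic boundary (see e.g.~\cite{ab3} and references there), there is an embedded one-sided collar $\cC$ around $C$ inside $X$, isometric to a standard hyperbolic half-collar $\{(\rho,\varphi):0\le\rho<\rho_0,\ \varphi\in\R/2\pi\Z\}$ with the metric $d\rho^2+\ell^2\cosh^2\!\rho\,d\varphi^2/(2\pi)^2$, where $\ell$ is the length of $C$ and $\rho_0$ depends only on $\ell$ (and can be taken uniformly bounded below as long as $\ell$ is bounded above).

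Next I would pass to the complex picture. The half-collar $\cC$, being an annulus with one geodesic boundary, is conformally a standard annulus $\{r_0<|\zeta|\le 1\}$ in which $C=\{|\zeta|=1\}$, and the angular coordinate on $C$ coming from arc length matches $\arg\zeta$ up to rotation. On the other side, the unit disk $D=\{|z|<1\}$ is glued to $X$ along $C$ so that its boundary circle $\{|z|=1\}$ carries the same angular coordinate. Thus in a neighborhood of $C$ inside $\wh X$ we have two coordinate charts — the disk coordinate $z$ on $D\cup C$ and the coordinate $\zeta$ on $\cC$ — whose transition on the common circle $C$ is $z\mapsto \zeta = z$ (after the rotation is absorbed). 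Because $z$ and $\zeta$ are both holomorphic up to the boundary $C$ and agree on $C$ with a boundary value that is a real-analytic (indeed smooth) diffeomorphism of the circle, the Schwarz reflection principle across the analytic arc $C$ shows that the identification $z\leftrightarrow\zeta$ extends holomorphically to a biholomorphism of a two-sided neighborhood of $C$ in $\wh X$ onto an annulus $\{1-\epsilon<|z|<1+\epsilon\}$ for some $\epsilon>0$.

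Finally I would assemble the extension. The original map $D\to\wh X$ is $z\mapsto z$ on $|z|<1$; by the previous paragraph it extends continuously and holomorphically to $\{|z|<1+\epsilon\}$, giving an open embedding $D'\to\wh X$ with $D'=\{|z|<1+\epsilon\}\supseteq D$ and the same center, as required; injectivity on $D'$ follows by shrinking $\epsilon$ since the extension is a local biholomorphism agreeing with the embedding $D\hookrightarrow\wh X$ on the open set $D$. The main obstacle, and the point deserving care, is the regularity of the gluing across $C$: one must check that the hyperbolic collar gives $C$ the structure of a real-analytic arc in $\wh X$ with respect to which both the disk coordinate and the surface coordinate extend, so that Schwarz reflection applies; this is exactly where the hypothesis that the two sides induce the \emph{same} angular coordinate on $C$ is used, and where the collar lemma provides the quantitative room to guarantee a disk strictly larger than $D$ rather than merely a germ of extension.
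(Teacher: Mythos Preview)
Your argument is correct and lands on the same key observation as the paper: there is a conformal half-annulus in $X$ abutting $C$ whose uniformizing angular coordinate coincides with the hyperbolic angular parametrization of $C$, so that $D$ together with this half-annulus is already a strictly larger round disk inside $\wh X$. The paper obtains that half-annulus by a different device: it forms the double $X^d=X\cup_C\bar X$, observes that the Nielsen extension of $\bar X$ at $C$ (i.e.\ $\bar X$ with the standard hyperbolic funnel attached) embeds conformally into $X^d$, and takes $A\subset X$ to be the piece of the funnel that lands on the $X$-side; then the already-treated half-annulus case gives the larger disk $D\cup A$. Your route via the collar lemma is more direct and quantitative; the paper's route via doubling and the Nielsen extension is slicker and avoids quoting the collar lemma.

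One comment on the write-up: the Schwarz reflection step is unnecessary and slightly misdirected. Once you choose the conformal coordinate on the collar so that $C=\{|\zeta|=1\}$ with $\arg\zeta$ equal to the hyperbolic angular coordinate (which you can, precisely because the standard half-collar is rotationally symmetric), the gluing relation on $C$ is literally $z=\zeta$ (or $z=1/\zeta$, depending on which side of $|\zeta|=1$ your collar sits). Hence $D\cup\cC$ is \emph{immediately} the disk $\{|w|<R\}$ with $w=z$ on $D$ and $w=\zeta$ on $\cC$; there is nothing to reflect or analytically continue. In particular, parametrizing the collar as $\{1\le|\zeta|<R\}$ (so that $C$ is its inner boundary) rather than $\{r_0<|\zeta|\le1\}$ makes the orientation bookkeeping and the conclusion transparent.
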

\begin{proof}The claim is clear if $X$ is a half-annulus
$A=\{z|\ 1\leq |z|<c\}$. Then $\wh{X}$ identifies with the disk
$\{z|\ |z|<c\}$ strictly containing the unit disk.

Now, if $X$ is arbitrary, let
$$X^d=X\cup_C\bar{X},$$
be the double of $X$ with respect to $C$,
where $\bar{X}$ is the antiholomorphic copy of $X$.
Then the Nielsen extension of $\bar{X}$ 
at $C$ embeds into $X^d$ and has form $A\cup\bar{X}$ where $A$
is a half-annulus having $C$ as the boundary and embedded into $X$.
This gives the required extension.
\end{proof}

The $r$ nodes of the original curve $X$ identify some pairs of punctures
of\  $\coprod_jS_j$. This gives a maximally degenerated curve $X_0$
having $r$ ``original'' nodes and $m-r$ new nodes, endowed with local 
coordinates 
$$z_1,\ldots,z_{m-r},w_1,\ldots,w_{m-r}$$
near the $m-r$ new nodes. We can choose the $2r$ coordinates near $r$
``original'' nodes in an arbitrary way. The curve $X$ is obtained from $X_0$
by the plumbing construction with parameters $t=(t_1,\ldots,t_m)$
where the geodesic $C_i$ in the corresponding pair of local coordinates
is given by the equations $|z|=\sqrt{t_i},\ |w|=\sqrt{t_i},\ $
and $t_i=0$ for $i>m-r$.

\subsubsection{The case of a  smooth curve}
\label{ks-calculation}

Assume that $X$ has no nodes.
We assume $X=X_t$ for some $t\in B_0$. The family
$\pi_0:\cX_0\to B_0$ of Riemann surfaces defines a map 
$T\beta:T_tB_0\to T_{\beta(t)}\sM\ $
of complex vector spaces. We want to prove that this map is an isomorphism
if $\pi$ is the family constructed in~\ref{choice-of-lc}.

The tangent space $T_{\beta(t)} \sM\ $ identifies with the cohomology
$H^1(X_t,T)$ where $T$ is the sheaf of vector fields vanishing at the
punctures. The image of a 
vector $v\in T_t(B)=\C^m$ is described by an explicit \v{C}ech 1-cocycle.
Thus the problem reduces to proving that some \v{C}ech 1-cocycles are not
coboundaries. This is, however, difficult to calculate explicitly,
and this is not true for a general choice of local coordinates---see
a counterexample in~\cite{cex}.

\subsubsection{The Fenchel-Nielsen family}
\label{sss:fn-family}
Recall  the construction of the Fenchel-Nielsen coordinates on the \TS.
As above, we have chosen a maximal collection of free loops on the basic
surface $S$. For each $(X,\phi)\in\cT_{g,n}$ a collection 
of geodesics is therefore defined. Their lengths give a
(real-analytic) map
\begin{equation}
\label{length-FN}
L:\cT_{g,n}\rTo \R_+^m
\end{equation}
(Fenchel-Nielsen length coordinates).
Fix $l=(l_1,\ldots,l_m)\in\R_+^m$. The preimage $L^{-1}(l)$ is a 
$\R^m$-torsor with the action of the $i$-th component of $\R$ given by 
cutting of a Riemann surface along the $i$-th geodesic, twisting the 
boundary components one with respect to the other, and gluing them back.

The map $L$ has a section which allows one to define what is classically
known as Fenchel-Nielsen  coordinates. This coordinate system consists of
 $m$ length coordinates~(\ref{length-FN}) 
 and $m$ angular Fenchel-Nielsen coordinates 
$\theta_1,\ldots,\theta_m$, chosen so that the shift by $2\pi$ 
along each coordinate corresponds to the Dehn twist. In what follows we will
use modified angular coordinates $\tau_i=\frac{l_i}{2\pi i}\theta_i$.

For a fixed value $l\in\R_+^m$ the Riemann surfaces from $L^{-1}(l)$
can be organized in a family with the base $\R^m$ --- this family is 
sometimes called the Fenchel-Nielsen deformation. 

Kodaira-Spencer theory~\cite{Kod}  provides for any $X\in L^{-1}(l)$
an $\R$-linear map $\R^m\to H^1(X,T)$, $T$ being the sheaf of
vector fields vanishing at the punctures of $X$.\footnote{Fortunately,
 Kodaira and Spencer developed their theory 
for $C^\infty$ families of complex manifolds!}

We denote the images of the coordinate vectors by 
$\frac{\partial}{\partial\tau_i}\in H^1(X,T).$

\begin{Lem}
The vectors $\frac{\partial}{\partial\tau_i},\ i=1,\ldots,m$, form
a basis of $H^1(X,T)$ over $\C$.
\end{Lem}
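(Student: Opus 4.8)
The plan is to use the fact that $\dim_{\C}H^1(X,T)=m=3g+n-3$, so that it suffices to prove that the $m$ vectors $\frac{\partial}{\partial\tau_i}$ are linearly independent over $\C$. Equivalently, under the Kodaira--Spencer identification $T_X\cT_{g,n}\cong H^1(X,T)$, I want to show that the real $m$-dimensional subspace
$$
W=\mathrm{span}_{\R}\Bigl\{\tfrac{\partial}{\partial\tau_1},\ldots,\tfrac{\partial}{\partial\tau_m}\Bigr\}\subset H^1(X,T)
$$
is totally real, i.e. $W\cap JW=0$, where $J$ denotes the complex structure of $\cT_{g,n}$ at $X$. A dimension count then gives $H^1(X,T)=W\oplus JW$, which is precisely the assertion that a real basis of $W$ is a $\C$-basis of $H^1(X,T)$.

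First I would identify $W$ geometrically. The length map $L\colon\cT_{g,n}\to\R_+^m$ of~(\ref{length-FN}) is a real-analytic submersion, since $(l_i,\theta_i)$ is a global coordinate system on $\cT_{g,n}$; hence the Fenchel--Nielsen fibre $L^{-1}(l)$ through $X$ is a smooth real $m$-manifold with $T_XL^{-1}(l)=\bigcap_i\ker(dl_i)$. The twist flows act along this fibre, and their infinitesimal generators are by definition the vectors $\frac{\partial}{\partial\tau_i}$; therefore $W=T_XL^{-1}(l)=\bigcap_i\ker(dl_i)$. Next I would invoke Wolpert's formula: the Weil--Petersson K\"ahler form on $\cT_{g,n}$ equals, up to a nonzero constant, $\omega_{\mathrm{WP}}=\sum_i dl_i\wedge d\tau_i$ in Fenchel--Nielsen coordinates. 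Restricting to $W=\bigcap_i\ker(dl_i)$ gives $\omega_{\mathrm{WP}}|_W=0$, so $W$ is a Lagrangian subspace of the symplectic vector space $(T_X\cT_{g,n},\omega_{\mathrm{WP}})$. Since $\omega_{\mathrm{WP}}$ is the imaginary part of the (positive definite) Weil--Petersson Hermitian metric, it is $J$-compatible and $g(v,w):=\omega_{\mathrm{WP}}(v,Jw)$ is positive definite.

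Now the conclusion is pure linear algebra: if $v\in W\cap JW$, write $v=Jw$ with $w\in W$; then $g(w,w)=\omega_{\mathrm{WP}}(w,Jw)=\omega_{\mathrm{WP}}(w,v)=0$, because $w$ and $v$ both lie in the Lagrangian subspace $W$. Positive definiteness forces $w=0$, hence $v=0$. Thus $W\cap JW=0$, and comparing real dimensions ($\dim_\R W=m$, $\dim_\R H^1(X,T)=2m$) yields $H^1(X,T)=W\oplus JW$, so $\frac{\partial}{\partial\tau_1},\ldots,\frac{\partial}{\partial\tau_m}$ form a basis of $H^1(X,T)$ over $\C$.

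I expect the only delicate point to be matching conventions: one must check that the vectors $\frac{\partial}{\partial\tau_i}$ produced here via Kodaira--Spencer theory for the $C^\infty$ Fenchel--Nielsen family coincide with the twist vector fields appearing in Wolpert's formula, and that the Kodaira--Spencer map is exactly the differential of the identification $T_X\cT_{g,n}\cong H^1(X,T)$ (so that "basis of $H^1(X,T)$" and "basis of $T_X\cT_{g,n}$" mean the same thing). An alternative route, still relying on Wolpert--Gardiner twist--length duality but avoiding the Lagrangian argument, is to note that $\omega_{\mathrm{WP}}(\,\cdot\,,\frac{\partial}{\partial\tau_i})=\pm\,dl_i$, so that the $J\frac{\partial}{\partial\tau_i}$ are the Weil--Petersson gradients of the functionally independent functions $l_i$ and hence linearly independent; combined with the Lagrangian property this again gives $W\cap JW=0$. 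Either way, the substantive input is Wolpert's computation of $\omega_{\mathrm{WP}}$ in Fenchel--Nielsen coordinates together with positive definiteness of the Weil--Petersson metric.
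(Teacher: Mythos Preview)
Your proof is correct and follows essentially the same route as the paper: both invoke Wolpert's formula $\omega_{\mathrm{WP}}=\sum d\tau_i\wedge dl_i$ together with the nondegeneracy (indeed positive-definiteness) of the Weil--Petersson form. The paper compresses the argument into a single sentence (``Since $\omega_{\mathrm{WP}}$ is nondegenerate, $d\tau_i$, and therefore $\frac{\partial}{\partial\tau_i}$ are linearly independent''), whereas you spell out the standard Lagrangian-hence-totally-real step that turns $\R$-linear independence into $\C$-linear independence; your version is more complete on exactly the point the paper leaves to the reader.
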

\begin{proof}
This follows from the Wolpert's formula~\cite[8.3]{ImaTani}
$$\omega_{\mathrm{WP}}=\sum_{i=1}^m d\tau_i\wedge dl_i$$
for the Weil-Petersson form on the \TS.
Since $\omega_{\mathrm{WP}}$ is nondegenerate, $d\tau_i$,
and therefore $\frac{\partial}{\partial\tau_i}$ are linearly independent.
\end{proof}

Now we can explain what is special 
about our choice of local coordinates.

The pullback of the family $\pi_0:\cX_0\to B_0$ along the map
$$
u:\R^m\rTo B_0
$$  
defined by the formula $u(x_1,\ldots,x_m)=(t_1e^{2\pi ix_1},\ldots,
t_me^{2\pi ix_m})$, is the Fen\-chel-Nielsen family.

Consider the diagram of  maps of tangent spaces
(``chain rule'') 
$$ \R^m \rTo^{Tu} \C^m \rTo^{T\beta}T_{\beta(t)}\sM=H^1(X,T).$$
The composition $T\beta\circ Tu$ sends the standard basis $\{e_i\}$
of $\R^m$ into a $\C$-basis  $\{\frac{\partial}{\partial\tau_i}\}$
of $T_{\beta(t)}\sM$. Since the map $Tu$ also sends the basis
of $\R^m$ into a basis of $\C^m$, the map $T\beta$ is an isomorphism.

This proves that in the case of $X$ smooth the special chart we have 
defined in~\ref{choice-of-lc} is \'etale at $t\in B_0$ for which $X=X_t$. 

The case of nodal curves is considered below.

\subsubsection{Proof of the \'etalness for nodal curves}
\label{sss:etality-nodal}
Let $X$ be a nodal punctured curve and let $\pi:\cX\to B$
be the family of curves built by the plumbing construction with the
special choice of the local coordinates as in~\ref{choice-of-lc}.
Assume that $X=X_t$ for $t\in\bar{B}$.
We have to check that the map of the tangent spaces
$$T(\beta):T_t B \rTo T_{\beta(t)} \bsM$$
is an isomorphism.
Since the dimensions of the vector spaces coincide, it is sufficient
to prove the injectivity. Let\ $t=(t_1,\ldots,t_m)$ and let
$$v=(v_1,\ldots,v_m)\in T_t B $$ 
belong to the kernel of $T(\beta)$. 
The target of  $T(\beta)$ is the collection
of deformations of $X_t$ over $\C[\epsilon]/(\epsilon^2)$. 
Triviality of such a deformation means 
in particular that all nodes of $X_t$  are 
preserved under the deformation; in other words,  one has 
$$ t_i=0 \Longrightarrow v_i=0.$$

Assume for simplicity that 
$t_1=\ldots=t_k=0=\tau_1=\ldots=\tau_k$ and $t_i\ne 0$ for $i>k$.
The normalization 
$X_t^\nor$ of $X_t$
is a smooth curve with $n+2k$ punctures
(all preimages of the nodes become punctures). 
Let $X'_0$ be obtained from $X_0$ by ungluing the first $k$
nodes and turning them into $2k$ punctures.%
\footnote{To unglue a single node  
$q$ we choose a neighborhood $U\ni q$ which does not contain  
other nodes, normalize $U$ and paste the result back.
We assign labels $n+1,n+2,\ldots, n+2k$ 
to the new $2k$ punctures in an arbitrary way.}
Let $B'=D^{n-k}$ and   let   
 $$ \pi':\cX'\to B'$$ 
be the family of curves obtained from $X'_0$ by the plumbing
construction with the same special choice of the local coordinates near
the punctures as specified in~\ref{choice-of-lc}. 
Then $X_t^\nor$ appears in this family
as the fiber of $\pi'$ at 
$t'=(t_{k+1},\ldots,t_n)\in B'$.
Therefore, the tangent vector $$v'=(v_{k+1},\ldots,v_n) \in T_{t'} B'$$ 
belongs to the kernel of the  map 
  $$T(\beta'):T_t' B' \to T_{\beta'(t')} \bsM,$$
where $\beta':B'\to\bsM_{g,n+2k}$ is  the map inducing the family $\pi'$.

Since we have already proved the \'etalness for  the smooth curve
$X^\nor$, it follows that it also holds  for $X$.

\subsubsection{The charts form an atlas}
\label{atlas-for-bM}
First of all, organize the charts $(U,A,\beta)$ constructed above
into a category as is explained in~\ref{orb-to-satake}. 
This gives a category 
$\cQ$ whose objects are triples $(U,A,\hat\beta)$, 
where  $\hat\beta:[A\bs U]\to\bsM$ is an open embedding
and whose morphisms consist of morphisms of such charts,
together with a $2$-isomorphism between their maps to $\bsM$.

Note that, according to our choice, each chart $(U,A)$ satisfies
the following property: $A$ has a fixed point in $U$. This implies,
in particular, that all maps of charts defined by arrows of $\cQ$, 
are injective. 

Let us show that the the category $\cQ$
together with the obvious  functor $$c:\cQ\to\Charts/\bM,$$
defines an orbifold atlas. The only thing to check is  the 
condition (ii) in the definition of 
orbifold atlas~\ref{satake-orbifold-atlas}.
Let 
$$[A_i\bs U_i]\to\bsM, \ i=1,2,$$ 
be two orbifold charts having a common
point $x\in\bM$ in the image. We can 
assume that $U_i$ is small enough so that $x_i$ is the only preimage
of $x$ in it. In this case the groups $A_1$ and $A_2$ can be identified 
with $A=\Aut(x)$. We will use this identification. 
Consider
$$W=\left[A\bs U_1\right]\times_{\bsM}U_2.$$
 The induced map $W\to U_2$ is an open embedding, equivariant with
 respect to the action of  $A$. 
This defines an abstract orbifold chart $(W,A)$ together with 
open embeddings
$[A\bs W]\to\relax[A\bs U_1]$ and $[A\bs W]\to\relax[A\bs U_2]$. 
Since $W$ is an open subset of $U_2$, the chart $(W,A)$
belongs to our collection.

The atlas of quasiconformal charts for $\bsM$ is constructed.

\section{Augmented \TS s from the complex-analytic point of view}
\label{sec:teich}
In this section we  study complex-analytic properties of Bers' augmented
\TS s  $\bT_{g,n}$. The space  $\bT_{g,n}$ is obtained by adding to
the classical \TS\ $\cT_{g,n}$ points  corresponding to Riemann
surfaces with nodal  singularities. 
Unlike $\cT_{g,n}$, the space $\bT_{g,n}$ is not a complex 
manifold (it is not even locally compact). 
However, as we show in this section, the quotient of
$\bT_{g,n}$ by any finite index subgroup $G$ of the \teich\ modular group  
$\Gamma_{g,n}$ is a normal complex space. 
More precisely, we prove (see  Theorem~\ref{thm:orbifoldstructure})
that $G\bs \bT_{g,n}$  has a canonical structure of a complex orbifold.  

\subsection{Complex structure on $G\bs \bT_{g,n}$: markings}
\label{chart-V}

Let $2g+n>2$ and $G$ be a finite index subgroup of the corresponding modular
group $\Gamma$.
Two markings $\phi,\phi'$ of a nodal curve $X$ are called $G$-equivalent
if there exists $g\in G$ such that $\phi'$ is isotopic to $g(\phi)$.
Points of the quotient space $G\bs\bT_{g,n}$ are pairs $(X,\phi)$,
where $X$ is a  stable curve of genus $g$ 
 with $n$ punctures and $\phi$ is
a $G$-equivalence class of markings (a $G$-marking).

We  are going
to construct an orbifold atlas for the quotient $G\bs\bT_{g,n}$. 

Shortly, the idea is the following.    We 
start with a quasiconformal orbifold atlas $\cQ$ atlas for 
the moduli stack $\bsM$ of stable curves (see Section~\ref{ss:qc}).

Then, for each chart $(U,A,\beta)\in\cQ$, endowed with an additional datum
(a marking of the singular fiber) we construct a chart $(V,H,\alpha)$
for $G\bs\bT_{g,n}$ making the diagram
$$
\begin{diagram}
V &  & &\rTo^\alpha & & &  G\bs\bT_{g,n} \\
\dTo & & & & & & \dTo \\
U &\rTo &[A\bs U] &\rTo^{\hat\beta} & \bsM & \rTo & \bM 
\end{diagram}
$$
commutative. Here $\hat\beta$ is the embedding of stacks determined by $\beta$.

Finally some work is needed to get everything arranged into an orbifold atlas 
and to prove various compatibilities.

In this subsection we 
present the construction of a chart $(V,H,\alpha)$ of $G\bs\bT_{g,n}$
based on a choice of $(U,A,\beta)\in\cQ$ and on a choice of a marking of
the special fiber of the family defined by $U$. 
We show that these charts can be arranged into an orbifold atlas 
$\cA\to\Charts(G\bs\bT)$.

As a result of the construction of the atlas, we get
a natural complex orbifold structure on the quotient $G\bs\bT_{g,n}$.
We denote the obtained orbifold by $[G\bs\bT_{g,n}]$. It is connected to
other spaces and orbifolds as shown in the diagram~(\ref{eq:smalldiag}) 
below. These connections are described in the following theorem whose 
proof occupies Sections~\ref{chart-V}--\ref{sec:projection}.

\begin{thm}\label{thm:orbifoldstructure}
Let $G$ be a finite index subgroup of the \teich\ modular group
$\Gamma=\Gamma_{g,n}$. Then the quotient space $G\bs\bT$ is the coarse
space of a naturally defined complex orbifold $[G\bs\bT]$\ so that
the quotient orbifold $[G\bs\cT]$ becomes its open substack.

The quotient map $\bT\to G\bs\bT$ factors through a map
$$\pi_G:\bT\to\relax[G\bs\bT];$$ 
the composition $[G\bs\cT]\to G\bs\bT$ factors
through $[G\bs\bT]\to G\bs\bT$ 
and the composition $[G\bs\cT]\to\overline{\sM}$ factors through 
a canonically defined morphism  $[G\bs\bT]\to\bsM$
(see the dashed arrows in the diagram~(\ref{eq:smalldiag}) below).

In particular, the quotient  $G\bs\bT$ has a natural structure of a
normal complex space extending that on  $G\bs\cT$.
\end{thm}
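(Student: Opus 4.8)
The plan is to build the promised complex orbifold structure on $G\bs\bT_{g,n}$ by constructing an explicit orbifold atlas in the sense of Definition~\ref{satake-orbifold-atlas}, then apply Theorem~\ref{satake-groupoid} to obtain the orbifold $[G\bs\bT]$ together with the homeomorphism of its coarse space with $G\bs\bT_{g,n}$. The raw material is the quasiconformal atlas $\cQ$ for $\bsM$ provided by Theorem~\ref{thm:atlas}. First I would fix a quasiconformal chart $(U,A,\beta)\in\cQ$, with associated family $\pi:\cX\to U$, singular fiber $X_z$ over the $A$-fixed point $z\in U$, and smooth locus $U_0$. A marking $\phi$ of $X_z$ determines, via the quasiconformal contractions $c^t_s$ of (QC2), a consistent choice of markings on nearby fibers up to the Dehn twists around the vanishing cycles; concretely, by Proposition~\ref{prp:dehn}, markings of $X_z$ correspond to $\langle\text{Dehn twists}\rangle$-orbits of markings of a smooth nearby fiber. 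Using (QC6), the fundamental group of $U_0$ is identified with the subgroup $D\subset\Gamma$ generated by the Dehn twists around the $\phi^{-1}(x)$, $x$ a node of $X_z$, so a marking $\phi$ of $X_z$ picks out a specific conjugacy class of such identifications.

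The chart $(V,H,\alpha)$ over $U$ is then obtained as follows. Over the smooth locus $U_0$, the marked curves in the family (with the marking propagated by the quasiconformal contraction) define a map $U_0\to\cT_{g,n}$; composing with $\cT_{g,n}\to G\bs\cT_{g,n}$ gives a map whose relevant covering space I take as the smooth part $V_0$ of $V$ --- explicitly $V_0$ is the pullback to $U_0$ of $\cT\to G\cap D\,\bs\cT$ in the appropriate sense, i.e. the covering of $U_0$ classified by the subgroup $(G\cap D)$ of $\pi_1(U_0)\cong D$. Since $G$ has finite index in $\Gamma$, $G\cap D$ has finite index in $D$, so $V_0\to U_0$ is a finite covering; it extends over the coordinate hyperplanes of $U$ by the standard normalization/branched-covering construction (each $t_i$-coordinate gets replaced by a root $t_i^{1/e_i}$ where $e_i$ is the ramification index of the corresponding Dehn twist in $D/(G\cap D)$), yielding a smooth $V$ mapping finitely to $U$. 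The group $H$ is the group of deck transformations of $V\to U$ lifted appropriately: it is an extension of $A$ by the finite group $D/(G\cap D)$, acting on $V$ with $[H\bs V]\to\bsM$ an open embedding. The map $\alpha:V\to G\bs\bT_{g,n}$ sends a point of $V$ to the corresponding fiber curve equipped with its propagated $G$-marking; that $\alpha$ identifies $V/H$ with an open subset of $G\bs\bT$ follows because the topology of $\bT$ near the boundary (the $\cU_{N,\epsilon}$ of the excerpt) is exactly matched by the quasiconformality in (QC2), so $\alpha$ is a local homeomorphism onto its image after quotienting.

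Next I would organize these charts into a chart category $\cA$ mapping to $\Charts/(G\bs\bT)$: objects are the data $(U,A,\beta)\in\cQ$ together with a $G$-orbit of markings of $X_z$, morphisms are induced from morphisms in $\cQ$ together with the compatible lifts of markings (using (QC4) to see the propagated markings agree on overlaps up to isotopy). One checks $\cA$ is a chart category (endomorphisms invertible, $\Hom$-sets are $\Aut$-torsors) directly from the corresponding property of $\cQ$ and the fact that the marking data rigidifies things exactly as in Remark~\ref{eff:equivalence-of-def}. The covering conditions (i),(ii) of Definition~\ref{satake-orbifold-atlas} follow from the same conditions for $\cQ$ (Theorem~\ref{thm:atlas}) together with Proposition~\ref{prp:dehn} to lift the covering of a point of $\bM$ to a covering of its preimages in $G\bs\bT$. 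Then Theorem~\ref{satake-groupoid} produces $[G\bs\bT]$ and the commutativity of the displayed square, while the inclusion of charts lying entirely in $U_0$ gives the open substack $[G\bs\cT]\hookrightarrow[G\bs\bT]$, which coincides with the quotient orbifold since over the smooth locus $\bT=\cT$ is a manifold and $G$ acts properly discontinuously. The morphism $[G\bs\bT]\to\bsM$ is assembled chart-by-chart from the $\hat\beta$'s using Corollary~\ref{crl:universal-property}; normality of the coarse space $G\bs\bT$ follows because $[G\bs\bT]$ is a complex orbifold and coarse spaces of complex orbifolds are normal complex spaces (cf.~\ref{ss:coarse}). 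I would defer the construction of the factorization $\pi_G:\bT\to[G\bs\bT]$ to the later sections as the theorem statement indicates, since it genuinely requires parts (ii),(iii) of the main theorem.

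\textbf{Main obstacle.} The delicate point is verifying property (QC2)-type quasiconformality is enough to guarantee that $\alpha:V\to G\bs\bT$ is a local homeomorphism onto an \emph{open} subset in the peculiar (non-locally-compact) topology of $\bT$ --- i.e. that the images $\alpha(V)$ genuinely form a basis compatible with the $\cU_{N,\epsilon}$. This is where the careful matching between the plumbing parameter $|t_i|$ and the "size" of the quasiconformal deformation must be made precise; everything else is bookkeeping with groupoids, coverings, and the already-established quasiconformal atlas. A secondary technical nuisance is keeping track of the non-effective part of $H$ (the generic automorphisms of the curves), which is exactly what the generalized Satake formalism of Section~\ref{sec:satake} was set up to handle.
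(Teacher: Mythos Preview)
Your overall strategy matches the paper's closely: lift each quasiconformal chart $(U,A,\beta)\in\cQ$, together with a marking $\phi$ of $X_z$, to a chart $(V,H,\alpha)$ of $G\bs\bT$, organize these into a chart category, and apply Theorem~\ref{satake-groupoid}. The identification of openness of $\alpha$ as the key analytic point is also correct (the paper isolates this as Proposition~\ref{bar-alpha-is-open}).

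However, your construction of the pair $(V,H)$ contains two genuine errors, not just bookkeeping slips. First, you describe $V_0$ as the covering of $U_0$ classified by the subgroup $G\cap D\subset D\cong\pi_1(U_0)$, and then say the extension to $V$ is obtained by replacing each $t_i$ by $t_i^{1/e_i}$. These two descriptions disagree. Writing $\Gamma_0$ for $D$, the paper distinguishes the subgroup $G\cap\Gamma_0$ from the smaller subgroup $\Gamma'_0=\langle D_1^{k_1},\ldots,D_r^{k_r}\rangle$ with $k_i=\min\{k:D_i^k\in G\}$; in general $\Gamma'_0\subsetneq G\cap\Gamma_0$ (take $\Gamma_0=\Z^2$ and $G\cap\Gamma_0$ an index-$3$ sublattice not of product form). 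The covering corresponding to $G\cap\Gamma_0$ is the paper's $Z$, and its normalization over $U$ need \emph{not} be smooth. The paper instead takes $Y=\Gamma'_0\bs Q$ precisely so that the normalization $V$ of $U$ in the function field of $Y$ is the obvious smooth polydisk branched to order $k_i$ along the $i$th coordinate hyperplane. Without this, $V$ is not a manifold and you do not get an orbifold chart.

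Second, your group $H$ is wrong on both ends of the extension. It is not an extension of all of $A$: only the stabilizer $A_Z\subset A$ of the relevant component $Z\subset G\bs P$ lifts to act on $V$ (an element of $A$ that moves the $G$-marking class to a different one cannot be a symmetry of this chart). And the kernel of $H\to A_Z$ is not $\Gamma_0/(G\cap\Gamma_0)$ but rather $(G\cap\Gamma_0)/\Gamma'_0$, reflecting the gap between $Y$ and $Z$ just discussed. The correct short exact sequence is $1\to (G\cap\Gamma_0)/\Gamma'_0\to H\to A_Z\to 1$; see the paper's Lemma~\ref{lem:lift} and the alternative description in~\ref{another-H}. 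Getting $H$ right is essential for Theorem~\ref{th:eq}, which shows the fibers of $\alpha$ are exactly $H$-orbits --- this is what makes $(V,H,\alpha)$ an honest orbifold chart rather than merely a surjection onto an open set.
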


\begin{equation}\label{eq:smalldiag}
\begin{diagram}
 \cT & \rTo & & &\bT   \\ 
\dTo& & & \ldDashto^{\pi_G} &\dTo\\
[G\bs \cT] &\rDashto &[G\bs\bT] &  &\\
\dTo &        &\dDashto &\rdDashto &\\
G\bs\cT &\rTo & & & G\bs \bT \\
\dTo  & & & &\dTo\\
\sM & \rTo & \overline{\sM} & \rTo &  \overline{\cM} 
\end{diagram}
\end{equation}

\

\subsubsection{Space of markings of fibers}
\label{sss:marking-the-fibers}
\label{transfer-of-markings}

Let $(U,A,\beta)$ be a quasiconformal chart in $\cQ$. 
In what follows we adopt the notations of~\ref{sss:QCdef} where the 
notion of quasiconformal chart is discussed.

A collection of contractions $c_s^t:X_s\to X_t$ allows one to transfer
markings from $X_s$ to $X_t$. We will show in~\ref{prp:independence-of-c} 
below that, even though we do not fix the contractions but only require 
their existence, the transfer of markings in a quasiconformal chart is
defined uniquely.

Fix a quasiconformal contraction 
\begin{equation}\label{eq:qc-contraction}
c^t:\cX^t\to X_t
\end{equation}
and a marking 
$$\phi:S\to X_t.$$
We say that a
marking $\phi_s$ of $X_s,\ s\in U^t,$ is \emph{consistent} with the
given marking $\phi:S\to X_t$ via $c^t$
if the marking $c^t_s \circ \phi_s$ of $X_t$ is equivalent to $\phi$.

\

Fix $t$ and $\phi:S\to X_t$ as above.
For $s\in U_0^t=U_0\cap U^t$ denote by $P_s$ the set of all markings of 
$X_s$ and by $Q_s$ the subset of markings in $P_s$ consistent with $\phi$.
 The sets $P_s$ and $Q_s$ combine into coverings of $U_0^t$,

$$
p:P \rTo U_0^t  \text{\ \  and\ \ } q:Q \rTo U_0^t,
$$
so that $P_s=p^{-1}(s),\ Q_s=q^{-1}(s)$. 

The coverings $p$ and $q$ are torsors over $U_0^t$  respectively
for the groups $\Gamma$ and $\Gamma_0$,
the free abelian subgroup of $\Gamma$ generated by the Dehn
twists around the curves $\phi^{-1}(x_i)$, where $x_i,\ i=1,\ldots,r$
are the nodes of $X_t$.

The covering $q$ is   a universal covering of $U_0^t$ and
$p$ can be recovered from it as follows:
\begin{equation}\label{eq:pq}
P=\Gamma\times^{\Gamma_0}Q.
\end{equation}

\

The roles of the coverings $p$ and $q$ is explained by the
following.

\begin{Lem}
Let $\pi':\cX'\to Y$  be the family of curves induced from
$\pi:\cX\to U_0^t$ via a map $Y \to U_0^t$. Then markings of $\pi'$
correspond to sections of the covering $P'\to Y$ induced from $p$.
The sections of the covering $Q'\to Y$ induced from $q$ correspond to
the markings of $\pi'$ consistent with $\phi$.
\end{Lem}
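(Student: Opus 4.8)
The lemma is a base-change (functoriality) statement for the covering of markings, combined with an unwinding of the definition of consistency. Recall that $\pi\colon\cX\to U_0^t$ is a family of \emph{smooth} punctured curves --- it is the restriction to $U_0^t=U_0\cap U^t$ of the plumbing family, which is smooth over $B_0$ --- and is therefore topologically a locally trivial bundle whose fibre is the underlying punctured surface of $S$. By definition the covering $p\colon P=P(\pi)\to U_0^t$ has fibre $P_s=\pi_0(\Diff^+(S,X_s))$ over $s$, a marking of $\pi$ is a section of $p$, and a marking of the induced family $\pi'\colon\cX'\to Y$ is a section of the covering $P(\pi')\to Y$. The plan is to identify $P(\pi')$ with the pulled-back covering $P'=P\times_{U_0^t}Y$, after which both assertions follow by unwinding definitions.

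First I would prove that $P(\pi')\cong P'$ as coverings of $Y$, not merely fibrewise. Write $f\colon Y\to U_0^t$ for the structure map. The fibrewise identification is automatic: the fibre of $\pi'$ over $y$ is $X_{f(y)}$, so $P(\pi')_y=\pi_0(\Diff^+(S,X_{f(y)}))=P_{f(y)}=P'_y$. To globalize, I would work over a trivializing open $V\subset U_0^t$ with $\cX|_V\cong V\times X_0$; then $P(\pi)|_V\cong V\times\pi_0(\Diff^+(S,X_0))$ is trivial, $\cX'|_{f^{-1}(V)}\cong f^{-1}(V)\times X_0$, and $P(\pi')|_{f^{-1}(V)}\cong f^{-1}(V)\times\pi_0(\Diff^+(S,X_0))=(f^{*}P(\pi))|_{f^{-1}(V)}$. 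These local trivializations are induced by the transition maps of $\cX$, so they agree on overlaps and glue to an isomorphism $P(\pi')\cong P'$. Consequently a marking of $\pi'$ is the same as a section of $P'\to Y$, which is the first assertion.

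For the second assertion I would unwind the definition of consistency. Fix the quasiconformal contraction $c^t\colon\cX^t\to X_t$ and the marking $\phi\colon S\to X_t$. For $s\in U_0^t$ one has, by the very definition of $Q_s$, that $\psi\in Q_s$ if and only if $c^t_s\circ\psi$ is isotopic to $\phi$, i.e.\ if and only if $\psi$ is consistent with $\phi$ via $c^t$ (note that $c^t_s\colon X_s\to X_t$ is a contraction for every $s\in U^t$, in particular for $s\in U_0^t$). A marking $\Psi$ of $\pi'$, viewed through the first assertion as a section $y\mapsto\Psi(y)\in P_{f(y)}$, is consistent with $\phi$ precisely when $\Psi(y)$ is consistent with $\phi$ for every $y\in Y$, i.e.\ when $\Psi(y)\in Q_{f(y)}$ for all $y$, i.e.\ when the section $\Psi$ factors through the sub-covering $Q'=f^{*}Q\subset P'$ (this is unambiguous since $Q'$ is open and closed in $P'$). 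Hence markings of $\pi'$ consistent with $\phi$ correspond exactly to sections of $Q'\to Y$.

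I expect the only genuinely delicate point to be the globalization in the second paragraph: upgrading the evident fibrewise bijection $P(\pi')_y\cong P'_y$ to an isomorphism of coverings. This is exactly where one uses the local triviality of $\pi$ and the compatibility of the assignment $s\mapsto\pi_0(\Diff^+(S,X_s))$ with restriction of the base along a continuous map; once that is in place, the rest is a matter of matching definitions.
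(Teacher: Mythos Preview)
Your argument is correct and is precisely the unpacking of definitions that the paper has in mind; the paper itself gives no proof beyond a bare \qed, treating the lemma as an immediate consequence of the definition of marking as a section of $P(\pi)\to B$ together with the obvious naturality of the assignment $\pi\mapsto P(\pi)$ under base change. Your explicit verification of the base-change isomorphism $P(\pi')\cong f^*P$ via local trivializations, and the pointwise reading of ``consistent with $\phi$'' as ``lands in $Q$'', is exactly what underlies that \qed.
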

\qed

One has a sequence of canonical maps $Q\to P\rTo^\alpha\cT$.

Note that $Q$ is a connected component of $P$; its choice depends on the choice
of the marking $\phi$. If $\gamma\in\Gamma$ then the marking 
$\phi'=\phi\circ\gamma$ corresponds to the component 
$Q'=\gamma(Q)$ of $P$.  
This gives the following geometric way of marking a curve $X_t$.
\begin{crl}
For a fixed quasiconformal contraction~(\ref{eq:qc-contraction})
$c^t:\cX^t\to X_t$,  there is a natural one-to-one correspondence
between markings of $X_t$  and components of $P$.
\end{crl}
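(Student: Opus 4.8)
The plan is to display the claimed correspondence explicitly as $\phi\mapsto Q_\phi$, where $Q_\phi\subset P$ is the set of markings of the smooth fibers $X_s$, $s\in U_0^t$, that are consistent with $\phi$ via the fixed contraction $c^t$, and then to verify that this assignment is a well-defined bijection. Most of the preparatory work has already been done in the discussion preceding the statement: there it is recorded that $q\colon Q_\phi\to U_0^t$ is a $\Gamma_0$-torsor (so $Q_\phi$ is non-empty and connected), that $Q_\phi$ is one of the connected components of the $\Gamma$-covering $p\colon P\to U_0^t$, and that replacing $\phi$ by $\phi\circ\gamma$ replaces $Q_\phi$ by $\gamma(Q_\phi)$. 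Hence $\phi\mapsto Q_\phi$ is a well-defined, $\Gamma$-equivariant map from the set of isotopy classes of markings of $X_t$ --- a non-empty set, since by construction $X_t$ carries the marking $\phi$ used to define $Q$, and in any case $\bT_{g,n}\to\bM_{g,n}$ is surjective --- to the set of connected components of $P$. It then remains to prove surjectivity and injectivity.

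For surjectivity I would use the identity $P=\Gamma\times^{\Gamma_0}Q$ from (\ref{eq:pq}) together with the connectedness of $Q$. Fixing any marking $\phi_0$ of $X_t$, so that $Q=Q_{\phi_0}$, the $\Gamma$-action on $P$ permutes the connected components transitively and the component of $Q$ is $\{1\}\times^{\Gamma_0}Q$; hence every component has the form $\gamma(Q)$ for some $\gamma\in\Gamma$, and $\gamma(Q)=Q_{\phi_0\circ\gamma}$. Thus the map hits every component of $P$.

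For injectivity I would argue directly, without trying to match point-stabilizers. Suppose $Q_\phi=Q_{\phi'}$ for two markings $\phi,\phi'$ of $X_t$. The base $U_0^t=U_0\cap U^t$ is non-empty --- by (QC1) the singular locus is a union of coordinate hyperplanes, so $U_0$ is dense in $U$ --- and $q\colon Q_\phi\to U_0^t$ is a $\Gamma_0$-torsor, in particular surjective, so I may choose $s\in U_0^t$ and a point $\phi_s\in Q_\phi=Q_{\phi'}$ lying over it. By the definition of consistency, $c^t_s\circ\phi_s$ is isotopic both to $\phi$ and to $\phi'$, whence $\phi\simeq\phi'$, i.e.\ $\phi$ and $\phi'$ are the same isotopy class of markings of $X_t$. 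This completes the argument.

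The step I expect to need the most care is not the corollary itself but making sure its hypotheses are legitimately in place --- namely that the ``consistent markings'' covering $q\colon Q\to U_0^t$ really is a $\Gamma_0$-torsor (equivalently, that markings do transfer along each contraction $c^t_s\colon X_s\to X_t$, and that two such transfers differ precisely by a Dehn twist about a vanishing cycle) and that $Q$ is connected because $U_0^t$ is a $K(\Gamma_0,1)$. Once those facts are granted the corollary is essentially bookkeeping: one is merely reading off that the two transitive $\Gamma$-sets --- $\pi_0(P)$ and the set of isotopy classes of markings of $X_t$, both with point-stabilizer $\Gamma_0$ --- are identified by $\phi\mapsto Q_\phi$, and the direct injectivity argument above means one never even has to invoke the stabilizer computation.
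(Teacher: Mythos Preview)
Your proof is correct and follows the same route the paper intends: the corollary is stated immediately after the observations that $Q$ is a connected component of $P$, that $P=\Gamma\times^{\Gamma_0}Q$, and that $\phi\mapsto Q_\phi$ is $\Gamma$-equivariant, and the paper simply records $\qed$. Your write-up just makes these implicit steps explicit, and your direct injectivity argument (pick $\phi_s\in Q_\phi=Q_{\phi'}$ and compose with $c^t_s$) is exactly the unwinding of the definition of consistency that the paper leaves to the reader.
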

\qed

We claim that this correspondence is independent of the choice of a
quasiconformal contraction.
To justify this we will present an independent characterization of a marking
defined by the choice of a component in $P$. We proceed as follows.

The point $t\in U^t$ admits a basis of neighborhoods $U^\delta$,
such that
$(U^\delta,A^t)$, where $A^t=\Stab_A(t))$,
is a subchart of $(U,A)$ satisfying the 
conditions (QC1)--(QC6) 
 in~\ref{satake-orbifold-atlas}.

Let $P^\delta$ and $Q^\delta$ be the spaces defined as above
with $U^\delta$ instead of $U$.
Since $U_0^t$ and $U_0^\delta=U_0^t\cap U^\delta$ have the same
fundamental groups, each component of $P$ contains precisely one component of 
$P^\delta$. Denote by $\ol{Q^\delta}$ the closure of $Q^\delta$ in the
augmented \TS\ $\bT$.

\begin{prp}
\label{prp:independence-of-c}
In the above notation, $\phi$ is the only marking of $X_t$ for which
$(X_t,\phi)$ belongs to the intersection 
$\bigcap_\delta\ol{Q^\delta}$. 
\end{prp}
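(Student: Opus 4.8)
The plan is to characterize the marking $\phi$ purely in terms of which smooth curves near $X_t$ in the augmented \TS\ carry markings consistent with $\phi$, and then to show that this characterization survives taking the intersection over the neighborhood basis $U^\delta$. First I would recall that, by construction, a component $Q$ of $P$ is a connected $\Gamma_0$-torsor over $U_0^t$, where $\Gamma_0\subset\Gamma$ is the group generated by the Dehn twists around the vanishing cycles $\phi^{-1}(x_i)$; by Proposition~\ref{prp:dehn}, the marking $\phi$ of $X_t$ is exactly the datum of the $\Gamma_0$-orbit of markings of a nearby smooth fiber obtained by transporting $\phi$ along a small path, i.e.\ exactly the datum of the component $Q\subset P$. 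So the content of the proposition is that the component $Q$ is recovered from the collection $\{\ol{Q^\delta}\}_\delta$, and not just from any single $Q^\delta$ (which would already determine $Q$) — the subtlety is that the closure operation in the non-locally-compact space $\bT$ could a priori attach to $\ol{Q^\delta}$ boundary points $(X_t,\phi')$ with $\phi'\neq\phi$.

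The key steps, in order: (1) Show that $(X_t,\phi)\in\ol{Q^\delta}$ for every $\delta$. This is where the quasiconformal structure of the chart is used essentially: fix the quasiconformal contraction $c^t:\cX^t\to X_t$, pick a sequence $s_j\to t$ in $U_0^\delta$, and let $\phi_{s_j}$ be the marking of $X_{s_j}$ consistent with $\phi$ via $c^t$, so $(X_{s_j},\phi_{s_j})\in Q^\delta$. Property (QC2) says that for any chosen neighborhood $N$ of the nodes of $X_t$ and any $\varepsilon>0$, the contraction $c^t_{s_j}:X_{s_j}\to X_t$ is $(1+\varepsilon)$-quasiconformal outside the preimage of $\ol N$ once $s_j$ is close enough to $t$; by the very definition of the basic neighborhoods $\cU_{N,\varepsilon}$ of the topology of $\bT$ (Section~\ref{sec:modular}, the paragraph defining $\cU_{N,\varepsilon}$), this means $(X_{s_j},\phi_{s_j})\in\cU_{N,\varepsilon}$, and as $N$ shrinks and $\varepsilon\to0$ these sets form a neighborhood basis of $(X_t,\phi)$. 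Hence $(X_t,\phi)=\lim_j(X_{s_j},\phi_{s_j})\in\ol{Q^\delta}$. (2) Show the reverse inclusion at the level of singular points: if $(X_t,\phi')\in\ol{Q^\delta}$, then $\phi'$ must be consistent with $\phi$. Suppose $(X_{s_j},\psi_j)\in Q^\delta$ converge to $(X_t,\phi')$ in $\bT$; by definition of the topology there are contractions $f_j:X_{s_j}\to X_t$ with $\psi_j\mapsto f_j\circ\psi_j\sim\phi'$ and $f_j$ increasingly close to conformal away from the nodes. On the other hand $\psi_j$ is consistent with $\phi$ via $c^t$, i.e.\ $c^t_{s_j}\circ\psi_j\sim\phi$. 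Comparing the two contractions $f_j$ and $c^t_{s_j}$ of the same curve $X_{s_j}$ onto $X_t$: both are homeomorphisms $X_{s_j,\reg}\to X_{t,\reg}$ that become uniformly quasiconformal off neighborhoods of the nodes, so for $j$ large the composite $c^t_{s_j}\circ f_j^{-1}$ is a self-map of $X_t$ which is close to conformal outside a small neighborhood of the nodes and maps each node to a node; by a normal-families / Carathéodory-type argument (or the standard rigidity of quasiconformal contractions, compare Definition~\ref{dfn:contraction} and the surrounding discussion in~\cite{ab3}) this composite is isotopic to the identity rel the structure. Therefore $\phi'\sim c^t_{s_j}\circ f_j^{-1}\circ f_j\circ\psi_j\sim c^t_{s_j}\circ\psi_j\sim\phi$ up to isotopy, so $\phi'=\phi$ in $\bT_{g,n}$. (3) Conclude: combining (1) and (2), $(X_t,\phi')\in\bigcap_\delta\ol{Q^\delta}$ forces $\phi'=\phi$, while $(X_t,\phi)$ does lie in this intersection, so it is the unique such marking.

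The main obstacle is step~(2), specifically the rigidity statement that two contractions of $X_s$ onto $X_t$ which are both asymptotically conformal away from the nodes differ by something isotopic to the identity. One has to be careful that "isotopic" here is in the sense of Definition~\ref{dfn:contraction}(iv) — isotopic as a map $S\to X_t$ after precomposition with the markings, i.e.\ in the mapping-class-group sense — rather than a pointwise isotopy, since $X_t$ is singular. I expect the cleanest route is to pass to the smooth fiber $X_s$ itself, where the two markings $\psi_j$ (consistent with $\phi$) and $f_j^{-1}$-transport of $\phi'$ differ by an element $\gamma_j\in\Gamma$; the quasiconformality bounds force the \teich\ distance (or extremal length of the relevant curves) between $\gamma_j$-translates to stay bounded while $s_j\to t$, which pins $\gamma_j$ down to the finite set of mapping classes fixing $X_t$ up to the Dehn-twist subgroup $\Gamma_0$, and the consistency condition $\psi_j\in Q^\delta$ (a single $\Gamma_0$-torsor component) then forces $\gamma_j\in\Gamma_0$, hence $\phi'\sim\phi$. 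This is the step where the specific properties (QC2)–(QC4) of quasiconformal charts, rather than just continuity, are indispensable; everything else is bookkeeping with the definitions of the topology on $\bT$ and of $G$-markings.
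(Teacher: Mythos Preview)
Your step (1) is correct and actually proves more than you extract from it: the argument via (QC2) shows not merely that $(X_t,\phi)$ is a limit of points of $Q^\delta$, but that for every basic neighborhood $\cU_{N,\varepsilon}(X_t,\phi)$ there is a $\delta$ with $Q^\delta\subset\cU_{N,\varepsilon}(X_t,\phi)$ (since the quasiconformality bound holds for \emph{all} $s\in U^\delta$, not just a chosen sequence). This stronger statement is exactly what the paper uses, and once you have it, step (2) is immediate from the Hausdorffness of $\bT$: if $(X_t,\phi)\ne(X_t,\phi')$ in $\bT$, choose disjoint open sets $W\ni(X_t,\phi)$ and $W'\ni(X_t,\phi')$; by the strengthened step (1) there is $\delta_0$ with $Q^{\delta_0}\subset W$, hence $W'\cap Q^{\delta_0}=\emptyset$ and $(X_t,\phi')\notin\ol{Q^{\delta_0}}$. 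That is the paper's entire proof.

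Your step (2), by contrast, attempts a direct rigidity argument comparing two contractions $f_j$ and $c^t_{s_j}$, and this is where the proposal has a genuine gap. The claim that $c^t_{s_j}\circ f_j^{-1}$ is isotopic to the identity is not justified: the two contractions need not collapse the same isotopy classes of simple closed curves on $X_{s_j}$, and the appeal to ``normal-families / Carath\'eodory-type'' reasoning or to bounded Teichm\"uller distance does not pin this down without substantial further work (indeed, it would amount to reproving properness of the $\Gamma$-action on $\bT$ near the boundary). The detour through extremal lengths and the finiteness of mapping classes fixing $X_t$ modulo $\Gamma_0$ is likewise not needed. So the approach is salvageable in principle but much harder than necessary; the clean route is to notice that your own step (1) already puts all of $Q^\delta$ inside an arbitrarily small neighborhood of $(X_t,\phi)$, and then invoke that $\bT$ is Hausdorff.
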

The proposition immediately implies that the  notion of consistency 
of markings, defined in~\ref{transfer-of-markings} with the help of
contraction, is in fact independent of the choice of contraction.

\begin{proof}[Proof of the proposition]
First of all,  $(X_t,\phi)\in\ol{Q^\delta}$ for each $\delta$
since any neighborhood of $(X_t,\phi)$ contains $Q^\delta$ for
$U^\delta$ small enough.

Assume $(X_t,\phi')\in\bigcap_\delta\ol{Q^\delta}$. If $(X_t,\phi)$ and
$(X_t,\phi')$ represent different points of $\bT$, they have disjoint 
neighborhoods. On the other hand, by (QC2) there exist $U^\delta$ such
that   $Q^\delta$ belongs to both of them.

Thus, choosing a component $Q$ of $P$, we reconstruct the
transfer of markings from $X_s$ to $X_t$ for each $s\in U^t_0$.
The property (QC4) implies that the transfer is uniquely defined
also for any $s\in U^t$.
\end{proof}

From now on we will keep the notation  of~\ref{sss:marking-the-fibers}
for $t=z$. Thus, we have  $U^t=U,\ U^t_0=U_0$, and $p:P\to U_0,\ q:Q\to U_0$.

Note the following consequence of the above discussion.
\begin{crl}There is a one-to-one correspondence between the markings
of $X_z$ and the connected components of $P$.
\end{crl}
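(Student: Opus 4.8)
The plan is to read this off the two results just established, with essentially no new work. First I would record that property (QC5) gives $U=U^z$, so the constructions of~\ref{sss:marking-the-fibers} apply with $t=z$: we have the covering $p\colon P\to U_0$ whose fibre over $s\in U_0$ is the set of all markings of $X_s$, and by (QC2) (legitimate at $t=z$ since $z\in U=U^z$) there exists at least one quasiconformal contraction $c^z\colon\cX\to X_z$.

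Next I would invoke the Corollary proved immediately above --- that for a fixed quasiconformal contraction there is a natural one-to-one correspondence between markings of $X_t$ and connected components of $P$ --- specialised to $t=z$ and to the chosen $c^z$. Explicitly, a marking $\phi\colon S\to X_z$ is sent to the component $Q\subseteq P$ of those markings of nearby smooth curves that are consistent with $\phi$ via $c^z$; as recorded in~\ref{sss:marking-the-fibers}, $Q$ is indeed a single connected component of $P$. This already produces a bijection between markings of $X_z$ and connected components of $P$; the only thing left is to see that it does not depend on the auxiliary choice of $c^z$.

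That last point is exactly the content of Proposition~\ref{prp:independence-of-c}: the component $Q$ recovers $\phi$ as the \emph{unique} marking of $X_z$ for which $(X_z,\phi)$ lies in $\bigcap_\delta\ol{Q^\delta}$, where $Q^\delta$ is the trace of $Q$ over a neighbourhood basis $U^\delta$ of $z$ --- and this description of the inverse map uses no contraction at all. Hence the bijection between markings of $X_z$ and connected components of $P$ is canonical, which is the assertion of the corollary.

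I do not expect a genuine obstacle: all the substance is in Proposition~\ref{prp:independence-of-c} and the preceding Corollary. The only point requiring a little care is the compatibility of the two descriptions of the correspondence --- via $c^z$ on the one hand and via the intersection $\bigcap_\delta\ol{Q^\delta}$ on the other --- but this is precisely what is checked in the proof of Proposition~\ref{prp:independence-of-c}, using that every neighbourhood of $(X_z,\phi)$ eventually contains $Q^\delta$ while the Hausdorff property of $\bT$ together with (QC2) rules out a second marking lying in all the closures $\ol{Q^\delta}$.
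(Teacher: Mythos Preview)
Your proposal is correct and follows exactly the approach the paper takes: the corollary is stated as an immediate consequence of the preceding discussion, obtained by specialising the earlier Corollary (bijection for a fixed contraction) to $t=z$ via (QC5), with Proposition~\ref{prp:independence-of-c} supplying independence of the choice of contraction. You have simply made explicit what the paper leaves as ``a consequence of the above discussion.''
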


The following description of the covering space $P$ is very useful.

\begin{lem}\label{lem:pfibre}
We have the isomorphism
$$
P=U_0\times_\sM\cT,
$$
where the fiber product is taken in the 2-category of complex orbifolds.
\end{lem}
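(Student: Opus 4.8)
The plan is to identify both sides with the groupoid of pairs consisting of a point $s\in U_0$ together with a marking of the fibre $X_s$ that is consistent (in the sense of~\ref{transfer-of-markings}) with the fixed marking $\phi$ of $X_z$, and then to observe that this is precisely the description of the $2$-fibre product furnished by Lemma~\ref{th:teich_mod}. More concretely: by Theorem~\ref{th:teich_mod} the \TS\ $\cT$ represents the functor of marked families of smooth curves, so a map $B\to\cT$ is the same as a smooth family over $B$ together with a marking. On the other hand the smooth locus $U_0$ carries the restriction of the tautological family $\pi_0:\cX_0\to U_0$, and the map $U_0\to\sM$ classifies this family (forgetting any marking). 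By the universal property of the $2$-fibre product in the $2$-category of complex orbifolds (Definition in~\ref{ss:fibre-products}), a $B$-point of $U_0\times_\sM\cT$ is a triple $(s,y,\theta)$ where $s:B\to U_0$, $y:B\to\cT$, and $\theta$ is an isomorphism in $\sM$ between the image family $s^*\pi_0$ and the family underlying $y$; since $\sM=[\Gamma\bs\cT]$ and the families involved are smooth, this $\theta$ amounts to choosing, compatibly over $B$, a $\Gamma$-marking matching the two families, i.e.\ after the identification, a genuine marking of the family $s^*\pi_0$. Thus $B$-points of $U_0\times_\sM\cT$ are exactly sections over $B$ of the covering $P\to U_0$ pulled back along $s$, which is by construction the same as the set of maps $B\to P$ lying over $U_0$.

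First I would make the identification $\sM=[\Gamma\bs\cT]$ explicit and recall from Theorem~\ref{th:teich_mod} and Theorem~\ref{th:teich_mod}'s proof that a morphism to $\cT$ is a smooth marked family while a morphism to $\sM$ is a smooth family; this turns the abstract fibre product into the groupoid of (smooth family over $U_0$, marking, identification of underlying families). Second, I would check that this groupoid is in fact \emph{discrete} — i.e.\ the fibre product is representable, hence an honest space — because a smooth curve with a marking has no nontrivial automorphisms compatible with the marking (the marking rigidifies, exactly as in the statement that $\cT$ is a manifold and not just an orbifold). So $U_0\times_\sM\cT$ is a complex manifold. Third, I would produce the mutually inverse maps: the covering $P\to U_0$ carries the pulled-back family together with its tautological marking, giving a canonical map $P\to\cT$ and hence $P\to U_0\times_\sM\cT$; conversely a point of the fibre product gives a marked smooth family over a point, i.e.\ a point of $P$. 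Finally I would verify these maps are holomorphic and mutually inverse, which is immediate once one notes that locally $P$ is a disjoint union of copies of $U_0$ (it is a covering) and on each sheet the maps are given by the tautological family, so holomorphy is inherited from the holomorphy of the classifying maps $U_0\to\sM$ and (on each sheet) $U_0\to\cT$.

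The one point that needs a little care — and which I expect to be the main obstacle — is matching the \emph{topology} of $P$ (defined in~\ref{sss:marking-the-fibers} via the covering structure, with fibres the sets of markings and monodromy the $\Gamma$-action) with the analytic structure on the fibre product. The fibre product a priori only knows that $P\to U_0$ is étale and that the fibre over $s$ is $\Hom_{[\Gamma\bs\cT]}$-torsor data; one must check that this recovers precisely the covering $p:P\to U_0$ and not, say, a different étale cover with the same fibres. This is where the modular interpretation of $\cT$ is essential: because $\cT$ \emph{represents} the marked-families functor (not merely parametrizes marked curves pointwise), the monodromy of the universal marking along a loop in $U_0$ is exactly the mapping class obtained by transporting the marking, which is the defining monodromy of $p$. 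So the identification of coverings follows from the representability in Theorem~\ref{th:teich_mod} rather than from any additional computation. Once this is in place, the isomorphism $P\cong U_0\times_\sM\cT$ of complex spaces is formal.
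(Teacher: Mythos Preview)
Your proposal is correct and ultimately hinges on the same key observation as the paper's proof: the identification is determined by matching the monodromy of the two coverings of $U_0$. The paper's argument is considerably more economical, however. Rather than unpacking the functor of points of $U_0\times_\sM\cT$ via Theorem~\ref{th:teich_mod}, the paper simply notes that the commutative square
\[
\begin{diagram}
P & \rTo^{\alpha_p} & \cT \\
\dTo^p & & \dTo \\
U_0 & \rTo^\beta & \sM
\end{diagram}
\]
produces a map $f:P\to U_0\times_\sM\cT$ of coverings over $U_0$, and then checks directly that both coverings have fibre $\Gamma$ with $\pi_1(U_0)\cong\Gamma_0$ acting by left translation through the inclusion $\Gamma_0\subset\Gamma$. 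Your approach buys a more conceptual description of what the fibre product classifies (marked families), which is pedagogically useful; the paper's approach bypasses this and goes straight to the covering-space classification, avoiding the need to separately verify discreteness of the fibre product or holomorphy of the inverse map.
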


\begin{proof}
Let $f$ be the map $P\to U_0\times_\sM\cT$ given by the commutative diagram
$$
\begin{diagram}
P & \rTo^{\alpha_p} &  \cT \\
\dTo^p & & \dTo  \\
U_0 & \rTo^\beta & \sM
\end{diagram}
$$

Since  $p:P\to U_0$ and $U_0\times_\sM \cT\to U_0$ are coverings and
$f$ is a morphism of coverings over $U_0$,  to prove that $f$ is an 
isomorphism, it is sufficient to compare the action of the fundamental
group of $U_0$ on the fibers. 
After identification of $\pi_1(U_0)$ with $\Gamma_0$ both fibers can be
identified with $\Gamma$ and the action of $\pi_1(U_0)$ with the left action of
$\Gamma_0\subset \Gamma$ on $\Gamma$.
\end{proof}

\

As a result, the space $P$ acquires an action of the group $A$  commuting 
with the action of $\Gamma$.

\subsection{Complex structure on $G\bs\bT_{g,n}$: charts}
\label{ss:gbt-charts}

Let $G$ be a finite index subgroup of the modular group $\Gamma$.

Fix a quasiconformal orbifold chart $(U,A,\beta)$ of $\bsM$.
Fix a marking 
$$\phi:S\to X_z$$ 
(recall that this is equivalent to fixing
a connected component $Q$ of $P$). We will assign 
to the pair (chart, marking) an orbifold chart $(V,H,\alpha)$ of the quotient
$G\bs\bT$.

The marking $\phi$ determines the spaces $Q\subset P$, the isomorphism  
$\pi_1(U_0)\isom \Gamma_0$ and the presentation $P=\Gamma\times^{\Gamma_0}Q$.

The moduli stack $\bsM$ contains as an open substack the stack $\sM$ of 
non-singular curves. The triple $(U_0,A,\beta|_{U_0})$ is, of course,
a chart for $\sM$.

\subsubsection{A big commutative diagram}
\label{ABCD}
As a first step in the construction of our orbifold chart, 
we have to describe 
the spaces and the arrows of the diagram~(\ref{eq:bigdiag}) below.

The quotient $G\bs P$ can be described by the bijection $i$
\begin{equation}\label{eq:coprodexpression}
G\bs P = G\bs (\Gamma\times^{\Gamma_0} Q) \lTo^i
\coprod_{\gamma\in G\bs \Gamma/\Gamma_0}
(\gamma^{-1}G\gamma\cap\Gamma_0)\bs Q~,
\end{equation}
where $\gamma$ runs through a set of representatives
of double cosets $G\bs \Gamma/\Gamma_0$
and $i=\{i_\gamma\}$
is the collection of maps
$$
 i_\gamma: (\gamma^{-1}G\gamma\cap\Gamma_0)\bs Q \rTo G\bs
 (\Gamma\times^{\Gamma_0} Q), 
\ 
[x] \mapsto [\gamma x].
$$

Recall that $\Gamma_0$ is the free abelian group generated by the Dehn twists
$D_i,\ i=1,\ldots,r,$ around the curves $\phi^{-1}(x_i)$ of $S$,
where $x_1,\ldots,x_r$ are the nodes of $X_z$.
Let 
$$k_i=\min\{k|D_i^k\in G\}, \text{\ for\ } i=1,\ldots,r. $$
Denote by
$\Gamma'_0$ the subgroup of $G\cap\Gamma_0$ generated by $D_1^{k_1},\ldots,
D_r^{k_r}$. 
Let $Y=\Gamma'_0 \bs Q$.
The natural map 
$$Y=\Gamma'_0 \bs Q \to U_0=\Gamma_0 \bs Q$$ 
is a covering with the Galois group 
$\Gamma_0/\Gamma'_0=\Z/\Z_{k_1}\times\ldots\times\Z/\Z_{k_r}$.

We define $Z=(G\cap \Gamma_0)\bs Q$. This is the component of $G\bs P$
corresponding to $\gamma=1$.
The natural projection $Y\to Z$ gives a map 
$$u:Y\to G\bs P$$ 
commuting with the projections of $Y$ and of $G\bs P$ to $U_0$.

The projection $G\bs P\to U_0$ is a finite map. 
Now let $V$ be the normalization of $U$ 
in the field of meromorphic functions of $Y$.
The variety $V$ is a smooth; it looks locally like a polydisk
ramified over the components of the singular locus
with the ramification degree  $k_1,\ldots,k_r$.
We denote by $\kappa$ both the projection $V\to U$ and its
restriction to the smooth part $Y\to U_0$.
Let
\begin{equation}
\label{eq:induced-family}
\pi': \cX'\rTo V
\end{equation}
be the family of curves induced from the family
$\pi:\cX\to U$ via $\kappa$.

The manifold $V$ is the ``space component'' of the orbifold chart we
are building.

Let 
$$\alpha:G\bs P \to\relax[G\bs\cT]$$ 
be the map induced by 
$\alpha_p: P \to\cT$.
Now we will extend the composition 
$$Y\rTo^u G\bs P \rTo^\alpha \relax[G\bs \cT]$$ 
to get the dashed map $\alpha: V \to G\bs \bT$ in the following
big commutative diagram (do not pay attention to 
the other dashed arrows for the time being).

\begin{equation}\label{eq:bigdiag}
\begin{diagram}
& && & P & \rTo^{\alpha_p} & \cT & \rTo & & &\bT   \\ 
& && &\dTo& &\dTo& & & \ldDashto^{\pi_G} &\dTo\\
& &Y&\rTo^u&G\bs P& \rTo^\alpha & [G\bs \cT] &\rDashto &[G\bs\bT] &  &\\
&\ldTo& \dTo^\kappa& &\dTo& & \dTo & \rdTo(4,2) &\dDashto &\rdDashto &\\
V& & & & &\rDashto^{\alpha} & & & & & G\bs \bT \\
\dTo^{\kappa}& &U_0&\rTo^=&U_0& \rTo^{\hat\beta} & \sM & & & &\dTo\\
&\ldTo& & & & & &\rdTo&  & &\\
U& & & & \rTo^{\hat\beta} & & & &\overline{\sM}&
\rTo&\overline{\cM} 
\end{diagram}
\end{equation}
Here $\bsM=\bsM_{g,n}$ is the complex orbifold associated to 
the smooth algebraic stack of moduli of stable  curves of genus $g$ 
with $n$ punctures. According to Theorem~\ref{thm:gaga},
$\overline{\sM}$ represents  complex-analytic families of stable
curves of genus $g$  with $n$ punctures.  
The family of curves $\pi:\cX\to U$ defines therefore a map 
$\hat\beta:U\to\bsM$. The map from 
$\sM$ to $\bsM$ is the obvious open embedding. The complex space 
$\overline{\cM}=\overline{\cM}_{g,n}$ is the
coarse moduli space for $\bsM$ and the horizontal map 
$\overline{\sM}\to\overline{\cM}$
is obvious. The map from $[G\bs\cT]$ to $G\bs\bT$ is the composition
of the projection $[G\bs\cT]\to G\bs\cT$ to the ``na\"ive quotient'' space
and of the embedding $G\bs\cT\to G\bs\bT$. Finally, the projection 
$\bT\to\overline{\cM}$ forgetting the 
marking is continuous 
and factors through the quotient 
$G\backslash \bT$.


The family of curves $\pi':\cX'\to V$ restricted to $Y$, admits a
canonical $G$-marking induced via $u$ from the canonical $G$-marking on the
family based on $G\bs P$. 

Choose a point  $x \in V$ and let $t=\kappa(x)\in U$. Since $G$ has finite 
index in $\Gamma$ and since the quotient $G\bs\bT$ is Hausdorff, there
exist a neighborhood $N$ of the nodes of $X_t$ and a positive $\varepsilon$
such that the standard neighborhoods $\cU_{N,\varepsilon}(X_t,\phi)$
have no intersection for different $G$-markings $\phi$.

Choose a neighborhood $U^t$ and a contraction $c^t:\cX^t\to X_t$. 
There exists a neighborhood $U^\delta$ of $t$ in $U^t$
such that $c^t_s$ is $(1+\varepsilon)$-quasiconformal outside $N$
for all $s\in U^\delta$. 
Define $\cO$ as the component of $\kappa^{-1}(U^\delta)$ containing $x$.

For $y\in\cO\cap Y$ let $\alpha(y)=(X_s,\psi)$ where we denote  $s=\kappa(y)$.
The $G$-marking  $c^t_s\circ\psi$ of $X_t$ does not depend on $y$.
This $G$-marking defines the image of $x\in V$ in $G\bs\bT$
and thus gives the required dashed map
\begin{equation}\label{eq:alpha-chart}
 \alpha:V \rTo G\bs \bT_{g,n}
\end{equation}
which is automatically continuous.
In Proposition~\ref{bar-alpha-is-open} below we will prove that $\alpha$
is open.

To get an orbifold chart $(V,H,\alpha)$ we have to specify the group $H$.

Recall that $Y=\kappa^{-1}(U_0)$ and the image of the map $u:Y\to G\bs P$
is a component $Z$ of $G\bs P$.
By Lemma~\ref{lem:pfibre} the groups $A$ and $\Gamma$ act
on $P$ and the actions commute. Thus, $A$ acts on the quotient $G\bs P$ as 
well.
Let
\begin{equation}
\label{eq:az}
A_Z=\{a\in A : a(Z)=Z \}
\end{equation}
be the stabilizer of the component $Z$.

We define $H$ as the group of pairs $(\wt{a},a),$ 
where $\wt{\alpha}:Y\to Y$ and $a:Z\to Z$ are automorphisms
with $a\in A_Z$ such that the diagram
$$
\begin{diagram}
Y & \rTo & Z \\
\dTo^{\wt{a}} & & \dTo^{a} \\
Y & \rTo & Z
\end{diagram}
$$
is commutative.

Another description of the groups $A_Z$ and $H$ is given in~\ref{another-H}.

The action of $H$ on $Y$ extends to $V$, since $V$ is the normalization
of $U$ in the field of meromorphic functions on $Y$, see~\cite[7.3]{gr}.
Now we will prove that $(V,H,\alpha)$ is an orbifold chart for $G\bs\bT$.

\begin{prp}
\label{bar-alpha-is-open}
The map 
$$\alpha:V\rTo G\bs\bT$$ 
is open.
\end{prp}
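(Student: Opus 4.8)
The plan is to show openness by working locally on $V$ and comparing the topology coming from $V$ with the basis of the topology of $\bT$ (and hence of $G\bs\bT$) described in the introduction via standard neighborhoods $\cU_{N,\epsilon}$. First I would reduce the statement to a local one: since openness is a local property on the source and $V$ is covered by the open sets $\cO$ constructed in~\ref{ABCD} (one for each point $x\in V$), it suffices to show that for each such $x$ the restriction $\alpha|_\cO:\cO\to G\bs\bT$ is open, or even just that $\alpha(\cO)$ contains a neighborhood of $\alpha(x)$. Over the smooth locus this is essentially already known: $\alpha$ restricted to $Y=\kappa^{-1}(U_0)$ is (up to the covering $u$) the map $G\bs P\to[G\bs\cT]\to G\bs\cT$, and $G\bs P\to U_0$ is a covering while $U_0\to\cM$ is \'etale, so $Y\to G\bs\cT$ is open; thus $\alpha(\cO\cap Y)$ is open in $G\bs\cT$, which is itself open in $G\bs\bT$.

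Next I would treat a point $x\in V$ lying over the singular locus, say with $t=\kappa(x)$ a nodal curve $X_t$. The key is that a basic neighborhood of $\alpha(x)=(X_t,\phi)$ in $\bT$ is $\cU_{N,\epsilon}(X_t,\phi)$, the set of $(X_s,\phi_s)$ admitting a $(1+\epsilon)$-quasiconformal contraction to $X_t$ outside $N$, consistent with $\phi$. By property (QC2) of the quasiconformal chart, for $U^\delta$ small enough the contraction $c^t_s:X_s\to X_t$ is $(1+\epsilon)$-quasiconformal outside a prescribed neighborhood of the nodes for all $s\in U^\delta$; shrinking $\cO$ (equivalently $U^\delta$) accordingly, every point $\alpha(y)$ for $y\in\cO$ lies in $\cU_{N,\epsilon}(X_t,\phi)$, so $\alpha(\cO)$ maps into the corresponding standard neighborhood in $G\bs\bT$. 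Conversely I must show $\alpha(\cO)$ \emph{contains} a neighborhood of $\alpha(x)$: given $(X_s,\psi)$ close enough to $(X_t,\phi)$, use that the underlying curve $X_s$ appears in the family $\pi:\cX\to U$ at some point near $t$ (because the family $\pi$ is induced from $\bsM$ and $\hat\beta:[A\bs U]\to\bsM$ is an open embedding, so it surjects onto a neighborhood of $\hat\beta(t)$ in $\bsM$, i.e.\ the image of $U\to\bM$ is a neighborhood of $\beta(t)$), and then lift that point to $V=\kappa^{-1}$ and check, using the consistency of the $G$-marking built into $\alpha$ (via $c^t$), that we land on the prescribed $G$-marking $\psi$ rather than another one — here I use that the chosen $N,\epsilon$ separate the standard neighborhoods for distinct $G$-markings, which is exactly how $\cO$ was chosen in~\ref{ABCD}.

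The main technical point — and the step I expect to be the real obstacle — is the last one: controlling the $G$-marking. One knows $X_s$ is realized as a fiber of $\cX\to U$ near $t$, but one must lift to $V$ \emph{with the correct marking}. The argument is: the set of markings (resp.\ $G$-markings) of nearby smooth curves is governed by the covering $P\to U_0$ (resp.\ $G\bs P\to U_0$), which extends over the ramification $\kappa:V\to U$; the component $Q$ (resp.\ $Z$) fixed in the construction of $(V,H,\alpha)$ is precisely the one picking out the marking $\phi$ by Proposition~\ref{prp:independence-of-c}, so the transfer of markings via any quasiconformal contraction is the one encoded in $V$. Hence a point near $(X_t,\phi)$ in $G\bs\bT$, which by definition comes with a $G$-marking consistent with $\phi$ through a contraction, corresponds to a point of $Z$ lying over a point of $U_0$ near $t$, and this point lifts along $u:Y\to G\bs P$ — here one uses that $u$ factors the normalization map and that, after shrinking, the relevant sheet is the one through $x$ — to a point of $\cO\subset V$, whose image under $\alpha$ is the given point. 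Assembling these pieces, $\alpha(\cO)$ contains an open neighborhood of $\alpha(x)$ in $G\bs\bT$, which proves that $\alpha$ is open.
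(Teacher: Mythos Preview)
Your overall strategy matches the paper's: reduce to showing that the image of $\alpha$ contains a standard neighborhood $\cU_{N,\varepsilon}(X_t,\phi)$ of each $\alpha(x)$, using openness of $\beta:U\to\bM$ to realize nearby curves as fibers of $\pi:\cX\to U$ and the finite-index hypothesis on $G$ to separate the standard neighborhoods of distinct $G$-markings. The step you correctly flag as ``the real obstacle'' is indeed the crux, and your treatment of it has a gap.

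The problem is your claim that a nearby point $(X',\phi')\in\cU_{N,\varepsilon}(X_t,\phi)$ ``corresponds to a point of $Z$ lying over a point of $U_0$ near $t$''. This is only literally correct when $X'$ is smooth: $Z$ is a component of $G\bs P$, and $P$ lives only over the smooth locus $U_0$. If $X'$ is itself nodal (which is allowed --- it need only admit a contraction to $X_t$), there is no point of $Z$ to invoke, and the argument stalls. Even in the smooth case you must be more careful about \emph{which} lift in $Y$ to take: the fibers of $Y\to U_0$ are $\Gamma_0/\Gamma'_0$-torsors, and distinct lifts give markings that all contract to $\phi$ on $X_z$ but may contract to different $G$-markings on the intermediate curve $X'$. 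The phrase ``the relevant sheet is the one through $x$'' does not by itself pin this down.

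The paper resolves both issues in one move. Given $(X',\phi')\in\cU_{N,\varepsilon}(X_t,\phi)$, find $s\in U^t$ with $X_s\cong X'$, then pass to a nearby \emph{smooth} point $u\in U^s\cap U^t\cap U_0$. Lift $u$ to some $y\in Y$; the resulting marking $\phi''$ on $X_u$ contracts to $\phi$ on $X_t$ by construction of $Q$, hence its contraction to $X'=X_s$ agrees with $\phi'$ up to an element of $\Gamma_0$. Translating $y$ by the $\Gamma_0/\Gamma'_0$-action on $Y$ yields a lift $y_1$ whose marking contracts exactly to $\phi'$ on $X'$. Finally, let $Y^\delta$ be the component of $\kappa^{-1}(U_0^\delta)$ through $y_1$; its closure meets $\kappa^{-1}(s)$ in a single point $z\in V$, and $\alpha(z)=(X',\phi')$. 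This detour through a smooth point, the explicit $\Gamma_0$-correction, and the closure step are precisely what your outline is missing.
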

\begin{proof}

We will prove that the image $\alpha(V)$ is open in $G\bs\bT$.
Since we can replace the chart $(U,A,\beta)$ with a smaller
quasiconformal chart, this will prove that
$\alpha$ carries any open set to an open set.

Let $x\in V$ and let $\alpha(x)$ be presented by a marked curve $(X,\phi)$.
We have to prove that there is a pair $(N,\varepsilon>0)$ where $N$ is a 
neighborhood of the nodes of $X$, so that the standard neighborhood 
$\cU_{N,\varepsilon}(X,\phi)$ 
of $(X,\phi)$ in $G\bs\bT$ lies in $\alpha(V)$.

Let $t=\kappa(x)$. By (QC2) there exists an open neighborhood $U^t$ of $t$
in $U$ and a  contraction $c^t:\cX^t\to X_t$.
The map $\beta:U\to\bM$ is open; thus, a pair $(N,\varepsilon)$
can be chosen so that $\cU_{N,\varepsilon}(X,\phi)$ lies in 
$\pi^{-1}(\beta(U^t))$,
where $\pi:G\bs\bT\to\bM=\Gamma\bs\bT$ is the standard projection.

Since $G$ has finite index in $\Gamma$, we can also assume that
the neighborhoods $\cU_{N,\varepsilon}(X,\phi)$ and 
$\cU_{N,\varepsilon}(X,\phi')$ have no intersection if $\phi$ and $\phi'$
define different $G$-markings.

We claim that $\alpha(V)$ contains the neighborhood 
$\cU_{N,\varepsilon}(X,\phi)$.

In fact, let $(X',\phi')\in\cU_{N,\varepsilon}(X,\phi)$. 
By construction, there exists $s\in U^t$ with $\beta(s)$ represented by $X'$.
Let  $U^\delta$ be a small neighborhood of $s$ contained in 
$U^s\cap U^t$ and consider $U_0^\delta=U^\delta\cap U_0$. 
Choose a point $u\in U_0^\delta$, lift it to a point $y\in Y$ and consider
$\alpha(y)=(X'',\phi'')$. The contraction of $\phi''$ to $X$
gives $\phi$, therefore, the contraction of $\phi''$ to $X'$
gives $\phi'$ up to an element of the group $\Gamma_0$. This means that
we can find another lift $y_1$ of $u$ to $Y$ with $\alpha(y_1)=(X'',\phi''_1)$
such that the contraction of $\phi''_1$ to $X'$ will be $\phi'$.

Let $Y^\delta$ 
be the component of $\kappa^{-1}(U_0^\delta)$
containing $y_1$. Then the intersection 
$\bar{Y^\delta}\cap\kappa^{-1}(s)$
consists of one point $z$  such that  $\alpha(z)=(X',\phi')$.
\end{proof}

\begin{lem}\label{lem:lift}
The homomorphism $H \to A_Z $ is surjective with the kernel
$$
K = \Aut_Z (Y).
$$
\end{lem}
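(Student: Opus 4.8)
The plan is to analyze the defining diagram of $H$ directly. Recall that an element of $H$ is a pair $(\wt a, a)$ with $a\in A_Z$, $\wt a\in\Aut(Y)$, making the square over $u:Y\to Z$ commute. The projection $H\to A_Z$ sends $(\wt a,a)\mapsto a$; I must show it is surjective and identify its kernel. First I would compute the kernel: if $a=\id_Z$, then the commutative square says $u\circ\wt a = u$, i.e.\ $\wt a$ is an automorphism of $Y$ over $Z$. Conversely any $\wt a\in\Aut_Z(Y)$ gives a pair $(\wt a,\id_Z)\in H$. Hence $\ker(H\to A_Z)=\Aut_Z(Y)=K$, which is the asserted description.

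For surjectivity, fix $a\in A_Z$. I need to produce a lift $\wt a:Y\to Y$ with $u\circ\wt a = a\circ u$. The key point is that $u:Y\to Z$ is the quotient map by the free action of the Galois group $\Gamma_0/\Gamma_0'$ (recall $Y=\Gamma_0'\bs Q$, $Z=(G\cap\Gamma_0)\bs Q$, and $\Gamma_0'\subset G\cap\Gamma_0$ with quotient $\prod_i\Z/k_i$; here one uses that $G\cap\Gamma_0$ contains each $D_i^{k_i}$ by the very choice of $k_i$, so $\Gamma_0'$ is normal of finite index in $G\cap\Gamma_0$). Thus $u$ is a connected Galois covering of $Z$ restricted over $U_0$, and it extends to the ramified covering $\kappa:V\to U$. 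Since $a$ preserves $Z$, the composite $a\circ u:Y\to Z$ is another connected covering of $Z$ dominated by the universal cover; to lift $a$ through $u$ it suffices that $a\circ u$ and $u$ have the same monodromy subgroup in $\pi_1(Z)$, equivalently that $a_*$ (the action of $a$ on $\pi_1(U_0)\cong\Gamma_0$, extended to the relevant groups) preserves the subgroup $\Gamma_0'$. This holds because $\Gamma_0'$ is generated by $D_1^{k_1},\dots,D_r^{k_r}$ and the action of $A$ (coming from $A\subset\Aut(X_z)$ via the identification of $\pi_1(U_0)$ with the group of Dehn twists, using property (QC6)) permutes the nodes of $X_z$ and hence permutes the Dehn twists $D_i$; since the $k_i$ are determined node-by-node by the condition $k_i=\min\{k: D_i^k\in G\}$ and $a$ stabilizes $Z$ (hence is compatible with $G\cap\Gamma_0$), the exponents are matched by the permutation, so $a_*(\Gamma_0')=\Gamma_0'$. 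This gives a lift $\wt a$ over $U_0$, unique up to a deck transformation in $K$, and it extends to $V$ by normality of $V$ over $U$ (as in~\cite[7.3]{gr}); restricting back, $\wt a$ restricts to an automorphism of $Y$, so $(\wt a,a)\in H$.

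I would then assemble these two observations: surjectivity just proved, and the short exact sequence $1\to K\to H\to A_Z\to 1$ read off from the kernel computation. The main obstacle I anticipate is the lifting criterion for surjectivity, namely verifying carefully that the $A$-action respects the subgroup $\Gamma_0'$; this requires being precise about the identification of $\pi_1(U_0)$ with the Dehn-twist subgroup via a marking (property (QC6) and Proposition~\ref{prp:dehn}) and about how $a\in A_Z$ interacts with this identification. Everything else — the kernel being $\Aut_Z(Y)$, extension of automorphisms from $Y$ to $V$, and the behaviour of coverings — is routine once that compatibility is in place.
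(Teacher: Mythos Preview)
Your approach is essentially the same as the paper's: both reduce surjectivity to the covering-theoretic lifting criterion, namely showing that $a\in A_Z$ preserves the subgroup $\Gamma'_0\subset\Gamma_0$, and both deduce this from the fact that the $A$-action on $\Gamma_0$ is by (signed) permutations of the generators $D_i$ together with the fact that $a\in A_Z$ forces $a$ to preserve $G\cap\Gamma_0$, hence to match up the exponents $k_i$.

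Two small slips to fix. First, the Galois group of $u:Y\to Z$ is $(G\cap\Gamma_0)/\Gamma'_0$, not $\Gamma_0/\Gamma'_0$; the quotient $\prod_i\Z/k_i$ you wrote is $\Gamma_0/\Gamma'_0$, and $(G\cap\Gamma_0)/\Gamma'_0$ is only a subgroup of it in general. This does not affect the kernel computation, since $\Aut_Z(Y)=(G\cap\Gamma_0)/\Gamma'_0$ either way. Second, the $A$-action on $\Gamma_0$ is by \emph{signed} permutations of the $D_i$ (an automorphism of $X_z$ may reverse the local orientation at a node), so you should allow $a_*(D_i)=D_j^{\pm1}$; the exponent-matching argument then gives $k_i=k_j$ exactly as you outline. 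The extension from $Y$ to $V$ via normality is correct but not needed for the lemma as stated, since $H$ is defined purely in terms of $Y$ and $Z$.
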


\begin{proof}
We have to verify that any automorphism $a:Z\to Z$ from $A_Z$ lifts
to an automorphism of $Y$. We have the following picture.
Three spaces, $U_0,\ Z$ and $Y$ have a common universal covering $Q$.
The fundamental group of $U_0$ is $\Gamma_0$, and the coverings
$Y$ and $Z$ correspond to the subgroups $\Gamma'_0$ and 
$G\cap\Gamma_0$ of $\Gamma_0$.

Let $a\in A_Z\subset A$.  Since $A$ is abelian, its action on $U_0$
induces an action on $\Gamma_0$. 
The action of $A$ on $U_0$ comes from an action on $U$,
therefore its action on $\Gamma_0$ must be
a signed permutation of the Dehn twists $D_1,\ldots,D_r$ which
generate $\Gamma_0$.

If an element $a\in A$ belongs to $A_Z$ then
$G\cap\Gamma_0$ is an $a$-invariant subgroup of $\Gamma_0$.
This implies that $\Gamma'_0$ is 
also $a$-invariant 
due to the specific form 
of the action of $A$ on $\Gamma_0$.

Note that the kernel $K$ of the epimorphism $H \to A_Z $ identifies
with $(G\cap \Gamma_0)/\Gamma'_0$.
\end{proof}

\

\begin{thm}\label{th:eq}
Let $x_1, x_2 \in V$, then $\alpha(x_1)=\alpha(x_2)$
if and only if $x_2 \in H x_1$.
\end{thm}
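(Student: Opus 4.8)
The plan is to prove both implications of the equivalence $\alpha(x_1)=\alpha(x_2)\iff x_2\in Hx_1$ by first reducing to the situation over the smooth locus $Y=\kappa^{-1}(U_0)$ and then using density to extend to all of $V$. The ``if'' direction is essentially formal: by construction $\alpha$ factors through the action of $H$, because the map $u:Y\to G\bs P$ identifies $Y/K$ with the component $Z$, and $H$ was defined exactly as the group of automorphisms of $Y$ lying over the stabilizer $A_Z$ of $Z$ in $A$; since the composite map $V\to G\bs\bT$ is built from $u$ followed by $\alpha:G\bs P\to[G\bs\cT]\to G\bs\bT$ and the latter is $A$-invariant after passing to the quotient $G\bs P$, the map $\alpha$ on $V$ is $H$-invariant. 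So the content is in the ``only if'' direction.

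For the ``only if'' direction, first I would treat the case $x_1,x_2\in Y$. Here $\alpha(x_i)$ is a marked \emph{smooth} curve, and using Lemma~\ref{lem:pfibre} (which gives $P=U_0\times_\sM\cT$) together with the presentation~(\ref{eq:coprodexpression}) of $G\bs P$, the equality $\alpha(x_1)=\alpha(x_2)$ in $G\bs\bT$ — which on the smooth locus is just $G\bs\cT$ — says that $u(x_1)$ and $u(x_2)$ lie in the same $A$-orbit inside $G\bs P$. Since $u(x_1),u(x_2)\in Z$, the element $a\in A$ moving one to the other must stabilize $Z$, i.e. $a\in A_Z$. By Lemma~\ref{lem:lift} there is a lift $\wt a$ of $a$ to an automorphism of $Y$, giving $(\wt a,a)\in H$, and then $\wt a(x_1)$ and $x_2$ have the same image $u(x_2)$ in $Z$, hence differ by an element of the kernel $K=\Aut_Z(Y)\subset H$; composing, $x_2\in Hx_1$. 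This settles the generic case.

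To pass from $Y$ to $V$, I would use that $Y$ is dense in $V$ (it is the complement of the ramification divisor, $V$ being the normalization of $U$ in the function field of $Y$) and that $\kappa:V\to U$, hence $\alpha:V\to G\bs\bT$ restricted to a $\kappa$-fiber, behaves well under the $H$-action: the quotient map $V\to H\bs V$ is finite (as $H$ is finite), and $H\bs V$ maps to $\bM$ via the coarse map, so the fibers of $\alpha$ are finite. Given $x_1,x_2\in V$ with $\alpha(x_1)=\alpha(x_2)$, I would approximate by sequences $y_1^{(k)}\to x_1$, $y_2^{(k)}\to x_2$ in $Y$; the delicate point is that $\alpha(y_1^{(k)})$ and $\alpha(y_2^{(k)})$ need not be equal, only close. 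Here one invokes the openness of $\alpha$ (Proposition~\ref{bar-alpha-is-open}) together with the quasiconformal structure: by property (QC4) and the characterization in Proposition~\ref{prp:independence-of-c}, the $G$-marking transferred to $X_t$ is locally constant in the chart, so for $k$ large the marked curves $\alpha(y_1^{(k)})$ and $\alpha(y_2^{(k)})$ contract compatibly to the \emph{same} marked curve near $\alpha(x_1)=\alpha(x_2)$; by the generic case there exist $h_k\in H$ with $y_2^{(k)}=h_ky_1^{(k)}$, and since $H$ is finite we may pass to a subsequence with $h_k=h$ constant, whence in the limit $x_2=hx_1$.

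The main obstacle is exactly this limiting argument: showing that the finitely many $H$-translates do not ``escape'' and that $\alpha$ genuinely separates distinct $H$-orbits over the singular locus, rather than merely over $Y$. This is where the full strength of the quasiconformal axioms is needed — specifically (QC2)'s uniform quasiconformality estimate, which guarantees that distinct $G$-markings give the disjoint standard neighborhoods $\cU_{N,\epsilon}$ used in the proof of Proposition~\ref{bar-alpha-is-open}, and (QC4) which makes the marking-transfer consistent across the whole chart $U^t$ and not just its smooth part. Once it is known that $\alpha$ on $V$ has the same fibers as the finite quotient map $V\to H\bs V$ over a dense open set and that both sides are finite over $U$, a normality/properness argument (using that $V$ is normal and $\kappa$ is finite) forces the fibers to agree everywhere, completing the proof.
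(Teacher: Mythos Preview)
Your outline matches the paper's strategy closely: both prove the smooth case first (via the identification of $A_Z$ and Lemma~\ref{lem:lift}) and then use density of $Y$ in $V$, openness of $\alpha$, and finiteness of $H$ to pass to the general case. Your smooth-case argument is essentially the paper's.

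The gap is in your passage from $Y$ to $V$. You correctly identify the difficulty --- for approximating sequences $y_i^{(k)}\to x_i$ one only has $\alpha(y_1^{(k)})$ \emph{close to} $\alpha(y_2^{(k)})$, not equal --- but your proposed fix (invoking (QC4) and marking-transfer) does not resolve it: compatible contraction to the same limit point does not give $\alpha(y_1^{(k)})=\alpha(y_2^{(k)})$, and without exact equality you cannot invoke the smooth case to produce $h_k\in H$ with $y_2^{(k)}=h_k y_1^{(k)}$. The alternative normality/properness route in your last paragraph has no target to land on either: at this stage of the paper $G\bs\bT$ carries no complex-space structure against which normality of $V$ could be played.

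The paper's resolution is simpler and uses openness of $\alpha$ in a direct way rather than via the quasiconformal estimates. Given open neighborhoods $U_1\ni x_1$, $U_2\ni x_2$, openness of $\alpha$ (Proposition~\ref{bar-alpha-is-open}) lets one shrink so that $\alpha(U_1)=\alpha(U_2)$. Now pick \emph{any} smooth point $x_1'\in U_1\cap Y$; since $\alpha(x_1')\in\alpha(U_2)$, there exists $x_2'\in U_2$ with $\alpha(x_1')=\alpha(x_2')$ \emph{exactly}, and $x_2'$ is automatically smooth. The smooth case then gives $x_2'\in Hx_1'$, so $U_2\cap H\cdot U_1\neq\emptyset$. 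Since this holds for every pair of neighborhoods and $H$ is finite, $x_2\in Hx_1$. No sequences, no (QC4), and no normality argument are needed.
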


\begin{proof}
Let  $(X_i,\phi_i)$, $i=1,2$, be the marked curves representing
the points $\alpha(x_i)\in G\bs \bT$.

The equality $\alpha(x_1)=\alpha(x_2)$
gives an isomorphism of $G$-marked curves, that is a commutative diagram
\begin{equation}\label{diag:x12}
\begin{diagram}
S & \rTo^{\phi_1} & X_1\\
\dTo^g&&\dTo_\theta\\
S & \rTo^{\phi_2} & X_2
\end{diagram}
\end{equation}
for some $g\in G$.

We will show that for any two open sets $U_1\ni x_1$ and
$U_2\ni x_2$ in $V$ the intersection 
$U_2 \cap (H\cdot U_1)$ is nonempty.
Since $H$ is a finite group, this will imply that $x_2\in Hx_1$.

\

From now on we fix $x_i$ and $U_i$, $i=1,2$ as above.

\
Since the map $\alpha:V\to G\bs\bT$ 
is open by Proposition~\ref{bar-alpha-is-open},
we may assume that 
$\alpha(U_1)=\alpha(U_2)$. Choose a point $x'_1\in U_1$
which corresponds
to a smooth curve $(X'_1,\phi'_1)$. Since the images
of $U_i$ under $\alpha$ coincide, there exists $x'_2\in U_2$
having the same image. The corresponding $G$-marked curve $(X'_2,\phi'_2)$
is, obviously, smooth as well. Moreover, there exist $g'\in G$ and
an isomorphism $\theta': X_1'\to X_2'$ such that the diagram
\begin{equation}\label{diag:xprime}
\begin{diagram}
S & \rTo^{\phi'_1} & X'_1\\
\dTo^{g'}&&\dTo_{\theta'}\\
S & \rTo^{\phi'_2} & X'_2
\end{diagram}
\end{equation}
is commutative.

The images of $x'_1$ and  $x'_2$ in $Z$ have the same image in
$G\bs \cT$. Therefore,
$x_2'=a x_1'$ for some $a\in A_Z$.
By Lemma~\ref{lem:lift} we can lift the element $a$ to $h\in H$ acting on
$Y$. Therefore $x_2'$ and $h(x'_1)$ have the same image in $Z$.
This implies that there exists an element $h'$ in the kernel $K$ of the
epimorphism $H\to A_Z$ such that $x_2'=h' h x_1'$. This concludes the
proof of the theorem.
\end{proof}

\subsubsection{Second description of the groups $A_Z$ and $H$}
\label{another-H}
Here we will  present yet another interpretation of the groups $A_Z$
and $H$ which appear
in the description of the orbifold charts $(V,H,\alpha)$.
This description will be  needed in Section~\ref{sec:teich-vs-adm}  
where we use a slightly more general quotient of $\bT$ 
than  the one described here.

Let $A_Q$  be the group of pairs $(\wt{a},a)$, where $a\in A$ and 
$\wt{a}:Q\to Q$ satisfies the condition $q\circ\wt{a}=a\circ q$,
where $q:Q\to U_0$.

The natural map $[A_Q\bs Q]\to\relax{[A\bs U_0]}$ is an equivalence. 
Thus, the map $Q\to\cT$ induces a map of the quotients
$[A_Q\bs Q]\to\relax{[\Gamma\bs\cT]}$. By Lemma~\ref{lem:explicit-asso}
this gives rise to a homomorphism $\iota:A_Q\to\Gamma$.
This homomorphism is uniquely  determined by the requirement of 
commutativity of the diagram
$$
\begin{diagram}
Q & \rTo^{\alpha_Q} & \cT \\
\dTo^a & & \dTo^{\iota(a)} \\
Q & \rTo^{\alpha_Q} & \cT 
\end{diagram}
$$

Consider $A_{Q,G}=A_Q\times_\Gamma G$. We claim that the image of the 
composition 
$$A_{Q,G}\to A_Q\to A$$ 
is precisely $A_Z$, so that we have got a morphism of short 
exact sequences 
\begin{equation}
\label{eq:2ses}
\begin{diagram}
1& \rTo& \Gamma_0\cap G& \rTo& A_{Q,G}&\rTo& A_Z &\rTo& 1 \\
& & \dTo & & \dTo & & \dTo & & \\
1& \rTo& \Gamma_0& \rTo& A_Q&\rTo& A &\rTo& 1 
\end{diagram}.
\end{equation}
In fact, since $A_{Q,G}$ acts on $Q$, the quotient 
$A'_Z=A_{Q,G}/(\Gamma_0\cap G)$
acts on $Z=(\Gamma_0\cap G)\bs Q$ and is a subgroup of $A$. Thus
$A'_Z\subseteq A_Z$. Since the composition 
$$[A'_Z\bs Z]\to\relax[A_Z\bs Z] \to\relax[G\bs\cT]$$ 
is an open embedding, one should necessarily have
$A'_Z=A_Z$.

As it was explained in the proof of~\ref{lem:lift}, the group $\Gamma'_0$
is normal in $A_{Q,G}$. Passing to the quotient by $\Gamma'_0$
in the upper line of~(\ref{eq:2ses}), one gets the short exact
sequence
$$1\rTo  (\Gamma_0\cap G)/\Gamma'_0\rTo A_{Q,G}/\Gamma'_0 \rTo A_Z\rTo 1$$
which identifies with the sequence
\begin{equation}
\label{eq:1ses}
1\rTo K\rTo H\rTo A_Z\rTo 1
\end{equation}
defined by Lemma~\ref{lem:lift}.

Recall that $Y=\Gamma'_0\bs Q$. Thus, one has an open embedding
\begin{equation}
\label{eq:open-embedding-hy}
[H\bs Y]=[A_{Q,G}\bs Q]=[A_Z\bs Z]\rTo\relax[G\bs\cT].
\end{equation}

\subsection{Orbifold atlas for $G\bs\bT$}
\label{chartsarecompatible}
 
We now have a sufficient supply of orbifold charts for constructing 
an orbifold atlas for $G\bs\bT$. 
In order to arrange the constructed orbifold charts into an atlas,
we have to present a chart category $\cA$ and a functor
$c:\cA\to\Charts/(G\bs\bT)$  satisfying the properties
of~\ref{satake-orbifold-atlas}. 

The chart $(V,H,\alpha)$ constructed above depends on the 
following choices.
\begin{itemize}
\item[1)] A chart $(U,A,\beta)$ in $\cQ$. The singular locus
$U-U_0$, where $U_0=\sM\times_{\bsM}U$, is a normal crossing divisor.
The group $A$ acts on $U$ with a fixed point $z$ (belonging to the 
intersection of the components)
\item[2)] A marking $\phi$ of the curve $X_z$ (or, what is equivalent,
a choice of a component of $P$).
\end{itemize}

The chart category $\cA$ will be constructed simultaneously
with a functor $p:\cA\to\cQ$, so that if $c(p(a))$ is the chart 
$(U,A,\beta)$ of $\bsM$, $c(a)$ is the chart $(V,H,\alpha)$
constructed in~\ref{chart-V}.

Recall some notation from Section~\ref{ss:gbt-charts}.
The chart $V$ contains a dense open $H$-equivariant subset $Y=\Gamma'_0\bs Q$
giving  
an open embedding
\begin{equation}
\label{obj-tA}
\hat\alpha:[H\bs Y]\rTo \relax[G\bs\cT].
\end{equation}

We define $\cA$ as the category whose objects are open 
embeddings~(\ref{obj-tA}), where $Y$ and $H$ 
are obtained from an orbifold chart 
$(V,H,\alpha)$ described above. A morphism in $\cA$ from 
$(Y_1,H_1,\hat\alpha_1)$
to $(Y_2,H_2,\hat\alpha_2)$ is defined as a
morphism of abstract orbifold charts
$$ 
(f_Y,f_H):(Y_1,H_1)\rTo (Y_2,H_2)
$$
together with a $2$-morphism
$$ \theta_Y:\hat\alpha_2\circ f_Y\isom \hat\alpha_1.
$$

Note that any object $(Y,H,\hat\alpha)$ of $\cA$ gives rise to a 
$2$-commutative diagram
\begin{equation}
\label{YZU}
\begin{diagram}
Y & \rTo^\alpha & [G\bs\cT] \\
\dTo & & \dTo \\
U & \rTo^\beta & \bsM
\end{diagram}
\end{equation}
so that the assignment $(Y,H,\hat\alpha)\mapsto (U,A,\hat\beta)$
defines a functor $\cA\to\cQ$. In fact, $Y=\Gamma'_0\bs Q$ and the
morphism $\hat\alpha:[H\bs Y]\to\relax[G\bs\cT]$ can be realized
by a pair of morphisms $(q:Q\to\cT, A_{Q,G}\to G).$ The pair
$(q:Q\to\cT,\ A_Q\to\Gamma)$ is compatible it
on one side and with $\hat\beta:[A\bs U]\to\bsM$ on
 the other side.
 
Furthermore, any map 
$$\eta:(Y_1,H_1,\hat\alpha_1)\rTo(Y_2,H_2,\hat\alpha_2)$$
in $\cA$ lifts to a map 
$\eta:Q_1\to Q_2$ which by Proposition~\ref{prp:hom-asso}
defines a unique diagram
\begin{equation}
\begin{diagram}
Q_1 & \rTo^{q_1} & \cT \\
\dTo^\eta & & \dTo^g \\
Q_2 & \rTo^{q_2} & \cT \\
\end{diagram}
\end{equation}
where $g\in G$. This diagram induces a morphism of orbifold charts
$$\bar\eta:(U_{01},A_1,\hat\beta_1)\rTo(U_{02},A_2,\hat\beta_2)$$
compatible with $\eta$.

Note that since $U_{0i}$ is the smooth locus of $U_i$,
any map 
$(U_1,A_1,\hat\beta_1)\to (U_2,A_2,\hat\beta_2)$ 
in $\cQ$ 
carries $U_{01}$ to $U_{02}$, so that $\bar f$ extends uniquely to a
morphism $(U_1,A_1,\hat{\beta}_1)\to(U_2,A_2,\hat\beta_2)$.

Let us prove that the above morphisms in $\cA$ 
give an orbifold atlas
for $G\bs\bT$. First of all, the category $\cA$ is a chart category
since it is a full subcategory of the chart category defined by
the orbifold $[G\bs\cT]$ via~\ref{orb-to-satake}.

Let us show that an arrow of $\cA$ defined as above, gives a
morphism of the corresponding orbifold charts. First, $\eta_Y:Y_1\to Y_2$ 
uniquely defines
a map $\eta_V:V_1\to V_2$ since $V_i$ can be identified as the normalization
of $U_i$ in the field of meromorphic functions on $Y_i$. 
The map of abstract orbifold
charts $(V_1,H_1)\to (V_2,H_2)$ is automatically defined since $\cA$
is a chart category. To check that we have a map of orbifold charts over 
$G\bs\bT$, we have to check that the map $\eta_V: V_1\to V_2$ is compatible
with the projections $\alpha_i:V_i\to\ G\bs\bT$. This is enough
to check on the dense subset $Y_1$ of $V_1$ where compatibility follows from 
the definition.

The required collection of isomorphisms 
$\iota:\Aut(a)\to H(a),\ a\in \cA$,
comes from the construction of $\cA$ as a full subcategory
of the chart category for $[G\bs\cT]$.

Let us check that the images of the charts $(V,H,\alpha)$ cover the whole
space $G\bs\bT$. Let $(X,\psi)$, where $X$ is a curve and $\psi$ is a 
$G$-marking of $X$, represent a point of $G\bs\bT$. Choose a quasiconformal 
chart $(U,A,\beta)$ of $\bsM$ containing $x\in U$ with $\beta(x)=X$. 
A choice of representative for the $G$-marking $\psi$ defines a marking 
$\phi:S\to X_z$ and therefore a chart $(V,H,\alpha)$ for $G\bs\bT$.
If $y\in V$ is a lifting of $x$, its image $\alpha(y)$ is a pair $(X,\psi')$.
The $G$-markings $\psi$ and $\psi'$ define the same $G$-marking $\phi$ on 
$X_z$. Therefore, they differ by an element $\gamma\in\Gamma_0$.
Since $\Gamma_0$ acts on $V$,\footnote{via $\Gamma_0/\Gamma'_0$}
the point $\gamma(y)$ has the required image in $G\bs\bT$.

The last thing to be checked is the condition (ii) of 
definition~\ref{satake-orbifold-atlas}.

Let $x\in G\bs\bT$ belong to the images of the orbifold charts 
$(V_i,H_i,\alpha_i),\ i=1,2$. Then the image $y$ of $x$ in $\bM$
is covered by $(U_i,A_i,\beta_i),\ i=1,2$. If $y_i\in U_i$ are 
preimages of $y\in\bM$, we can assume as in~\ref{atlas-for-bM}
that there exists an isomorphism of charts
$\eta:(U_1,A_1,\beta_1)\to (U_2,A_2,\beta_2)$ sending $y_1$ to $y_2$. 

Let $(X_1,\phi_1)$ and $(X_2,\phi_2)$ represent the curves at the points
$y_1$ and $y_2$ of $U_1$ and $U_2$. Since both marked curves represent
the same point  $x\in G\bs\bT$, there exist an isomorphism $\theta:X_1\to X_2$
and an element $g\in G$ making the diagram ~(\ref{diag:x12}) commutative.

The isomorphism $\eta:U_1\to U_2$ commutes with the maps
$\hat\beta_i:U_i\to\bsM$. Therefore, $\eta$ induces an isomorphism
\begin{equation}\label{etasmooth}
\eta:U_{01}\rTo U_{02}
\end{equation}
The isomorphism~(\ref{etasmooth}) induces an isomorphism of the
corresponding fundamental groups so that the Dehn twists defined by
the nodes of $X_1$ map (up to sign) to the Dehn twists defined by 
the corresponding nodes of $X_2$. In particular, this implies that 
the numbers $k_1,\ldots,k_r$
defining the coverings $Y_1$ and $Y_2$, coincide.

Also, an isomorphism $\eta:Q_1\to Q_2$ of the covering spaces of $U_{0i}$
is induced so that the diagram
$$
\begin{diagram}
Q_1 & \rTo &  \cT \\
\dTo^\eta & & \dTo^g\\
Q_2 & \rTo &  \cT 
\end{diagram}
$$
is commutative. This defines an isomorphism of the factors
$$ Y_1=\Gamma'_0\bs Q_1\rTo \Gamma'_0\bs Q_2=Y_2.$$

This induces an isomorphism of $V_i$ since $V_i$ is the normalization of $U_i$
in the field of meromorphic functions of $Y_i$.

\section{Properties of orbifolds $G\bs \bT$}
\label{sec:properties}

In this section we establish some properties of the orbifold structure
on  $G\bs\bT$ introduced in the previous section.
We start with showing that for the subgroup
$$
\Gamma^{(\ell)}=\Ker(\Gamma\rTo\Aut(H_1(S,\Z/\ell))).
$$
the orbifold $[\Gamma^{(\ell)}\bs\bT]$
corresponds to the moduli  stack of 
curves with level-$\ell$ structures. 
Besides of providing an interesting example, this fact will be used
in~\ref{covered-by} to construct the canonical map
$$\pi_G:\bT\to \relax[G\bs \bT]$$ for an arbitrary finite-index 
subgroup $G \subset \Gamma.$

We also construct here gluing operations on the orbifolds
$[G\bs\bT]$ which are induced by gluing operations for bordered
surfaces. 
\subsection{Example: level-$\ell$ curves}
\label{levelcurves}
Let $\ell>2$ be a natural number. Define
$$\Gamma^{(\ell)}=\Ker(\Gamma\rTo\Aut(H_1(S,\Z/\ell))).$$
$\Gamma^{(\ell)}$-marking on a smooth curve $X$ is the same as a level-$\ell$ 
structure on $X$.

Let $(X,\phi:S\to X)$ represent a $\Gamma^{(\ell)}$-marked nodal Riemann
surface. 
Choose a quasiconformal neighborhood $(U,A,\beta)$ of $X\in\bsM$
and construct a corresponding chart $(V,H,\alpha)$ for 
$[\Gamma^{(\ell)}\bs \bT]$ as in~\ref{ABCD}. We assume that
$\alpha(s)=(X,\phi)$ for some $s\in V$.

Recall that the group $H$ is an extension
$$ 1\rTo K\rTo H\rTo A_Z\rTo 1,$$
where $A_Z$ is the subgroup of the automorphism group $A$ of 
$X\in\bsM$ stabilizing the component $Z$.

\begin{prp}
For $G=\Gamma^{(\ell)}$ one has  $A_Z=1$. 
\end{prp}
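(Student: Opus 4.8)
The plan is to show that the stabilizer $A_Z$ must act trivially on the relevant homology, hence be trivial because $G = \Gamma^{(\ell)}$ is defined precisely as such a kernel. I would argue as follows.

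\medskip

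\textbf{Setup.} Recall from~\ref{another-H} that $A_Z = A_{Q,G}/(\Gamma_0\cap G)$, where $A_{Q,G} = A_Q\times_\Gamma G$ and $A_Q$ fits into the exact sequence $1\to\Gamma_0\to A_Q\to A\to 1$. So an element of $A_Z$ is represented by an element $\tilde a\in A_Q$ whose image $\iota(\tilde a)$ in $\Gamma$ lies in $G = \Gamma^{(\ell)}$. The key point is that $A$ is the automorphism group of the \emph{stable curve} $X = X_z$; an automorphism $a\in A$ of $X$ induces, via the marking $\phi\colon S\to X_z$ and the identification of $U_0$'s fundamental group with $\Gamma_0$, a well-defined outer action on $\pi_1$ of the punctured regular locus, but — and this is the crux — it also acts on $H_1(S,\Z/\ell)$ compatibly with the embedding $A_{Q,G}\hookrightarrow A_Q\to\Gamma$ followed by $\Gamma\to\Aut(H_1(S,\Z/\ell))$.

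\medskip

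\textbf{Main step.} First I would observe that if $\tilde a\in A_{Q,G}$ maps to the identity in $\Aut(H_1(S,\Z/\ell))$ (which holds since $\iota(\tilde a)\in\Gamma^{(\ell)}$), then the corresponding automorphism $a$ of $X$ acts trivially on $H_1$ of the curve with $\Z/\ell$ coefficients. Now I would invoke the classical fact — valid since $\ell > 2$ — that an automorphism of a stable curve $X$ of arithmetic genus $g$ acting trivially on $H_1(X,\Z/\ell)$ is itself trivial. In the smooth case this is the standard rigidity statement behind the existence of fine moduli of curves with level structure (Serre's lemma / the theorem that $\Gamma^{(\ell)}$ acts freely on $\cT$ for $\ell\geq 3$); for stable curves one reduces to this via the weight filtration on $H_1$ and the dual graph, noting that triviality on $H_1(X,\Z/\ell)$ forces triviality on each normalized component (with its marked points, including preimages of nodes) and on the combinatorial data of the dual graph. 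Hence $a = \id_X$, i.e.\ the image of $\tilde a$ in $A$ is trivial, so $\tilde a\in\Gamma_0\cap G$, which means the class of $\tilde a$ in $A_Z = A_{Q,G}/(\Gamma_0\cap G)$ is trivial. Therefore $A_Z = 1$.

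\medskip

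\textbf{Expected obstacle.} The routine part is the smooth-curve rigidity; the point requiring care is the reduction for \emph{nodal} curves: one must check that an automorphism of $X$ fixing $H_1(X,\Z/\ell)$ also fixes the marked/nodal points on each component and does not permute components or branches of nodes. This follows because the action on the graph cohomology $H_1(\text{dual graph})$ and on the toric part of $H_1$ is detected inside $H_1(X,\Z/\ell)$, and an automorphism fixing all vertices and edges of the (finite) dual graph while acting trivially on each component's homology is the identity by the smooth case applied componentwise. I would also double-check that the identification of the $A$-action with the $\Gamma\to\Aut(H_1(S,\Z/\ell))$ action is exactly the one coming from the square in~\ref{another-H} (the commuting-with-$\alpha_Q$ diagram), so that ``$\iota(\tilde a)\in\Gamma^{(\ell)}$'' literally translates to ``$a$ trivial on $H_1$ mod $\ell$''. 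With that compatibility in hand, the argument closes.
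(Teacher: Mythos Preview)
Your strategy matches the paper's: reduce to showing that the image $a\in A$ of any $\tilde a\in A_{Q,G}$ acts trivially on $H_1(X_z,\Z/\ell)$, then invoke faithfulness of the $A$-action on $H_1(X_z,\Z/\ell)$. But you have misjudged where the work lies. The faithfulness for stable (possibly nodal) curves, which you treat as the ``expected obstacle'' and spend a paragraph on, is precisely the standard fact the paper simply asserts and uses without comment.

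The substantive step is the one you relegate to a final ``double-check'': why does $\iota(\tilde a)\in\Gamma^{(\ell)}$ (triviality on $H_1(S,\Z/\ell)$) force triviality of $a$ on $H_1(X_z,\Z/\ell)$? The diagram in~\ref{another-H} relates $\tilde a$ to $\iota(\tilde a)$ through the action on $Q\subset\cT$, which concerns markings of nearby \emph{smooth} fibers, not of the nodal fiber $X_z$ directly. Bridging $H_1(S,\Z/\ell)$ and $H_1(X_z,\Z/\ell)$ is the actual content of the proof. The paper does it by choosing $x\in Y$ and $y=h(x)$, so that the markings $\phi_x,\phi_y$ of the smooth fibers $X_x,X_y$ are genuine homeomorphisms; one diagram records that $a_*\circ H_1(\phi_x)=H_1(\phi_y)$ (this is where $G=\Gamma^{(\ell)}$ enters), another that $a$ intertwines the vanishing-cycle maps $v_x,v_y:H_1(X_{x,y})\to H_1(X_z)$, and the key identity $v_x\circ H_1(\phi_x)=v_y\circ H_1(\phi_y)$ is then obtained by a local-system argument over $Y$. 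Combining the three and using surjectivity of $v_x\circ H_1(\phi_x)$ gives $a_*=\id$ on $H_1(X_z,\Z/\ell)$. Your proposal supplies none of this; the commuting-with-$\alpha_Q$ square alone does not close the gap, since it lives over $\cT$ and says nothing about the nodal fiber.
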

\begin{proof}
Let $h\in H$. We will check that the image $a\in A_Z$ of $h$ 
induces a trivial action on the homology $H_1(X,\Z/\ell)$.
This will imply that $a=1$ since the automorphism group $A$ of $X$
acts faithfully on $H_1(X,\Z/\ell)$. Since the map $H\to A_Z$ is surjective,
this will imply our claim.

Recall  that $Y=\varkappa^{-1}(U_0)\subset V$. Choose $x\in Y,\ y=h(x)$
and let $(X_x,\phi_x),\ (X_y,\phi_y)$ be the corresponding $G$-marked
Riemann surfaces. The map $\alpha$
from diagram~(\ref{eq:bigdiag}) induces an open embedding
$$ \left[H\bs Y\right]\rTo\relax[G\bs\cT].$$
This gives rise to the following commutative diagram
\begin{equation}
\label{eq:right-part}
\begin{diagram}
H_1(X_x,\Z/\ell) & \lTo^{H_1(\phi_x)} & H_1(S,\Z/\ell) \\
\dTo^a      & & \dEQ \\
H_1(X_y,\Z/\ell) & \lTo^{H_1(\phi_y)} & H_1(S,\Z/\ell) \\
\end{diagram}
\end{equation}

On the other hand, the family of curves with the base $V$ defines
a morphism $V\to U\to\bsM$. The element $h\in H$ induces an automorphism
$a\in A$ of the family. This implies the commutativity of the diagram
\begin{equation}
\label{eq:left-part}
\begin{diagram}
H_1(X,\Z/\ell) & \lTo^{v_x} & H_1(X_x,\Z/\ell) \\
\dTo^a      & & \dTo^a \\
H_1(X,\Z/\ell) & \lTo^{v_y} & H_1(X_y,\Z/\ell) \\
\end{diagram},
\end{equation}
where the horizontal arrows are the vanishing cycles maps.

We will show  later that
\begin{equation}
\label{compatibility-vc}
v_x\cdot H_1(\phi_x)=v_y\cdot H_1(\phi_y):H_1(S,\Z/\ell)\rTo
H_1(X,\Z/\ell).
\end{equation}
Then comparing the diagrams (\ref{eq:right-part}) and 
(\ref{eq:left-part}) we see that 
$$a:H_1(X,\Z/\ell)\to H_1(X,\Z/\ell)$$
is the identity, which yields the claim.

Let us now explain~(\ref{compatibility-vc}).

Let $\pi:\cX\to V$ be the family of curves  described in~(\ref{eq:induced-family}) 
(where the notation $\cX'$ was used instead of $\cX$).
One has $X=\pi^{-1}(s), \ X_x=\pi^{-1}(x),\ X_y=\pi^{-1}(y)$.
Let $j_s,\ j_x,\ j_y$ be the respective embeddings of $X,\ X_x,\ X_y$ into 
$\cX$. The space $\cX$ contracts to $X$, so $j_s$ is a homotopy equivalence. 
Consider the diagram
\begin{equation}
\label{1-homol}
\begin{diagram}
H_1(S,\Z/\ell) & \rTo^{H_1(\phi_y)} & H_1(X_y,\Z/\ell) & &  \\
\dTo^{H_1(\phi_x)} & & \dTo^{H_1(j_y)} & &\\
H_1(X_x,\Z/\ell) & \rTo^{H_1(j_x)} & H_1(\cX,\Z/\ell)& \lTo^{H_1(j_s)}_\sim & 
H_1(X,\Z/\ell)  
\end{diagram}.
\end{equation}
The vanishing cycle homomorphism is the composition $j_s^{-1}j_x$; therefore, 
the compatibility~(\ref{compatibility-vc}) is equivalent to the commutativity
of the diagram~(\ref{1-homol}). 

Finally, commutativity of~(\ref{1-homol}) can be shown as follows. The 
restriction of the family $\pi:\cX\to V$ to $Y$ is locally trivial; thus,
the assignment 
$$x\in Y\mapsto H_1(X_x,\Z/\ell)$$ is a local system on $Y$.
The maps 
$$H_1(S,\Z/\ell) \rTo^{H_1(\phi_x)}H_1(X_x,\Z/\ell)\rTo^{H_1(j_x)}H_1(\cX,\Z/\ell)$$
give rise to a map of constant local systems
$H_1(S,\Z/\ell) \to H_1(\cX,\Z/\ell)$ which therefore does not depend on $x\in Y$.

\end{proof}

\subsection{Functoriality with respect to $G$}
\label{sec:projection}
In this section we will prove that the orbifold structure on 
spaces $G\bs\bT$ is natural with respect to a subgroup $G$ of
the modular group $\Gamma$. Then we will use this fact
to produce in~\ref{covered-by} the map
$\pi_G:\bT\to\relax[G\bs\bT]$ 
from the big commutative diagram~(\ref{eq:bigdiag}).

\subsubsection{A canonical map  
$[G_1\bs \bT]\to[G_2\bs\bT]$} 
\label{sss:2groups}
Let $G_1\subset G_2$ be two finite index subgroups of $\Gamma$.
Then a canonical map
\begin{equation}
\label{eq:2groups}
 [G_1\bs \bT]\rTo\relax[G_2\bs\bT]
\end{equation}
can be constructed
as follows. Starting from an orbifold chart $(U,A,\beta)\in\cQ$ of 
$\bsM$, we get as in~\ref{ABCD} the charts $(V_i,H_i,\alpha_i),\ i=1,2,$ 
and a compatible pair
of maps $V_1\to V_2,\ H_1\to H_2$. Thus, a map of charts
$(V_1,H_1,\alpha_1)\to (V_2,H_2,\alpha_2)$ is canonically defined, 
giving finally a map of 
orbifolds~(\ref{eq:2groups}). 

The group homomorphism $H_1\to H_2$ appears in the
commutative diagram whose construction is obvious.
$$
\begin{diagram}
1 & \rTo & K_1 & \rTo & H_1 & \rTo & A_{Z_1} & \rTo & 1 \\
  &      & \dTo&      & \dTo&      & \dTo    &      & \\
1 & \rTo & K_2& \rTo & H_2 & \rTo & A_{Z_2} & \rTo & 1 \\
\end{diagram}
$$

The map $A_{Z_1}\to A_{Z_2}$ is injective. This implies that if, for instance,
$G_2=\Gamma^{(l)},\ l\geq 3$, then $A_{Z_1}=1$.

Note that the map~(\ref{eq:2groups}) is seldom \'etale.

The following result generalizes 
\cite[Proposition 3]{Loo}.

\begin{prp}
\label{asloo}
For each positive integer $k$ there exists a finite index subgroup 
$\Gamma_{(k)}$
of the modular group $\Gamma$ satisfying the following property.
For each collection $D_1,\ldots,D_m$ of independent Dehn twists
the intersection of $\Gamma_{(k)}$ with the group generated by 
$D_1,\ldots,D_m$ is 
generated by some powers $D_1^{k_1},\ldots,D_m^{k_m}$ where all $k_i$ 
are divisible by $k$.
\end{prp}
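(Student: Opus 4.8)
The plan is to take for $\Gamma_{(k)}$ a Prym level subgroup of the type introduced by Looijenga and to reduce the assertion to two facts about the action of Dehn twists on the homology of a suitable cover of $S$. First observe that it suffices to prove the statement with $k$ replaced by some even multiple $k'\ge 6$ of $k$: if it holds for such a $k'$, then $\Gamma_{(k)}:=\Gamma_{(k')}$ does the job, since powers divisible by $k'$ are divisible by $k$. So fix such a $k$ and let $p:\wt S\to S$ be the universal Prym cover, i.e.\ the Galois cover whose Galois group $H$ is $H_1(S,\Z/2)$. As $p$ is characteristic, every mapping class of $S$ lifts to one of $\wt S$, uniquely up to a deck transformation; we set $\Gamma_{(k)}=\Gamma_{g,n,\chs{k}}$ to be the group of $\gamma\in\Gamma$ whose lift acts on $\wt H:=H_1(\wt S,\Z/k)$ as an element of $H$. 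Since the image of $\Gamma$ in $\Aut(\wt H)$ is finite, $\Gamma_{(k)}$ has finite index in $\Gamma$.

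Let $D_1,\dots,D_m$ be independent Dehn twists along disjoint simple closed curves $c_1,\dots,c_m$; then $\Lambda=\langle D_1,\dots,D_m\rangle$ is free abelian of rank $m$. A finite index sublattice $L$ of $\Z^m$ has the ``rectangular'' form $\bigoplus_i\langle D_i^{k_i}\rangle$ precisely when, writing $k_i$ for the order of $D_i$ modulo $L$, every relation $D_1^{n_1}\cdots D_m^{n_m}\in L$ forces $k_i\mid n_i$ for all $i$. Hence it is enough to prove: (a) $D^n\in\Gamma_{(k)}$ implies $k\mid n$ for each $D$ in the collection; and (b) $D_1^{n_1}\cdots D_m^{n_m}\in\Gamma_{(k)}$ implies $D_i^{n_i}\in\Gamma_{(k)}$ for every $i$. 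Indeed (a) yields $k\mid k_i$, and (a) together with (b) gives, for any relation, first $D_i^{n_i}\in\Gamma_{(k)}$ and hence $k_i\mid n_i$.

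The lift of $D_c$ to $\wt S$ is the product of the Dehn twists along the components $\wt c_1,\dots,\wt c_r$ of $p^{-1}(c)$; these being pairwise disjoint, the corresponding transvections commute and
$$\wt{D_c}^{\,n}\colon\ x\ \longmapsto\ x+n\sum_{j}\langle x,\gamma_j\rangle\,\gamma_j ,\qquad \gamma_j=[\wt c_j]\in\wt H .$$
Writing $N_c$ for the nilpotent operator $x\mapsto\sum_j\langle x,\gamma_j\rangle\gamma_j$, disjointness gives $N_c^2=0$, so $\wt{D_c}^{\,n}=\mathrm{id}+nN_c$ and every power of it has this shape. Two inputs then finish (a). First, \emph{each $\gamma_j$ is a primitive nonzero class}: each $\wt c_j$ is either parallel to a boundary circle of $\wt S$ (where the claim is obvious) or non-separating in $\wt S$; the latter is proved, for $c$ separating $S$ into pieces of negative Euler characteristic, by the argument of \cite{Loo}: every connected component of the $p$-preimage of such a piece meets $p^{-1}(c)$ in at least two circles, so removing one of them does not disconnect $\wt S$. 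Consequently $N_c$ is a primitive integral operator and $nN_c\equiv0\pmod k$ forces $k\mid n$. Second, \emph{no nontrivial $h\in H$ acts unipotently on $\wt H$}: over $\Q$ the finite group $H$ acts semisimply, so $(h-\mathrm{id})^2=0$ forces $h=\mathrm{id}$, and for $k$ even and $\ge6$ this persists modulo $k$ (this is where Looijenga's numerical hypothesis on $k$ enters). Combining: if $\wt{D_c}^{\,n}=\mathrm{id}+nN_c$ equals some $h\in H$, then $h-\mathrm{id}=nN_c$ squares to zero, so $h=\mathrm{id}$, and then $k\mid n$. Assertion (b) is handled the same way: for $g=D_1^{n_1}\cdots D_m^{n_m}$ one has $\wt g=\mathrm{id}+\sum_i n_iN_{c_i}$ with $(\wt g-\mathrm{id})^2=0$ (all the curves $\wt c_{i,j}$ are disjoint, so all relevant intersection numbers vanish), so if $\wt g\in H$ then $\wt g=\mathrm{id}$ and $\sum_i n_iN_{c_i}\equiv0\pmod k$; since the classes attached to distinct $c_i$ span mutually orthogonal primitive sublattices of $H_1(\wt S,\Z)$, each summand vanishes mod $k$, i.e.\ each $D_i^{n_i}\in\Gamma_{(k)}$.

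The step I expect to be the main obstacle is the geometric statement that the components of $p^{-1}(c)$ are homologically primitive — together with the companion orthogonality used in (b) — for an arbitrary simple closed curve $c$ on the possibly punctured (or bordered) surface $S$, separating curves included, where the class of $c$ itself is zero. This is precisely what forces the use of the Prym cover rather than an ordinary congruence level cover, and it is the point at which Looijenga's analysis in \cite{Loo} must be transported to the bordered and punctured setting. A secondary delicate point is the passage from vanishing of the nilpotent parts over $\Q$ to their vanishing modulo $k$, which is the origin of the evenness and size constraints on $k$.
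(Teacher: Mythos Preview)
Your approach and the paper's diverge at the very first step. You attempt to transport Looijenga's Prym-level argument directly to the bordered surface $S$ and then wrestle with the primitivity of lifted curves and the unipotency-mod-$k$ issue; you correctly identify these as the hard points, and as written they remain genuine gaps (for instance, when $c$ cuts off a genus-zero piece carrying only boundary circles, your dichotomy ``boundary-parallel or non-separating in $\wt S$'' is not obviously exhaustive, and the independence of the operators $N_{c_i}$ in $H_1(\wt S,\Z/k)$ for distinct $i$ is asserted rather than proved).

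The paper avoids all of this by a reduction trick. It takes Looijenga's closed-surface result as a black box and deduces the bordered case as follows: form the closed surface $T=S\cup_{\partial S}(-S)$ of genus $2g+n-1$, and define the diagonal homomorphism $\Delta:\Gamma_S\to\Gamma_T$ sending a class $[\phi]$ to the class acting as $\phi$ on both copies. One checks that $\Delta$ is injective (the paper does this via the retraction $T\to S$ that folds the two copies together, which splits the map of fundamental groups and hence the map of outer automorphism groups). Then set $\Gamma_{(k),S}=\Delta^{-1}(\Gamma_{(k),T})$. If $D_1,\dots,D_m$ are independent Dehn twists in $S$, their images under $\Delta$ are the products $D_i^+D_i^-$ of the twists along the two copies of each curve, and these $2m$ twists are independent in $T$. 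Looijenga's result for the closed surface $T$ says that $\prod_i(D_i^+)^{a_i}(D_i^-)^{b_i}\in\Gamma_{(k),T}$ forces each $a_i,b_i$ to be divisible by $k$ (or $2k$); applying this with $a_i=b_i=n_i$ gives exactly what you want for $S$, with no further analysis of the Prym cover needed.

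So your strategy is not wrong in spirit, but the paper's doubling maneuver is both shorter and sidesteps precisely the two obstacles you flagged.
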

\begin{proof}
The case $n=0$ follows from Looijenga's  result~\cite[Proposition 3]{Loo}
Here is the definition of $\Gamma_{(k)}$ for $n=0$. Let $\wt{S}\to S$
be a universal Prym cover, i.e. a Galois cover with 
$\Gal(\wt{S}/S)=H^1(S,\Z/2)$ considered as the quotient of 
$\pi_1(S)$ by the normal subgroup generated by the squares of the elements.

Without loss of generality we can assume that $k$ is even and $k\geq 6$.
The group $\Gamma_{(k)}$ 
is then the group of $\gamma\in\Gamma$ whose
(arbitrary) lift $\wt{\gamma}:\wt{S}\to\wt{S}$ acts on $H^1(\wt{S},\Z/k)$ 
as an element of $\Gal(\wt{S}/S)$. 
By~\cite[Proposition 3]{Loo} the group $\Gamma_{(k)}$ satisfies the
requirements of the proposition:  its intersection with a group
generated by $D_1,\ldots,D_m$ is the group generated by
$D_1^{k_1},\ldots,D_m^{k_m}$ where  $k_i=k$ if $D_i$ disconnects $S$, and $k_i=2k$ otherwise.

The general case will be reduced to the case $n=0$.

Let $S$ be a  compact oriented surface of genus $g$ with 
 $n$ boundary components. 
We define a new surface $T$ as the result of gluing $S$ to $-S$ along
the boundary in an obvious way. Thus $T$ has no boundary and it is of
genus $2g+n-1$. Any diffeomorphism $\phi$ of $S$ preserving the boundary
defines a diffeomorphism $\Delta(\phi)$ of $T$ acting as $\phi$ on both $S$ 
and $-S$. This construction preserves isotopy, and, therefore, induces
a homomorphism of the modular groups
\begin{equation}
\label{map-of-gammas}
\Delta:\Gamma_S\rTo\Gamma_T
\end{equation}
of $S$ and of $T$ respectively.
Lemma~\ref{lem:mono-modgroups} below claims that $\Delta$ is injective.
Then we define the subgroup $\Gamma_{(k),S}$ of $\Gamma_S$ as
$\Delta^{-1}(\Gamma_{(k),T})$.
 
If $D_1,\ldots,D_m$ are independent Dehn twists in $\Gamma_S$, one has
$2m$ independent Dehn twists $D^{\pm}_i,\ i=1,\ldots,m$ in $T$ defined 
by the corresponding circles in $S$ and in $-S$. 
By~\cite[Proposition 3]{Loo}
an element $\prod (D^+_i)^{k^+_i}\prod (D^-_i)^{k^-_i}$ belongs
to $\Gamma_{(k),T}$ if and only if $k^{\pm}_i$ are divisible by $k$ or by $2k$,
depending on $i$.\footnote{More precisely, this is $k$ if $D^+_i$
  disconnects $T$, and $2k$ if it does not.}  
Since $\Delta(\prod D_i^{k_i})=\prod (D^+_i)^{k_i}\prod (D^-_i)^{k_i}$,
we get the required property.
\end{proof}

Now we will prove injectivity of $\Delta$.
\begin{lem}
\label{lem:mono-modgroups}
The map $\Delta:\Gamma_S\to\Gamma_T$ defined in~(\ref{map-of-gammas}) 
is injective.
\end{lem}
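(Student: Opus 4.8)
The plan is to construct an explicit left inverse to the doubling homomorphism $\Delta$, which immediately yields injectivity. The key observation is that $T = S \cup_{\partial} (-S)$ carries an orientation-reversing involution $\sigma: T \to T$ that swaps the two halves $S$ and $-S$ and fixes the gluing curves $\partial S$ pointwise. For any diffeomorphism $\phi$ of $S$ fixing $\partial S$, the doubled diffeomorphism $\Delta(\phi)$ commutes with $\sigma$ by construction. Conversely, $S$ sits inside $T$ as (the closure of) one of the two components of $T \setminus \partial S$, so any diffeomorphism of $T$ that preserves $S$ setwise and fixes $\partial S$ restricts to a diffeomorphism of $S$.

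First I would make precise the retraction on the level of mapping classes. Given $[\psi] \in \Gamma_T$ in the image of $\Delta$, choose the representative $\Delta(\phi)$; its restriction to the submanifold $S \subset T$ recovers $\phi$ up to isotopy, so the assignment $[\Delta(\phi)] \mapsto [\phi]$ is well defined on $\Delta(\Gamma_S)$. To see this is independent of the choice of isotopy, note that an isotopy of $\Delta(\phi)$ to $\Delta(\phi')$ inside $\Diff^+(T)$ can be averaged against the involution $\sigma$ (replace an isotopy $h_t$ by the isotopy $x \mapsto \sigma(h_t(\sigma x))$ and concatenate appropriately, or simply restrict a $\sigma$-equivariant isotopy) to produce a $\sigma$-equivariant isotopy, which restricts to an isotopy of $\phi$ to $\phi'$ on $S$ rel $\partial S$. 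This shows $\Delta$ is injective: if $\Delta(\phi)$ is isotopic to the identity in $\Gamma_T$, then restricting a $\sigma$-equivariant isotopy to $S$ shows $\phi$ is isotopic to the identity rel $\partial S$, i.e. $[\phi] = 1$ in $\Gamma_S$.

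Alternatively, and perhaps more cleanly, one can argue via the action on $\pi_1$. A marking-type argument: $S$ is a $\pi_1$-injective subsurface of $T$ (the inclusion $S \hookrightarrow T$ induces an injection on fundamental groups since $S$ is a retract of a regular neighborhood and $T$ is built by gluing along circles which are not nullhomotopic), and a mapping class of $S$ fixing $\partial S$ is determined by its action on $\pi_1(S)$ together with the (trivial) action on the peripheral structure, by the Dehn--Nielsen--Baer theorem for surfaces with boundary. Since $\Delta(\phi)$ acts on $\pi_1(T) \supset \pi_1(S)$ by an automorphism restricting to $\phi_*$ on $\pi_1(S)$, the composite $\Gamma_S \xrightarrow{\Delta} \Gamma_T \to \Aut(\pi_1(T))$ already detects $\phi_*$, hence is injective on $\Gamma_S$; therefore $\Delta$ itself is injective.

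The main obstacle is the delicate point about isotopies: one must be careful that ``isotopic in $\Diff^+(T)$'' for the doubled maps implies ``isotopic rel $\partial S$ in $\Diff^+(S)$'' for the restrictions, because a priori an isotopy on $T$ need not respect the decomposition. The involution-averaging trick resolves this, but it requires knowing that a $\sigma$-equivariant isotopy exists whenever any isotopy does — this follows because the $\sigma$-fixed locus $\partial S$ is preserved and $\Delta(\phi), \Delta(\phi')$ are both $\sigma$-equivariant, so the two halves of a general isotopy can be symmetrized. I would present the $\pi_1$ argument as the primary proof since it sidesteps these subtleties entirely, and mention the geometric picture for intuition.
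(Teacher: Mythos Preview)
Your $\pi_1$ argument is essentially the paper's approach, but there is a genuine gap at the key step. You write ``the composite $\Gamma_S \xrightarrow{\Delta} \Gamma_T \to \Aut(\pi_1(T))$'', but for the closed surface $T$ there is no natural map $\Gamma_T \to \Aut(\pi_1(T))$, only $\Gamma_T \to \Out(\pi_1(T))$. Concretely: if $\Delta(\phi)$ is isotopic to the identity in $\Diff^+(T)$, the isotopy need not fix the basepoint, so $\Delta(\phi)_*$ is only known to be an \emph{inner} automorphism $\ad(g)$ of $\pi_1(T,*)$ for some $g\in\pi_1(T,*)$. Restricting to the subgroup $\pi_1(S,*)$ gives $\phi_* = \ad(g)|_{\pi_1(S)}$, but since $g$ need not lie in $\pi_1(S)$ this does not directly exhibit $\phi_*$ as inner in $\Aut(\pi_1(S))$, and you cannot yet invoke Dehn--Nielsen--Baer for $S$. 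So the claim that the $\pi_1$ argument ``sidesteps these subtleties entirely'' is not justified as written.

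The paper closes exactly this gap with one ingredient you are missing: the fold map $T = S\cup_{\partial S}(-S)\to S$ (collapsing $-S$ onto $S$) induces a retraction $\rho:\pi_1(T)\to\pi_1(S)$ splitting the inclusion $i:\pi_1(S)\hookrightarrow\pi_1(T)$. One then sets $\nabla(\psi)=\rho\circ\psi\circ i$ for $\psi\in\Aut(\pi_1(T))$; since $\nabla(\ad(g)\circ\psi)=\ad(\rho(g))\circ\nabla(\psi)$, this $\nabla$ descends to $\Out(\pi_1(T))\to\Out(\pi_1(S))$. The resulting identity $\nabla\circ\alpha_T\circ\Delta=\alpha_S$, combined with injectivity of $\alpha_S$, gives injectivity of $\Delta$. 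Your geometric ``averaging against $\sigma$'' approach has an analogous unresolved issue: concatenating the isotopies $h_t$ and $\sigma h_t\sigma$ does not produce a $\sigma$-equivariant isotopy, and in general two $\sigma$-equivariant diffeomorphisms that are isotopic need not be $\sigma$-equivariantly isotopic; moreover even a $\sigma$-equivariant isotopy only preserves $\partial S=\mathrm{Fix}(\sigma)$ setwise, not pointwise, so it would not restrict to an isotopy rel $\partial S$ on $S$.
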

\begin{proof}
Denote $\pi=\pi_1(S),\ \Pi=\pi_1(T)$. We choose as the base point for both
$S$ and $T$ a boundary point of $S$. The embedding $S\to T$ admits an
obvious section which identifies $-S$ with $S$. Thus, the embedding
$i:\pi\to\Pi$ of fundamental groups induced by the embedding $S\to T$
splits by a projection $\rho:\Pi\to\pi$. 

The modular groups $\Gamma_S$ and $\Gamma_T$ act by outer automorphisms
on the corresponding fundamental groups $\pi$ and $\Pi$;
the canonical maps 
$$\alpha_S:\Gamma_S\to\Out(\pi),
\text{\ and \ } \alpha_T:\Gamma_T\to\Out(\Pi)$$ 
are well-known to be injective.

Define a map (this is not a group homomorphism!) 
$$\nabla:\Aut(\Pi)\to \Aut(\pi)$$ 
by the formula $\nabla(\phi)=\rho\circ\phi\circ i$.
One has for $\phi\in\Aut(\Pi),\ g\in\Pi$,
$$\nabla(\ad(g)\circ\phi)=
\rho\circ\ad(g)\circ\phi\circ i=
\ad(\rho(g))\circ\nabla(\phi).$$
Thus $\nabla$ induces a map 
$\nabla:\Out(\Pi)\to\Out(\pi)$. We claim that
the diagram 
$$
\begin{diagram}
\Gamma_S & \rTo^\Delta & \Gamma_T \\
\dTo &          & \dTo \\
\Out(\pi)& \lTo^\nabla & \Out(\Pi)
\end{diagram}
$$
is commutative. In fact, if $\gamma:S\to S$ defines an element of $\Gamma_S$
then $\alpha_S(\gamma)$ sends a loop $u\in\pi$ to
$\gamma(u)$.\footnote{$\alpha_S$ can be actually defined as a
  homomorphism to $\Aut(\pi)$ since we chose the base point preserved
  by any $\gamma$.} 
On the other hand, $\nabla\circ\alpha_T(\Delta(\phi)$ sends $u\in\pi$
to $\rho(\Delta(\phi)(u))=\phi(u)$.
Thus $\Delta$ is injective since $\alpha_S$ is injective.
\end{proof}

\subsubsection{Construction of the map $\pi_G:\bT\to [G\bs \bT]$}
\label{covered-by}

Now we will use the reasoning of~\cite[Corollary 2.10]{BoPi} 
to prove that for any finite index subgroup $G\subset \Gamma$ there
exists a smaller finite index subgroup $H\subset G\subset \Gamma$
such that  $[H\bs\bT]$ is a manifold. 
This will allow to construct the canonical map
$\pi_G:\bT\to\relax[G\bs\bT]$ 
from the big commutative diagram~(\ref{eq:bigdiag}).

Let $G$ be any finite index subgroup of the modular group $\Gamma$. 
The intersection
$G'=G\cap\Gamma^{(l)}$ has also finite index. 
Let 
$$G''=\bigcap_{g\in\Gamma}gG'g^{-1}.$$
This is a normal subgroup of $\Gamma$ contained in $G$ and having a finite 
index. Since there are only finitely many 
Dehn twists in $\Gamma$ up to conjugation, there exists $k$ such that
for each Dehn twist $D$ one has $D^k\in G''$.     
Finally, consider the subgroup $G'''=G''\cap\Gamma_{(k)} $. 
We claim that the quotient $[G'''\bs \bT]$ is a complex manifold.

Look at a chart $(V,H,\alpha)$ constructed as in in~\ref{ABCD}
for the quotient $G'''\bs\bT$. Recall that
the group $H$ appears as the extension of $A_Z$ with 
the quotient $K=(G'''\cap\Gamma_0)/\Gamma'_0$, see the notation
of~\ref{ABCD}--\ref{lem:lift}. The group $A_Z$ is 
trivial since $G'''\subseteq\Gamma^{(l)}$, see~\ref{sss:2groups}. 
Let us show the group $K$ is trivial. Let $\gamma=\prod_{i=1}^rD_i^{d_i}\in G'''\cap
\Gamma_{00}$. Then $\gamma\in\Gamma_{(k)}$ since $G'''\subseteq\Gamma_{(k)}$. 
Therefore, $d_i$ are all divisible by $k$ and $D_i^{d_i}\in\Gamma_{(k)}$.
By the choice of $k$ $D_i^{d_i}\in G''$ as well, so they belong
to $\Gamma'_0$.

Thus, we see that $H=1$. Therefore, $[G'''\bs \bT]$ is a complex manifold.

Assume now that $H_1,\ H_2$ are two finite index subgroups of
$G\subset\Gamma$ such that $[H_i\bs\bT]$ are manifolds for $i=1,2$.
Then the intersection $H_1\cap H_2$ contains as well a finite index subgroup
$H_3$ such that $[H_3\bs\bT]$ is a manifold. This proves that the 
compositions $\bT\to\relax[H_i\bs\bT]\to\relax[G\bs\bT]$
coincide.

We can now define the map $\pi_G:\bT\to \relax[G\bs \bT]$ 
as the composition 
$$\bT\rTo  \relax[H\bs \bT]\rTo\relax[G\bs \bT],$$
where $H$ is any finite index subgroup of $G$ such that
the quotient $[H\bs \bT]$ is a manifold.

\

\subsection{Gluing operations}
\label{ss:gluing-ATS}

For Riemann surfaces with parametrized boundary components,
as well for stable curves with punctures,
there exist natural gluing operations
which correspond to compositions in a {\em modular operad}.
Given two surfaces (resp., stable curves)
$S_i, \ i=1,2$ of genus  $g_i$  with $n_i$ parametrized 
boundary components (resp., with $n_i$ punctures), one can  
glue them along $a$th component (resp., puncture) of $S_1$ and
$b$th component (resp., puncture) of $S_2$ 
to get a surface (resp., a stable curve) of genus $g_1+g_2$
with $n_1+n_2-2$ boundary components (resp., punctures). 
Similarly we can glue two boundary components (resp., punctures) of 
$S_1$ and produce a surface (resp., a stable curve) of genus $g_1+1$
and $n_1-2$ boundary components (resp., punctures). 

These gluing operations on surfaces and on stable curves are compatible in the 
sense that for two marked stable curves $\phi_1:S_1\to X_1$ and $\phi_2:S_2\to X_2$ 
one can define a new marked curve $\phi:S\to X$, where $S$ is obtained by gluing 
$S_1$ and $S_2$ and $X$ is obtained by gluing $X_1$ and $X_2$.

All this is almost obvious. Note, however, that gluing stable curves
is canonical in the best possible way --- it defines the maps of the 
corresponding moduli stacks (as described in a more detail 
in~\ref{opera-sta-cu} below). 
To be justify our suggestion to interpret
augmented \TS s  as projective limits of complex 
orbifolds $[G\bs\bT]$
we have to show that  the gluing operations for the augmented \TS s 
descend to well-defined operations on complex orbifolds $[G\bs\bT]$. 
This is done in the current  subsection. 

Below we describe gluing operations for different types of objects:
first for surfaces with boundary in~\ref{opera-sur}, then
for  augmented \TS s---on the level of points---in~\ref{opera-T}.
After that in~\ref{opera-sta-cu} we recall the gluing operations
for the stacks of stable curves, and in the  last two
subsections, \ref{opera-ats-q1} and \ref{opera-ats-q2}, we describe the 
gluing operations  on the level of complex orbifolds --- quotients of
the augmented \TS s. 
Note that the description in \ref{opera-sur} and \ref{opera-sta-cu} 
contain nothing new and the construction in \ref{opera-T} is fairly obvious.

\subsubsection{Gluing bordered surfaces}
\label{opera-sur}
In what follows we denote by $\cS_{g,n}$ the groupoid whose objects
are oriented  surfaces of genus $g$ with $n$ labeled boundary
components together with a parametrization of each component. The
morphisms are diffeomorphisms preserving the parametrization of the boundary 
components, up to isotopy.%
\footnote{A version of this groupoid with non-numbered boundary components
is called {\em extended \teich\ groupoid} in~\cite{BaKi}.} 
In particular, for $S\in\cS_{g,n}$ the modular group of $S$ is just
$\Gamma(S):=\Aut_{\cS_{g,n}}(S)$.

The following gluing operations
are defined.
\begin{itemize}
\item Gluing two bordered surfaces: given $S_1\in\cS_{g_1,n_1}$
and $S_2\in\cS_{g_2,n_2}$, a choice of a pair of boundary components
in $S_1$ and in $S_2$ defines 
$$S_1\circ S_2\in\cS_{g_1+g_2,n_1+n_2-2}.$$
\item Gluing two boundary components: given $S\in\cS_{g,n}$,
a choice of a pair of boundary components defines a new surface
$\bar S\in\cS_{g+1,n-2}$. 
\end{itemize}
The gluing operations are functorial; in particular, for $S=S_1\circ S_2$
one has natural group homomorphisms $\Gamma_i\to\Gamma$ with
$\Gamma_i=\Gamma(S_i),\ \Gamma=\Gamma(S)$.

The operations described above satisfy standard axioms saying that
the collection 
$$g,n\mapsto\cS_{g,n}$$
gives a {\em modular operad} in the $2$-category of groupoids.

\subsubsection{Gluing augmented \TS s}
\label{opera-T}
It is convenient to consider the augmented \TS s as a collection
of functors
$$ \cS_{g,n}\rTo\Top$$
to topological spaces. The action of the modular groups on $\cT(S)$
is built in in this approach. The gluing operations described above
extend to the following maps connecting different $\bT(S)$:

\begin{equation}\label{eq:opera-T1}
\bT(S_1)\times\bT(S_2)\rTo\bT(S_1\circ S_2).
\end{equation}
\begin{equation}\label{eq:opera-T2}
\bT(S)\rTo\bT(\bar S).
\end{equation}
The result of gluing $(X_i,\phi_i:S_i\to X_i)$ gives the pair
$$(X,\phi:S_1\circ S_2\to X),$$ 
where $X=X_1\vee X_2$ is obtained by
gluing $X_i$ along the corresponding punctures, with $\phi=\phi_1\vee\phi_2$ 
defined by $\phi_1$ and $\phi_2$ in an obvious way.

The second operation is defined similarly.

\subsubsection{Gluing stable curves}
\label{opera-sta-cu}
     The famous modular operad 
     $$ (g,n)\mapsto\bsM_{g,n}$$
of moduli of stable curves is a close relative of the
above.\footnote{Since $\bsM_{g,n}=\relax[\Gamma(S)\bs\bT(S)]$ for
  $S\in\cS_{g,n}$} 
In order to define the gluing operations
$$\bsM_{g_1,n_1}\times\bsM_{g_2,n_2}\rTo\bsM_{g_1+g_2,n_1+n_2-2},$$
one has to be able to glue two families of punctured stable curves 
$$\cX_i\to V,\ i=1,2,$$ 
of types $(g_i,n_i)$ along a chosen
pair of punctures 
$$s_1:V\to\cX_1,\ s_2:V\to\cX_2.$$ 
This is much easier than one could have imagined:
the result is given by the colimit of the diagram
$$\cX_1\lTo V\rTo \cX_2$$
defined by the choice of the punctures. 
The existence of such (very special) colimit is easily verified.

The second type gluing operation 
$$\bsM_{g,n}\rTo\bsM_{g+1,n-2}$$
is defined similarly. Let $\cX\to V$ be a family of punctured stable curves
of type $(g,n)$ and let $s_{1,2}:V\to\cX$ be a pair of punctures. Then 
the corresponding family $\bar\cX\to V$ of type $(g+1,n-2)$ is defined
by the coequalizer of the pair $(s_1,s_2)$.

\subsubsection{Gluing quotients of $\bT_{g,n}$. Disconnected case.}
\label{opera-GT}
\label{opera-ats-q1}

The gluing operations on augmented \TS s  described in~\ref{opera-T}
 are just continuous maps of topological spaces. In this section
 we will show that they can be lifted to the level of orbifold maps
for corresponding quotient orbifolds $[G\bs\bT]$.

Let us consider first the operation that corresponds 
to gluing two different surfaces. 

\begin{Prp}
Consider two surfaces $ S_i\in\cS_{g_i,n_i}$, \ $i=1,2$.
Set  as above 
 $$S=S_1\circ S_2, \ \bT_i=\bT(S_i), \ \Gamma_i=\Gamma(S_i), \
 \Gamma=\Gamma(S).$$

Let $G\subset\Gamma$ be a finite index subgroup.
Set $G_i=\Gamma_i\times_{\Gamma}G, \ i=1,2$.
Then  there exists a natural map of complex orbifolds
\begin{equation}
\label{gluing-q-1}
[G_1\bs\bT_1]\times[G_2\bs\bT_2]\rTo\relax[G\bs\bT]
\end{equation}
which is  compatible with the gluing operation~(\ref{eq:opera-T1}) 
of topological spaces.
\end{Prp}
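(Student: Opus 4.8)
The plan is to construct the map~(\ref{gluing-q-1}) by working with the quasiconformal orbifold atlases on both sides and checking compatibility of charts. First I would recall that by the construction of Section~\ref{ss:gbt-charts} the orbifolds $[G_i\bs\bT_i]$ are presented by orbifold atlases $\cA_i$ whose charts $(V_i,H_i,\alpha_i)$ are obtained from quasiconformal charts $(U_i,A_i,\beta_i)\in\cQ_i$ of $\bsM_{g_i,n_i}$ together with a marking of the degenerate fiber; similarly $[G\bs\bT]$ is presented by an atlas $\cA$ built from quasiconformal charts of $\bsM_{g,n}$. Using Corollary~\ref{crl:universal-property} (the universal property of the realization of a Satake orbifold), it suffices to produce, for each pair of charts $(V_1,H_1,\alpha_1)$, $(V_2,H_2,\alpha_2)$ in $\cA_1\times\cA_2$, a compatible chart in $\cA$ together with an orbifold map between the quotients, and to check the compatibility $2$-morphisms along morphisms of charts.

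The key step is the chart-level construction. Given quasiconformal charts $(U_i,A_i,\beta_i)$ with families $\pi_i:\cX_i\to U_i$, I would first glue the two families of stable curves over $U_1\times U_2$ along the chosen pair of punctures — this is the very special colimit recalled in~\ref{opera-sta-cu}, which exists and produces a family $\cX\to U_1\times U_2$ of stable curves of type $(g_1+g_2,n_1+n_2-2)$. By Theorem~\ref{thm:gaga} this family is classified by a holomorphic map $U_1\times U_2\to\bsM_{g,n}$; shrinking if necessary and invoking Lemma~\ref{lem:orbi-neighborhood}, one obtains a quasiconformal chart $(U,A,\beta)$ of $\bsM_{g,n}$ through which this map factors. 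On the level of markings, the gluing operation~(\ref{eq:opera-T1}) on $\bT$ shows how to glue a marking $\phi_1:S_1\to X_1$ and $\phi_2:S_2\to X_2$ into $\phi=\phi_1\vee\phi_2:S\to X$; tracking the subgroups, the Dehn twists $\Gamma_{0,i}$ around the nodes of $X_i$ together with the new Dehn twist around the gluing circle assemble into $\Gamma_0$ for the node set of $X$, and the group homomorphisms $\Gamma_i\to\Gamma$ restrict to $G_i\to G$ by the definition $G_i=\Gamma_i\times_\Gamma G$. This produces, from the charts $(V_i,H_i,\alpha_i)$, the ramified covers $V_i\to U_i$ and hence a ramified cover $V\to U$ (normalization in the appropriate function field), and a group $H$ fitting into the extension $1\to K\to H\to A_Z\to 1$; I would check that the natural maps $V_1\times V_2\to V$ and $H_1\times H_2\to H$ assemble into a morphism of charts compatible with the projections to $[G\bs\bT]$, using the description of $H$ and $A_Z$ in~\ref{another-H} via $A_{Q,G}=A_Q\times_\Gamma G$.

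Having the chart-level maps, the remaining work is to verify the compatibility conditions of Corollary~\ref{crl:universal-property}: for each pair of morphisms in $\cA_1\times\cA_2$ one gets an induced morphism of the glued charts in $\cA$, and the $2$-morphisms between the composed maps to $[G\bs\bT]$ must satisfy the cocycle condition. This is essentially bookkeeping, since all the relevant $2$-morphisms come from the canonical identifications on the dense smooth loci $Y_i\subset V_i$ (where the families are locally trivial and markings transfer uniquely, by Proposition~\ref{prp:independence-of-c}), and these determine everything by density and normality of $V$. Finally, continuity and the compatibility with~(\ref{eq:opera-T1}) on coarse spaces is automatic because on the open dense part both sides are built from the actual marked curves and the gluing of marked curves is the one described in~\ref{opera-T}, together with the factorization $\bT\to[G\bs\bT]$ from~\ref{covered-by}. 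The main obstacle I anticipate is the verification that the map $V_1\times V_2\to V$ is well-defined and that $H_1\times H_2\to H$ is the correct homomorphism — i.e.\ that the ramification data (the integers $k_i$) and the stabilizer groups $A_{Z}$ behave correctly under gluing; this requires carefully matching the Dehn twist subgroups under $\Gamma_i\to\Gamma$ and using that the new gluing node contributes a Dehn twist lying in $G$ iff it lies in both $G_i$, which follows from $G_i=\Gamma_i\times_\Gamma G$.
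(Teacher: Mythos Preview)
Your approach is essentially the paper's: work chart by chart and assemble via the universal property of the realization. A few points where your plan needs sharpening, compared with the paper's execution.

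First, Lemma~\ref{lem:orbi-neighborhood} only produces an orbifold chart, not a quasiconformal one; the construction of $(V,H,\alpha)$ in Section~\ref{ss:gbt-charts} requires the full list (QC1)--(QC6). The paper handles this by first choosing, via~\ref{ss:qc}, a quasiconformal chart $(U,A,\beta)$ of $\bsM_{g,n}$ with special fiber $X_z=X_1\vee X_2$, and \emph{then} choosing $(U_i,A_i,\beta_i)$ small enough that $U_1\times U_2$ lands inside $U$ under the gluing map. This top-down order avoids having to argue that a chart built from the bottom up is quasiconformal.

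Second, the paper gets $f_V:V_1\times V_2\to V$ by a clean Cartesian-square argument: since the ramification indices $k_i$ for the nodes of $X_1$ and $X_2$ agree under $G_i=\Gamma_i\times_\Gamma G$, and since $f_U$ is an open embedding of $U_1\times U_2$ into the divisor $D_x\subset U$ corresponding to the new node, one has $(U_1\times U_2)\times_U V\cong V_1\times V_2$. Any $f_V$ making this square Cartesian must then be adjusted by an element $\gamma\in\Gamma_{01}\times\Gamma_{02}$ to become compatible with the maps $\alpha_i$ and $\alpha$; this adjustment (comparing two $G$-markings both consistent with $\phi_1\vee\phi_2$) is not automatic and should not be glossed over.

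Finally, your anticipated obstacle is mis-stated: the Dehn twist around the gluing circle lies in $\Gamma$ but not in the image of either $\Gamma_i\to\Gamma$, so ``lying in $G$ iff it lies in both $G_i$'' is meaningless. The correct point is that $U_1\times U_2$ sits inside $D_x$, so the ramification of $V\to U$ along $D_x$ plays no role in the fiber product over $U_1\times U_2$.
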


\begin{proof}
Choose an arbitrary  pair of marked Riemann surfaces
$$((X_1,\phi_1),(X_2,\phi_2))\in\bT_1\times\bT_2.$$ 
By~\ref{ss:qc}, there  exists a quasiconformal orbifold chart
$(U,A,\beta)$  for $\bsM_{g,n}$, where
$(g,n)=(g_1+g_2,n_1+n_2-2)$,
with $X_z=X_1\vee X_2,\ A=\Aut(X_z)$
and a pair of quasiconformal charts $(U_i,A_i,\beta_i),\
i=1,2,$ for  $\bsM_{g_i,n_i}$, with $X_i=X_{z_i}$, $A_i=\Aut(X_i)$,
such that $U_1\times U_2$ belongs to the preimage of $U$ under the
gluing map~(\ref{eq:opera-T1}). This induces maps 
  $$  f_U:U_1\times U_2\rTo U, \ \mathrm{and} \ f_A:A_1\times A_2\rTo  A. $$
Since by~(\ref{ss:coarse}) open substacks of a stack correspond to open
subsets of  its coarse space, we obtain the following $2$-commutative
diagram of stacks
\begin{equation}
\label{eq:glue-charts-M}
\begin{diagram}
[A_1\bs U_1]\times[A_2\bs U_2]& \rTo^{(f_U,f_A)}& [A\bs U] \\
\dTo & & \dTo \\
\bsM_{g_1,n_1}\times\bsM_{g_2,n_2}& \rTo & \bsM_{g,n}
\end{diagram}
\end{equation}
where the vertical arrows are embeddings of open substacks.

Recall that each node of $X_1\vee X_2$ defines a component of the 
singular locus $U-U_0$ of $U$; in particular, the node $x$ obtained by 
gluing the punctures of $X_1$ and $X_2$, defines a component $D_x$.
The image of $f_U$ lies in $D_x$ and, moreover,
$f_U$ is an open embedding of $U_1\times U_2$ to $D_x$.

Let $x_1,\ldots,x_r$ be the nodes of $X_1$, and
$x_{r+1},\ldots,x_{r+s}$ be the nodes of $X_2$. Then the nodes of 
$X_1\vee X_2$ are
$$ x_1,\ldots,x_r,x_{r+1},\ldots,x_{r+s},x.$$
The corresponding circles in $S_1\circ S_2$ consist of the circles in $S_1$
of the form $C_i=\phi_1^{-1}(x_i),\ i=1,\ldots,r$, the circles in $S_2$ 
of the form $C_i=\phi_2^{-1}(x_i),\ i=r+1,\ldots,r+s$, and the common 
boundary component of $S_1$ and $S_2$.

Let $D_i$  be the Dehn twists around $C_i,\ i=1,\ldots,r,$ in $\Gamma_1$,
and around $C_i,\ i=r+1,\ldots,r+s,$ in $\Gamma_2$,
let $\bar D_i$ be their images in $\Gamma$. Let 
$$k_i=\left\{\begin{array}{ll}
\min\{d|(D_i)^d\in G_1\}&\textrm{ for }i=1,\ldots,r,\\
\min\{d|(D_i)^d\in G_2\}&\textrm{ for }i=r+1,\ldots,r+s.
\end{array}
\right.
$$
By the choice of $G_i$ the same values
$k_i$ define the ramification indices of $V$ over $U$ 
around the corresponding components of the 
singular locus.
This implies that the fiber product $(U_1\times U_2)\times_UV$ is isomorphic 
to $V_1\times V_2$. Choose a morphism $f_V:V_1\times V_2\to V$ so that
the diagram 
$$
\begin{diagram}
V_1\times V_2 & \rTo^{f_V} & V \\
\dTo & & \dTo \\
U_1\times U_2 & \rTo^{f_U} & U
\end{diagram}
$$
is Cartesian.
Let us show that $f_V$ in the diagram above can be chosen to be compatible 
with the maps 
$$\alpha:V\rTo G\bs\bT,\ \alpha_i:V_i\rTo G_i\bs\bT_i\ (i=1,2),$$
together with the operations
$$\bT_1\times\bT_2\rTo \bT$$
defined in~\ref{opera-T}.
Let $v_i,\ i=1,2,$ be the (only) preimages of $z_i\in U_i$ in $V_i$.
Any choice of $f_V$ sends the pair $(v_1,v_2)\in V_1\times V_2$
to the only preimage $v\in V$ of $z\in U$. Both $\alpha(f_V(v_1,v_2))$
and the result of gluing $\alpha_i(v_i)$ give the element
$(X_1\vee X_2,\phi_1\vee\phi_2)\in G\bs\bT$. If now 
$(y_1,y_2)\in Y_1\times Y_2\subset V_1\times V_2$ with 
$\alpha_i(y_i)=(X'_i,\phi'_i)$, the image $\alpha(f_V(y_1,y_2))$
has form $(X'_1\vee X'_2,\phi')$ where the $G$-markings $\phi'$ and 
$\phi'_1\vee\phi'_2$ are both consistent with $\phi_1\vee\phi_2$, that is
differ by an element $\gamma\in\Gamma_{01}\times\Gamma_{02}$,
where, as in~\ref{sss:marking-the-fibers}, $\Gamma_{01}$ and $\Gamma_{02}$
are generated by the Dehn twists $D_1,\ldots,D_r$ and 
$D_{r+1},\ldots,D_{r+1}$.
The element $\gamma$ is unique modulo the intersection 
$(\Gamma_{01}\times\Gamma_{02})\cap (G_1\times G_2)$. 
The dependence of $\gamma$ on the choice of the point $(y_1,y_2)$ is 
continuous; therefore, $\gamma$ is constant. Replacing now $f_V$ to its 
composition with $\gamma$, we get a new $f_V$ with the required property.

This allows one to lift the maps $f_U$ and $f_A$ to maps
$$ f_V:V_1\times V_2\rTo V,\ f_H:H_1\times H_2\rTo H$$
connecting the orbifold charts of $[G_i\bs\bT_i]$ and $[G\bs\bT]$
and giving rise to a $2$-commutative diagram
\begin{equation}
\label{eq:glue-GT} 
\begin{diagram}
[H_1\bs V_1]\times[H_2\bs V_2]& \rTo& [H\bs V] \\
\dTo & & \dTo \\
[G_1\bs\bT_1]\times[G_2\bs\bT_2]& \rTo & [G\bs\bT]
\end{diagram}.
\end{equation}

The collections $(V_1\times V_2,H_1\times H_2,\alpha_1\times\alpha_2)$
form an orbifold atlas for the product $[G_1\bs\bT_2]\times[G_2\bs\bT_2]$.
The diagram~(\ref{eq:glue-GT}) gives, in particular, a collection
of maps
$$[H_1\bs V_1]\times[H_2\bs V_2]\rTo\relax[G\bs\bT].$$
Any morphism of charts 
$$(V_1\times V_2,H_1\times H_2,\alpha_1\times\alpha_2)\rTo
 (V'_1\times V'_2,H'_1\times H'_2,\alpha'_1\times\alpha'_2)$$
can be uniquely completed to a $2$-commutative diagram
\begin{equation}
\label{} 
\begin{diagram}
[H_1\bs V_1]\times[H_2\bs V_2]& \rTo^{\hat f_V}& [H\bs V] \\
\dTo & & \dTo \\
[H'_1\bs V'_1]\times[H'_2\bs V'_2]& \rTo^{\hat f_{V'}} & [H'\bs V']
\end{diagram}.
\end{equation}

This gives the required map~(\ref{gluing-q-1}).
\end{proof}

\subsubsection{Gluing quotients of $\bT_{g,n}$. Connected case.}
\label{opera-GT-2}
\label{opera-ats-q2}
Now we will describe gluing operation of the second type 
which corresponds to gluing two boundary components of the same surface.

\begin{Prp}
Let $S\in\cS_{g,n}$ and let $\bar S\in\cS_{g+1,n-2}$ be obtained from $S$
by gluing two chosen boundary components. Let $\Gamma=\Gamma(S)$, 
$\bar\Gamma=\Gamma(\bar S)$. One has a group homomorphism
$\Gamma\to\bar\Gamma$. Choose a finite index subgroup $\bar G$ of $\bar\Gamma$
and let $G=\Gamma\times_{\bar\Gamma}\bar G$.
Then there exists a natural map of complex orbifolds
\begin{equation}
\label{gluing-q-2}
[G\bs\bT(S)]\rTo\relax[\bar G\bs\bT(\bar S)]
\end{equation}
compatible with the continuous map~(\ref{eq:opera-T2}) 
of topological spaces.
\end{Prp}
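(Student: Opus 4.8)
The plan is to imitate the proof of the disconnected-case gluing proposition~\ref{opera-ats-q1} almost verbatim, replacing the external gluing of two surfaces by the self-gluing of one surface along a pair of boundary components. First I would fix an arbitrary marked stable curve $(X,\phi)\in\bT(S)$ and let $\bar X$ be the stable curve obtained by gluing the two chosen punctures of $X$; let $\bar x$ be the resulting new node. By Section~\ref{ss:qc} choose a quasiconformal chart $(\bar U,\bar A,\bar\beta)$ of $\bsM_{g+1,n-2}$ with $X_{\bar z}=\bar X$, $\bar A=\Aut(\bar X)$, and a quasiconformal chart $(U,A,\beta)$ of $\bsM_{g,n}$ with $X_z=X$, $A=\Aut(X)$, such that $U$ maps into $\bar U$ under the gluing map~\ref{opera-sta-cu} of moduli stacks; moreover the image of $U$ lies in the component $D_{\bar x}$ of the singular locus of $\bar U$ cut out by the new node, and $U\to D_{\bar x}$ is an open embedding. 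This produces maps $f_U:U\to\bar U$, $f_A:A\to\bar A$ fitting in a $2$-commutative square of stacks with the vertical arrows open embeddings, exactly as in~(\ref{eq:glue-charts-M}).

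Next I would pass to the covers $V\to U$ and $\bar V\to\bar U$ attached to $G$ and $\bar G$ as in~\ref{ABCD}. The nodes of $\bar X$ are those of $X$ together with $\bar x$; the corresponding curves in $\bar S$ are the images of the node-curves of $S$ under $\phi$ together with the curve obtained from the two glued boundary components. Since $G=\Gamma\times_{\bar\Gamma}\bar G$, for each node-curve $C_i$ of $X$ the minimal power $k_i$ with $D_i^{k_i}\in G$ equals the minimal power of the image Dehn twist $\bar D_i$ lying in $\bar G$; hence the ramification indices of $V\to U$ agree with the ramification indices along the corresponding components of $\bar V\to\bar U$, and the fiber product $U\times_{\bar U}\bar V$ is canonically identified with $V$. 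Choosing $f_V:V\to\bar V$ making the square with $f_U$ Cartesian, I then adjust $f_V$ by an element of $\Gamma_0$ (the free abelian group on the Dehn twists around the node-curves of $X$) so that $f_V$ becomes compatible with $\alpha:V\to G\bs\bT(S)$, $\bar\alpha:\bar V\to\bar G\bs\bT(\bar S)$ and the gluing map~(\ref{eq:opera-T2}) on points. As in~\ref{opera-ats-q1}, the two $G$-markings produced at a generic point $y\in Y\subset V$ --- the one from $\bar\alpha(f_V(y))$ and the one obtained by gluing $\alpha(y)$ --- are both consistent with the fixed marking of the special fiber, hence differ by an element of $\Gamma_0$; this element depends continuously on $y$, so it is constant, and composing $f_V$ with it yields the desired compatibility. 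This lifts $(f_U,f_A)$ to maps $f_V:V\to\bar V$, $f_H:H\to\bar H$ of orbifold charts, giving a $2$-commutative square as in~(\ref{eq:glue-GT}); the charts $(V,H,\alpha)$ form an atlas of $[G\bs\bT(S)]$, any morphism of such charts completes uniquely to a $2$-commutative square of the quotient stacks, and this data assembles, via Corollary~\ref{crl:universal-property}, into the required morphism~(\ref{gluing-q-2}).

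The main obstacle is the same subtle point as in the disconnected case: verifying that $f_V$ can be chosen compatibly with the marking maps $\alpha,\bar\alpha$ and the topological gluing~(\ref{eq:opera-T2}). The issue is that the orbifold structures on $[G\bs\bT]$ and $[\bar G\bs\bT]$ were defined by an ad hoc atlas, not a universal property, so the compatibility cannot be invoked formally; one must track the $G$-marking of each fiber explicitly and use the uniqueness-up-to-$\Gamma_0$ of consistent markings together with a connectedness argument (continuity of the discrepancy element in a discrete torsor) to pin $f_V$ down. A secondary point needing care is that the self-gluing identifies \emph{two} punctures of the same curve $X$, so one must check that the resulting new node $\bar x$ still cuts out a genuine component of the singular locus of $\bar U$ and that the relevant Dehn twist in $\bar\Gamma$ behaves as in Looijenga's analysis --- but this is covered by the general properties (QC1)--(QC6) of quasiconformal charts and by the construction of $\bar V$ in~\ref{ABCD}, so no new difficulty arises there. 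Everything else is a routine transcription of the argument of Proposition~\ref{opera-ats-q1}.
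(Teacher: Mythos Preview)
Your proposal is correct and follows exactly the paper's own approach: the paper's proof of this proposition is a brief paragraph saying to choose quasiconformal charts $(U,A,\beta)$ and $(\bar U,\bar A,\bar\beta)$ with $U$ mapping to $\bar U$, then lift $(f_U,f_A)$ to $(f_V,f_H)$ ``similarly to~\ref{opera-GT}'' and assemble over the atlas. You have simply spelled out in more detail what ``similarly'' means --- the matching of ramification indices via $G=\Gamma\times_{\bar\Gamma}\bar G$, the identification $U\times_{\bar U}\bar V\cong V$, and the $\Gamma_0$-adjustment of $f_V$ --- all of which are straightforward transcriptions of the disconnected-case argument.
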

\begin{proof}
Let $(X,\phi)\in\bT(S)$ and let $(\bar X,\bar\phi)$ be the corresponding
point in $\bT(\bar S)$.  
By~\ref{ss:qc}, there  exists a quasiconformal orbifold chart
$(\bar U,\bar A,\bar\beta)$ for
$\bsM_{g+1,n-2}$ with the exceptional curve $X_{\bar z}=\bar X$, \
$\bar A=\Aut(\bar X)$ and 
a quasiconformal chart  $(U,A,\beta)$ for  $\bsM_{g,n}$, 
with the exceptional curve $X_z=X$, $A=\Aut(X)$, 
such that $U$ is contained in the preimage of $\bar U$ under the gluing
map~(\ref{eq:opera-T2}).
This induces a pair of maps $$ f_U:U\rTo \bar U,\ f_A:A\rTo\bar A,$$
and gives  the following $2$-commutative diagram of stacks
\begin{equation}
\label{eq:glue-charts-M-2}
\begin{diagram}
[A\bs U]& \rTo^{(f_U,f_A)}& [\bar A\bs\bar U] \\
\dTo & & \dTo \\
\bsM_{g,n}& \rTo & \bsM_{g+1,n-2}
\end{diagram}
\end{equation}
where the vertical arrows are open embeddings of stacks
which are defined by~(\ref{ss:coarse}).

Similarly to~\ref{opera-GT}, the maps $f_U,\ f_A$  can be lifted 
to maps $f_V:V\to \bar V$ and $\ f_H:H\to \bar H$ defining the maps of
orbifolds 
  $$
 V\rTo\relax[H\bs V]\rTo\relax[\bar H\bs\bar V]\rTo\relax
[\bar G\bs\bT_{\bar S}].
$$

Here $(V,H,\alpha)$ is a chart of $[G\bs\bT(S)]$ and  
$(\bar V,\bar H,\bar\alpha)$ is the corresponding chart of 
$[\bar G\bs\bT(\bar S)]$. 
A morphism 
    $$ (V,H,\alpha)\rTo (V',H',\alpha')$$
between the charts defines a canonical 
$2$-morphism  connecting $V\to\relax[\bar G\bs\bT_{\bar S}]$ with  
$V'\to\relax[\bar G\bs\bT_{\bar S}]$. This defines ~(\ref{gluing-q-2}).
\end{proof}
\

\section{Augmented \TS s and admissible coverings}
\label{sec:teich-vs-adm}

Let $S$ be a compact oriented 
surface $S$ of genus $g$ with $n$ boundary 
components. Fix an unramified covering $\rho:\wt{S}\to S$ of degree $d$. 
To each marked stable curve $(X,\phi)\in\bT(S)$  a very natural
construction (described below in~\ref{adm-pointwise}) assigns an
admissible covering  $\phi_*(\rho):\wt{X}\to X$. The 
goal of this section is to show that this leads to
a continuous map   
$$v_\rho:\bT(S)\to\Adm_{g,n,d}.$$

The morphism $v_\rho:\bT(S)\to\Adm_{g,n,d}$ 
is defined as the composition
 $$ \bT(S)\rTo^{\pi_{\wt{G}}}\relax[\wt{G}\bs\bT]\rTo^{v_\rho^{\wt{G}}}\Adm_{g,n,d}$$
where $\wt{G}$ is a group defined in~\ref{ss:choice-g} below
and   $v_\rho^{\wt{G}}$ is a morphism of complex orbifolds.
Thus, using our interpretation of  $\bT$ as a projective system
of complex orbifolds, $v_\rho$ may be viewed as 
a projective system of morphisms of complex orbifolds.

The group $\wt{G}$ consists of pairs $(\wt{\gamma},\gamma)$ where 
$\gamma\in\Gamma$ and $\wt{\gamma}$ is a lifting of $\gamma$ to $\wt{S}$.
It is not a subgroup of $\Gamma$ since such lifting is not unique. 
Instead, one has a group homomorphism $\wt{G}\to\Gamma$  whose kernel is finite
and whose image $G$ has finite index in $\Gamma$. Thus,
our standard definition of the quotients $[G\bs\bT]$ and of the
canonical maps $\pi_G$ given in~\ref{chartsarecompatible}, \ref{covered-by}
does not meet  our needs; 
the quotient $[\wt{G}\bs\bT]$ and the canonical map $\pi_{\wt{G}}$
are defined in~\ref{mod-wt-G}.
The resulting orbifold is a gerbe over the quotient $[G\bs\bT]$
which is of the type we studied in Section~\ref{sec:teich}.
The orbifold charts for $[\wt{G}\bs\bT]$ have form 
$(V,\wt{H},\alpha)$ where $(V,H,\alpha)\in \cA$ is a chart for 
$[G\bs\bT]$ and $\wt{H}$ is a group endowed with a surjective
map to $H$.

The definition of the morphism $v_\rho^{\wt{G}}:[\wt{G}\bs\bT]\to\Adm_{g,n,d}$
amounts to the construction of a compatible collection 
of admissible coverings for the families of curves corresponding
to each orbifold chart of $[\wt{G}\bs\bT]$.

There is an equivariant version of the construction: if 
$\rho:\wt{S}\to S$ is an $H$-covering where $H$ is a finite group,
a continuous map $v_{\rho,H}:\bT(S)\to\Adm_{g,n}(H)$ is defined.
This is done in~\ref{ss:verho}.

The morphisms $v_\rho$ and $v_{\rho,H}$ have some important factorization
properties with respect to gluing bordered surfaces, see~\ref{sss:vrho_prop}.
The factorization properties of the maps $v_\rho$ and $v_{\rho,H}$ 
follow from the comparison of the corresponding admissible
coverings for the orbifold charts of $[\wt{G}\bs\bT]$.

The maps $v_\rho:\bT(S)\to\Adm_{g,n,d}$ are of ultimate importance in
the construction of correction classes for the definition of stringy
cohomology, see~\cite{HV}  and Section~\ref{sec:cohom}. 

\subsection{Pointwise construction}
\label{adm-pointwise}
Fix a bordered surface $S$ of genus $g$ with $n$ boundary components
and a finite covering $\rho : \wt{S} \to S$.

Let $(X,\phi:S\to X)$ be a point of $\bT(S)$.
Using the marking $\phi:S\to X$ one can
push the covering $\rho:\wt{S}\to S$ forward (see~\ref{sss:pushforward})
to get an admissible covering $\phi_*(\rho):\wt{X}\to X$.

In the case  $\rho:\wt{S}\to S$ is an $H$-covering where $H$ is a 
finite group, the map $\phi_*(\rho):\wt{X}\to X$ acquires an action of $H$
which is automatically balanced as we show in Lemma~\ref{lem:balanced} below.
 
Thus, the map $\phi_*(\rho):\wt{X}\to X$
becomes an admissible $H$-covering in the sense of Definition 4.3.1 
of~\cite{ACV}.

\subsubsection{Pushforward of $\rho$}
\label{sss:pushforward}
Here is the construction of $\phi_*(\rho)$.
Outside the nodes and the punctures of $X$
the covering  $\phi_*(\rho)$ is the pullback
of $\rho$ via $\phi^{-1}$ with the complex structure on $\wt{X}$ induced
from $X$. By passing to the normalization we get  a ramified 
covering $\beta$ of the normalization $X^\nor$ of $X$. Let $p_1$ and $p_2$ be
two points of  $X^\nor$ that correspond to a node $p$ of $X$.
The fibers of $\beta$ at $p_1$ and $p_2$ are canonically identified with the
orbits of monodromy of $\rho$ around the loop $\phi^{-1}(p)$. Thus we
obtain an admissible covering $\phi_*(\rho):\wt{X}\to X$.

Assume now that $\rho$ is an $H$-covering where $H$ is a finite group.
The group $H$ in this case acts upon the map $\phi_*(\rho):\wt{X}\to X$.
  
\begin{lem}
\label{lem:balanced}
The action of $H$ on $\phi_*(\rho):\wt{X}\to X$ is balanced.
\end{lem}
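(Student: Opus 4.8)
The plan is to reduce the assertion to a purely local computation at the nodes of $\wt X$ lying over nodes of $X$. Recall that, for an admissible $H$-covering $\pi\colon\wt X\to X$, the balancedness condition of~\cite[\S4.3.1]{ACV} is local at each such node $\tilde p$: one requires that the stabilizer $H_{\tilde p}$ preserve each of the two branches of $\wt X$ at $\tilde p$ and act on their tangent lines through mutually inverse characters. Here the non-branch-swapping hypothesis is automatic, since $\wt X/H=X$ forces any $h\in H$ fixing $\tilde p$ to fix each branch of $X$ at $p=\pi(\tilde p)$, hence each branch of $\wt X$ at $\tilde p$; so the whole content is the statement about the two characters, and it suffices to verify it one node at a time. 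Moreover the nodes of $\wt X$ over a fixed node $p$ of $X$ form a single $H$-orbit, so it is enough to treat one of them.

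Fix a node $p$ of $X$, set $C=\phi^{-1}(p)\subset S$ (a simple closed curve, by Definition~\ref{dfn:ATS}), and choose an annular neighbourhood $A$ of $C$ in $S$, cut by $C$ into two half-annuli $A_1,A_2$, so that $\phi$ restricts to orientation-preserving homeomorphisms $A_i\setminus C\xrightarrow{\ \sim\ }D_i^{*}$, where $D_1,D_2$ are the two branch discs of $X$ at $p$ and $D_i^{*}=D_i\setminus\{p\}$. By the construction of $\phi_*(\rho)$ in~\ref{sss:pushforward}, the covering $\wt X\to X$ over $D_1\cup D_2$ is obtained from $\rho|_A$ by transporting it through $\phi^{-1}$ to $D_1^{*}\sqcup D_2^{*}$ and then normalizing over $p$. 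Let $\tilde p$ be a node of $\wt X$ over $p$. Its stabilizer $H_{\tilde p}$ is the cyclic group $\langle h\rangle$, where $h\in H$ is the monodromy of $\rho$ around $C$ (for a chosen orientation of $C$ and base point), of order $r$; the branch $\wt D_i$ of $\wt X$ at $\tilde p$ lying over $D_i$ is the connected cyclic cover of $D_i$ of degree $r$, totally ramified over $p$, with a coordinate $u_i$ satisfying $u_i^{r}=(\text{coordinate on }D_i)$. A lift-the-loop computation then shows that $h$ acts on $\wt D_i$ by $u_i\mapsto\zeta_r^{\,\epsilon_i}u_i$, where $\zeta_r=e^{2\pi i/r}$ and $\epsilon_i\in(\Z/r)^{\times}$ is read off from the monodromy of $\rho$ around the image under $\phi^{-1}$ of a small positively oriented loop encircling $p$ inside $D_i^{*}$.

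Everything then hinges on one orientation fact: a small positively oriented loop around $p$ in $D_1^{*}$ and one in $D_2^{*}$, pushed into $S$ by the orientation-preserving map $\phi^{-1}$, are $C$ with \emph{opposite} orientations. This is the classical observation that the complex structures on the two branches at a node orient the linking circle oppositely — it is, in fact, precisely why the balancedness condition takes the form it does — and I would verify it by writing $A\cong S^{1}\times(-1,1)$ with $C=S^{1}\times\{0\}$ and matching the complex orientations of $D_1^{*}$ and $D_2^{*}$ against the fixed orientation of $A$ through $\phi$. Granting it, the two monodromies are inverse, so $\epsilon_2=-\epsilon_1$; normalizing $h$ so that $\epsilon_1=1$, we obtain $h\colon u_1\mapsto\zeta_r u_1,\ u_2\mapsto\zeta_r^{-1}u_2$, which is exactly the balanced local model. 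Carrying this out at every node of $\wt X$ completes the proof. I expect this orientation computation to be the only genuinely delicate step; the remainder is routine bookkeeping with cyclic covers of annuli and discs.
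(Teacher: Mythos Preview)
Your proof is correct and takes essentially the same approach as the paper's: both reduce to the local observation that the positively oriented loops in the two branch discs $D_1^{*},D_2^{*}$ correspond under $\phi^{-1}$ to the circle $C$ traversed with opposite orientations (the paper phrases this as the existence of an annular involution whose push-forward to $D_+\vee D_-$ is antiholomorphic), whence the two characters $\zeta_\pm$ are complex conjugate, i.e.\ mutually inverse. Your write-up is somewhat more explicit about the cyclic-cover bookkeeping, but the argument is the same.
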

\begin{proof}
Let $y\in\wt{X}$ be a node over $x\in X$ and let $h\in H_y$ stabilize $y$.
Let $\wt{D}_+\vee \wt{D}_-$ be a small neighborhood of $y$ consisting of a 
pair of unit disks glued at $y$ and let $D_+\vee D_-$ be the corresponding
neighborhood of $x\in X$. An element $h\in H_y$ acts on $\wt{D}_+$ and on 
$\wt{D}_-$ by multiplication by primitive $n$-th roots of unity, $\zeta_\pm$.
Balancedness condition means that $\zeta_+\zeta_-=1$. One can read out the 
values of $\zeta_\pm$ from the action of $h$ on the nearby fiber of 
$\phi_*(\rho)$ at $x_\pm\in D_\pm$. Let $C=\phi^{-1}(x)$ and $\wt{C}$ be
the component of $\rho^{-1}(C)$ corresponding to $y$. The annulus
$\phi^{-1}(D_+\vee D_-)$ in $S$ admits an involution
identifying the fibers at $x_+$ and $x_-$; the corresponding
involution identifying $D_+$ and $D_-$ is antiholomorphic. Therefore, 
$\zeta_+$ and $\zeta_-$ are complex conjugate.
\end{proof}

\subsection{Modular group and some other automorphism groups}

The classical Dehn-Nielsen-Baer theorem states that the modular group 
$\Gamma(S)$ embeds into the outer automorphism group $\Out(\pi_1(S))$.
The latter group has an especially nice interpretation in terms of the 
fundamental groupoid $\Pi(S)$. In this subsection we present 
a groupoid interpretation for the modular group $\Gamma$ and for some of its
relatives. 

\subsubsection{Fundamental groupoid and the modular group}
\label{sss:fund-gr}
Recall that for a topological space $X$ its fundamental groupoid $\Pi(X)$
has the points of $X$ as the objects, and the homotopy classes of paths 
connecting the points as the arrows.
We will be especially interested in $\Pi=\Pi(S)$
  where $S$ is a fixed  oriented surface with boundary.

For (any) groupoid $\Pi$ let $\SEQ(\Pi)$ denote the groupoid of
self-equivalences of $\Pi$ and  let $\Aut(\Pi)$ denote
the corresponding group of isomorphism classes of objects of $\SEQ(\Pi)$.

For a connected groupoid $\Pi$ the group $\Aut(\Pi)$ is nothing but $\Out(\pi)$
where $\pi$ is the automorphism group of an object of $\Pi$. Thus,
for $\Pi=\Pi(S)$ the natural homomorphism from the modular group 
$\Gamma$ to $\Aut(\Pi)$ is injective.

\subsubsection{Variations}
\label{sss:var}

More generally, for a pair of groupoids $\Pi_1,\ \Pi_2$ we denote by 
$\EQ(\Pi_1,\Pi_2)$ the groupoid of equivalences $f:\Pi_1\to\Pi_2$,
so that $\EQ(\Pi,\Pi)=\SEQ(\Pi)$. We write $\Iso(\Pi_1,\Pi_2)$
for the set of isomorphism classes of objects of $\EQ$.

For a pair of functors $j_{1,2}:\Pi_{1,2}\to\Pi$ a groupoid
$\EQ(j_1,j_2)$ has as objects pairs of equivalences, 
$$ f:\Pi_1\rTo\Pi_2,\quad g:\Pi\rTo\Pi,$$
together with an isomorphism $\theta:g\circ j_1\isom j_2\circ f$.
Similarly to the above, $\Iso(j_1,j_2)$ is the set of isomorphism
classes of objects of $\EQ(j_1,j_2)$. As a special case we get a groupoid
$\SEQ(j)$ and a group $\Aut(j)$.

\subsubsection{Variations with  coverings}
\label{sss:cov}

Let $X$ be a topological space with the fundamental groupoid $\Pi$.
A covering $\rho:\wt{X}\to X$ can be described by a functor
$\Sigma:\Pi\to\Ens$ given by $\Sigma(x)=\rho^{-1}(x)$. This is a
``basepoint-free'' version of the usual description of a covering by 
the action of the fundamental group of $X$ on a fiber.

We can define now more groupoids similarly to~\ref{sss:var}.
Thus given  
$$\Sigma_i:\Pi_i\to\Ens, \ i=1,2,$$
 one defines
$\EQ((\Pi_1;\Sigma_1),(\Pi_2;\Sigma_2))$ as the groupoid whose
objects are pairs $(f,\phi)$ where $f:\Pi_1\to\Pi_2$ is an equivalence
and $\phi:\Sigma_1\to f^*(\Sigma_2)$ is an isomorphism. Similarly,
for a pair of functors $j_{1,2}:\Pi_{1,2}\to\Pi$ and $\Sigma:\Pi\to\Ens$
one defines $\EQ(j_1,j_2;\Sigma)$ to be the groupoid whose
objects are quadruples $(f,g,\theta,\phi)$ where
$$ f:\Pi_1\rTo\Pi_2,\quad g:\Pi\rTo\Pi,\quad
\theta:g\circ j_1\isom j_2\circ f,\quad
\phi:\Sigma\isom g^*\Sigma. $$
Isomorphism classes of objects of $\EQ(j_1,j_2;\Sigma)$ are denoted
by $\Iso(j_1,j_2;\Sigma)$. The notations
$$ \EQ((\Pi_1;\Sigma_1),(\Pi_2;\Sigma_2)),\
\Iso((\Pi_1;\Sigma_1),(\Pi_2;\Sigma_2)),\
\SEQ(\Pi;\Sigma),\ \Aut(\Pi;\Sigma)$$
are self-evident.

The above defined groups and sets  are connected by a bunch of forgetful
maps which are all seen in the following commutative
diagram corresponding to a pair $j_{1,2}:\Pi_{1,2}\to \Pi$
and to a functor $\Sigma:\Pi\to\Ens$
\begin{equation}
\label{eq:seqs}
\begin{diagram}
\Iso((\Pi_1;j^*_1\Sigma),(\Pi_2;j_2^*\Sigma)) & \lTo & \Iso(j_1,j_2;\Sigma) 
& \rTo & \Aut(\Pi;\Sigma)\\
\dTo  & & \dTo & & \dTo \\
\Iso(\Pi_1,\Pi_2) & \lTo & \Iso(j_1,j_2) & \rTo & \Aut(\Pi)
\end{diagram}.
\end{equation}

Note that the right-hand side square of the diagram is Cartesian.

\subsection{Choice of the group}
\label{ss:choice-g}
In this subsection we present the group $\wt{G}$ which will appear
in the decomposition 
$$ \bT(S)\rTo^{\pi_{\wt{G}}}\relax[\wt{G}\bs\bT]\rTo^{v_\rho}\Adm_{g,n,d}.$$

The group $\wt{G}$ will be chosen as a certain subgroup of 
$\wt{\Gamma}$ which is defined as the group of pairs $(\wt{\gamma},\gamma)$ 
where $\gamma\in\Gamma$ and $\wt{\gamma}$ is a lifting of $\gamma$ to $\wt{S}$.

In the notation of~\ref{sss:cov} $\wt{\Gamma}$ is just 
the fiber product $\Gamma\times_{\Aut(\Pi)}\Aut(\Pi;\Sigma)$
where $\Pi$ is the fundamental groupoid of $S$ and $\Sigma$ is defined by
$\rho$.

Let $C$ be a circle in $S$. We denote by $\rho_C$ the pullback 
      $$\rho_C: C\times_S\wt{S}\to C,$$
and by $\rho_C^k$ the pullback of $\rho_C$ with respect to 
the $k$-sheeted covering  $C\to C$. The covering $\rho_C$ is 
determined up to isomorphism, by a monodromy operator acting on a
$d$-element set;   the covering $\rho_C^k$  corresponds to  the $k$-th
power of  this operator. 

\begin{prp}There exists a subgroup $\wt{G}$ of $\wt{\Gamma}$ satisfying the
following properties.
\begin{itemize}
\item The kernel of the map $\wt{G}\to\wt{\Gamma}\to \Gamma$ is finite.
\item The image $G$ of the map $\wt{G}\to\wt{\Gamma}\to \Gamma$ has finite 
index.
\item For any circle $C$ in $S$ with the Dehn  twist $D\in\Gamma$, if
for some $k$ $D^k\in G$, then $\rho_C^k$ is trivial.
\end{itemize}
\end{prp}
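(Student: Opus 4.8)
The plan is to build $\wt{G}$ in two stages, first controlling the Dehn twists and then controlling the kernel and index. The starting observation is that a covering $\rho_C$ of a circle $C$ is classified by a conjugacy class of a monodromy permutation $\sigma_C\in\Sigma_d$ acting on the $d$-element fiber, and $\rho_C^k$ is classified by $\sigma_C^k$; hence $\rho_C^k$ is trivial precisely when $\sigma_C^k=1$, i.e.\ when $k$ is divisible by the order of $\sigma_C$. Since there are only finitely many conjugacy classes of Dehn twists in $\Gamma$ (up to conjugation every nonseparating twist is equivalent, and there are finitely many topological types of separating curves), the orders of the relevant monodromy permutations are bounded, so there is a single integer $N=N(\rho)$ such that $\rho_C^N$ is trivial for \emph{every} circle $C$ in $S$.

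Next I would invoke Proposition~\ref{asloo}: for this $N$ there is a finite index subgroup $\Gamma_{(N)}\subset\Gamma$ whose intersection with any group generated by independent Dehn twists $D_1,\dots,D_m$ is generated by powers $D_i^{k_i}$ with each $k_i$ divisible by $N$. In particular, if a Dehn twist $D$ around a circle $C$ satisfies $D^k\in\Gamma_{(N)}$, then $N\mid k$ (apply the statement to the single twist $D$), so $\sigma_C^k=1$ and $\rho_C^k$ is trivial. Thus the third bullet holds with $G=\Gamma_{(N)}$, and more generally for any $G\subseteq\Gamma_{(N)}$. It remains to upgrade $\Gamma_{(N)}\subset\Gamma$ to a subgroup $\wt{G}\subset\wt{\Gamma}$ lying over it with finite kernel and finite-index image.

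For this I would look at the homomorphism $p:\wt{\Gamma}\to\Gamma$, $(\wt\gamma,\gamma)\mapsto\gamma$. Its kernel consists of pairs $(\wt\gamma,\id_S)$ where $\wt\gamma$ is a deck-type lift of the identity; this kernel injects into $\Aut(\Pi;\Sigma)$ over $\id\in\Aut(\Pi)$, and using the Cartesian square on the right of diagram~(\ref{eq:seqs}) one identifies $\Ker(p)$ with the (finite) group of automorphisms of the functor $\Sigma:\Pi\to\Ens$ — equivalently, the centralizer of the monodromy image in $\Sigma_d$ up to the appropriate quotient — which is finite because $\Sigma_d$ is finite. The image of $p$ has finite index in $\Gamma$: indeed the $\Gamma$-orbit of the isomorphism class of $\rho$ (equivalently of the conjugacy class of the monodromy homomorphism $\pi_1(S)\to\Sigma_d$) is finite, since there are only finitely many homomorphisms $\pi_1(S)\to\Sigma_d$ up to the relevant equivalence; the stabilizer of $[\rho]$ in $\Gamma$ is exactly $p(\wt\Gamma)$. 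Now set $\wt{G}=p^{-1}(\Gamma_{(N)})\cap p^{-1}(p(\wt\Gamma))$, or more simply $\wt{G}=p^{-1}(\Gamma_{(N)}\cap p(\wt\Gamma))$. Then $\Ker(\wt{G}\to\Gamma)=\Ker(p)$ is finite; the image $G=\Gamma_{(N)}\cap p(\wt\Gamma)$ has finite index in $\Gamma$ as an intersection of two finite-index subgroups; and since $G\subseteq\Gamma_{(N)}$, the third property follows from the discussion above.

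The main obstacle I anticipate is the bookkeeping in the last step: making precise the claim that $\Ker(p)$ is finite and that $p(\wt\Gamma)$ has finite index, which requires being careful about the "basepoint-free'' covering formalism of~\ref{sss:cov} and the precise meaning of a lift $\wt\gamma$ of $\gamma$ (lifts differ by deck transformations only when $\rho$ is Galois, so in general one must phrase everything in terms of the groupoid $\EQ(j_1,j_2;\Sigma)$ rather than deck groups). Once the right groupoid-theoretic statements are set up, each finiteness is an immediate consequence of the finiteness of $\Sigma_d$ and of the set of monodromy homomorphisms, but writing it cleanly is where the work lies; the Dehn-twist part is essentially a direct citation of Proposition~\ref{asloo} together with the elementary remark about orders of monodromy permutations.
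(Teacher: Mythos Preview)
Your argument is correct and follows essentially the same route as the paper: bound the orders of the monodromy permutations by a single integer $N$, invoke Proposition~\ref{asloo} to obtain a finite-index subgroup in which any power of a Dehn twist forces $N\mid k$, and then pull back to $\wt\Gamma$ via $\wt G=p^{-1}(G)$ (the paper writes this as $G\times_\Gamma\wt\Gamma$). Your identification of $\Ker(p)$ with the automorphism group of the functor $\Sigma$ is exactly the paper's $\Aut(\wt S/S)$, and your stabilizer argument for the finite index of $p(\wt\Gamma)$ is the same as the paper's. One incidental remark: your detour through ``finitely many conjugacy classes of Dehn twists'' is unnecessary for bounding $N$, since every $\sigma_C$ lies in the finite group $\Sigma_d$ and hence has order dividing $d!$; the paper instead argues via finitely many $\bar\Gamma$-orbits of circles, but either way the existence of $N$ is immediate.
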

\begin{proof}
The kernel of the map $\wt{\Gamma}\to\Gamma$ identifies with 
$\Aut(\wt{S}/S)$; it is, therefore, finite. Thus, the first property
of $\wt{G}$ is automatically fulfilled for any subgroup $\wt{G}$ of 
$\wt{\Gamma}$.
Let us show  that the image $\bar\Gamma$ of the map $\wt{\Gamma}\to\Gamma$
has finite index in $\Gamma$.

The covering $\rho:\wt{S}\to S$ is uniquely defined by the action of the 
fundamental group $\pi_1(S,s)$ at a point $s\in S$ on the finite set 
$\Sigma=\rho^{-1}(s)$.
Since $\pi_1(S,s)$ is finitely generated, there are finite number of
isomorphism classes of such coverings.
An element $g\in\Gamma$ belongs to $\bar\Gamma$ if and only if the inverse image 
$g^*(\wt{S})$ is isomorphic to $\wt{S}$. 
Thus, $\Gamma$ acts on a finite set (the set of
isomorphism classes of  coverings of degree $d$)
and $\bar\Gamma$ is the stabilizer of one of its elements.

We will now prove that there is a finite index subgroup $\wt{G}$ of 
$\wt{\Gamma}$ satisfying the third property. Then the second property will
be automatically fulfilled for $\wt{G}$.

The group $\Gamma$ has a finite number of orbits on the set of 
(free homotopy classes of) non-trivial circles in $S$. 
Since $\bar\Gamma$ has finite index in $\Gamma$, it has as well
a finite number of orbits. This implies that there exists an integer $K$
such that for each non-trivial circle $C$ 
one has $\rho_C^K=\id$. By Proposition~\ref{asloo} one can choose
a finite index subgroup $G$ in $\bar\Gamma$ such that
for each non-trivial circle $C$ in $S$ the corresponding Dehn twist
$D\in\Gamma$ satisfies the condition
$$ D^k\in G\Longrightarrow k\textrm{ is divisible by }K.$$

We can now define $\wt{G}=G\times_\Gamma\wt{\Gamma}$. Clearly,
the map $\wt{G}\to G$ is surjective.

\end{proof}

\subsection{The quotient $[\wt{G}\bs\bT]$.}
\label{mod-wt-G}
In this subsection we construct an orbifold atlas for the quotient
of $\bT$ modulo the group $\wt{G}$. The orbifold so defined is endowed
with a canonical projection $\pi_{\wt{G}}:\bT\to\relax[\wt{G}\bs\bT]$. 

Recall that our construction of the quotient $[G\bs\bT]$ described 
in~Section~\ref{sec:teich} is valid only for finite index subgroups of 
$\Gamma$. We lack a general 
construction of the quotient modulo a group $\wt{G}$ acting on $\bT$
via $f:\wt{G}\to\Gamma$ such that $\Ker(f)$ and $[\Gamma:\Image(f)]$
are finite. 
Our construction is specifically tailored for the groups $\wt{G}$ described 
in~\ref{ss:choice-g}.

The orbifold atlas for the quotient $[\wt{G}\bs\bT]$
is a slight modification of the atlas for $[G\bs\bT]$ where $G$ is the image
of $\wt{G}$ in $\Gamma$. For each orbifold
chart $(V,H,\alpha)\in\cA$ we construct a group epimorphism $\wt{H}\to H$
which will give rise to a chart $(V,\wt{H},\alpha)$ for the quotient
modulo $\wt{G}$. Here is how to get $\wt{H}$.

\subsubsection{Construction of the chart $(V,\wt{H},\alpha)$}
\label{mod-wt-G-chart}
Recall~\ref{another-H} that the group $H$ of symmetries of an orbifold
chart $(V,H,\alpha)$ appears as the quotient
\begin{equation}
\label{eq:h=}
 H=A_{Q,G}/\Gamma'_0
\end{equation}
where $A_{Q,G}=A_Q\times_\Gamma G$ and $\Gamma'_0=\langle D_1^{k_1},
\ldots,D_r^{k_r}\rangle$ is generated by appropriate powers of the Dehn
twists $D_i$ around the circles $C_i=\phi^{-1}(x_i)$ which are the
preimages in $S$ of the nodes of $X_z$. Define
\begin{equation}
A_{Q,\wt{G}}=A_Q\times_\Gamma\wt{G}.
\end{equation}

One has a natural projection $A_{Q,\wt{G}}\to A_{Q,G}$. We claim that
the subgroup $\Gamma'_0$ of $A_{Q,G}$ canonically lifts to $A_{Q,\wt{G}}$.

Let $\wt{C}_{ij}$ be the components of $\rho^{-1}(C_i)$ and let 
$d_{ij}$ denote the degree of $\wt{C}_{ij}$ over $C_i$. By the choice of 
$\wt{G}$, 
$k_i$ is divisible by all $d_{ij}$. Therefore, $D_i^{k_i}$ can be lifted to 
$\prod_jD_{ij}^{\frac{k_i}{d_{ij}}}$, where $D_{ij}$ denotes the Dehn twist
around $\wt{C}_{ij}$.

\

The image of $\Gamma'_0$ in $A_{Q,\wt{G}}$ will be denoted $\wt\Gamma'_0$.

Define now $\wt{H}=A_{Q,\wt{G}}/\wt\Gamma'_0$.

\
The formula~(\ref{eq:h=}) immediately gives
a canonical surjection $\wt H\to H$ with the
kernel isomorphic to $\Ker(\wt\Gamma\to\Gamma)=\Aut(\wt{S}/S)$.

The group $\wt{H}$ acting on $V$ via $H$, we have got a (highly non-effective)
orbifold chart $(V,\wt{H},\alpha)$ of $G\bs\bT=\wt{G}\bs\bT$.

Recall that $V$ contains an open dense $H$-equivariant subset 
$Y=\Gamma'_0\bs Q$. The group $\wt{H}$ acts on $Y$ via $H$.

\begin{Lem} The map $\alpha:Y\to G\bs\cT$ defines an orbifold chart
$ (Y,\wt{H},\alpha)$ for the quotient $[\wt{G}\bs\cT]$.
\end{Lem}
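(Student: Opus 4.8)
The statement to prove is that $\alpha:Y\to G\bs\cT$ defines an orbifold chart $(Y,\wt{H},\alpha)$ for the quotient $[\wt{G}\bs\cT]$, where $\wt{H}=A_{Q,\wt{G}}/\wt{\Gamma}'_0$ acts on $Y=\Gamma'_0\bs Q$ via the surjection $\wt{H}\to H$ constructed in~\ref{mod-wt-G-chart}. By Definition~\ref{dfn:satake-orbi} (and the discussion of~\ref{another-H}) this amounts to two things: first, that $(Y,\wt{H})$ is an abstract orbifold chart, i.e.\ the induced map $[\wt{H}\bs Y]\to[\wt{G}\bs\cT]$ is an open embedding; and second, that the continuous map $\pi:Y\to G\bs\cT$ identifies $Y/\wt{H}$ with an open subset of the coarse space. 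Since $\wt{H}$ acts on $Y$ through $H$, we have $Y/\wt{H}=Y/H$, and the second point is already known: it is exactly the content of the fact that $(Y,H,\alpha)$ (more precisely its smooth-locus version) is an orbifold chart for $[G\bs\cT]$, which was established in Section~\ref{ss:gbt-charts} via~(\ref{eq:open-embedding-hy}) and Theorem~\ref{th:eq}. So the real work is the first point, the open-embedding statement for the \emph{non-effective} enlargement $\wt{H}$.

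\textbf{Key steps.} First I would recall from~\ref{another-H}, equation~(\ref{eq:open-embedding-hy}), the chain of identifications
$$[H\bs Y]=[A_{Q,G}\bs Q]=[A_Z\bs Z]\rTo[G\bs\cT],$$
the last arrow being an open embedding. The strategy is to produce the analogous chain one level up: I claim
$$[\wt{H}\bs Y]=[A_{Q,\wt{G}}\bs Q].$$
This follows formally: $\wt{H}=A_{Q,\wt{G}}/\wt{\Gamma}'_0$, the subgroup $\wt{\Gamma}'_0$ maps isomorphically onto $\Gamma'_0$ under $A_{Q,\wt{G}}\to A_{Q,G}\to\Gamma_0$ (it is \emph{defined} as the lift of $\Gamma'_0$, and the lift is unique because the Dehn twists $D_{ij}$ around the components $\wt{C}_{ij}$ generate a free abelian group), and $\wt{\Gamma}'_0$ acts freely on $Q$ (since already $\Gamma'_0\subset\Gamma_0=\pi_1(U_0)$ acts freely on its universal cover $Q$, and $\wt{\Gamma}'_0\to\Gamma'_0$ is injective). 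Hence the quotient groupoid $[A_{Q,\wt{G}}\bs Q]$ is equivalent to $[(A_{Q,\wt{G}}/\wt{\Gamma}'_0)\bs(\wt{\Gamma}'_0\bs Q)]=[\wt{H}\bs Y]$. Next, the map $Q\to\cT$ induces $[A_{Q,\wt{G}}\bs Q]\to[\wt{G}\bs\cT]$ (this is where $A_{Q,\wt{G}}=A_Q\times_\Gamma\wt{G}$ enters: the $A_Q$-action on $Q$ covers the $A$-action on $U_0$ compatibly with $Q\to\cT$ and the homomorphism $\iota:A_Q\to\Gamma$, and pulling back along $\wt{G}\to\Gamma$ gives an action factoring through $\wt{G}$). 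To see this is an open embedding, I would use that $[\wt{G}\bs\cT]\to[G\bs\cT]$ is a gerbe (indeed $\wt{G}\to G$ is a surjection of groups acting compatibly, so this is the non-split gerbe example from~\ref{sss:moduli}), base-change the known open embedding $[H\bs Y]\to[G\bs\cT]$ along this gerbe, and observe that the base change is precisely $[\wt{H}\bs Y]\to[\wt{G}\bs\cT]$ — because forming $\wt{H}$ from $H$ is exactly this base change at the level of the structure groups, $\wt{H}=H\times_G\wt{G}$ in the appropriate sense (via $\wt{H}=A_{Q,\wt{G}}/\wt{\Gamma}'_0$, $H=A_{Q,G}/\Gamma'_0$, and $A_{Q,\wt{G}}=A_{Q,G}\times_G\wt{G}$). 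Since open embeddings are stable under base change (locality of open embeddings, Section~\ref{sss:properties-rep}), the result follows.

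\textbf{Main obstacle.} The essential point requiring care is verifying that $\wt{H}\to H$ really is the base change of $\wt{G}\to G$ in the relevant sense, i.e.\ that the diagram
$$\begin{diagram}
[\wt{H}\bs Y] & \rTo & [\wt{G}\bs\cT]\\
\dTo & & \dTo\\
[H\bs Y] & \rTo & [G\bs\cT]
\end{diagram}$$
is Cartesian. Granting the two vertical maps are gerbes bound by $\Aut(\wt{S}/S)$ (for the right one this is the construction of $[\wt{G}\bs\bT]$; for the left one it follows from $\Ker(\wt{H}\to H)\cong\Aut(\wt{S}/S)$ noted just before the Lemma), one checks Cartesianness by comparing fibers over a point, where it reduces to the group-theoretic identity $\wt{H}=H\times_G\wt{G}$, equivalently $A_{Q,\wt{G}}=A_{Q,G}\times_G\wt{G}$ modulo the matched subgroups $\wt{\Gamma}'_0$ and $\Gamma'_0$; this in turn is immediate from $A_{Q,\wt{G}}=A_Q\times_\Gamma\wt{G}$, $A_{Q,G}=A_Q\times_\Gamma G$, and $\wt{G}=G\times_\Gamma\wt{\Gamma}$. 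This is bookkeeping rather than a genuine difficulty, but it is the place where one must be scrupulous about which fiber products are being taken. Once this is in hand, the openness of the coarse-space map is inherited verbatim from the $(Y,H,\alpha)$ case, and the Lemma is proved.
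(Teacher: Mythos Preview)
Your argument is correct, but it takes a different base change from the paper's. Both proofs start from the known open embedding $[H\bs Y]\to[G\bs\cT]$ of~(\ref{eq:open-embedding-hy}). You pull this back along the gerbe $[\wt{G}\bs\cT]\to[G\bs\cT]$ and argue that the resulting Cartesian square has $[\wt{H}\bs Y]$ in the upper-left corner; the paper instead pulls $[\wt{H}\bs Y]\to[\wt{G}\bs\cT]$ back along the \'etale atlas $\cT\to[\wt{G}\bs\cT]$, obtaining $[A_{Q,\wt{G}}\bs(\wt{G}\times Q)]\to\cT$, and observes that this map is literally equivalent to $[A_{Q,G}\bs(G\times Q)]\to\cT$ (the base change of the known open embedding along $\cT\to[G\bs\cT]$), because the kernel $\Aut(\wt{S}/S)$ of $A_{Q,\wt{G}}\to A_{Q,G}$ acts freely on the $\wt{G}$-factor. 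The paper's route is more hands-on: it lands in representable territory ($\cT$ is a manifold) where the comparison is a one-line freeness check. Your route is more structural and the key identity $A_{Q,\wt{G}}=A_{Q,G}\times_G\wt{G}$ does yield Cartesianness of the square once you compute the 2-fiber product of the groupoid presentations; just note that the phrasing ``$\wt{H}=H\times_G\wt{G}$'' is not literally meaningful (there is no map $H\to G$, since $\Gamma'_0$ does not die in $G$), and ``comparing fibers over a point'' is not by itself a proof of Cartesianness for stacks---the honest verification is the groupoid fiber-product computation you gesture at, which indeed reduces to $A_{Q,\wt{G}}=A_{Q,G}\times_G\wt{G}$.
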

\begin{proof}We have to check that the map 
$\alpha:[\wt{H}\bs Y]\to\relax[\wt{G}\bs\cT]$
is an open embedding. Making the base change 
with respect to the map $\cT\to\relax[\wt{G}\bs\cT]$, we get the map
\begin{equation}
\label{eq:open-embedding-hty}
             f:  [A_{Q,\wt{G}}\bs \wt{G}\times Q] \rTo\cT,
\end{equation}
where the group $A_{Q,\wt{G}}$ acts on the product $\wt{G}\times Q$
by 
$a(g,q)=(ga^{-1},aq)$.
\footnote{Here 
we identify an element $a\in A_{Q,\wt{G}}$ with its image in $\wt{G}$.}
We have to check that~(\ref{eq:open-embedding-hty}) is an open embedding.
Since the map $[H\bs Y]\to\relax[G\bs\cT]$
(see~(\ref{eq:open-embedding-hy}) ) is an open embedding,
the base change with respect to
$\cT\to[G\bs\cT]$ gives an open embedding
$$[A_{Q,G}\bs G\times Q] \rTo\cT$$
which is equivalent to the map $f$ in~(\ref{eq:open-embedding-hty}).
\end{proof}

We will show now how to organize the charts $(\wt{H},V,\alpha)$ into an atlas.

\subsubsection{Construction of  an atlas for $[\wt{G}\bs\bT]$}
The atlas category $\wt{\cA}$ for $[\wt{G}\bs\bT]$ is defined very 
similarly to the definition of $\cA$, see~\ref{chartsarecompatible}.

The category $\wt{\cA}$ is a full subcategory of the chart category 
corresponding to the orbifold $[\wt{G}\bs\cT]$ via~\ref{orb-to-satake}.
Its objects are the triples $(Y,\wt{H},\hat\alpha)$
coming from the charts $(V,\wt{H},\alpha)$.\footnote{So these are basically
  the same objects as in $\cA$}

Note that, similarly to~~\ref{chartsarecompatible}, every object in $\wt{\cA}$
defines canonically a commutative diagram

\begin{equation}
\label{wtUYZ}
\begin{diagram}
Y    & \rEQ    &  Y  &  \rTo   & U \\
\dTo^\alpha & & \dTo & & \dTo^\beta \\
[\wt{G}\bs\cT] & \rTo &  [G\bs\cT] &\rTo &\bsM
\end{diagram},
\end{equation}
giving rise to the functors $\wt{\cA}\to\cA\to\cQ$.

The functor $c:\wt{\cA}\to
\Charts/(\wt{G}\bs\bT)$
assigns a chart $(V,\wt{H},\alpha)$ to a triple $(Y,\wt{H},\hat\alpha)$.

The required collection of isomorphisms 
$\iota:\Aut(a)\to\wt{H}(a),\ a\in\wt{\cA}$,
comes from the construction of $\wt{\cA}$ as a full subcategory
of the chart category for $[\wt{G}\bs\cT]$.

Verification of the axioms of Definition~\ref{satake-orbifold-atlas}
is immediate.

Thus, we  proved the following result.
\begin{Prp}
The category $\wt{\cA}$ defined above, together with
 the  charts $(V,\wt{H},\alpha)$, gives an orbifold 
    atlas for the quotient  $\wt{G}\bs\bT$.
 The realization of the atlas denoted as $[\wt{G}\bs\bT]$
contains the quotient $[\wt{G}\bs\cT]$ as an open dense suborbifold. 
\end{Prp}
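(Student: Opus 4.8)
The plan is to run, almost verbatim, the construction of the atlas $\cA$ for $[G\bs\bT]$ carried out in~\ref{chartsarecompatible}, keeping track of the single new feature: the symmetry group $\wt H$ of each chart $(V,\wt H,\alpha)$ acts on $V$ only through its effective quotient $H$, so the charts are highly non-effective and the resulting orbifold is a gerbe over $[G\bs\bT]$. First I would check that $\wt\cA$ is a chart category in the sense of Definition~\ref{chart-cat}. By construction $\wt\cA$ is a full subcategory of the chart category attached by Proposition~\ref{orb-to-satake} to the orbifold $[\wt G\bs\cT]$ (which is an orbifold because $\wt G$ acts on the manifold $\cT$ discontinuously through $\gamma\colon\wt G\to\Gamma$, so Example~\ref{transformation-g} applies), and a full subcategory of a chart category is again a chart category: endomorphisms, $\Hom$-sets and $\Aut$-actions are all inherited, so endomorphisms stay invertible and $\Hom$-sets stay torsors. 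The isomorphisms $\iota\colon\Aut(a)\to\wt H(a)$ are provided by Proposition~\ref{orb-to-satake}, which identifies $\Aut(a)$ with the full group $\wt H$ of a triple $a=(Y,\wt H,\hat\alpha)$; their compatibility with the inner automorphisms of $c(a)=(V,\wt H,\alpha)$ follows from the identification $[\wt H\bs Y]=[A_{Q,\wt G}\bs Q]$ of~\ref{mod-wt-G-chart}, the open embedding~(\ref{eq:open-embedding-hty}), and the commuting square characterizing $\iota$ in~\ref{another-H}.

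Next I would verify that $c\colon\wt\cA\to\Charts/(\wt G\bs\bT)$ is a well-defined functor into orbifold charts. Each $(V,\wt H,\alpha)$ is a legitimate orbifold chart of $\wt G\bs\bT$ because $\wt H$ is finite, $V/\wt H=V/H$, and $(V,H,\alpha)$ is already known from~\ref{chartsarecompatible} to be a chart of $G\bs\bT=\wt G\bs\bT$ with the continuous projection $\alpha$ constructed in~\ref{mod-wt-G-chart}. On morphisms: an arrow of $\wt\cA$ between $(Y_1,\wt H_1,\hat\alpha_1)$ and $(Y_2,\wt H_2,\hat\alpha_2)$ lifts to a map $Y_1\to Y_2$, hence to a map $V_1\to V_2$ of the normalizations of $U_1,U_2$ in the function fields of $Y_1,Y_2$ exactly as in~\ref{chartsarecompatible}; the homomorphism $\wt H_1\to\wt H_2$ is forced by the chart-category structure, and compatibility of $V_1\to V_2$ with the projections $\alpha_i$ to $\wt G\bs\bT$ is checked on the dense open $Y_1$, where it is part of the construction. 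Property~\ref{satake-orbifold-atlas}(i) (the images cover $\wt G\bs\bT$) is inherited unchanged from the same property for $\cA$, since the underlying maps $\alpha$ and the space $\wt G\bs\bT=G\bs\bT$ are the same. Property~\ref{satake-orbifold-atlas}(ii) is obtained by rerunning the last argument of~\ref{chartsarecompatible}: one descends to $\bM$, picks a common quasiconformal refinement $(U,A,\beta)\in\cQ$, notes that the ramification indices $k_1,\dots,k_r$ and the auxiliary coverings $\wt C_{ij}$ and their lifts $D_i^{k_i}\mapsto\prod_j D_{ij}^{k_i/d_{ij}}$ behave functorially under the refinement, and pulls the morphism of $\cQ$-charts up to a morphism of $\wt\cA$-charts. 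This shows $(\wt\cA,c)$ is an orbifold atlas of $\wt G\bs\bT$, and $[\wt G\bs\bT]$ is its realization in the sense of Theorem~\ref{satake-groupoid}, whose coarse space is $\wt G\bs\bT$.

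For the final clause I would argue as follows. The open set $\wt G\bs\cT$ is dense in $\wt G\bs\bT$ because $\cT$ is open and dense in $\bT$ and the quotient maps are open. For each chart $\alpha\colon V\to\wt G\bs\bT$ the preimage $\alpha^{-1}(\wt G\bs\cT)$ is the smooth locus $Y=\kappa^{-1}(U_0)\subset V$, so the induced atlas of~\ref{induced-atlas} on $\wt G\bs\cT$ has charts $(Y,\wt H,\alpha|_Y)$, which are precisely the objects of $\wt\cA$ viewed as a full subcategory of the chart category of the orbifold $[\wt G\bs\cT]$ via the Lemma of~\ref{mod-wt-G}. These charts still cover $\wt G\bs\cT$, so $\wt\cA$ is a sub-atlas of the canonical atlas of $[\wt G\bs\cT]$, and Proposition~\ref{prp:independence} identifies the realization of this sub-atlas with $[\wt G\bs\cT]$. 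The canonical open embedding of realizations attached in~\ref{induced-atlas} to an induced atlas, together with the correspondence between open substacks and open subsets of the coarse space recalled in~\ref{ss:coarse}, then exhibits $[\wt G\bs\cT]$ as the open suborbifold of $[\wt G\bs\bT]$ supported on the dense open set $\wt G\bs\cT$.

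I expect the genuine obstacle to be the refinement property~\ref{satake-orbifold-atlas}(ii): although the authors call it "immediate", it requires transporting the entire package of data (quasiconformal chart, chosen marking, ramification indices $k_i$, and the specific lifts $D_i^{k_i}\mapsto\prod_j D_{ij}^{k_i/d_{ij}}$ defining $\wt\Gamma'_0$) coherently along a refinement of charts, and then checking that the non-effective homomorphisms $\wt H_1\to\wt H_2$ assemble compatibly — in effect one must see that the $\Aut(\wt S/S)$-gerbe structure of $[\wt G\bs\bT]$ over $[G\bs\bT]$ is coherent across the whole atlas. Everything else reduces mechanically to the already established facts about $\cA$ and to the local computations of~\ref{mod-wt-G-chart}--\ref{mod-wt-G}.
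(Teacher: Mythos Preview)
Your proposal is correct and matches the paper's approach; the paper itself treats the Proposition as immediate from the preceding construction (it writes only ``Verification of the axioms of Definition~\ref{satake-orbifold-atlas} is immediate'' followed by \qed), and your write-up is simply a careful unpacking of that word ``immediate'' along exactly the lines the paper intends. Your worry about property~(ii) is overstated: because the objects of $\wt\cA$ are in canonical bijection with those of $\cA$ and the extra data $\wt H=A_{Q,\wt G}/\wt\Gamma'_0$ together with the canonical lift $\Gamma'_0\to\wt\Gamma'_0$ are functorially determined by the $\cA$-data, the refinement already produced in~\ref{chartsarecompatible} lifts automatically, which is why the paper does not pause over it.
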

\qed

\subsubsection{The canonical projection $\pi_{\wt{G}}:\bT\to\relax[\wt{G}\bs\bT]$}
\label{sss:pi-wtg}

In Section~\ref{sec:teich} the canonical projection $\pi_G:\bT\to[G\bs\bT]$ was
constructed in the case $G$ is a finite index subgroup of $\Gamma$. The idea
was to find a smaller group $H$ in $G$ so that the quotient $[H\bs\bT]$ is a
complex manifold, and to present the quotient map as the composition
$$ \bT\rTo\relax [H\bs\bT]\rTo\relax [G\bs\bT].$$
This approach will not work for the quotient modulo $\wt{G}$ since the action of
$\wt{G}$ on $\bT$ is not effective. 
To construct the canonical projection 
\begin{equation}
  \label{eq:canproj}
\pi_{\wt{G}}:\bT\to\relax[\wt{G}\bs\bT]
  \end{equation}
we will use the already constructed map $\pi_G:\bT\to [G\bs\bT]$.

The canonical map $[\wt{G}\bs\bT]\to\relax[G\bs\bT]$ is a gerbe.
Its base change with respect to the map $\pi_G:\bT\to\relax[G\bs\bT]$
gives a gerbe $\wt{\bT}\to\bT$. Since $\bT$ is contractible, the gerbe
$\wt{\bT}\to\bT$ is trivial, i.e. is isomorphic to the gerbe
$\Aut(\wt{S}/S)\times\bT\pile{\rTo \\ \rTo }\bT$.

Fortunately, we can point out to a canonical trivialization of this gerbe.
In fact, the base change of this gerbe with respect to the embedding
$\cT\to\bT$ gives a gerbe $\wt{\cT}\to\cT$ which is 
canonically trivialized by the fact that
$$\wt{\cT}=\cT\times_{[G\bs\cT]}[\wt{G}\bs\cT].$$
This trivialization defines a unique trivialization of the
gerbe $\wt{\bT}\to\bT$. In particular, we have a canonical splitting
$s:\bT\to\wt{\bT}$ (``zero section'').

Now we can define the map $\pi_{\wt{G}}$ as the composition
$$\bT\rTo^s \wt{\bT}\rTo\relax[\wt{G}\bs\bT].$$

\subsection{Construction of the map $v_\rho:\bT\protect\to\Adm_{g,n,d}$}
\label{ss:verho}

The canonical map $[\wt{G}\bs\bT]\to[G\bs\bT]$ gives rise to
a family of marked nodal curves over $[\wt{G}\bs\bT]$.
In order to obtain a morphism of orbifolds
\begin{equation}\label{eq:vrhoG} 
 v_\rho^{\wt{G}}:[\wt{G}\bs\bT]\rTo\Adm_{g,n,d},
\end{equation}
we will construct below an admissible covering of this family
corresponding to   $\rho:\wt{S}\to S $. 
Then, composing  $v_\rho^{\wt{G}}$
 with the canonical projection $\pi_{\wt{G}}$ constructed in~\ref{sss:pi-wtg},  
we will finally produce the  desired map $$v_\rho:\bT\rTo\Adm_{g,n,d}.$$

\subsubsection{Admissible covering of $\cX_V\protect\to V$}
\label{sss:adm-v}

Let $(V,\wt{H},\alpha)\in \wt{\cA}$ be the orbifold chart for $[\wt{G}\bs\bT]$
corresponding to a chart $(U,A,\beta)\in\cQ$ and to a marking
$\phi$ of the fiber $X_z$ of the universal family $\pi:\cX\to U$
at $z\in U$. 

We intend to construct an admissible 
covering of the induced family 
$\pi_V:\cX_V\to V$
corresponding to $\rho:\wt{S}\to S$.

Choose a contraction $c:\cX\to X_z$ (the result will not depend on the 
choice). This induces a contraction $c:\cX_V\to X_z$.

Let $x_1,\ldots,x_r$ be the nodes of $X_z$. Choose small neighborhoods
$\cO_i$ of $x_i$ in $X_z$ as in (QC3).
The manifold  $\cX_V$ is covered by the following open subsets.
\begin{itemize}
\item[1.] 
$\cY=c^{-1}(X_z-\{x_1,\ldots,x_r\}).$
\item[2.]
$
\cP_i =c^{-1}(\cO_i).
$
\end{itemize}
The sets $\cP_i$ are disjoint; one has
$$ \cY\cap \cP_i=c^{-1}(\cO_i-\{x_i\}).$$

A fiber of $\cP_i$ at $x$ looks as follows: if $x$ does not belong to the
$i$-th component of the singular locus,
it is a small annulus 
around the circle $c_x^{-1}(x_i)$. Otherwise it is a standard neighborhood 
of the node $zw=0$.

An admissible covering of $\cX_V$ is uniquely described by admissible 
coverings on $\cY$ and on $\cP_i$ together with isomorphisms on the 
intersections $\cY\cap\cP_i$.

$\cY$ is a family of (non-compact) Riemann surfaces on $V$. Admissible
covering of $\cY$ is the same as a unramified covering; it is defined
uniquely 
up to unique isomorphism by a unramified covering of
$X_z-\{x_1,\ldots,x_r\}$. 

In particular, the marking $\phi:S\to X_z$ uniquely defines a unramified
covering on $\cY$.

We denote $\cC_\cY\to\cY\to V$ the resulting admissible covering.

The intersection of each $\cP_i$ with $\cY$ is (homotopically) 
a union of two annuli. The induced unramified coverings on these
annuli are determined by the restriction of $\rho:\wt{S}\to S$ 
to the circle $C_i=\phi^{-1}(x_i)\subset S$. 
The latter is a  degree-$d$ covering $\wt{C}_i=\coprod_j \wt{C}_{ij} \to C_i$,
see~\ref{mod-wt-G-chart}.

Note that the constructed covering $\cC_\cY\to\cY$ is endowed
with a canonical isomorphism of the restriction 
$\cC_\cY|_{\cY\cap\cP_i}\to \cY\cap\cP_i$ with the one defined by $\wt{C}_i$.

\begin{Prp}
The covering $\cC_\cY\to\cY\to V$ extends uniquely up to unique isomorphism
 to an admissible covering $\cC_V\to\cX_V\to V$.
\end{Prp}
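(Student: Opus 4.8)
The plan is to construct the covering $\cC_V\to\cX_V$ by gluing the already-constructed covering $\cC_\cY\to\cY$ with explicit model coverings over each $\cP_i$, using the compatibility data recorded just before the statement. First I would recall from (QC3) that the map $(c^t)^{-1}(\cO_i)\to U^t$ is analytically isomorphic to the standard projection $P_i\to D^m$, where $P_i=\{(u,v,t_1,\ldots,t_m)\,|\,uv=t_i\}$; pulling back along $\kappa:V\to U$ (which ramifies to order $k_i$ over the $i$-th component of the singular locus) replaces $P_i$ by the normalization of $P_i\times_{D^m}V$, which is again of the same shape, say $\wt P_i=\{(u,v,\mathbf{s})\,|\,uv=s_i\}$ where $s_i^{k_i}$ is (up to unit) the $i$-th coordinate pulled back from $U$. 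Over $\wt P_i$ I would build the admissible covering as a disjoint union indexed by the components $\wt C_{ij}$ of $\rho^{-1}(C_i)$: the component corresponding to $\wt C_{ij}$, of degree $d_{ij}$ over $C_i$, is modelled on $\{(u',v',\mathbf s)\,|\,u'v'=s_i^{k_i/d_{ij}}\}\to\wt P_i$ via $(u',v')\mapsto((u')^{d_{ij}},(v')^{d_{ij}})$ — this is exactly the local picture of an admissible covering at a node (a node mapping to a node with both branches ramified to the same order), and the hypothesis that $k_i$ is divisible by every $d_{ij}$ (the defining property of $\wt G$ from Section~\ref{ss:choice-g}, used already in~\ref{mod-wt-G-chart}) is what makes the exponent $k_i/d_{ij}$ an integer, so the model makes sense.

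Next I would check that on the overlap $\cY\cap\cP_i$ these model coverings agree with the restriction of $\cC_\cY$. By the $\Prp$ preceding the statement, $\cC_\cY\to\cY$ already comes equipped with a canonical isomorphism of its restriction to $\cY\cap\cP_i$ with the covering induced by $\wt C_i=\coprod_j\wt C_{ij}$; and the restriction of my model covering over $\wt P_i$ to the punctured part $\{s_i\ne 0\}\cup(\text{annuli})$ is, by inspection, precisely the covering determined by the same $\wt C_{ij}$ (a $d_{ij}$-sheeted covering of each of the two annuli forming a fibre of $\cY\cap\cP_i$). So the gluing data is canonically given, and Hausdorffness of the glued space follows exactly as in the plumbing construction of~\ref{ssub:basicfamily}. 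This produces $\cC_V\to\cX_V\to V$; that it is flat, proper, with nodal fibres, and unramified away from the marked points and such that the branch behaviour at nodes is balanced of the required shape — i.e.\ that it is an \emph{admissible} covering in the sense of~\cite[Sect.~4]{ACV} — is then a fibrewise verification which reduces to the pointwise construction of~\ref{adm-pointwise} together with the explicit local models just described.

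Finally, uniqueness: an admissible covering of $\cX_V\to V$ extending $\cC_\cY$ is determined over $\cY$ (since an unramified covering of a fixed base is rigid) and over each $\cP_i$ it is forced, because an admissible covering of the standard smoothing family $\wt P_i\to V$ restricting to a prescribed covering over the generic/annular locus has no nontrivial automorphisms and is uniquely determined by that restriction — this is the standard rigidity of admissible coverings near a node, which is exactly what makes $\Adm_{g,n,d}$ a Deligne--Mumford stack, and the relevant statement in the analytic category is available from Section~\ref{gaga} (Theorem~\ref{thm:gaga}). The two local extensions then patch uniquely because the patching isomorphism over $\cY\cap\cP_i$ was itself canonical.

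The main obstacle I expect is the verification of balancedness of the constructed covering at the new nodes — i.e.\ that over a node of a fibre $X_t$ lying on the $i$-th singular component, the two branches of $\cC_V$ are ramified with \emph{reciprocal} local monodromies. This is precisely the content of Lemma~\ref{lem:balanced} in the pointwise setting, and in the family setting it follows from the antiholomorphic involution on the annulus $\phi^{-1}(D_+\vee D_-)\subset S$ interchanging the two sides of the circle $C_i$, which forces the two roots of unity describing the local action to be complex conjugate, hence reciprocal; but one has to see that the local models $\{u'v'=s_i^{k_i/d_{ij}}\}$ over $\wt P_i$ do carry exactly this branch data, which is where the bookkeeping of the degrees $d_{ij}$ and the exponents $k_i/d_{ij}$ has to be done carefully.
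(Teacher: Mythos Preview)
Your approach is the paper's: construct explicit model coverings over each $\cP_i$ and glue them to $\cC_\cY$. There is, however, a concrete error in your local model for $\cP_i$. You claim that pulling back $P_i=\{uv=t_i\}$ along $\kappa$ and normalizing yields $\wt P_i=\{uv=s_i\}$. This is wrong on two counts. First, $\cX_V$ is defined as the fibre product $\cX\times_U V$ (the induced family~(\ref{eq:induced-family})), not its normalization, so $\cP_i\cong\{uv=s_i^{k_i}\}$. Second, even if one did normalize, the hypersurface $\{uv=s_i^{k_i}\}$ is already normal (it is a complete intersection with singular locus of codimension two), so normalization is the identity. Your own covering model confirms the inconsistency: the map $(u',v')\mapsto((u')^{d_{ij}},(v')^{d_{ij}})$ from $\{u'v'=s_i^{k_i/d_{ij}}\}$ lands in $\{uv=s_i^{k_i}\}$, not in $\{uv=s_i\}$. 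Once you replace $\wt P_i$ by $\{uv=s_i^{k_i}\}$ your construction agrees with the paper's verbatim.

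Two smaller points. For uniqueness, citing Theorem~\ref{thm:gaga} is a non sequitur: that result compares algebraic and analytic moduli and says nothing about local rigidity. The paper argues instead that an admissible covering of $\cP_i$ is finite and normal over $\cP_i$, hence equals the normalization of $\cP_i$ in the meromorphic function field of its restriction to the smooth locus $\cP_i^0$; this pins it down. Finally, the balancedness you flag as the main obstacle is not at stake here: balancedness is a condition on an $H$-action and only enters for $v_{\rho,H}$ via Lemma~\ref{lem:balanced}; the present Proposition concerns plain admissible coverings, where the required node-to-node local form (equal ramification on the two branches) is already exhibited by your model.
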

\begin{proof}
We have to construct admissible coverings $\cC_i$ of $\cP_i\to V$ endowed 
with isomorphisms of the restrictions 
$\cC_i|_{\cY\cap\cP_i}\to \cY\cap\cP_i$ with the coverings 
defined by $\wt{C}_i$. This will allow to canonically glue the coverings 
into an admissible covering of $\cX_V\to V$.

Recall that by the choice of $G$ (see~\ref{ss:choice-g}) $k_i$ are
divisible by the degrees $d_{ij}$ of the components $\wt{C}_{ij}$ of $\wt{C}_i$
over $C_i$.

We are now looking for an admissible covering $\cC_i$ of $\cP_i\to V$ 
inducing $\wt{C}_i$ on both components of the intersection $\cY\cap\cP_i$.  

Let $V_{0i}=V-D_{x_i}$ 
be the collection of points which do not belong to the
$i$-th component of the singular locus, see condition (QC6)
of the quasiconformal charts.

Let  $\cP^0_i$ be the preimage of $V_{0i}$ in $\cP_i$.
Since $\cP^0_i$ is smooth,
the restriction of an admissible covering to $\cP^0_i$ is unramified;
it is therefore determined by the action of the fundamental group of 
$\cP^0_i$ on a typical fiber of $\rho$. 

\

The fundamental group of $\cP^0_i$
is the free abelian group generated by two loops:
\begin{itemize}
\item[(Lp1)]
around the annulus in any fiber of $\pi:\cP^0_i\to V_{0i}$, and 
\item[(Lp2)]  around the singular locus
of $V_{0i}$.
\end{itemize}

The first loop is homotopic 
to each one of the components of $\cY\cap\cP_i$.
The second loop is contractible in $\cY$. 

Thus, the covering $\cC_\cY$ of $\cY$
uniquely extends to a non-ramified covering $\cC_i^0$ of the open part 
$\cP^0_i$ of $\cP_i$  so that its restriction to (Lp1)
canonically identifies with $\wt{C}_i$, whereas
the restriction  to (Lp2) is trivial.

Now we have to show that the covering $\cC_i^0$ of $\cP^0_i$
uniquely extends to an admissible covering $\cC_i$ 
of the family $\cP_i\to V$. 

Let us start with the uniqueness. The admissible covering $\cC_i$ of $\cP_i$, 
if it exists, is normal\footnote{$k[x,y,t]/(xy-t^r)$ 
is normal by Serre's criterion $R_1+S_2$}
and finite over $\cP_i$. It can therefore
be described as the normalization of $\cP_i$ in the field of meromorphic
functions on $\cC_i^0$. This gives the uniqueness.

To prove the existence, note that the projection $\pi:\cP_i\to V$
is analytically isomorphic by (QC3)(b) to the standard projection of the space
$$
P_i=\{(u,v,t_1,\ldots,t_m)\in D^2\times D^m|uv=t^{k_i}_i\}
$$
to $D^m$. Here $D$ is the standard polydisk and $k_i$ is defined by the 
condition $k_i=\min\{k|D_i^k\in G\}$ where $D_i$ is the Dehn twist around
$C_i$.

\

The generators of the fundamental group are now presented by the loops
\begin{itemize}
\item[(Lp1)] $\theta\mapsto (u\exp(2\pi i\theta),v\exp(-2\pi i\theta),
t_1,\ldots,t_m)$.
\item[(Lp2)] $\theta\mapsto (u,v,t_1,\ldots,t_i\exp(2\pi i\theta),
\ldots,t_m)$.
\end{itemize}
We have to present an admissible covering $C_i$ of $P_i$ which induces 
$\wt{C}_i$ on (Lp1)   and a trivial covering on (Lp2).

The covering $\wt{C}_i$ of the circle $C_i$ is uniquely determined by the
degrees $d_{ij}$ of each component.  We know that $d_{ij}$ divides $k_i$ for 
each $j$.   Thus, it is sufficient to present for each divisor 
$d$ of $k_i$ an admissible  covering of $P_i\to D^m$ of degree $d$, 
such that the monodromy around (Lp1) acts transitively on the generic
fiber of the covering, whereas the monodromy  around (Lp2) acts
trivially on it. 

Consider
$$
\wt{P}_i=\{(\wt{u},\wt{v},t_1,\ldots,t_m)\in D^2\times V|\wt{u}\wt{v}=
t_i^{\frac{k_i}{d}}\},
$$
and define the map $\wt{P}_i\to P_i$ by 
$u=\wt{u}^d,\ v= \wt{v}^d$.   This gives the required admissible covering.

As we have already mentioned, the admissible coverings
$\cC_\cY\to\cY\to V$ and $\cC_i\to\cP_i\to V$
glue uniquely to get an admissible covering of
the family $\cX_V\to V$. 
\end{proof}
The resulting admissible covering will be denoted as
$$\cC_V\rTo \cX_V\rTo V.$$

\begin{thm}
\label{sss:gluingup}
The admissible coverings $\cC_V\to\cX_V\to V$ constructed above canonically
glue into an admissible covering of the universal curve $\cX$ of
 $[\wt{G}\bs\bT]$.
\end{thm}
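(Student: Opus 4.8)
The plan is to recognize an admissible covering of the universal curve $\cX\to[\wt{G}\bs\bT]$ as descent data over the orbifold atlas $\wt{\cA}$ of Section~\ref{mod-wt-G}, and to assemble it from the coverings $\cC_V\to\cX_V\to V$ already built in~\ref{sss:adm-v}. By the construction of the realization of a Satake orbifold (Theorem~\ref{satake-groupoid}, Theorem~\ref{thm:realization-is-orbi}) together with the description of sheaves and vector bundles on orbifolds in~\ref{dfn:sheaf}, a finite flat covering of $\cX$ — equivalently, a coherent sheaf of $\cO_\cX$-algebras, finite and flat over $\cO_\cX$ — is the same as the following data: for each $a\in\wt{\cA}$, with chart $(V(a),\wt{H}(a),\alpha(a))$ and family $\pi_{V(a)}:\cX_{V(a)}\to V(a)$, a covering $\cC_{V(a)}\to\cX_{V(a)}$; and for every morphism $f:a\to b$ in $\wt{\cA}$ (which, as in~\ref{chartsarecompatible}, induces a morphism of families $\cX_{V(a)}\to\cX_{V(b)}$ over the chart map $V(a)\to V(b)$, compatible with the chosen contractions) an isomorphism $\theta_f$ between $\cC_{V(a)}$ and the pullback of $\cC_{V(b)}$; subject to compatibility with composition. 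Taking $b=a$ and letting $f$ range over $\Aut(a)=\wt{H}(a)$ encodes, in particular, the $\wt{H}(a)$-equivariance of $\cC_{V(a)}$ on $[\wt{H}(a)\bs V(a)]$.

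First I would produce the isomorphisms $\theta_f$. A morphism $f:a\to b$ in $\wt{\cA}$ is by construction a morphism in the underlying atlas $\cA$, so it lifts to a compatible map of universal covers $Q$ identifying markings; hence the nodes of the central fibres, the circles $C_i=\phi^{-1}(x_i)$, the induced coverings $\wt{C}_i\to C_i$, and the ramification indices $k_i$ all correspond under $f$. Recall how $\cC_V$ was built in~\ref{sss:adm-v}: over the generic part $\cY=c^{-1}(X_z-\{x_i\})$ it is the unramified covering determined, uniquely up to unique isomorphism, by the pushforward of $\rho$ along the marking $\phi$; over each node neighborhood $\cP_i$ it is the normalization of $\cP_i$ in the meromorphic functions of its unramified restriction, hence again unique; and the divisibility $d_{ij}\mid k_i$ forced by the choice of $\wt{G}$ in~\ref{ss:choice-g} is exactly what makes the extension exist and glue uniquely. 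Every ingredient of this construction is intrinsic to the marking — and the transfer of markings in a quasiconformal chart is itself independent of the auxiliary choices by Proposition~\ref{prp:independence-of-c} — so the pullback of $\cC_{V(b)}$ satisfies the same universal characterization as $\cC_{V(a)}$, which yields a canonical $\theta_f$.

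With canonicity in hand the cocycle identity $\theta_g\circ f^*(\theta_h)=\theta_{h\circ g}$ for composable $f,g$ is automatic, since both sides are the unique isomorphism compatible with the universal characterizations over $\cY$ and over the $\cP_i$. Hence, by the universal property of the realization (Corollary~\ref{crl:universal-property} applied to the atlas $\wt{\cA}$), the data $(\cC_{V(a)},\theta_f)$ glue to a single finite flat covering $\cC\to\cX$. The conditions making $\cC\to\cX$ \emph{admissible} in the sense of~\cite[Sect.~4]{ACV} — étale away from nodes and marked points, source nodal with nodes over nodes, and the two branches at each node balanced — are all local over $\cX$ and were verified on the charts (the balancing being the fibrewise version of the half-annulus argument of Lemma~\ref{lem:balanced}, reproduced by the standard model $P_i$). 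So $\cC\to\cX\to[\wt{G}\bs\bT]$ is an admissible covering of degree $d$.

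The main obstacle is the canonicity claim of the second paragraph: one must verify carefully that $\cC_V$ depends only on the marking $\phi$ of the central fibre, and not on the contraction $c$, the node neighborhoods $\cO_i$, or the analytic identification of $\pi_V^{-1}(\cO_i)$ with the plumbing model $P_i$ used in~\ref{mod-wt-G-chart} and~\ref{sss:adm-v}. The uniqueness-up-to-unique-isomorphism statements proved just before the theorem dispose of the dependence on $\cO_i$ and on the model, while independence of the contraction reduces to Proposition~\ref{prp:independence-of-c}; once these are assembled, compatibility over triple overlaps, and hence descent, is formal. As a corollary, combining $\cC\to\cX$ with Theorem~\ref{thm:gaga} produces the morphism $v_\rho^{\wt{G}}:[\wt{G}\bs\bT]\to\Adm_{g,n,d}$ of complex orbifolds.
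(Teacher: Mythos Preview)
There is a genuine gap in your construction of the isomorphisms $\theta_f$. You write that a morphism $f:a\to b$ in $\wt{\cA}$ ``is by construction a morphism in the underlying atlas $\cA$'' and that therefore ``the pullback of $\cC_{V(b)}$ satisfies the same universal characterization as $\cC_{V(a)}$, which yields a canonical $\theta_f$.'' This is not correct, and the failure is precisely the reason the paper introduces the gerbe $[\wt{G}\bs\bT]$ over $[G\bs\bT]$ in the first place.

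The covering $\cC_\cY\to\cY$ is determined, uniquely up to unique isomorphism, by the push-forward $\phi_*(\rho)$ for a \emph{fixed} marking $\phi$. But a morphism $f$ in $\wt{\cA}$ relates two markings $\phi_a$ and $\phi_b$ only up to an element $g\in G$; the pullback $f^*\cC_{V(b)}$ is then the covering associated to $g^{-1}\cdot\phi_a$, i.e.\ to the covering $g^*(\rho)$ of $S$ rather than to $\rho$ itself. An isomorphism $\rho\simeq g^*(\rho)$ exists (that is what $g\in G$ means), but it is \emph{not} canonical: its choices form an $\Aut(\wt{S}/S)$-torsor. In particular, for $f\in\Aut(a)=\wt{H}(a)$ lying over the identity of $H(a)$, your argument produces no preferred automorphism of $\cC_{V(a)}$ at all, so you cannot obtain the required $\wt{H}(a)$-equivariance from universal properties alone. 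If your argument worked it would descend $\cC$ all the way to $[G\bs\bT]$, which it does not.

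What is missing is exactly the extra datum carried by a morphism of $\wt{\cA}$ as opposed to $\cA$. In the paper's proof, one lifts $f$ to a map $u_Q:Q_1\to Q_2$ of universal covers; this produces not only $g\in G$ but a specific lift $\wt{g}\in\wt{G}$, i.e.\ (in the language of~\ref{sss:cov}) an element of $\Iso(j_1,j_2;\Sigma)$, which \emph{is} a choice of isomorphism $\rho\simeq g^*(\rho)$. That $\wt{g}$ is what defines $\theta_f$. One then checks that a different lift $u_Q$ changes $\wt{g}$ by the canonical lift of an element of $\Gamma'_0$, which acts trivially on the covering data of $X_z-\{x_i\}$, so $\theta_f$ is well defined; the cocycle identity then follows. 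Your outline should be revised to extract and use $\wt{g}$ explicitly.
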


\begin{proof}

To get an admissible covering over the whole quotient $[\wt{G}\bs\bT]$,
we have to construct a canonical isomorphism
$$\cC_{V_1}\rTo u^*\cC_{V_2}$$
for each morphism $u:a_1\to a_2$ in $\wt{\cA}$,
where $c(a_i)=(V_i,\wt{H}_i,\alpha_i)$.

Let $a_i=(Y_i,\wt{H}_i,\hat\alpha_i)$. A morphism $u:a_1\to a_2$
is given by a triple 
$$u_Y:Y_1\rTo Y _2,\quad u_H:\wt{H}_1\rTo\wt{H}_2,\quad
\theta:\hat\alpha_1\rTo\hat\alpha_2\circ \hat u,$$
where $\hat u:[\wt{H}_1\bs Y_1]\to\relax[\wt{H}_2\bs Y_2]$ is induced by
$(u_Y,u_H)$.

The admissible coverings $\cC_{V_i},\ i=1,2,$ are uniquely determined by their
restrictions $\cC_{Y_i},\ i=1,2,$ to $Y_i$.
Thus, it is enough to present a canonical isomorphism
\begin{equation}
\label{gluing-c}
\cC_{Y_1}\rTo\ u_Y^*(\cC_{Y_2})
\end{equation}
of coverings of $Y_1$.
Since $u_Y$ is always injective, we can consider separately two cases:
$u$ is an embedding and $u$ is an isomorphism. 
The case when $u$ is an embedding is obvious. Let us assume now 
that $u$ is an isomorphism.

Lift a map $u_Y$ to a map $u_Q:Q_1\to Q_2$ of the universal coverings.
The obvious equivalences
$$ [A_{Q_i,\wt{G}}\bs Q_i]\rTo\relax[\wt{H}_i\bs Y_i]$$
of the orbifolds allow one to translate a morphism $u$ into
a pair of commutative diagrams
\begin{equation}
\label{dgrm:2sides}
\begin{diagram}
Q_1 & \rTo & \cT    & &&  &  A_{Q_1,\wt{G}} & \rTo & \wt{G} \\ 
\dTo^{u_Q}&&\dTo^g&&\textrm{and}&& \dTo^{\ad(u_Q)} & & \dTo^{\ad(\wt{g})} \\
Q_2 & \rTo & \cT    & &&  &  A_{Q_2,\wt{G}} & \rTo & \wt{G} 
\end{diagram}
\end{equation}
for some $g\in G$ and a lifting $\wt{g}$ of $g$ in $\wt{G}$.

The element $\wt{g}\in\wt{G}$ defines an isomorphism (\ref{gluing-c})
as follows. We assume that $z_i\in V_i$ satisfy the condition $u_V(z_1)=z_2$.
Let $\Pi_1$ (resp., $\Pi_2$) be the fundamental groupoid of
$S-\cup C^1_i$ (resp.,  $S-\cup C^2_i$), where $C^1_i=\phi_1^{-1}(x_i)$
and similarly for $C^2_i$, and let $j_i:\Pi_i\to\Pi,\ i=1,2,$ be the
obvious embeddings.

The element $g\in G$ appearing in the left-hand side of (\ref{dgrm:2sides})
represents an element of $\Iso(j_1,j_2)$ (see~\ref{sss:var}); its lifting
$\wt{g}$ gives an element of $\Iso(j_1,j_2;\Sigma)$ as in the diagram
(\ref{eq:seqs}). This defines an isomorphism between $(\Pi_1;j_1^*\Sigma)$
and $(\Pi_2;j_2^*\Sigma)$ which is precisely the isomorphism 
$\cC_{Y_1}\to u_Y^*(\cC_{Y_2})$ we need.

Another choice of lifting  $u_Q:Q_1\to Q_2$ of $u_Y$ leads to
different $g$ and $\wt{g}$. The difference is, however, not very serious.
If $u'_Q$ is another lifting, one has $u'_Q=u_Q\circ\gamma$ where
$\gamma\in\Gamma_0'=\langle D_1^{k_1},\ldots,D_r^{k_r}\rangle$. Thus
the lifting $u'_Q$ gives rise to the pair $g'\in G,\ \wt{g}'\in \wt{G}$
where
$$ g'=g\gamma,\ \wt{g}'=\wt{g}\wt{\gamma}$$
and $\wt{\gamma}$ is the canonical lifting of $\gamma$.

Since $\wt{\gamma}$ is a product of Dehn twists along the components $C'_{ij}$
of $\rho^{-1}(C_i)$, the induced element of 
$\Iso((\Pi_1;j_1^*\Sigma),(\Pi_2;j_2^*\Sigma))$
is the same.

The continuous map
$$v_\rho:\bT\rTo\Adm_{g,n,d}$$
is constructed.
\end{proof}

\subsubsection{The map $v_\rho$ on the level of points}
\label{v-rho-points}

To make sure we constructed exactly what was announced at the beginning of
Section~\ref{sec:teich-vs-adm}, let us describe the image
$v_\rho(X,\phi)$ for arbitrary $(X,\phi)\in\bT$.

We can assume that $(X,\phi)$ belongs to the image 
$\alpha(V)$ of an orbifold chart $(V,H,\alpha)$ of $G\bs\bT$.

The admissible covering $\cC_V$ of $V$ was constructed  by gluing
admissible coverings $\cC_{\cY}$ and $\cC_i$ of $\cY$ and of $\cP_i$
respectively, see~\ref{sss:adm-v}.
Let $X=X_v$ for $v\in V$. The intersection $\cY\cap X$ is 
$X-c_v^{-1}\{x_1,\ldots,x_r\}$ where $c_v:X=X_v\to X_z$ is the restriction
of the contraction to $X_v$. An admissible covering of $X$ is uniquely
determined by its restriction to $X\cap\cY$; Since the $G$-markings of $X$ 
and of $X_z$ are compatible, the restriction of the admissible covering
on $X\cap\cY$ induced from $\cC_V$ is the same as the one described
in~\ref{adm-pointwise}. Therefore, $v_\rho(X,\phi)$ is presented
by the admissible covering of $X$ described in~\ref{adm-pointwise}.

\subsubsection{Admissible $H$-coverings}
\label{sss:adm}
If $\rho:\wt{S}\to S$ is an $H$-covering, the resulting admissible
coverings $\cC_V$ of $(V,\wt{H},\alpha)$ acquire an $H$-action. 
Since the balancedness condition is verified at each point
by \ref{v-rho-points} and \ref{lem:balanced}, the admissible covering
of $[\wt{G}\bs\bT]$ becomes an admissible $H$-covering. Thus, a map
\begin{equation}
  \label{eq:adm}
v_{\rho,H}:\bT\rTo\Adm_{g,n}(H)  
\end{equation}
is defined.

\subsection{Compatibilities}
\label{sss:vrho_prop}

The augmented \TS s $\bT_{g,n}$ as well as the stacks of admissible 
coverings $\Adm_{g,n,d}$ have various gluing operations giving
rise to (a sort of) modular operads, see~\ref{ss:gluing-ATS}.

In this subsection we will describe
the compatibility of these structures with the map $v_\rho$.

We also describe functoriality of $v_{\rho,H}$ with respect to
the change of $H$.

The proofs of the properties \ref{sss:fun}--\ref{sss:fact2} are given 
in~\ref{sss:proofs}--\ref{sss:proofs-2}. Basically, the properties follow 
directly from the construction of an admissible covering of 
$[\wt{G}\bs \bT_S]$ described in~\ref{sss:gluingup}.

\subsubsection{Functoriality for $v_{\rho,H}$} 
\label{sss:fun}
We will now describe functoriality for the maps $v_{\rho,H}$.
Let $\rho:\wt{S}\to S$ be an $H$-covering and let $f:H\to H'$ be a finite
group homomorphism. This defines an $H'$-covering $\rho':\wt{S}'\to S$
obtained by {\em induction} along $H\to H'$. If $f$ is injective, $\wt{S}'$
consists of $[H':H]$ copies of $\wt{S}$. If $f$ is surjective, $\wt{S}'$
is the quotient of $\wt{S}$ by the group $\Ker(f)$.

One has
\begin{Prp}
A group homomorphism $f:H\to H'$ induces a map of the stacks
$$ f_*:\Adm_{g,n}(H) \rTo \Adm_{g,n}(H').$$
Moreover, the following diagram
$$
\begin{diagram}
\bT(S) & \rTo^{=} & \bT(S)\\
\dTo^{v_{\rho,H}} & & \dTo^{v_{{\rho',H'}}} \\
\Adm_{g,n}(H) & \rTo^{f_*} & \Adm_{g,n}(H')
\end{diagram}
$$
is 2-commutative. 
\end{Prp}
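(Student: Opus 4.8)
The plan is to reduce everything to the pointwise description of $v_{\rho,H}$ given in Section~\ref{v-rho-points} together with the elementary behaviour of admissible $H$-coverings under a group homomorphism. First I would construct the functor $f_*$. Given an admissible $H$-covering $C\to X\to S$, one forms the contracted product $C'=H'\times^HC$ (where $H$ acts on $H'\times C$ by $h\cdot(h',c)=(h'f(h)^{-1},hc)$); this is a finite scheme over $X$ with an $H'$-action, and it is visibly admissible away from the nodes since it is \'etale-locally the induction of an \'etale $H$-cover. At a node one checks that the balancedness condition is preserved: if $h\in H$ acts on the two branches by multiplication by roots of unity $\zeta_\pm$ with $\zeta_+\zeta_-=1$, then $f(h)$ acts on the induced branches by the same $\zeta_\pm$, so balancedness for the stabilizer $f(H_y)$ in $H'$ follows from balancedness for $H_y$ in $H$. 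This construction is functorial for families (it commutes with base change, being a finite flat relative construction), so it defines a morphism of stacks $f_*\colon\Adm_{g,n}(H)\to\Adm_{g,n}(H')$.

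Next I would identify $\rho'$ with $f_*$ applied to $\rho$ in the following sense: for an $H$-covering $\rho\colon\wt S\to S$ (unramified outside the boundary), the $H'$-covering obtained by induction along $f$ is exactly $H'\times^H\wt S\to S$, which is what the proposition calls $\rho'$. Then, for any marked curve $(X,\phi)\in\bT(S)$, I would compare $v_{\rho',H'}(X,\phi)$ with $f_*\,v_{\rho,H}(X,\phi)$. By~\ref{v-rho-points} both are uniquely determined by their restrictions to $X_{\reg}$, where $v_{\rho,H}(X,\phi)$ restricts to the unramified covering $\phi^*(\rho|_{S_{\reg}})$ pushed forward (pulled back via $\phi^{-1}$) with its $H$-action, and similarly for $\rho'$. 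Since pushforward via a homeomorphism commutes with the contracted-product construction, $\phi_*(\rho')|_{X_{\reg}}=H'\times^H\phi_*(\rho)|_{X_{\reg}}=f_*\bigl(\phi_*(\rho)\bigr)|_{X_{\reg}}$; and the fibre identifications at the nodes of $X$ (given by the monodromy orbits of the covering around the loops $\phi^{-1}(p)$) are transported compatibly, because induction of coverings commutes with taking monodromy on a circle. Hence $v_{\rho',H'}(X,\phi)\cong f_*v_{\rho,H}(X,\phi)$ canonically and naturally.

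To upgrade this pointwise statement to a $2$-commutative square of morphisms of orbifolds (and of topological stacks after composing with $\pi_{\wt G}$), I would work at the level of the orbifold atlas $\wt\cA$ for $[\wt G\bs\bT]$ constructed in~\ref{mod-wt-G}. For a chart $(V,\wt H,\alpha)$, the admissible $H$-covering $\cC_V\to\cX_V\to V$ was built in~\ref{sss:adm-v}--\ref{sss:adm} by gluing the piece $\cC_{\cY}$ over $\cY$ (determined by $\rho$ via the marking) with the pieces $\cC_i$ over the $\cP_i$. Applying $H'\times^H(-)$ to this data commutes with all the gluings and with the restriction to $\cY$, and the identification of $\cC_i$ in terms of the divisibility of $k_i$ by the branch degrees $d_{ij}$ goes through verbatim for $\rho'$ because each component $\wt C_{ij}$ of $\rho^{-1}(C_i)$ induces up to a disjoint union of $|{\Ker f}|\cdot\deg$-copies (or a quotient) whose degrees still divide $k_i$. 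The induced isomorphisms $\cC_{Y_1}\to u_Y^*\cC_{Y_2}$ for morphisms $u$ in $\wt\cA$ (coming from an element $\wt g\in\wt G$ and, via~\ref{sss:cov}, from the diagram~(\ref{eq:seqs})) are carried to the corresponding isomorphisms for $\rho'$ because the diagram~(\ref{eq:seqs}) is natural in $\Sigma$ and induction of fibres is functorial. Thus $f_*$ applied to the universal admissible covering of $[\wt G\bs\bT]$ for $\rho$ is canonically the universal admissible covering for $\rho'$, giving the desired $2$-isomorphism $f_*\circ v_\rho^{\wt G}\isom v_{\rho'}^{\wt G}$; composing with $\pi_{\wt G}$ yields the $2$-commutative square in the statement.

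The main obstacle I anticipate is not any single step but rather checking, with sufficient care, that the contracted-product construction $H'\times^H(-)$ really is compatible with all the structures in play at once: with base change in families (needed to get an honest morphism of stacks $f_*$, not just a map on points), with the local plumbing models $P_i$ and the divisibility bookkeeping of the $k_i$ near the nodes, and with the groupoid-theoretic description of the transition isomorphisms via $\Iso(j_1,j_2;\Sigma)$. Each individual verification is routine, but assembling them coherently—so that the resulting $2$-isomorphism is the same whatever chart or lifting $u_Q$ one chooses—requires tracking the same kind of "difference by a canonical lift of a Dehn twist" argument that appears in the proof of Theorem~\ref{sss:gluingup}, now with $\rho$ replaced by $\rho'$. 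I would organize the write-up so that this coherence is inherited formally from the already-established coherence for $\rho$, by observing that $H'\times^H(-)$ is an additive, base-change-compatible operation on the category of admissible $H$-coverings and hence commutes with every construction used in~\ref{sss:adm-v}--\ref{sss:gluingup}.
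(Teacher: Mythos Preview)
Your proposal is correct and follows essentially the same line as the paper's proof. The paper defines $f_*$ by treating the injective and surjective cases separately (disjoint union of copies, respectively quotient by $\Ker f$), whereas you use the contracted product $H'\times^H(-)$ uniformly; these are equivalent descriptions. The paper then dispatches the $2$-commutativity in a single sentence, simply asserting that the chart-level admissible covering $\cC'_V$ built from $\rho'$ coincides with $f_*(\cC_V)$---precisely the observation you spell out in detail when you argue that $H'\times^H(-)$ commutes with the gluing of $\cC_{\cY}$ and the $\cC_i$ and with the transition isomorphisms coming from $\Iso(j_1,j_2;\Sigma)$.
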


\subsubsection{Factorization (gluing two bordered surfaces)}
\label{sss:fact1}

Let $S_1\in\cS_{g_1,n_1},\ S_2\in\cS_{g_2,n_2}$ be two bordered surfaces. 
Choose a boundary component in each one of $S_i$ and let 
$S=S_1\circ S_2\in\cS_{g,n}$ where $g=g_1+g_2,\ n=n_1+n_2-2$.

Fix a finite covering
$\rho:C\to S$ and let $\rho_i:C_i\to S_i$ 
be the induced covering of $S_i,\ i=1,2$.

Let $\Upsilon_d$ denote the (discrete) groupoid of finite multisets
of weight $d$: its objects are pairs $(X,w)$ where $X$ is a finite set
and $w:X\to \Z_{>0}$ satisfies $\sum w(x)=d$.

Let $\pi:C\to X$ be  an admissible covering of  degree $d$.
Then any $x\in X$
defines an object of $\Sigma_d$: this is the set-theoretic preimage
$\pi^{-1}(x)$ with the weight function defined by the multiplicities
of the points of $\pi^{-1}(x)$. The covering $C$ is non-ramified at $x$
 if and only if all points of $\pi^{-1}(x)$ have weight one.
An admissible covering $\cC\rTo^\pi\cX\to V$ of degree $d$
and a choice of a puncture $s:V\to\cX$ 
defines a map $V\to\Upsilon_d$ 
which assigns
to $v\in V$ the fiber of the map $\cC_v\to\cX_v$ at $\pi(v)$.
This map is locally constant.
Thus, the map
$$ F_s:\Adm_{g,n,d}\rTo\Upsilon_d$$
of orbifolds is defined. In particular, a choice of boundary components
of $S_i,\ i=1,2,$ defines a pair of maps 
$\Adm_{g_i,n_i,d}\to\Upsilon_d,\ i=1,2.$

\begin{Prp}
\begin{itemize}
\item[(1)]Gluing of $S_i$  defines a canonical operation
\begin{equation}
\label{gluing-adm-1} 
\Adm_{g_1,n_1,d}\times_{\Upsilon_d}\Adm_{g_2,n_2,d}\rTo^\iota  
\Adm_{g,n,d}.
\end{equation}
\item[(2)]The product of maps $v_{\rho_1}$ and $v_{\rho_2}$
$$v_{\rho_1}\times v_{\rho_2}:\bT(S_1)\times\bT(S_2)\rTo
\Adm_{g_1,n_1,d}\times\Adm_{g_2,n_2,d}$$
is canonically factored through the map
$$\Adm_{g_1,,n_1,d}\times_{\Upsilon_d}\Adm_{g_2,n_2,d}\rTo
\Adm_{g_1,n_1,d}\times\Adm_{g_2,n_2,d}.$$
\item[(3)] The following diagram
\begin{equation}\label{diag:comp-bound}
\begin{diagram}
\bT(S_1)\times\bT(S_2) & \rTo^\iota & \bT(S) \\
\dTo^{v_{1,2}} & & \dTo^{v_\rho} \\
\Adm_{g_1,n_1,d}\times_{\Upsilon_d}\Adm_{g_2,n_2,d}& \rTo^\iota & 
\Adm_{g,n,d}
\end{diagram}
\end{equation}
is $2$-commutative. Here $v_{1,2}$ is defined by 
$v_{\rho_1}\times v_{\rho_2}$ via (2). 
\end{itemize}
\end{Prp}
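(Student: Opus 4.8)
The plan is to reduce each of the three assertions to the corresponding statement for orbifold charts, exactly as was done for the gluing operations on the quotients $[G\bs\bT]$ in~\ref{opera-GT}, and then to exploit the explicit description of an admissible covering on each chart given in~\ref{sss:adm-v} and~\ref{sss:gluingup}. First I would dispose of part (1): the operation~(\ref{gluing-adm-1}) is constructed by the same colimit argument as in~\ref{opera-sta-cu}. Given admissible coverings $\cC_i\to\cX_i\to V$ with a chosen puncture $s_i:V\to\cX_i$ and with the fibers of $\cC_i$ over $s_i$ identified as objects of $\Upsilon_d$ (this is exactly what the fiber product over $\Upsilon_d$ records), one glues the $\cX_i$ along $s_1,s_2$ as before, and glues $\cC_1,\cC_2$ along the identified fibers over the new node; the result is admissible because away from the new node nothing changes and at the new node the two local pictures match by construction of the fiber product. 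Functoriality in $V$ is immediate, so one gets the map of orbifolds~(\ref{gluing-adm-1}). The map $F_s:\Adm_{g,n,d}\to\Upsilon_d$ is locally constant as claimed because the weighted fiber over a section is a discrete invariant of a flat family.

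For part (2), I would argue that for a marked curve $(X_i,\phi_i)\in\bT(S_i)$ the admissible covering $v_{\rho_i}(X_i,\phi_i)=\phi_{i,*}(\rho_i)$ restricted to the chosen puncture is, by~\ref{adm-pointwise} and~\ref{sss:pushforward}, canonically the orbit set of the monodromy of $\rho_i$ around $\phi_i^{-1}(\text{puncture})$, i.e.\ the restriction $\rho_i|_{\partial}$ of $\rho_i$ to the boundary circle being glued. But both boundary circles of $S_1$ and $S_2$ are glued to the same circle of $S$ and $\rho$ restricts to $\rho_1|_\partial$ and to $\rho_2|_\partial$ on the two sides, so the two objects of $\Upsilon_d$ coincide \emph{canonically} and independently of the point of $\bT(S_1)\times\bT(S_2)$. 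This gives the canonical lift of $v_{\rho_1}\times v_{\rho_2}$ through the fiber product. On the level of charts one uses that by~\ref{sss:adm-v} the covering $\cC_{V_i}$ is determined by $\cC_{\cY_i}$, and $\cC_{\cY_i}|_{\cY_i\cap\cP_{\text{punct}}}$ is exactly $\wt C$ coming from $\rho|_{\partial}$, so the identification over $\Upsilon_d$ is realised at chart level too; hence the factorization is a morphism of orbifolds, not merely of underlying spaces.

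Part (3) is then the heart of the matter: one must check that the admissible covering obtained by first gluing $(X_1,\phi_1)$ and $(X_2,\phi_2)$ to $(X,\phi)\in\bT(S)$ and then applying $v_\rho$ agrees, via the canonical isomorphism~(\ref{gluing-adm-1}), with the one obtained by first applying $v_{\rho_i}$ and then gluing. Here I would work with a triple of compatible quasiconformal charts $(U_i,A_i,\beta_i)$ for $\bsM_{g_i,n_i}$ and $(U,A,\beta)$ for $\bsM_{g,n}$ as in the proof in~\ref{opera-GT}, lift to orbifold charts $(V_i,\wt H_i,\alpha_i)$ and $(V,\wt H,\alpha)$ for $[\wt G_i\bs\bT_i]$ and $[\wt G\bs\bT]$, with a map $f_V:V_1\times V_2\to V$ sitting over a Cartesian square with $f_U$. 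The admissible covering $\cC_V$ is glued from $\cC_{\cY}$ and the local pieces $\cC_i$ over the nodes of $X_z=X_{1,z}\vee X_{2,z}$; the nodes of $X_z$ are the nodes of $X_{1,z}$, the nodes of $X_{2,z}$, and the new node $x$ coming from the gluing. Over the first two groups of nodes $\cC_V$ pulls back under $f_V$ to the corresponding pieces of $\cC_{V_1}$ and $\cC_{V_2}$ because the plumbing data agree there; over the new node $x$, the local covering $\cC_x$ of $\cP_x\to V$ is precisely the one built from $\wt C$ where $\wt C$ is $\rho$ restricted to the circle being glued, and this is exactly the datum used to glue $\cC_{\cY_1}$ to $\cC_{\cY_2}$ across $x$ in the target of~(\ref{gluing-adm-1}). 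Comparing the two gluings then amounts to checking that the monodromy identification at $x$ is the same on both sides --- which it is, by the same argument as in part (2), using that the relevant Dehn twists $D_x^{k}$ lift canonically (by the choice of $\wt G$ in~\ref{ss:choice-g}) to products of Dehn twists around the components of $\rho^{-1}(C_x)$. The compatibility of these chart-level isomorphisms with morphisms of charts is then checked exactly as in~\ref{sss:gluingup}, using the diagrams~(\ref{dgrm:2sides}) and the groupoid-theoretic bookkeeping of~\ref{sss:cov}, so that the local comparisons glue to a $2$-morphism making~(\ref{diag:comp-bound}) commute.

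\textbf{Main obstacle.} The routine parts are the colimit construction in (1) and the pointwise statement in (2); the real work is in (3), specifically in verifying that the canonical isomorphism $\cC_{V_1}\times_{\text{glue}}\cC_{V_2}\to f_V^*\cC_V$ over the new node is \emph{the} isomorphism coming from~(\ref{gluing-adm-1}) and not merely \emph{an} isomorphism, and that this choice is compatible with all morphisms in the atlas categories $\wt{\cA}_i$ and $\wt{\cA}$. This is the same kind of difficulty encountered in~\ref{sss:gluingup}: one has to track the element $\wt g\in\wt G$ witnessing an isomorphism of charts through the forgetful diagram~(\ref{eq:seqs}) and check that changing the lift of $u_Q$ by an element of $\Gamma'_0$ (resp.\ its canonical lift) does not alter the induced element of $\Iso((\Pi_1;j_1^*\Sigma),(\Pi_2;j_2^*\Sigma))$, now in the presence of the extra node coming from gluing. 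I expect this bookkeeping, rather than any conceptual difficulty, to be the bulk of the proof.
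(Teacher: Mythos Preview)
Your approach is correct and essentially identical to the paper's: construct the operation~(\ref{gluing-adm-1}) by the colimit argument of~\ref{opera-sta-cu}, lift the gluing of quotients from~\ref{opera-GT} to the gerbe versions $[\wt G_i\bs\bT_i]\to[\wt G\bs\bT]$, and compare $f_V^*\cC_V$ with the glued covering $\cC_{V_1}\boxtimes\cC_{V_2}$ chart by chart.

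The one place where you diverge from the paper is in your assessment of the ``main obstacle''. You anticipate having to track elements $\wt g\in\wt G$ through the diagram~(\ref{eq:seqs}) and verify that the isomorphism over the new node is \emph{the} canonical one rather than merely \emph{an} isomorphism, with further compatibility checks across morphisms of the atlas. The paper sidesteps all of this with a single observation you already have in hand from~\ref{sss:adm-v}: an admissible covering of the family $(\cX_1\times V_2)\vee(V_1\times\cX_2)$ over $V_1\times V_2$ is \emph{uniquely} determined by its restriction to the smooth locus of the exceptional fiber $X_1\vee X_2$. Since both $f_V^*\cC_V$ and the glued covering manifestly agree on that smooth locus (both are the pushforward of $\rho$ restricted to $S\setminus\{\text{circles}\}$), they agree, period --- and uniqueness of the extension means the identification is automatically canonical and automatically compatible with chart morphisms. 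So the bookkeeping you flag as the bulk of the proof simply does not arise; the paper's argument for (3) is two sentences.
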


\subsubsection{Factorization (gluing two boundary components)}
\label{sss:fact2}

Let $S\in\cS_{g,n}$ be a bordered surface. 
Gluing a pair of boundary components
in $S$ we get a surface $\bar{S}\in\cS_{g+1,n-2}$
together with a canonical map $S\to\bar{S}$.
Fix a finite covering
$\rho:C\to \bar{S}$ and let $\rho_S:C_S\to S$ 
be the induced covering of $S$.

The choice of two boundary components in $S$ defines
a map 
$$\Adm_{g,n,d}\to\Upsilon_d\times\Upsilon_d$$ 
as in~\ref{sss:fact1}.

\begin{Prp}
\begin{itemize}
\item[(1)]
Gluing of two boundary components of $S$ defines a canonical operation
\begin{equation}
\label{adm-gluing-2}
\Adm_{g,d,n}\times_{\Upsilon_d\times\Upsilon_d}\Upsilon_d\rTo\Adm_{g+1,n-2,d}.
\end{equation}
\item[(2)]The map
$$v_{\rho_S}:\bT(S)\rTo\Adm_{g,n,d}$$
is canonically factored through the projection onto the first factor
$$ \Adm_{g,d,n}\times_{\Upsilon_d\times\Upsilon_d}\Upsilon_d\rTo\Adm_{g,d,n}.
$$
\item[(3)]
The following diagram
\begin{equation}\label{diag:comp-bound2}
\begin{diagram}
\bT(S) & \rTo^\iota & \bT(\bar{S}) \\
\dTo^{v'_{\rho_S}} & & \dTo^{v_\rho} \\
\Adm_{g,n,d}\times_{\Upsilon_d\times\Upsilon_d}\Upsilon_d& \rTo^\iota & 
\Adm_{g+1,n-2,d}
\end{diagram}
\end{equation}
is $2$-commutative. Here $v'_{\rho_S}$ is defined by $v_{\rho_S}$ via (2).
\end{itemize}
\end{Prp}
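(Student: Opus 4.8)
The three assertions are established by the same mechanism that underlies the analogous statement for gluing two different surfaces (Proposition in~\ref{opera-GT}) together with the explicit description of the admissible covering $\cC_V\to\cX_V\to V$ built in~\ref{sss:adm-v}. First I would construct the operation~(\ref{adm-gluing-2}). An object of $\Adm_{g,d,n}\times_{\Upsilon_d\times\Upsilon_d}\Upsilon_d$ over $V$ is an admissible covering $\cC\to\cX\to V$ of degree $d$ of a family of $(g,n)$-curves together with chosen punctures $s_1,s_2:V\to\cX$ and an identification of the two multiset-valued functions $F_{s_1}$ and $F_{s_2}$ with a single object of $\Upsilon_d(V)$, i.e.\ a chosen isomorphism $s_1^*\cC\isom s_2^*\cC$ of the two pullback coverings as multisets. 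Gluing $\cX$ along $s_1,s_2$ as in~\ref{opera-sta-cu} (the coequalizer of $(s_1,s_2)$) produces the $(g+1,n-2)$-family $\bar\cX\to V$; the chosen isomorphism $s_1^*\cC\isom s_2^*\cC$ is precisely the datum needed to glue $\cC$ into an admissible covering $\bar\cC\to\bar\cX\to V$ (at the new node the two branches of the covering are matched by this isomorphism, and the resulting covering is admissible since the ramification data on the two sides agree by construction). This gives~(\ref{adm-gluing-2}). Assertion (1) follows.

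For (2), I would show that the map $v_{\rho_S}:\bT(S)\to\Adm_{g,n,d}$ carries a natural lift to the fiber product. Given $(X,\phi)\in\bT(S)$, the admissible covering $\phi_*(\rho_S):\wt X\to X$ of~\ref{adm-pointwise} comes with two chosen punctures (the images under $\phi$ of the two distinguished boundary circles of $S$), and the fibers of $\wt X\to X$ over these two punctures are both canonically identified with the monodromy orbit set of $\rho$ around the image circle $\bar C\subset\bar S$ of the glued pair --- because $\rho_S$ is itself induced from $\rho$ on $\bar S$, and under the gluing $C_1,C_2\subset S$ both map to the same circle $\bar C$. This gives the canonical object of $\Upsilon_d$ and hence the lift $v'_{\rho_S}$; that it is locally constant / continuous is inherited from $v_{\rho_S}$. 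On the orbifold level one repeats this on each chart $(V,\wt H,\alpha)\in\wt{\cA}$: in the covering $\cC_V$ built in~\ref{sss:adm-v}, the two punctures correspond to two of the annular pieces $\cP_i\cap\cY$ whose induced coverings are both the fixed $\wt C$ associated to $\bar C$, giving a canonical matching isomorphism.

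For (3), the $2$-commutativity of~(\ref{diag:comp-bound2}) is checked chart by chart, exactly as in the proof of the Proposition in~\ref{opera-GT}. Choosing a quasiconformal chart $(U,A,\beta)$ for $\bsM_{g,n}$ mapping into a quasiconformal chart $(\bar U,\bar A,\bar\beta)$ for $\bsM_{g+1,n-2}$ under the gluing map~(\ref{eq:glue-charts-M-2}), one gets compatible charts $(V,\wt H,\alpha)$ and $(\bar V,\bar{\wt H},\bar\alpha)$; the admissible covering $\cC_V$ of $\cX_V\to V$ glues to the admissible covering $\cC_{\bar V}$ of $\cX_{\bar V}\to\bar V$ because the gluing of curves identifies the two annular neighborhoods $\cP_i$ sitting over the two punctures and the coverings induced there coincide (both equal $\wt C$). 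Passing to the realization and using the universal property of Corollary~\ref{crl:universal-property} assembles these chart-level $2$-isomorphisms into the desired global $2$-morphism, and one then composes with the canonical projections $\pi_{\wt G}$ to descend to $\bT(S)\to\bT(\bar S)$; compatibility on points is the computation in~\ref{v-rho-points}.

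\textbf{Main obstacle.} The genuine point of care --- the analogue of the delicate ``continuity of the $\Gamma_0'$-ambiguity forces a constant $\gamma$'' argument in~\ref{opera-GT} --- is the \emph{uniqueness} of the gluing isomorphism $\cC_{V_1}\to u^*\cC_{V_2}$ under the self-gluing, i.e.\ showing that the matching of the two branches of the covering at the new node is \emph{canonical} and not merely well-defined up to the action of the deck group $\Aut(\wt C/\bar C)$ of the glued circle. This is controlled by the choice of the group $\wt G$ in~\ref{ss:choice-g}: the canonical lift of Dehn twists along $\bar C$ to $\wt G$ (which exists precisely because $k_i$ is divisible by all component-degrees $d_{ij}$, as used in~\ref{mod-wt-G-chart} and in the proof of~\ref{sss:gluingup}) pins down the branch-matching, so the argument of~\ref{sss:gluingup} applies verbatim: a change of local lift $u_Q$ alters $\wt g$ by a canonical product of Dehn twists along the components of $\rho^{-1}(\bar C)$, which acts trivially on the relevant $\Iso$-set. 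Everything else is routine bookkeeping with the operadic axioms, so I would not spell those calculations out.
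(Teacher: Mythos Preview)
Your proposal is correct and follows the paper's approach, which disposes of this proposition in one line (``The proof of~\ref{sss:fact2} goes along the same lines'' as~\ref{sss:fact1}, with the gluing operation~(\ref{adm-gluing-2}) defined ``similarly'' in~\ref{sss:proofs}). Your ``main obstacle'' is handled in the paper more economically by the observation at the end of~\ref{sss:proofs-2} that an admissible covering of the glued family is uniquely determined by its restriction to the smooth locus of the exceptional fiber, so the branch-matching canonicity is automatic and no separate Dehn-twist/deck-group analysis is required at this stage --- that was already absorbed into Theorem~\ref{sss:gluingup}.
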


\subsubsection{Operations for $\Adm$}
\label{sss:proofs} 
\label{opera-A} 

The induction operation 
$$f_*:\Adm_{g,n}(H)\to\Adm_{g,n}(H')$$
can be constructed separately for the case $f:H\to H'$ is injective or
surjective. 

If $f$ is injective and if $\cC\to\cX\to V$ is an admissible $H$-covering,
its image under $f_*$ is given by $\cC'\to\cX\to V$ where $\cC'$
is disjoint union of $[H':H]$ copies of $\cC$. 

If $f$ is surjective, $\cC'$ is the quotient of $\cC$ by the action of 
$\Ker(f)$. 

To define the gluing operation~(\ref{gluing-adm-1}), we have to construct,
given two families $\cC_i\to\cX_i\to V$, $i=1,2,$ of admissible coverings, 
together
with a choice of punctures $s_i:V\to \cX_i$ and an isomorphism
$\theta:F_{s_1}\to F_{s_2}$ in $\Upsilon_d$, a glued up family
$\cC\to\cX\to V$. We have already described  (see~\ref{opera-sta-cu})
how to get $\cX$ as the colimit of the diagram $\cX_1\lTo V\to \cX_2$. 
Similarly, the choice
of punctures $s_1,\ s_2$ and of $\theta$ define a one-to-one correspondence
between the punctures of $\cC_1$ over $s_1$ and the punctures of $\cC_2$
over $s_2$. The coproduct of $\cC_1$ and $\cC_2$ under an appropriate
number of copies of $V$ gives the admissible covering $\cC$. 

The gluing operation~(\ref{adm-gluing-2}) is defined similarly.

\subsubsection{Proof of \ref{sss:fun}--\ref{sss:fact2}}
\label{sss:proofs-2}

Proposition~\ref{sss:fun} results from the following
obvious observation. Let $\cC_V\to\cX_V\to V$ (resp.,
$\cC'_V\to\cX_V\to V$)
be an admissible covering constructed as in~\ref{sss:adm-v} for
$\rho:\wt{S}\to S$ (resp., for $\rho':\wt{S}'\to S$). Then
$\cC'=f_*(\cC)$.

To prove \ref{sss:fact1}, let $G$ be the finite index subgroup of
$\Gamma(S)$ chosen as in~\ref{ss:choice-g} for the covering
$\rho:C\to S=S_1\circ S_2$; the groups $\Gamma(S_i)$ embed into $\Gamma(S)$;
define $G_i=\Gamma_i\times_\Gamma G$.

The gluing operation
$$ [G_1\bs\bT_1]\times[G_2\bs\bT_2]\rTo\relax[G\bs\bT]$$
defined in~\ref{opera-GT} extends trivially to its gerbe-version
$$ [\wt{G}_1\bs\bT_1]\times[\wt{G}_2\bs\bT_2]\rTo\relax[\wt{G}\bs\bT],$$
where the groups $\wt{G}_i,\ \wt{G}$ are defined as in~\ref{ss:choice-g}.

The property (3) of~\ref{sss:fact1} results from
$2$-commutativity of the following diagram of complex orbifolds.

\begin{equation}\label{diag:comp-bound-2}
\begin{diagram}
[\wt{G}_1\bs\bT_1]\times[\wt{G}_2\bs\bT_2] & \rTo^\iota & [\wt{G}\bs\bT] \\
\dTo^{v_{1,2}} & & \dTo^{v_\rho} \\
\Adm_{g_1,n_1,d} \times_{\Upsilon_d}\Adm_{g_2,n_2,d}& \rTo^\iota & 
\Adm_{g,n,d}
\end{diagram}
\end{equation}

The latter results from the following observation.
Let $(V_i,\wt{H}_i,\alpha_i),\ i=1,2,$ and $(V,\wt{H},\alpha)$
be the charts for the quotients 
$[\wt{G}_1\bs\bT_1]$, $[\wt{G}_2\bs\bT_2]$ and $[\wt{G}\bs\bT]$ respectively,
so that a map              
$$ f_V:V_1\times V_2\rTo V$$
realizes the gluing operation~(\ref{diag:comp-bound-2})
as in~\ref{opera-GT}. The spaces $V_1,\ V_2$ and $V$ are bases of families
of admissible coverings $\cC_i\to\cX_i\to V_i$ and $\cC\to\cX\to V$
constructed as in~\ref{sss:adm-v}. Then the inverse image $f_V^*(\cC)$
identifies with the admissible covering based on $V_1\times V_2$
obtained by gluing of 
$$\cC_1\times V_2\rTo\cX_1\times V_2\rTo V_1\times V_2$$
and 
$$V_1\times\cC_2\rTo V_1\times\cX_2\rTo V_1\times V_2$$
as described in~\ref{opera-A}.

The observation follows from the fact that an admissible covering of   
the family $(\cX_1\times V_2)\vee(V_1\times\cX_2)$ is uniquely defined
by its restriction to the smooth locus of the exceptional curve $X_1\vee X_2$
--- see~\ref{sss:adm-v}. 

The proof of \ref{sss:fact2} goes along the same lines. 

\subsection{Associativity of the stringy orbifold cup-product} 
\label{sec:cohom}
As an application of the results proved earlier in this section,
we will show how they can be used in the study of orbifold cohomology.

Based on work of string theorists, Chen and Ruan in~\cite{CR} 
(see also~\cite{FG}) introduced a new invariant of almost complex orbifolds   
called the stringy orbifold cohomology ring.
Multiplication in this ring is defined in a very non-trivial way
and the proof of its associativity given in~\cite{CR} and~\cite{FG}
involves various moduli spaces of stable Riemann surfaces with punctures.

In the forthcoming work~\cite{HV} we will show that
augmented \TS s and their properties established in this
paper provide a very natural tool for dealing with 
various orbifold cohomology theories. 

Here we will only illustrate this by showing how
to fix some problems in the proofs of associativity 
of the stringy orbifold cup-product given in~\cite{CR} and in~\cite{FG}
(we elaborate on this in Remark~\ref{rem:gap} below).

Let $X=[Y/G]$ be an almost complex global quotient orbifold,
i.e.\  $Y$ is an almost complex manifold and $G$ a finite group which 
acts on $Y$ by diffeomorphisms preserving the almost complex structure.
Proof of associativity of stringy orbifold cohomology cup-product 
reduces to the following statement.

Let $g_1,g_2,g_3,g_4$ be a quadruple of elements in $G$ 
with $g_1 g_2 g_3 g_4= 1$. Let 
$$H=\langle g_1,g_2,g_3,g_4\rangle\subset G$$
be the subgroup in $G$ generated by these elements. 
Define two representations
$V_L$ and $V_R$ of the group $H$ as follows.
Let $(C,p_1,p_2,p_3,p_4)\in \cM_{0,4}$ 
be a nodal Riemann surface obtained by gluing two Riemann spheres
$C_1$ and $C_2$ at a point $p$ with punctures $p_1,p_2$ on the component
$C_1$ and $p_3,p_4$ on $C_2$.
Let 
$$
  \pi:\wt{C}\to C
$$ 
be the Galois covering of $C$ with the Galois group $H$ 
unramified outside of the punctures $\{p_1,p_2,p_3,p_4\} \subset C$,
such that the     monodromy
around $p_i$ is given by the action of 
$g_i\in H$.
Let 
$$
  V_L=H^1(\wt{C}, \cO_{\wt{C}})
$$
 be the representation of $H$ given by the action
of $H$ on $\wt{C}$.

Note that this covering depends on a choice of a marking of $C$,
i.e.\ of an identification of the fundamental group 
$\pi_1(C - \{p_1,p_2,p_3,p_4\})$ with the free group 
$$
F_3=\langle x_1,x_2,x_3,x_4 | \prod x_i =1\rangle.
$$

Another representation of $H$ denoted $V_R$ is 
constructed by relabeling the marked points.
Now we put the points $p_1$ and $p_3$ on $C_1$ and
the points $p_2$ and $p_4$ on $C_2$.

The proof of associativity of stringy orbifold cup-product
in~\cite{FG} reduces to the following statement.

\begin{lem}
 \label{lem:fg}
The representations $V_L$ and $V_R$ of the group $H$ are isomorphic.  
\end{lem}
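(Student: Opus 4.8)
The plan is to deduce the isomorphism $V_L\cong V_R$ from the results of this paper about $v_\rho$ and its compatibility with the gluing operations, by realizing both representations as fibers of a single vector bundle over a connected space. The key observation is that the two labellings of the four marked points on $C=C_1\vee C_2$ correspond to two different ways of degenerating a smooth four-punctured sphere, hence to two boundary strata of $\cM_{0,4}$; both $V_L$ and $V_R$ will be pulled back from the same bundle on $\Adm_{0,4}(H)$, and the contractibility of $\bT_{0,4}$ will force the two pullbacks to agree.

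First I would set up the geometry. Let $S$ be an oriented sphere with four boundary components, numbered so that a chosen pants decomposition separates $\{1,2\}$ from $\{3,4\}$; then gluing produces the nodal curve $C$ of type $(0,4)$, and the chosen marking identifies $\pi_1(C\smallsetminus\{p_i\})$ with $F_3$. The quadruple $(g_1,g_2,g_3,g_4)$ with $g_1g_2g_3g_4=1$ defines a surjection $F_3\to H$, i.e.\ an $H$-covering $\rho:\wt S\to S$ unramified outside the boundary, and hence a point of $\bT(S)$ whose image $(C,\psi)$ carries the admissible $H$-covering $\psi_*(\rho):\wt C\to C$ of~\ref{adm-pointwise}. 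By Lemma~\ref{lem:balanced} this is an admissible $H$-covering in the sense of~\cite{ACV}, so it is a point of $\Adm_{0,4}(H)$, and $V_L=H^1(\wt C,\cO_{\wt C})$ is the fiber at this point of the natural vector bundle
$$
\cH=R^1\varpi_*\cO_{\wt{\cC}}
$$
on $\Adm_{0,4}(H)$, where $\varpi:\wt{\cC}\to\Adm_{0,4}(H)$ is the universal covering curve (this bundle exists by Theorem~\ref{thm:gaga}, which lets us work with analytic families, and by the discussion of vector bundles on orbifolds in~\ref{sec:sheaves}). The relabelling that produces $V_R$ corresponds to applying the permutation $(2\,3)$ of the punctures; concretely, it is the same covering $\rho$ of the same surface $S$, but glued along the pants curve separating $\{1,3\}$ from $\{2,4\}$. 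Thus $V_R$ is again a fiber of $\cH$, at a possibly different point of $\Adm_{0,4}(H)$.

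Next I would connect the two points. The map $v_{\rho,H}:\bT(S)\to\Adm_{0,4}(H)$ of~\eqref{eq:adm} is continuous, and $\bT(S)=\bT_{0,4}$ is contractible, hence in particular connected; therefore its image lies in one connected component of $\Adm_{0,4}(H)$. The point giving $V_L$ and the point giving $V_R$ both lie in this image: the former is $v_{\rho,H}$ of the point $(C,\psi)\in\bT(S)$ described above (by~\ref{v-rho-points}, the value of $v_{\rho,H}$ at a point of $\bT$ is exactly the pointwise construction of~\ref{adm-pointwise}), and the latter is $v_{\rho,H}$ of the point obtained by degenerating along the other pants curve — both degenerations are limits of the \emph{same} family of smooth four-punctured spheres with the $H$-structure induced by $\rho$, a family whose total space is connected because $\cM_{0,4}$ minus its boundary is connected. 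Since a vector bundle on an orbifold has, up to isomorphism, constant fiber along any connected set (restrict $\cH$ to the connected component, pull back to a presentation, and use local triviality together with connectedness), the fibers of $\cH$ at the two points are isomorphic as vector spaces. To upgrade this to an isomorphism of $H$-representations, I would use the $H$-equivariant structure: $\cH$ is an $H$-equivariant bundle on $\Adm_{0,4}(H)$ (the $H$-action on $\wt{\cC}$ over $\Adm_{0,4}(H)$ induces one on $\cH$), so the same connectedness argument, applied to the equivariant bundle, yields an $H$-equivariant identification of the two fibers, which is precisely an isomorphism $V_L\cong V_R$ of $H$-modules.

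The main obstacle I anticipate is the last point: making rigorous the passage from "the two nodal points lie in the image of a connected space under $v_{\rho,H}$" to "the two fibers of the equivariant bundle $\cH$ are isomorphic as $H$-representations." One has to be careful that (a) the compatibility of $v_{\rho,H}$ with the two gluing operations~(\ref{sss:fact1}) and~(\ref{sss:fact2}) really identifies the two boundary points of $\bT_{0,4}$ as images of the two pants decompositions of the \emph{same} covering $\rho$ — this is where Propositions~\ref{sss:fact1} and~\ref{sss:fact2} are used — and (b) local constancy of the fiber of an equivariant vector bundle along a connected (possibly non-separated or orbifold) base is applied correctly; the cleanest route is to pull $\cH$ back to $\bT_{0,4}$ via $v_{\rho,H}$, obtaining an $H$-equivariant bundle on a contractible space, which is therefore $H$-equivariantly trivial, so all its fibers — in particular $V_L$ and $V_R$ — are isomorphic $H$-modules. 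A secondary technical point is to check that the covering $\wt C\to C$ entering the definition of $V_L$ in the lemma genuinely agrees with $\psi_*(\rho)$ for the marking $\psi$ determined by the given identification $\pi_1\cong F_3$; this is a bookkeeping matter comparing monodromies, using that the monodromy of $\psi_*(\rho)$ around $p_i$ is $g_i$ by construction.
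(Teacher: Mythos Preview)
Your proposal is correct and follows essentially the same argument as the paper: pull back the $H$-equivariant bundle $R^1\varpi_*\cO_{\wt{\cC}}$ from $\Adm_{0,4}(H)$ to $\bT(S)$ via $v_{\rho,H}$, observe that $V_L$ and $V_R$ are fibers at two boundary points of $\bT(S)$, and conclude from connectedness (you say contractibility, which is stronger but equally valid) of $\bT(S)$. The paper's proof is terser and does not explicitly invoke the factorization Propositions~\ref{sss:fact1}--\ref{sss:fact2}, simply asserting that $V_L,V_R$ arise as the two relevant fibers, but your more careful bookkeeping of why the two degenerations land in the image of the \emph{same} $v_{\rho,H}$ is a reasonable elaboration of that step.
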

\begin{proof}
Let $S$ be a surface obtained by removing four open disks (holes) from $S^2$.  
The fundamental group of $S$ can be identified 
with 
$$F_3=\langle x_1,x_2,x_3,x_4 | \prod x_i =1\rangle,$$
where $x_i$ corresponds to the path going around the boundary of the $i$th
hole. This gives an epimorphism $F_3 \to H$ and with it a canonical
$H$-covering $\rho: \wt{S}\to S$. 

Due to the result of Section~\ref{sss:adm}  there exists a map~(\ref{eq:adm})
$$
v_{\rho,H}: \bT (S) \rTo \Adm_{g,n}(H)
$$
for certain $g$ and $n$.

 The tautological family of curves
$$
\wt{C}\rTo^\pi C \rTo^ \sigma \Adm_{g,n}(H)
$$
gives an $H$-equivariant vector bundle $\cV$ on $\Adm_{g,n}(H)$
defined by
$$
\cV = R^1(\sigma \pi)_*(\cO_{\wt{C}})
$$
which induces via $v_{\rho,H}$ an $H$-equivariant vector bundle 
    $$\cW=v^*_{\rho,H}(\cV)$$ on $\bT (S)$.

Representations $V_L$ and $V_R$ of $H$ constructed above
appear as fibers of $\cW$ at two different boundary points 
of $\bT(S)$ (they correspond to curves $C$ and $\bar{C}$ with
specific choices of marking). The desired isomorphism
between $V_L$ and $V_R$
now follows from connectedness of $\bT(S)$.
\end{proof}

\begin{rem}
  \label{rem:gap}
The proof of this lemma given in~\cite{FG}  uses the moduli stack $\sM_{g,n}$
instead of the augmented \TS\ $\bT$.

This does not allow to take into account  the dependence of the
construction of relevant coverings  on the choice of markings 
(which is equivalent to an identification of the fundamental group 
of the     punctured surface with the free group).    
An attempt to resolve the issue by replacing $\sM_{g,n}$ with the
stack $\Adm$ hits the problem of high non-connectivity of $\Adm$.
\end{rem}

\end{document}